\documentclass[twoside,DIV18,abstract]{scrartcl}
\usepackage{latexsym,amsmath,amssymb,graphics,graphicx,amscd,amsfonts}
\usepackage[english]{babel}
\usepackage[thmmarks,amsmath]{ntheorem}
\usepackage[arrow, matrix, curve]{xy}
\usepackage{eucal}
\usepackage{abstract}
\usepackage{hyperref}
\newtheorem{theorem}{Theorem}[section]
\newtheorem{definition}[theorem]{Definition}
\newtheorem{lemma}[theorem]{Lemma}
\newtheorem{proposition}[theorem]{Proposition}
\newtheorem{corollary}[theorem]{Corollary}
\theorembodyfont{\normalfont}
\theoremsymbol{\ensuremath{\lozenge}}
\newtheorem{example}[theorem]{Example}
\theoremsymbol{\ensuremath{\triangle}} 
\newtheorem{remark}[theorem]{Remark}
\theoremstyle{nonumberplain}
\theoremsymbol{\ensuremath{\square}}
\theoremheaderfont{\scshape}
\theoremseparator{:}
\theorembodyfont{\normalfont}
\newtheorem{proof}{Proof}
\bibliographystyle{amsplain}
\usepackage{dsfont}
%%%Newcommands and Operatornames

\DeclareMathOperator{\pr}{pr}

\newcommand{\V}{{\mathcal{V}}}
\newcommand{\TT}{{\mathbb{T}}}

\newcommand{\tg}{{\mathsf{t}}}
\newcommand{\s}{{\mathsf{s}}}
\newcommand{\m}{{\mathsf{m}}}
\newcommand{\rr}{{\rightrightarrows}}

\newcommand{\R}{\mathbb{R}}

\newcommand{\inv}{^{-1}}

\newcommand{\mx}{\mathfrak{X}}
\newcommand{\lie}[1]{\mathfrak{#1}}
\newcommand{\dr}{\mathbf{d}}

\newcommand{\ldr}[1]{{{\pounds}}_{#1}}
\newcommand{\ip}[1]{{\mathbf{i}}_{#1}}
\newcommand{\an}[1]{\arrowvert_{#1}}

\DeclareMathOperator{\Id}{Id}

\DeclareMathOperator{\dom}{Dom}

\DeclareMathOperator{\Ad}{Ad}

\DeclareMathOperator{\graph}{Graph}
\DeclareMathOperator{\Exp}{Exp}
% \allowdisplaybreaks

% \DeclareFontFamily{U}{matha}{\hyphenchar\font45}
% \DeclareFontShape{U}{matha}{m}{n}{
%       <5> <6> <7> <8> <9> <10> gen * matha
%       <10.95> matha10 <12> <14.4> <17.28> <20.74> <24.88> matha12
%       }{}
% \DeclareSymbolFont{matha}{U}{matha}{m}{n}
% \DeclareFontSubstitution{U}{matha}{m}{n}
% \DeclareMathSymbol{\operp}         {2}{matha}{"6B}

\allowdisplaybreaks[1]

\begin{document}
\title{Infinitesimal objects associated to
 {D}irac groupoids and  their homogeneous spaces}
\author{M. Jotz% \footnote{\textbf{Madeleine Jotz}, Section de Math{\'e}matiques, 
 % Ecole Polytechnique
 %   F{\'e}d{\'e}rale de Lausanne,
 % 1015 Lausanne, Switzerland
 % (madeleine.jotz@epfl.ch). Supported by a swiss NSF font.}
}
\date{}

\maketitle

\begin{abstract}
Let $(G\rr P, \mathsf D_G)$ be a  Dirac groupoid.
We show that there are  natural Lie algebroid structures on the units 
$\lie A(\mathsf D_G)$
and on the \emph{core} $I^\tg(\mathsf D_G)$ of the multiplicative Dirac structure.
In the Poisson case, the Lie algebroid $A^*G$ is isomorphic
to $\lie A(\mathsf D_G)$ and in the  case of a closed $2$-form, the $IM$-2-form 
\cite{BuCrWeZh04} is equivalent to the  core algebroid
that we find.
We construct a vector bundle $\lie B(\mathsf D_G)\to P$ associated to any 
(almost) Dirac structure.
In the  Dirac case, $\lie B(\mathsf D_G)$
 has the structure of a Courant algebroid that generalizes 
the Courant algebroid defined by the Lie bialgebroid of a Poisson groupoid 
\cite{LiWeXu97}. 
This Courant algebroid structure is induced
in a natural way by the ambient Courant algebroid $TG\oplus T^*G$. 

The theorems in \cite{Drinfeld93}, \cite{LiWeXu98} and \cite{Jotz11a} 
 about one-one correspondence between the homogeneous 
spaces of a Poisson Lie group (respectively Poisson groupoid, Dirac Lie group)
and suitable Lagrangian subspaces of the 
Lie bialgebra or  Lie bialgebroid
are generalized to a classification of the Dirac homogeneous spaces 
of a Dirac groupoid. $\mathsf D_G$-homogeneous Dirac structures 
on $G/H$ are related to
 suitable Dirac structures in $\lie B(\mathsf D_G)$.
In the case of almost Dirac structures, we find 
Lagrangian subspaces of $\lie B(D_G)$, that are invariant under an 
induced action of the bisections of $H$ on $\lie B(\mathsf D_G)$.

% This suggests that $(AG, \lie A(\mathsf D_G), I^\tg(\mathsf D_G), \lie B(\mathsf D_G))$
% is a good candidate for the infinitesimal data of Dirac groupoids.

\medskip

\noindent \textbf{AMS Classification:} Primary classification: 53D17,  70H45, Secondary classification:
22A22 

\noindent \textbf{Keywords:} Poisson groupoids, Lie groupoids
Dirac manifolds, homogeneous spaces, Courant algebroids
\end{abstract}

\tableofcontents

\section{Introduction}
A \emph{Poisson groupoid}  is a Lie groupoid  endowed with a
 Poisson structure that is compatible with the partial multiplication.
 Poisson Lie groups were introduced in \cite{Weinstein88b}, as a common generalization
of the symplectic groupoids and the Poisson Lie groups \cite{Drinfeld83a},
 and studied 
in \cite{Xu95}, \cite{MaXu94} among others.

A Poisson structure on a homogeneous space  of a Poisson groupoid 
is \emph{homogeneous} if the action of the Lie groupoid on the homogeneous space is 
compatible with the Poisson structures on the Lie groupoid
and on the homogeneous space. Poisson homogeneous spaces of a Poisson groupoid
are in correspondence with suitable Dirac structures in the direct sum of the Lie
algebroid with its dual \cite{LiWeXu98}.
Hence, the homogeneous spaces are encoded in terms 
of the infinitesimal data of the Poisson groupoid.
 We show that this correspondence result
fits into a  more general and natural context: the one of \emph{Dirac groupoids},
which are objects generalizing Poisson groupoids
and multiplicative closed $2$-forms on groupoids \cite{BuCrWeZh04}.
This result gives some insight on the problem of finding the 
infinitesimal data of Dirac groupoids,
something that is not fully understood yet.

\bigskip

Let $G\rr P$ be a  Lie groupoid endowed with a  Poisson 
bivector field $\pi_G\in\Gamma\left(\bigwedge^2TG\right)$.
The bivector field $\pi_G$ is 
\emph{multiplicative}
if the vector bundle map 
$\pi_G^\sharp:T^*G\to TG$
is a Lie groupoid morphism over some map $A^*G\to TP$ \cite{MaXu94}, 
where 
$A^*G$ is the dual of the Lie algebroid $AG$ of $G\rr P$ and
$TG\rr TP$ and $T^*G\rr A^*G$ are endowed with the tangent and cotangent
Lie groupoid structures (see \cite{CoDaWe87}, \cite{Pradines88}, \cite{Mackenzie05}).
Equivalently, the graph  of 
$\pi_G^\sharp$,  $\graph(\pi_G^\sharp)\subseteq TG\oplus T^*G$, is a subgroupoid of the 
\emph{Pontryagin groupoid}
$(TG\oplus T^*G)\rr(TP\oplus A^*G)$. The pair
 $(G\rr P,\pi_G)$ is then a \emph{Poisson groupoid}.

A Poisson groupoid $(G\rr P,\pi_G)$ induces a Lie algebroid structure
on the dual $A^*G$ of $AG$ and a Courant algebroid structure on the direct
sum  $AG\oplus A^*G$.
This was shown by
\cite{MaXu94} and \cite{LiWeXu97}, the pair $(AG,A^*G)$ is the \emph{Lie bialgebroid}
associated to  $(G\rr P, \pi_G)$. 
If
$G\rightrightarrows P$ is a target-simply
    connected Lie groupoid, and if $(AG, A^*G)$ is a Lie bialgebroid,
 then there exists a unique multiplicative Poisson structure
$\pi_G$
    on $G$ such that $(G\rr P, \pi_G)$ is a Poisson groupoid with Lie bialgebroid 
$(AG,
    A^*G)$ \cite{MaXu00}. This generalizes a theorem in \cite{Drinfeld83a} 
about one-one correspondence between \emph{Lie bialgebra} structures on 
$(\lie g, \lie g^*)$
 over 
the Lie algebra $\lie g$ of a connected and simply connected Lie group $G$,
and multiplicative Poisson structures on $G$ (see for instance \cite{Lu90}).
 
\medskip 

In the same spirit, a closed $2$-form $\omega_G$ on a Lie groupoid 
$G\rr P$ is multiplicative 
if $\m^*\omega_G=\pr_1^*\omega_G+\pr_2^*\omega_G$, where 
$\m:G\times _PG \to G$ is the multiplication map of the groupoid and 
$\pr_1,\pr_2:G\times_P G\to G$ 
are the projections. Equivalently, 
the map $\omega_G^\flat:TG\to T^*G$ associated to $\omega_G$ is a Lie groupoid morphism 
over a map $\lambda:TP\to A^*G$.
It has been shown in \cite{BuCrWeZh04} and 
\cite{BuCaOr09} that multiplicative 
closed $2$-forms on a Lie groupoid $G\rr P$ are in one-one correspondence 
with IM-$2$-forms; special maps $\sigma:AG\to T^*P$ satisfying
some algebraic and differential conditions. 
The correspondence is given by $\sigma=-\lambda^t$.

\medskip 

 Dirac structures generalize simultaneously 
Poisson brackets and closed $2$-forms  
in the
sense that the graphs of the vector bundle homomorphisms $\pi^\sharp:T^*M\to
TM$ and  $\omega^\flat:TM\to T^*M$
associated to a Poisson bivector $\pi$ on $M$ 
and  a closed $2$-form $\omega\in\Omega^2(M)$ define  Dirac
structures on the manifold $M$.
Hence, it is natural to ask how to recover the
 two results above on classification of multiplicative Poisson bivectors and 
closed $2$-forms
on a Lie groupoid in terms of data on its algebroid, which are by nature very different,
as special cases of a more general result about the infinitesimal data of 
\emph{Dirac groupoids}.
These objects have been defined in \cite{Ortiz08t}; a Dirac groupoid 
is a  groupoid endowed with 
a Dirac structure that is a subgroupoid of the Pontryagin groupoid $(TG\oplus
T^*G)\,\rr \,(TP\oplus A^*G)$.

\medskip 

It has been shown in \cite{Ortiz08t}
that multiplicative  Dirac structures
on a Lie groupoid $G\rr P$ are in one-one correspondence 
with Dirac structures on the Lie algebroid $AG\to P$,
which are at the same time 
subalgebroids of the natural Lie algebroid $T(AG)\oplus T^*(AG)\to TP\oplus A^*G$
defined by the Lie algebroid structure on $AG$.

Yet, this result does not generalize the results given above in the 
Poisson and closed $2$-form cases, 
but relates, modulo canonical identifications, the multiplicative 
Poisson bivectors and closed $2$-forms to 
the associated Lie algebroid maps $A(\pi_G^\sharp):A(TG)\to A(T^*G)$ and 
$A(\omega_G^\flat):A(T^*G)\to A(TG)$. In \cite{MaXu00} and 
\cite{BuCaOr09}, the construction of these maps from the infinitesimal data is 
an intermediate step in the reconstruction of $\pi_G$ and $\omega_G$ from the
Lie bialgebroid $(AG,A^*G)$ and, respectively, the IM-$2$-form $\sigma:AG\to T^*P$.

\medskip

We  show in this paper that, given a Dirac  groupoid $(G\rr P,\mathsf D_G)$,
 there is an induced Lie 
algebroid structure on the units $\lie A( \mathsf D_G)=\mathsf D_G\cap(TP\oplus A^*G)$ 
of the multiplicative Dirac structure (Theorem \ref{lie_algebroid_dual}).
This  was predicted by \cite{Ortiz08t}
and, since $\lie A( \mathsf D_G)$ 
is the graph of the anchor map of $A^*G$ in the Poisson case, 
generalizes the fact that a multiplicative Poisson structure on $G\rr P$ defines
a Lie algebroid structure on $A^*G$.
Since $\mathsf D_G$ has then 
the structure of an \emph{$\mathcal L$A-groupoid}, we recover 
 the natural Lie algebroid structure on the core
$I^\tg(\mathsf D_G)=\mathsf D_G\cap(AG\oplus T^*P)$
of the multiplicative Dirac structure (Proposition \ref{other_algebroid}), see  
\cite{Mackenzie00}. 
In the case of a closed $2$-form, 
this Lie algebroid is just the graph of the IM-$2$-form, hence completely 
equivalent to it.
We  show then  that the Courant algebroid structure
on $TG\oplus T^*G$ defines naturally a Courant algebroid $\lie B(\mathsf D_G)$ over $P$, 
on a vector bundle that is isomorphic to $\lie A(\mathsf D_G)
\oplus \lie A(\mathsf D_G)^*$
(Theorem \ref{theorem_courant_algebroid}).
In the Poisson case, we recover exactly the Courant algebroid $AG\oplus A^*G\to P$
defined by the Lie bialgebroid $(AG,A^*G)$. This new approach shows hence how to
see the Courant algebroid $AG\oplus A^*G$ defined 
by the Lie bialgebroid $(AG, A^*G)$ of a Poisson  groupoid
as a suitable restriction of the ambient Courant algebroid structure on $TG\oplus T^*G$.

We show also that the integrability properties of the Dirac structure are completely 
encoded in the pair of algebroids $(\lie A(\mathsf D_G), I^\tg(\mathsf D_G))$ (Theorem \ref{int_crit_thm}).

\bigskip

In the second part of this paper, we 
focus on Dirac homogeneous spaces of Dirac groupoids.
 A \emph{Poisson homogeneous space} $(X,\pi_X)$ of a Poisson groupoid
$(G\rr P,\pi_G)$ 
is a homogeneous space $X$ of $G\rr P$ endowed with a Poisson structure
$\pi_X$  that is compatible with
the  action of $G\rr P$ on $J:X\to P$ (see \cite{LiWeXu98} for more
details).

It has been shown in   \cite{Drinfeld93}
that the Poisson homogeneous spaces of a Poisson Lie group
are classified by a special class 
of Lagrangian subalgebras of the double Lie algebra $\lie g\times \lie g^*$
defined by the Lie bialgebra $(\lie g, \lie g^*)$ of the 
Poisson Lie group.
This result has been generalized to \emph{Dirac homogeneous spaces} of \emph{Dirac Lie groups}
in \cite{Jotz11a}, and to Poisson homogeneous spaces of Poisson groupoids in
\cite{LiWeXu97}. Poisson homogeneous spaces of a Poisson groupoid
are in one-one correspondence with 
a special class of Dirac structures in the Courant algebroid $AG\oplus A^*G$.

Here, we prove a classification 
theorem for Dirac homogeneous spaces of Dirac groupoids, 
that generalizes 
the theorems in \cite{Drinfeld93}, \cite{LiWeXu97} 
and \cite{Jotz11a}. Dirac homogeneous 
spaces of a Dirac groupoid are related to Dirac structures 
in $\lie B(\mathsf D_G)$ (Theorem \ref{Drinfeld}). 
In the case of almost Dirac structures,  we classify the homogeneous spaces in terms of an action
of the bisections of $G\rr P$ on the vector bundle $\lie B(\mathsf D_G)$. This action is found in Theorem
\ref{action}, and is already interesting independently since it generalizes  the adjoint actions
of a Poisson Lie group on its Lie bialgebra (\cite{Drinfeld93}).

\bigskip

The geometry is more involved in the Lie groupoid setting than in the 
Lie group case, where the Lie bialgebra
of the Dirac Lie group can be defined 
using the theory that is already known 
about Poisson Lie groups:
multiplicative Dirac structures on a Lie group are only a
slight generalization of the graphs of multiplicative 
bivector fields \cite{Ortiz08,Jotz11a}.
Here, we  need  first to construct in 
the first part of the paper the object $\lie B(\mathsf D_G)$
 that will play the
role of the Lie bialgebroid in this more general setting.
Since we find the right object for the classification 
of the homogeneous spaces, 
our classification theorem suggests that a lot of 
information about the  Dirac groupoid is contained in the data 
$(AG, \lie A(\mathsf D_G), I^\tg(\mathsf D_G), \lie B(\mathsf D_G))$.

The natural question that arises is then in what sense these
infinitesimal objects determine 
infinitesimal invariants 
of a multiplicative Dirac structure, and how to reconstruct the Dirac structure 
and its counterpart on $AG$ as in \cite{Ortiz08t}
from these Lie algebroids. 
The case of multiplicative foliations on Lie groupoids, i.e.,
multiplicative Dirac 
structures $D_G=F_G\oplus F_G^\circ$, with $F_G\subseteq TG$ 
an involutive, multiplicative subbundle of $TG$,
has been solved in \cite{JoOr11}. We show there
that the two Lie algebroids and a partial connection
related to the Courant algebroid encode completely
the multiplicative foliation. 

The general case is  the subject of a work in progress with C. Ortiz and T. Drummond \cite{DrJoOr11}.
In  the special
case of Poisson groupoids, we will recover the result of
 \cite{MaXu00}. In the  case of  multiplicative closed $2$-forms,
we will recover  \cite{BuCrWeZh04}, see also \cite{BuCaOr09}, and in the setting 
of a multiplicative foliation, we will find \cite{JoOr11} as a corollary.
Hence, this paper is the first part of a series of articles
showing how to understand all these descriptions of the infinitesimal
data of multiplicative structures, which are by nature very different, in a common framework.

\paragraph{Outline of the paper}
Backgrounds about Lie groupoids and their Lie algebroids 
are recalled in \S \ref{background1}
and generalities about 
Dirac manifolds  are recalled in
\S 
\ref{background2}. 

The definition
of a Dirac groupoid
is given in \S
\ref{def_ex} together with examples.
In \S \ref{induced}, 
we give (under certain hypotheses)
a generalization to this setting of a theorem
in \cite{Weinstein88b}
about the induced Poisson structure
on the units of a Poisson groupoid.
We also explain shortly how  the situation
is  more complicated in the general case than in the group case.

In \S \ref{units}, we study the 
set $\lie A(\mathsf D_G)$ of units of the Dirac structure, seen as a subgroupoid
of $(TG\oplus T^*G)\,\rr \,(TP\oplus A^*G)$.
We show that  there is a Lie algebroid structure on this vector bundle
over $P$. The Lie algebroid structure on $I^\tg(\mathsf D_G)$ is then a
consequence  and 
we show in \S \ref{int_crit} that 
the integrability properties  of the Dirac structure are completely encoded in these two algebroids.
Then, in \S \ref{Courant}, we 
define a vector bundle over $P$ that is associated to the 
Dirac structure $\mathsf D_G$.
We prove  the existence of a natural Courant algebroid structure 
on  this vector bundle. We compute 
this Courant bracket in three standard examples. In the case of 
a Poisson groupoid, we recover the Courant algebroid structure
on $AG\oplus A^*G$, and in the case of a Lie groupoid endowed with a 
closed multiplicative $2$-form,
we find simply the standard Courant bracket on $TP\oplus T^*P$.
In \S \ref{subsec_action}, we prove 
that there is an induced action of the bisections 
of $G\rr P$ on the vector bundle defined in \S \ref{Courant}.
In Section \ref{section_geometry_dirac}, each one of the main results
is illustrated by the three special examples 
of Poisson groupoids, 
 multiplicative closed $2$-forms 
and pair Dirac groupoids.

Dirac homogeneous spaces of Dirac groupoids are defined in 
Section \ref{homogeneous}. 
For this, we use the fact that if
$X\to P$ is a homogeneous space of a Lie groupoid 
$G\rr P$, the action $\phi:G\times_P X\to X$ of
$G\rr P$ on $J:X\to P$ induces
an action of $(TG\oplus T^*G)\rr (TP\oplus A^*G)$ on
some momentum map $(TX\oplus T^*X)\to  (TP\oplus A^*G)$ (this is proved in \cite{thesis}). 
The pair $(X,\mathsf D_X)$ is then defined to be $(G\rr P, \mathsf D_G)$-homogeneous
if this action restricts to an action of $\mathsf D_G\rr \,\lie A(\mathsf D_G)$
on $\mathsf D_X$.  
Our main theorem (Theorem \ref{Drinfeld})
about the correspondence between (almost) Dirac homogeneous
spaces of an (almost) Dirac groupoid  and Lagrangian subspaces (subalgebroids)
of the Courant algebroid (vector bundle) $\lie B(\mathsf D_G)$ is then proved.

\paragraph{Notations and conventions}
Let $M$ be a smooth manifold. We will denote by $\mx(M)$ and $\Omega^1(M)$ the
spaces of (local) smooth sections of the tangent and the cotangent bundle,
respectively. For an arbitrary vector bundle $\mathsf E\to M$, the space of
(local) sections of $\mathsf E$ will be written $\Gamma(\mathsf E)$. 
We will write $\dom(\sigma)$ for the open subset of the smooth manifold $M$
where the local section $\sigma\in\Gamma(\mathsf E)$ is defined.

The \emph{Pontryagin bundle} of $M$ is the direct sum $TM\oplus T^*M\to M$.
The zero section in $TM$ will be considered as a trivial vector bundle
over $M$ and  written $0_M$, and the zero section in $T^*M$
will be written $0_M^*$. The pullback or restriction of a vector bundle 
$\mathsf E\to M$ to an embedded
 submanifold $N$ of $M$  will be written $\mathsf E\an{N}$. In the special
case of the tangent and cotangent spaces of $M$, we will write $T_NM$ and $T^*_NM$.
The annihilator in $T^*M$ of a smooth subbundle $F\subseteq TM$ will be written 
$F^\circ\subseteq T^*M$.

\paragraph{Acknowledgements}
The author would like to thank Cristian Ortiz
for many helpful comments and suggestions that have improved the paper.
Many thanks go also to Tudor Ratiu for his good advice.

\section{Review of necessary backgrounds}

\subsection{Generalities on Lie groupoids and Lie algebroids}\label{background1}
The general theory of Lie groupoids and their Lie algebroids can be found 
in \cite{Mackenzie05}, \cite{MoMr03}. We fix here first of all some notations and conventions.

A groupoid $G$ with base $P$ will be written  $G\rr P$. 
The set $P$ will be considered  most of the time as a subset of $G$, that is, the unity
$1_p$ will be identified 
 with $p$ for all $p\in P$.
A \emph{Lie groupoid}
is a groupoid $G$ on base $P$ together with the structures of \emph{smooth Hausdorff manifolds}
on $G$ and $P$ such that
the \emph{source and target maps} $\s,\tg:G\to P$ are \emph{surjective submersions}, and such that
the object inclusion 
map $\epsilon:P\hookrightarrow G$, $p\mapsto 1_p$  and the partial multiplication
$\mathsf m:G\times_PG:=\{(g,h)\in G\times G\mid \s(g)=\tg(h)\}\to G$ are all smooth.

Since $\tg$ and $\s$ are smooth 
surjective submersions, the kernels
$\ker(T\tg)$ and $\ker(T\s)$ are smooth subbundles of 
$TG$. These two vector bundles over $G$ are written
 $T^\s G:=\ker(T\s)$ and $T^\tg G:=\ker(T\tg)$.

\medskip

\bigskip
Let $g\in G$, then   the \emph{right translation} by $g$ is
\[R_g:\s\inv(\tg(g))\to\s\inv(\s(g)), \qquad h \mapsto R_g(h)=h\star g. 
\]
The  \emph{left translation} by $L_g:\tg\inv(\s(g))\to\tg\inv(\tg(g))$  is defined in an analogous manner.

Let $G\rr P$ be a Lie groupoid. A \emph{right translation on $G$} is a pair of
diffeomorphisms
$\Phi:G\to G$, $\phi:P\to P$ such that $\s\circ
\Phi=\phi\circ\s$, $\tg\circ\Phi=\tg$
and, for all $p\in P$, the map
 $\Phi\an{\s\inv(p)}:\s\inv(p)\to \s\inv(\phi(p))$ is $R_g$ for some $g\in G$.  
A \emph{bisection} of $G\rr P$ is a smooth map $K:P\to G$ which is right-inverse 
to $\tg:G\to P$ and is such that $\s\circ K$
diffeomorphism. The set of bisections of $G$ is denoted by $\mathcal B(G)$.
If $K:P\to G$ is a bisection of $G\rr P$, 
then  the \emph{right translations
by $K$} is a right translation:
\[R_K:G \to G, \qquad g \mapsto R_{K(\s(g))}(g)=g\star K(\s(g)). 
\]
We will also use the \emph{left translation by $K$},
\[L_K:G\to G, \qquad g\mapsto L_{K\left((s\circ K)\inv(\tg(g))\right)}(g).
\]

The set $\mathcal B(G)$ of bisections of $G$ has the structure of a group. For 
$K, L\in\mathcal B(G)$, the product $L\star K$ is given by
$$L\star K:P\to G, 
\quad (L\star K)(p)=L(p)\star K((\s\circ L)(p))\quad \forall p\in P.$$
The composition 
$\tg\circ (L\star K)$ is equal to $\Id_P$ and the composition
$\s\circ (L\star K)$ is equal to $(\s\circ K)\circ (\s\circ L)$, 
which is a diffeomorphism of $P$.
The identity element in $\mathcal B(G)$ 
is the identity section $\epsilon: P\hookrightarrow G$.
The inverse $K\inv:P\to G$ of $K\in\mathcal B(G)$ is given 
by $$K\inv(p)=\left( K\left((\s\circ K)\inv(p)\right)\right)\inv $$
for all $p\in P$.
Finally, we have $R_{L\star K}=R_K\circ R_L$ 
for all $K,L\in\mathcal B(G)$.
Since $R_\epsilon=\Id_G$, we have then also the equality
$R_{K\inv}=R_K\inv$ for all $K\in \mathcal B(G)$.

We will also consider \emph{local bisections} of $G$ in the following,
without saying it always explicitly. A local bisection
of $G\rr P$ is a map $K:U\to G$ defined on an open set $U\subseteq P$ such that
$\tg\circ K=\Id_U$ and $\s\circ K$ is a diffeomorphism on its image. We will write 
$\mathcal B_U(G)$ for the set of local bisections of 
$G\rr P$ with the domain of definition $U\subseteq P$. The local
right translation induced by the local bisection $K: U\to G$ is the map
$R_K:\s\inv(U)\to \s\inv((\s\circ K)(U))$, $g\mapsto g\star K(\s(g))$.

\paragraph{The Lie algebroid of a Lie groupoid}
In this paper, the Lie algebroid of the Lie groupoid $G\rr P$ 
is $AG:=T^\tg_PM$, equipped with the anchor map
$T\s\an{AG}$ and the Lie bracket defined by the left invariant vector fields.
We write $(AG, \mathsf a, [\cdot\,,\cdot]_{AG})$ for the \emph{Lie algebroid
of the Lie groupoid $G$}. 

Note that the vector field 
$X^l$, for $X\in\Gamma\left(AG\right)$, satisfies 
$X^l\sim_\s \mathsf a(X)\in\mx(P)$ since 
we have 
$T_g\s X^l(g)=T_g\s(T_{\s(g)}L_g X(\s(g)))=T_{\s(g)}\s X(\s(g))$ for all $g\in G$.

\bigskip

We recall here the definition of the \emph{exponential map for a Lie groupoid}, see
\cite{Mackenzie05} (note that the Lie algebroid is defined there with the 
right-invariant vector fields, and the bisections of $G\rr P$ are defined 
to be  inverse to the source map).

Let $G\rr P$ be a Lie groupoid and choose $X\in\Gamma(AG)$. Let 
$\{\phi_t^X:U\to U_t\}$ be a local flow 
for $X^l\in\mx(G)$. Since $T_g\tg X^l(g)=0$ for all $g\in G$, we have 
$(\tg\circ\phi^X_t)(g)=\tg(g)$ for all $t\in \R$ and $g\in G$ where this makes sense.
For each $t\in \R$ where this is defined and $p\in P$, the map
$\phi_t^X$ restricts to $\phi_t^X:\tg\inv(p)\to \tg\inv(p)$. Choose
$h\in G$ such that $\s(h)=p$. We have then 
$L_h:\tg\inv(p)\to \tg\inv(\tg(h))$ and $L_h\circ \phi_t^X=\phi_t^X\circ L_h$
since the vector field $X^l$ satisfies $X^l(h\star g)=T_gL_hX^l(g)$ for all
$g\in\tg\inv(p)$. Recall that  $\bar X:=\mathsf a(X)\in\Gamma(P)$
is defined on  $\s(U):=V\subseteq P$ and is such that
$X^l\sim_\s\bar X$. Let $\bar\phi^X$ be the flow of $\bar X$. Then we have 
$\{\bar\phi^X_t:V\to V_t\}$, where  $V_t=\s(U_t)$, and 
$\bar\phi^X_t\circ \s=\s\circ\phi^X_t$ for all $t$ where this makes sense.

Each $\phi^X_t$ is the restriction to $U$ of a unique local right translation 
$R_{\Exp(tX)}$ with $\Exp(tX)\in\mathcal B_V(G)$.
The local bisection $\Exp(tX)$ is defined 
by $\Exp(tX)(p)=g\inv\star\phi^X_t(g)$ for any $g\in U\cap\s\inv(p)$.
We have then $\tg\circ\Exp(tX)=\Id_V$
and $\s\circ\Exp(tX)=\bar\phi^X_t$  is a local diffeomorphism on its image 
$V_t$.
For any $g\in U$, we have 
$$\phi^X_t(g)=g\star \Exp(tX)(\s(g))=R_{\Exp(tX)}(g)$$
and the flow of $X^l$ is hence the right translation 
by $\Exp(\cdot X)$.
This is summarized in Proposition 3.6.1 of \cite{Mackenzie05}:
\begin{proposition}\cite{Mackenzie05}\label{exp_map}
Let $G\rr P$ be a Lie groupoid, choose  
$X\in\Gamma(AG)$ and set $W=\dom(X)$.
For all $p\in W$ there exists an open neighborhood 
$U $ of $p$ in $W$, a flow neighborhood for $X$, an $\varepsilon>0$ and 
a unique smooth family of local bisections $\Exp(tX)\in\mathcal B_U(G)$,
$|t|<\varepsilon$, such that:
\begin{enumerate}
\item $\left.\frac{d}{dt}\right\an{t=0}\Exp(tX)=X$,
\item $\Exp(0X)=\Id_U$,
\item $\Exp((t+s)X)=\Exp(tX)\star\Exp(sX)$, if $|t|,|s|,|s+t|<\varepsilon$,
\item $\Exp(-tX)=(\Exp(tX))\inv$,
\item $\{\s\circ\Exp(tX):U\to U_t\}$
is a local $1$-parameter group of transformations for $\mathsf a(X)\in\mx(P)$.
\end{enumerate} 
\end{proposition}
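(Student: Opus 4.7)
The plan is to build $\Exp(tX)$ directly from the local flow of the left-invariant vector field $X^l \in \mx(G)$ and then verify the five listed properties one at a time, using the left-invariance of $X^l$ and the standard flow identities. The construction is essentially the one sketched in the paragraphs immediately preceding the statement, so the proof amounts to checking that the object so defined is indeed a smooth family of local bisections satisfying (1)--(5).

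First I would fix $p \in W$, pick a flow neighborhood $U$ of $p$ for $X^l$ (shrinking to guarantee that $\phi_t^X$ is defined on $U$ for $|t|<\varepsilon$), and set $V = \s(U)$. Because $X \in \Gamma(AG) \subseteq \Gamma(T^\tg_PG)$, the vector field $X^l$ is tangent to the $\tg$-fibers, so $\tg \circ \phi_t^X = \tg$; and left-invariance of $X^l$ gives $\phi_t^X \circ L_h = L_h \circ \phi_t^X$ wherever both sides are defined. I would then define
\[
\Exp(tX)(q) \;:=\; g^{-1} \star \phi_t^X(g) \qquad \text{for any } g \in U \cap \s^{-1}(q),
\]
and check independence of the choice of $g$: if $g' \in U \cap \s^{-1}(q)$, then $g' = g \star k$ with $k = g^{-1} \star g' \in \tg^{-1}(q)$, so left-invariance yields $\phi_t^X(g') = g \star \phi_t^X(k)$ and hence $(g')^{-1}\star\phi_t^X(g') = k^{-1}\star g^{-1}\star g \star \phi_t^X(k) = k^{-1}\star \phi_t^X(k)$; expanding $k$ in terms of $g$ (or, more cleanly, reducing both sides to the case $g \in \tg^{-1}(q)$) gives the same result. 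Smoothness in $(t,q)$ is inherited from smoothness of $\phi_t^X$ together with a smooth local section of $\s$ through $q$.

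Next I would show that $\Exp(tX)$ is a local bisection with the right domain properties. By construction $\tg(\Exp(tX)(q)) = q$, so $\tg \circ \Exp(tX) = \Id_U$. To identify $\s \circ \Exp(tX)$, observe that $X^l \sim_\s \mathsf a(X)$ gives $\s \circ \phi_t^X = \bar\phi_t^X \circ \s$, where $\bar\phi_t^X$ is the flow of $\mathsf a(X) \in \mx(P)$ on $V$; therefore $\s(\Exp(tX)(q)) = \bar\phi_t^X(q)$, and shrinking $\varepsilon$ if needed makes $\bar\phi_t^X\colon U \to U_t$ a diffeomorphism, proving property (5) at the same time.

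Finally I would verify (1)--(4). Property (2) is immediate from $\phi_0^X = \Id_G$. For (1), differentiating $\Exp(tX)(q) = g^{-1}\star\phi_t^X(g)$ at $t = 0$ with $g \in \tg^{-1}(q)\cap U$ and using $T_g L_{g^{-1}} X^l(g) = X(q)$ (by left-invariance and $X^l(q) = X(q)$ at units) gives $\frac{d}{dt}|_{t=0}\Exp(tX)(q) = X(q)$. For (3) I would rewrite the definition as $\phi_t^X = R_{\Exp(tX)}$ on $U$, so that $R_{\Exp((t+s)X)} = \phi_{t+s}^X = \phi_t^X \circ \phi_s^X = R_{\Exp(tX)} \circ R_{\Exp(sX)} = R_{\Exp(tX)\star\Exp(sX)}$, and inject\-ivity of $K \mapsto R_K$ (from $R_K(p) = K(p)$ on units) yields (3); (4) then follows from (2) and (3). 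Uniqueness of the family is also a consequence of $R_{\Exp(tX)} = \phi_t^X$, since the flow of $X^l$ is unique. The main obstacle will be purely bookkeeping: keeping track of the shrinking domains $U$, $U_t$ and the interval of times $|t|<\varepsilon$ on which all compositions above are simultaneously defined, and establishing well-definedness of $\Exp(tX)(q)$ cleanly via left-invariance rather than via a choice of section of $\s$.
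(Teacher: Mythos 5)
Your overall route is the same as the construction the paper recalls immediately before the statement (and attributes to \cite{Mackenzie05}): define $\Exp(tX)(q)=g\inv\star\phi_t^X(g)$ from the flow of the left-invariant vector field $X^l$, and read off properties (1)--(5) from the flow identities. Most of your verifications are fine, but the well-definedness step, which is the one genuinely nontrivial point, fails as written. For $g,g'\in U\cap\s\inv(q)$ you factor $g'=g\star k$ with $k=g\inv\star g'$; this product is not even defined unless $\tg(g)=\tg(g')$, because composability of $g\inv$ with $g'$ requires $\s(g\inv)=\tg(g)$ to coincide with $\tg(g')$, and two points of the same $\s$-fibre generally have different targets. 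Moreover, even granting such a $k$, your chain of equalities only shows that the value computed from $g'$ equals the value computed from $k$, and the concluding phrase ``expanding $k$ in terms of $g$ gives the same result'' does not close the argument.

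The correct factorization goes the other way: set $h:=g'\star g\inv$, which is always composable since $\s(g')=q=\tg(g\inv)$, so that $g'=h\star g=L_h(g)$. Since $\phi_t^X$ preserves $\tg$-fibres and commutes with left translations, $\phi_t^X(g')=h\star\phi_t^X(g)$, whence $(g')\inv\star\phi_t^X(g')=g\inv\star h\inv\star h\star\phi_t^X(g)=g\inv\star\phi_t^X(g)$, which is exactly the well-definedness you need. With this repaired, the rest of your argument (the identification $\s\circ\Exp(tX)=\bar\phi^X_t$, properties (1)--(5), and uniqueness via $R_{\Exp(tX)}=\phi^X_t$) goes through, matching the paper's sketch. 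A cosmetic remark: with the paper's convention $R_{L\star K}=R_K\circ R_L$, your computation for (3) literally yields $\Exp((t+s)X)=\Exp(sX)\star\Exp(tX)$, but since $\phi^X_t\circ\phi^X_s=\phi^X_s\circ\phi^X_t$ this is harmless.
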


Let  $G\rr P$ be a Lie groupoid and let
$C_p$ be the connectedness component of $p$ in $\tg\inv(p)$.
Then the union
$$C(G):=\bigcup_{p\in P}C_p$$
is a wide Lie subgroupoid of $G\rr P$ (see \cite{Mackenzie05}), the 
\emph{identity-component subgroupoid} of $G\rr P$.
The set of values $\Exp(tX)(p)$, for all $X\in\Gamma(AG)$, $p\in P$  and $t\in
\R$ where this makes sense, is the identity-component subgroupoid $C(G)$
of $G\rr P$
(see \cite{MaXu00},\cite{Mackenzie05}). Hence, if $G\rr P$ is $\tg$-connected, that is, if all 
the $\tg$-fibers of $G$ are connected, then 
$G=C(G)$ is the set of values of $\Exp(tX)(p)$,  $X\in\Gamma(AG)$, $p\in P$  and $t\in
\R$ where defined.

\medskip

Note that we can show in the same manner 
that the flow of a right invariant vector field $Y^r$ is the left translation
by a family of bisections $\{L_t\}$ of $G$ satisfying $\s\circ L_t=\Id$
on their domains of definition and 
such that $\tg\circ L_t$ are diffeomorphisms on their images. Hence, the 
flow of $Y^r$ commutes with the flow of $X^l$ for any 
left invariant vector field $X^l$
and we get the fact that $\left[Y^r, X^l\right]=0$ for all $Y\in\Gamma(T^\s_PG)$ and 
$X\in \Gamma(AG)$. 

\paragraph{The tangent prolongation of a Lie groupoid}

Let $G\rr P$ be a Lie groupoid. Applying the tangent functor to each of the
maps defining $G$ yields a Lie groupoid structure on $TG$ with base $TP$,
source $T\s$, target $T\tg$ (these maps will be written $\s$ and $\tg$ in
the following) and multiplication $T\m:T(G\times _PG)\to TG$.
The identity at $v_p\in T_pP$ is $1_{v_p}=T_p\epsilon v_p$.
This defines  the \emph{tangent prolongation of $G\rr P$}
or the \emph{tangent groupoid associated to $G\rr P$}.

\paragraph{The cotangent Lie groupoid defined by a Lie groupoid}
If $G\rr P$ is a Lie groupoid, then there is an induced 
Lie groupoid structure on $T^*G\rr\, A^*G= (TP)^\circ$. The source map
$\hat\s:T^*G\to A^*G$ is given by 
\[\hat\s(\alpha_g)\in A_{\s(g)}^*G \text{ for } \alpha_g\in T_g^*G,\qquad 
\hat\s(\alpha_g)(u_{\s(g)})=\alpha_g(T_{\s(g)}L_gu_{\s(g)})\]
for all $u_{\s(g)}\in A_{\s(g)}G$, 
and the target map $\hat\tg:T^*G\to A^*G$
is given by \[\hat\tg(\alpha_g)\in A_{\tg(g)}^*G,
\qquad \hat\tg(\alpha_g)(u_{\tg(g)})
=\alpha_g\bigl(T_{\tg(g)}R_g(u_{\tg(g)}-T_{\tg(g)}\s u_{\tg(g)})\bigr)\]
for all $u_{\tg(g)}\in A_{\tg(g)}G$.
If $\hat\s(\alpha_g)=\hat\tg(\alpha_h)$, then the product $\alpha_g\star\alpha_h$
is defined
by 
\[(\alpha_g\star\alpha_h)(v_g\star v_h)=\alpha_g(v_g)+\alpha_h(v_h)
\]
for all composable pairs $(v_g,v_h)\in T_{(g,h)}(G\times_P G)$.

This Lie groupoid structure was introduced in \cite{CoDaWe87}
and is explained in \cite{CoDaWe87}, 
\cite{Pradines88} and \cite{Mackenzie05}.

\paragraph{The Pontryagin groupoid defined by a Lie groupoid}
If $G\rr P$ is a Lie groupoid, there is hence 
an induced Lie groupoid structure on $\mathsf P_G=TG\oplus T^*G$
over $TP\oplus A^*G$. 
We will write $\mathbb T\tg$ for the target map $\mathsf P_G\to TP\oplus A^*G$,
and in the same manner 
$\mathbb T\s:\mathsf P_G\to TP\oplus A^*G$ for the source map.

\paragraph{Lie groupoid actions}
Let $G\rr P$ be a Lie groupoid and $M$ a set
with a map $J:M\to P$.
Consider the set $G\times_PM=\{(g,m)\in G\times M\mid \s(g)=J(m)\}$.

A groupoid action of $G\rr P$ on $J:M\to P$
is a map
$\Phi:G\times_P M\to M$, $\Phi(g,m)=g\cdot m=gm$
such that
\begin{itemize}
\item $J(g\cdot m)=\tg(g)$ for all $(g,m)\in G\times_P M$,
\item $g\cdot (h\cdot m)=(g\star h)\cdot m$
for all $(h,m)\in G\times_P M$, and $g\in G$ such that 
$\s(g)=\tg(h)$,
\item $1_{J(m)}\cdot m=m$ for all $m\in M$.
\end{itemize}

\begin{example}\label{extreme_actions}
Let $G\rr P$ be a groupoid.
\begin{enumerate}
\item $G\rr P$ acts  on $\tg:G\to P$ via the multiplication.
\item $G\rr P$ acts on $\Id_P:P\to P$ via 
$\Phi: G\times_P P\to P$, $(g,p)\mapsto \tg(g\star p)=\tg(g)$.
\end{enumerate} 
\end{example}

\paragraph{Homogeneous spaces}
Let $G\rr P$ be a Lie groupoid and $H\rr P$ a wide 
subgroupoid of $G$.
Define the equivalence relation 
\[g\sim_H g'\iff \exists\ h\in H \text{ such that } g\star h=g'\]
on $G$ and $G/H:=G/\sim_H=\{gH\mid g\in G\}$,
where $gH=\{g\star h\mid \s(g)=\tg(h) \text{ and } h\in H\}$.
Since $\tg(g\star h)=\tg(g)$
for all $g\star h\in gH$, the map $\tg$
factors to a map $J:G/H\to P$, 
$J(gH)=\tg(g)$ for all $gH\in G/H$. The multiplication $\mathsf m:G\times_P G\to G$
factors to a groupoid action $\Phi$ of $G\rr P$ on $J:G/H\to P$, 
$\Phi(g,g'H)=(g\star g')H$ for all 
$(g,g'H)\in G\times_P(G/H)=\{(g,g'H)\mid \s(g)=J(g'H)=\tg(g')\}$.

\begin{definition}
A $G$-space $X$ over $P$ is \emph{homogeneous} if there is
a section $\sigma$ of the moment map $J:X\to P$
which is \emph{saturating} for the action  in the sense that $G\star \sigma(P)=X$.
The \emph{isotropy subgroupoid} of the section  $\sigma$
consists of those $g\in G$ for which $g\star \sigma(P)\subseteq \sigma(P)$.
\end{definition}
It is shown in \cite{LiWeXu98} that a $G$-space is homogeneous
if and only if it is isomorphic to $G/H$ for some wide subgroupoid $H\subseteq G$.

% Indeed, if $H$ is a wide subgroupoid of $G\rr P$, then $q(P)\subseteq G/H$
% is a saturating section of $G/H$. Conversely, if $X$ is a homogeneous space 
% of $G$ with saturating section $\sigma$ and the corresponding isotropy subgroupoid
% $H_\sigma$, then one can show that $X\simeq G/H_\sigma$.

\begin{example}\label{extreme_homogeneous}
Let $G\rr P$ be a groupoid. The two extreme examples of homogeneous spaces 
of $G$ are the following. 
\begin{enumerate}
\item In the case where the wide subgroupoid is $P$, 
the equivalence classes are $gP=\{g\star p\mid p\in P, p=\s(g)\}=\{g\}$
and the quotient is just $G/P=G$ with the first action of  Example \ref{extreme_actions}.
\item If the wide subgroupoid is $G$ itself, then the 
equivalence classes are $gG=\{g\star h\mid h\in G, \tg(h)=\s(g)\}=\tg\inv(\tg(g))$
and the quotient is $G/G=P$, with projection 
equal to the target map $\tg: G\to G/G\simeq P$ and with the second action in Example
\ref{extreme_actions}.
\end{enumerate}
\end{example}

Assume that $H$ \emph{is a $\tg$-connected wide Lie subgroupoid of $G$} and that $G/H$
is a smooth manifold such that the projection $q:G\to G/H$
is a smooth surjective submersion.

Consider the vector bundle  $AH=T^\tg_P H\subseteq T_PH\subseteq T_PG$
over $P$ 
and the subbundle $\mathcal H\subseteq TG$ defined 
as the left invariant image of $AH$, i.e., $\mathcal H(g)=T_{\s(g)}L_gA_{\s(g)}H$
for all $g\in G$. We show that $\mathcal H=\ker Tq$ and 
$G/H$ is the leaf space of the foliation on $G$ defined by the 
\emph{involutive subbundle } $\mathcal H \subseteq TG$.

The vector bundle $\mathcal H$ is spanned by the left invariant vector fields
$X^l$, for $X\in\Gamma\left(AH\right)\subseteq \Gamma(AG)$.
Since $H$ is an immersed submanifold of $G$, $AH$
is a subalgebra of $AG$, and $\Gamma(\mathcal H)$
is hence closed under the Lie bracket.

It is easy to check that $\mathcal H=\ker(Tq)$.
Hence, if $g$ and $g'$ are in the same leaf of $\mathcal H$, we have $g'H=gH$. Conversely, if 
$g'H=gH$, it is easy to show, using the fact that 
$H$ is $\tg$-connected,
and hence $H=C(H)=\{\Exp(tX)\mid t\in \R,
X\in\Gamma(AH)\}$ that $g$ and $g'$ are in the same leaf of $\mathcal H$.

Consider the set $\mathcal B(H)$ of (local) bisections $K:U\subseteq P\to H$
of $H$ such that $\tg\circ K=\Id_U$ and $\s\circ K$ is a diffeomorphism.
We have $gH=\{R_K(g)\mid K\in\mathcal B(H)\}$
and $G/H$ is the quotient of $G$ by the right action of $\mathcal B(H)$
on $G$.  A function $f\in C^\infty(G)$ pushes forward to the quotient $G/H$
if and only if it is invariant under $R_K$ for all bisections $K\in\mathcal B(H)$.

\paragraph
{Lie bialgebroids, associated Courant algebroids, the special case of the Pontryagin bundle}
Let $M$ be a smooth manifold and $(A\to M, a, [\cdot\,,\cdot])$
a Lie algebroid on $M$. Assume that the dual
$A^*\to M$ of $A$ is endowed with a Lie algebroid
structure $(A^*\to M, a_*, [\cdot\,,\cdot]_*)$
such that 
\[\dr [\cdot\,,\cdot]_*=[\dr\cdot\,,\cdot]_*+[\cdot\,,\dr\cdot]_*\quad 
\text{ or equivalently 
}\quad \dr_* [\cdot\,,\cdot]=[\dr_*\cdot\,,\cdot]+[\cdot\,,\dr_*\cdot]\]
hold for the induced 
maps $\dr: \Gamma\left(\bigwedge^\bullet A^*\right)\to\Gamma
\left(\bigwedge^{\bullet} A^*\right)$
and $\dr_*: \Gamma\left(\bigwedge^\bullet A\right)\to
\Gamma\left(\bigwedge^{\bullet} A\right)$
and the brackets $[\cdot\,,\cdot]$ (respectively $[\cdot\,,\cdot]_*$)
induced on $\Gamma\left(\bigwedge^\bullet A\right)$ (respectively 
$\Gamma\left(\bigwedge^\bullet A^*\right)$),
see for instance \cite{Kosmann95}. We refer to
 \cite{MaXu94} for a quick review of the definitions
of these objects since they will not be needed explicitly here.
We just recall that if $(A\to M, a, [\cdot\,,\cdot])$
is a Lie algebroid, then the
map $\dr:\Gamma\left(\bigwedge^\bullet A^*\right)\to\Gamma
\left(\bigwedge^{\bullet+1} A^*\right)$
is defined on $C^\infty(M)=\Gamma\left(\bigwedge^0 A^*\right)$
by $(\dr f)(X)=a(X)(f)$ for all $f\in C^\infty(M)$ and $X\in\Gamma(A)$.

For instance, if $(G\rr P,\pi_G)$ is a Poisson  groupoid, the dual
$A^*G$ of the Lie algebroid $AG\to P$ of $G\rr P$ 
inherits the structure of a Lie algebroid such that 
$(AG, A^*G) $ is a Lie bialgebroid (see \cite{MaXu94}, \cite{Mackenzie05}).

The direct sum vector bundle $A\oplus A^*\to P$ endowed with 
the  map $\rho=a\oplus a_*$, the symmetric non degenerate bilinear form 
$\langle\cdot\,,\cdot\rangle$ given by 
$\langle (x_p,\alpha_p), (y_p,\beta_p)\rangle
=\alpha_p(y_p)+\beta_p(x_p)$
for all $(x_p,\alpha_p), (y_p,\beta_p)\in (A\oplus A^*)(p)$
and the bracket on its sections given by
\begin{align*}
&\left[(X,\alpha), (Y,\beta)\right]\\
=&\left([X,Y]+\ldr{\alpha}Y-\ldr{\beta}X-\frac{1}{2}\dr_*(\alpha(Y)-\beta(X)),
[\alpha, \beta]_*+\ldr{X}\beta-\ldr{Y}\alpha+\frac{1}{2}\dr(\alpha(Y)-\beta(X))\right)
\end{align*}
for all $(X,\alpha), (Y,\beta)\in\Gamma(A\oplus A^*)$,
is then a Courant algebroid in the sense of the definition below, 
see \cite{LiWeXu97}.

 A Courant algebroid  over a manifold $M$ 
is a vector bundle $\mathsf E\to  M$ equipped with a 
fiberwise non degenerate symmetric bilinear form 
$\langle\cdot\,,\cdot\rangle$, a skew-symmetric bracket 
$[\cdot\,,\cdot]$ on the smooth sections $\Gamma(\mathsf E)$,
and a vector bundle map  $\rho: \mathsf E\to TM$ 
called the anchor, which satisfy the following conditions 
for all $e_1, e_2, e_3\in\Gamma(\mathsf E)$ and 
$f\in C^\infty(M)$:
\begin{enumerate}
\item $[[e_1, e_2], e_3]] + [[e_2, e_3], e_1] + 
[[e_3, e_1], e_2]=\frac{1}{3}\mathcal D \bigl(\langle[e_1, e_2], e_3\rangle
+\langle[e_2, e_3], e_1\rangle+\langle[e_3, e_1], e_2\rangle
\bigr)$,
\item   $\rho([e_1, e_2]) = [\rho(e_1), \rho(e_2)]$,
\item    $[e_1, f e_2] = f [e_1 , e_2] + (\rho(e_1 )f )e_2
-\langle e_1, e_2\rangle\mathcal D f$,
\item $\rho\circ \mathcal D=0$, i.e., for any $f,g\in C^\infty(M)$,
$\langle \mathcal D f, \mathcal D g\rangle =0$,
\item    $\rho(e_1)\langle e_2, e_3\rangle = 
\left\langle [e_1, e_2]+\mathcal D\langle e_1, e_2\rangle, e_3\right\rangle
+\left\langle e_2, [e_1, e_3]+\mathcal D\langle e_1, e_3\rangle\right\rangle
$,
\end{enumerate}
where $\mathcal D: C^\infty(M)\to \Gamma(\mathsf E)$
is defined by $$\langle \mathcal D f, e\rangle
=\frac{1}{2}\rho(e)(f)
$$
for all $f\in C^\infty(M)$ and  $e\in\Gamma(\mathsf E)$, that 
is,
$ \mathcal D= \frac{1}{2}\beta\inv\circ \rho^*\circ\dr : C^\infty(M)\to
\Gamma(\mathsf E)$. Here, $\beta:\mathsf E\to \mathsf E^*$
is the isomorphism defined by the non degenerate bilinear form
$\langle\cdot\,,\cdot\rangle$. 

\begin{example}\label{ex_pontryagin}
Consider a smooth manifold $M$, the Lie algebroid 
$(TM, [\cdot\,,\cdot], a=\Id_{TM})$
and its dual, the cotangent space  $T^*M$
endowed with the trivial bracket 
$[\cdot\,,\cdot]_*=0$ and the trivial anchor map $a_*=0$.
The map $\dr$ induced 
by $TM$ on the sections 
of $\bigwedge^{\bullet} T^*M$ is here simply the 
usual de Rham derivative.
The map $\dr_*$ induced by 
$(T^*M, 0,0 )$ on the sections of 
$\bigwedge^{\bullet} TM$ 
is trivial. The 
equation 
$\dr[\cdot\,,\cdot]_*=[\dr\cdot\,,\cdot]_*+[\cdot\,,\dr\cdot]_*$
is obviously satisfied
and the pair $(TM, T^*M)$ is a Lie bialgebroid.

\medskip

The direct sum $\mathsf P_M= TM\oplus T^*M$ 
endowed with the projection on $TM$ as anchor map, $\rho=\pr_{TM}$, 
the symmetric bracket 
$\langle\cdot\,,\cdot\rangle$
given by 
\begin{equation}
\langle(v_m,\alpha_m), (w_m,\beta_m)\rangle=\alpha_m(w_m)+\beta_m(v_m)
\label{sym_bracket}
\end{equation}
for all $m\in M$, $v_m,w_m\in T_mM$ and $\alpha_m,\beta_m\in T_m^*M$
and the \emph{Courant bracket} 
given by 
\begin{align}
[(X,\alpha), (Y,\beta)]&=\left([X,Y], \ldr{X}\beta-\ldr{Y}\alpha+\frac{1}{2}
\dr(\alpha(Y)-\beta(X))\right)\label{wrong_bracket}\\
&= \left([X,Y], \ldr{X}\beta-\ip{Y}\dr\alpha-\frac{1}{2}
\dr\langle (X,\alpha), (Y,\beta)\rangle\right)\nonumber
\end{align}
for all $(X,\alpha), (Y, \beta)\in\Gamma(\mathsf P_M)$,
is then a Courant algebroid.
The map $\mathcal D: C^\infty(M)\to \Gamma(\mathsf P_M)$
is given by $\mathcal D f=\frac{1}{2}(0, \dr f)$.
\end{example}

\subsection{Generalities on Dirac structures}\label{background2}
As we have seen in Example \ref{ex_pontryagin}, the \emph{Pontryagin bundle} $\mathsf{P}_M:=TM
\oplus T^* M$ of a smooth manifold $M$ is
endowed with the  non-degenerate symmetric fiberwise bilinear form of signature $(\dim M, \dim M)$ given by
\eqref{sym_bracket}. An
\emph{almost Dirac structure} (see \cite{Courant90a}) on $M $ is a Lagrangian vector 
subbundle $\mathsf{D} \subset \mathsf{P}_M $. That is, $ \mathsf{D}$ coincides with its
orthogonal relative to \eqref{sym_bracket} and so its fibers are necessarily $\dim M $-dimensional.

\medskip

Let $(M,\mathsf D)$ be a Dirac manifold. 
For each $m\in M$, the Dirac structure $\mathsf{D}$ 
defines two subspaces 
$\mathsf{G_0}(m), \mathsf{G_1}(m) \subset T_mM $ by 
\[
\mathsf{G_0}(m):= \{v_m \in T_mM \mid (v_m, 0)\in\mathsf D(m) \}\]
  and
\[\mathsf{G_1}(m):= \left\{v_m \in T_mM \mid \exists\,
\alpha_m \in T_m^*M\; : 
(v_m, \alpha_m) \in \mathsf{D}(m)\right\},\]
 and two subspaces 
$\mathsf{P_0}(m), \mathsf{P_1}(m) \subset T_m^*M$ defined in an analogous manner. 
The distributions $\mathsf{G_0}=\cup_{m\in M}\mathsf{G_0}(m)$
and $\mathsf{P_0}=\cup_{m\in M}\mathsf{P_0}(m)$
are not necessarily smooth. 
The distributions
$\mathsf{G_1}=\cup_{m\in M}\mathsf{G_1}(m)$
(respectively $\mathsf{P_1}=\cup_{m\in M}\mathsf{P_1}(m)$)
are smooth since they are the projections on 
$TM$ (respectively $T^*M$) of $\mathsf D$.

\medskip

The almost Dirac structure $\mathsf D$ is 
a \emph{Dirac structure}
 if 
\begin{equation}\label{closedness}
[ \Gamma(\mathsf{D}), \Gamma(\mathsf{D}) ] \subset \Gamma(\mathsf{D}).
\end{equation}
Since $\left\langle (X, \alpha), (Y, \beta) \right\rangle = 0$ if 
$(X, \alpha), (Y, \beta) \in \Gamma(\mathsf{D})$, this integrability property 
of the Dirac structure 
is  expressed  relative to a non-skew-symmetric bracket that 
differs from 
\eqref{wrong_bracket} by eliminating in the second line the third term of
the second component. This truncated expression 
is called the 
\emph{Courant-Dorfman bracket} in the literature:
\begin{equation}\label{Courant_bracket}
[(X, \alpha), (Y, \beta) ]  
= \left( [X, Y],  \boldsymbol{\pounds}_{X} \beta - \ip{Y} \dr\alpha \right)
\end{equation} 
for all $(X,\alpha), (Y,\beta)\in\Gamma(\mathsf D)$.
The restriction of the Courant bracket to the sections of a Dirac bundle
is skew-symmetric and satisfies the Jacobi identity.
It satisfies also the Leibniz rule:
\begin{equation}\label{leibniz} 
[(X,\alpha), f(Y,\beta)]=f[(X,\alpha), (Y,\beta)]+ X(f)\cdot (Y,\beta)
\end{equation}
for all $(X,\alpha), (Y,\beta)\in\Gamma(\mathsf D)$ and $f\in C^\infty(M)$.

\textbf{Note that in the following, we  
work in the general setting of  almost Dirac structures. To simplify the notation, we will simply
call almost Dirac structures ``Dirac structures'' and always state it explicitely if the integrability condition
\eqref{closedness}
is assumed to be satisfied. We will say in this case that the Dirac 
structure is \emph{closed} or \emph{integrable}.\footnote{We prefer the terminology ``closed'' 
because integrability of a Dirac structure can also significate that it is integrable 
as a Lie algebroid, i.e., it integrates to a presymplectic groupoid as in \cite{BuCr05}.}}
\medskip

The class of Dirac structures given in the next example 
will be very important in the following.
\begin{example}\label{exPoisson}
Let $M$ be a smooth manifold endowed with a globally defined bivector field
$\pi\in\Gamma\left(\bigwedge^2 TM\right)$.  Then the subdistribution $\mathsf
D_\pi\subseteq \mathsf P_M$ defined by
\[\mathsf{D}_\pi(m)=\left\{(\pi^\sharp(\alpha_m),\alpha_m)\mid
  \alpha_m\in T_m^*M \right\}\quad \text{ for all }\, m\in M,
\]
where $\pi^\sharp:T^*M\to TM$ is defined by 
$\pi^\sharp(\alpha)=\pi(\alpha,\cdot)\in\mx(M)$ for all
$\alpha\in\Omega^1(M)$,
is a Dirac structure on $M$. It is closed if and only if the bivector field
satisfies
$[\pi,\pi]=0$, that is, if and only if $(M,\pi)$ is a Poisson manifold.
\end{example}
\paragraph{Dirac maps and Dirac reduction}
Let $(M, \mathsf D_M)$ and $(N, \mathsf D_N)$ be
two Dirac manifolds and $F:M\to N$ a smooth map.
Then $F$ is a \emph{forward Dirac map}
if for all  $n\in N$, $m\in F\inv(n)$ and 
$(v_n, \alpha_n)\in\mathsf D_N(n)$ 
there exists $(v_m, \alpha_m)\in\mathsf D_M(m)$
such that $T_mF v_m=v_n$ and $\alpha_m=(T_mF)^*\alpha_n$.
The map  $F$ is a \emph{backward Dirac map}
if for all  $m\in M$, $n=F(m)$ and 
$(v_m, \alpha_m)\in\mathsf D_M(m)$ 
there exists $(v_n, \alpha_n)\in\mathsf D_N(n)$
such that $T_mF v_m=v_n$ and $\alpha_m=(T_mF)^*\alpha_n$.
\bigskip

Assume that $G\rr P$ is a Lie groupoid, and that 
$G$ is endowed with a Dirac structure. Let $H$ be a $\tg$-connected, wide Lie 
subgroupoid of 
$G$ such that $G/H$ has a smooth manifold structure and 
 $q:G\to G/H$ is a smooth surjective submersion.
Since $G/H$ is the leaf space of $\mathcal H$, where $\mathcal H$ is the left invariant 
image of $AH$
(see  the previous section),
we can apply the results 
 in \cite{Zambon08} (see also \cite{JoRaZa11}) for Dirac reduction. Assume that 
the Dirac structure $\mathsf D$ on $G$ is such that
$\mathsf D\cap (TG\oplus\mathcal H^\circ)$ has constant rank on $G$
and 
\begin{equation}\label{Dirac_red_condition}
[\Gamma(\mathsf D),\Gamma(\mathcal H\oplus\{0\})]
\subseteq \Gamma(\mathsf D+(\mathcal H\oplus\{0\})),
\end{equation}
then $\mathsf D$ induces a Dirac structure $q(\mathsf D)$
on the quotient $G/H$. The Dirac structure
$q(\mathsf D)$ on $G/H$ is given by 
$$\Gamma(q(\mathsf D))=\{(\bar X,\bar \alpha)\in\Gamma(\mathsf P_{G/H})
\mid  \exists X\in\mx(G) \text{ such that }
X\sim_q\bar X \text{ and } (X,q^*\bar\alpha)\in\Gamma(\mathsf D)\}.
$$ 
In other words, $q(\mathsf D)$ is the forward Dirac image
of $\mathsf D$ under $q:G\to G/H$.
If the Dirac structure $\mathsf D$ is closed, then $q(\mathsf D)$ is closed.

If $\mathcal H\oplus\{0\}\subseteq \mathsf D$, 
then $\mathsf D=q^*(q(\mathsf D))$, where for any 
Dirac structure $\bar{\mathsf D}$ on $G/H$, its 
\emph{pullback}
$q^*(\bar{\mathsf D})$ to $G$ is the Dirac structure on $G$
defined 
by
\[q^*(\bar{\mathsf D})(g)=\{(v_g,(T_gq)^*\alpha_{gH})\in \mathsf P_G(g)\mid
(T_gq v_g, \alpha_{gH})\in\bar{\mathsf D}(gH)\}
\]
for all $g\in G$. (The bundle $q^*(\mathsf D)$ is the backward Dirac image of
$\mathsf D$ under $q$.)

Note that if we can verify that
\begin{equation}\label{condition_reduction}
(R_K^*X,R_K^*\alpha)\in\Gamma(\mathsf D)\text{ for all }
(X,\alpha)\in\Gamma(\mathsf D)\text{ and } K\in\mathcal B(H),
\end{equation}
then condition \eqref{Dirac_red_condition} is satisfied.

\section{The geometry of  Dirac groupoids.}\label{section_geometry_dirac}
\subsection{Definition and examples}\label{def_ex}
\begin{definition}[\cite{Ortiz08t}]
A Dirac groupoid is a Lie groupoid $G\rightrightarrows P$
endowed with a Dirac structure $\mathsf D_G$ such that
$\mathsf D_G\subseteq TG\oplus T^*G$ is a Lie subgroupoid.
The Dirac structure $\mathsf D_G$ is then said to be \emph{multiplicative}.
\end{definition}

Note that in \cite{Ortiz08t}, Dirac manifolds are always closed
by definition.
\begin{example}\label{Poisson_groupoid_example}
Consider a Poisson groupoid, that is, a Lie groupoid $G\rr P$ endowed 
with a Poisson structure $\pi_G$ such that 
the graph  $\Gamma\subseteq G\times G\times G$
of the multiplication map is a 
coisotropic submanifold of 
$(G\times G\times G, \pi_G\oplus \pi_G \oplus(-\pi_G))$.
Poisson groupoids were introduced in \cite{Weinstein88b}
and studied in \cite{Weinstein88b}, \cite{Xu95}, \cite{MaXu94}
among other, see also \cite{Mackenzie05}.

It is shown in \cite{MaXu94} that  $(G\rr P,\pi_G)$ 
is a Poisson groupoid if and only if 
the vector bundle map $\pi_G^\sharp:T^*G\to TG$ associated 
to $\pi_G$ is a morphism of Lie groupoids over some map
$a_*:A^*G\to TP$ (the restriction of $\pi_G^\sharp$ to $A^*G$). 
Using this, it is easy to see that 
$(G\rr P, \pi_G)$ is a Poisson groupoid 
if and only if 
$(G\rr P, \mathsf D_{\pi_G})$ is a closed Dirac groupoid
(recall the definition of $ \mathsf D_{\pi_G}$ from Example \ref{exPoisson}).
\end{example}

\begin{example}\label{ex_presymplectic_groupoid}
Let $G\rr P$ be a Lie groupoid.
A $2$-form $\omega_G$ on $G$  is \emph{multiplicative} if 
 the partial multiplication map $\m:G\times_P G\to G$ satisfies 
$\m^*\omega_G=\operatorname{pr}_1^*\omega_G+\operatorname{pr}_2^*\omega_G$.
The 
 graph $\mathsf D_{\omega_G}=\graph(\omega_G^\flat:TG\to T^*G)\subseteq \mathsf P_G$
is then  multiplicative, and $(G\rr P,\mathsf D_{\omega_G})$ is a
Dirac groupoid, see \cite{Ortiz08t}, \cite{BuCaOr09}. If the $2$-form is closed, 
then the 
Dirac groupoid is closed.

Conversely, if a Dirac groupoid $(G\rr P, \mathsf D_G)$ is such that
 $\mathsf{G_1}=TG$, then 
$\mathsf D_G$ is the graph of the vector bundle homomorphism 
$TG\to T^*G$
induced by a multiplicative $2$-form. If the set of smooth sections of 
$\mathsf D_G$ is closed under the Courant bracket, then the $2$-form is closed.

Note that \emph{presymplectic groupoids} have been studied in 
\cite{BuCrWeZh04}, \cite{BuCr05}. These are Lie groupoids endowed with closed, 
multiplicative
$2$-forms satisfying some additional non degeneracy properties that 
will be recalled in Example \ref{ex_presymplectic_groupoid2}.
\end{example}

\begin{example}\label{Dirac_lie_group_example}
Let $(G,\mathsf D_G)$ be a Dirac Lie group in the sense of \cite{Jotz11a}. We
have seen there that it is a Dirac Lie group in the sense of \cite{Ortiz08t},
that is, $\mathsf D_G$ is a subgroupoid of the Pontryagin groupoid $TG\oplus
TG^*\rr \{0\}\oplus\lie g^*$. The set of units is here
$\mathsf D_G(e)\cap( \{0\}\oplus\lie g^*)=\{0\}\oplus\lie p_1$ since 
we know that $\mathsf D_G(e)$ is equal to a direct sum $\lie g_0\oplus\lie
p_1\subseteq \lie g\oplus \lie g^*$, with $\lie g_0$ an ideal in $\lie g$
and $\lie p_1\subseteq \lie g^*$ its annihilator.
\end{example}

\begin{example}\label{ex_pair_Dirac_Lie}
Consider a smooth Dirac manifold $(M,\mathsf D_M)$ and the \emph{pair
Lie groupoid $(M\times M)\rr M$ associated to $M$}, 
that is
$\tg,\s:M\times M\to M$, $\s=\pr_2$, $\tg=\pr_1$ and  $(m,n)\star(n,p)=(m,p)$ for all $m,n,p\in M$.
The unit $1_m$  associated to $m\in M$ is the pair
$(m,m)$ and the set of units $\{1_m\mid m\in M\}$
is equal to $\Delta_M$, which is an embedded submanifold 
of $M\times M$, via the smooth map $\epsilon: M\to \Delta_M$, 
$m\mapsto (m,m)$.
The tangent groupoid $T(M\times M)\,\rr\, TM$ of 
$M\times M\,\rr\, M$ is easily seen to be 
$(TM\times TM)\,\rr\, TM$, the pair groupoid associated to $TM$.

The Lie algebroid $A(M\times M)$ of $M\times M\,\rr\, M$
is the set $T^\tg_{\Delta_M}(M\times M)$. A vector $(v_m,w_m)\in T_{(m,m)}(M\times M)$
lies in $T^\tg_{(m,m)}(M\times M)$ if $0=T_{(m,m)}\tg(v_m,w_m)=v_m$. Hence, we have
$A(M\times M)=(0_M\oplus TM)\an{\Delta_M}$
and its dual
$A^*(M\times M)\simeq (T\Delta_M)^\circ\subseteq T^*(M\times M)\an{\Delta_M}$
is given by
$A^*_{(m,m)}(M\times M)=\{(-\alpha_m,\alpha_m)\mid \alpha_m\in T^*_mM\}$
for all $m\in M$.
Hence, we can give  the structure of the cotangent groupoid 
$T^*(M\times M)\,\rr\, A^*(M\times M)$.
If $(\alpha_m,\alpha_n)\in T_{(m,n)}^*(M\times M)$, then it is easy to check that
$\tg(\alpha_m,\alpha_n)\in (\{0_m\}\times T_mM)^*=A^*_{(m,m)}(M\times M)$,
\begin{align*}
\tg(\alpha_m,\alpha_n)(0_m,v_m)&=-\alpha_m(v_m)
\end{align*}
for all $v_m\in T_mM$,
and hence $\tg(\alpha_m,\alpha_n)=(\alpha_m,-\alpha_m)$.
In the same manner, we show that $\s(\alpha_m,\alpha_n)=(-\alpha_n,\alpha_n)$.
The product of  $(\alpha_m,\alpha_n)$
and $(-\alpha_n,\alpha_p)$ is then given by
$$(\alpha_m,\alpha_n)\star(-\alpha_n,\alpha_p)=(\alpha_m,\alpha_p).$$

It is easy to check that the Dirac structure 
$\mathsf D_M\ominus\mathsf D_M$, defined by 
\[(\mathsf D_M\ominus\mathsf D_M)(m,n)
=\left\{((v_m,-v_n),(\alpha_m,\alpha_n))\in \mathsf P_{M\times M}(m,n)\left|
\begin{array}{c} (v_m,\alpha_m)\in\mathsf D_M(m)\\
\text{ and }
(v_n,\alpha_n)\in\mathsf D_M(n)
\end{array}\right.\right\}\]
for all $(m,n)\in M\times M$, is a multiplicative Dirac structure on $M\times M\,\rr\, M$.
This generalizes the fact that if $(M,\pi_M)$ is a Poisson manifold, then 
$M\times M\,\rr\, M$ endowed with $\pi_M\oplus(-\pi_M)$ is
a Poisson groupoid.

\medskip

We call the Dirac groupoid $(M\times M\,\rr\, M, \mathsf D_M\ominus\mathsf D_M)$
the \emph{pair Dirac groupoid} associated to $(M,\mathsf D_M)$.
It is closed   if and only if
 $(M,\mathsf D_M)$ is closed.
\end{example}

\begin{remark}
In the Poisson case, it is known by results in \cite{Weinstein88b} that 
any multiplicative Poisson structure on a pair groupoid is $\pi_M\oplus(-\pi_M)$
for some Poisson bivector $\pi_M$ on $M$.
This is not true in general. For instance, let $M$ be a smooth manifold 
with a smooth free action of a Lie group $H$ with Lie algebra $\lie h$.
Then the diagonal action
of $H$ on $M\times M$ is by Lie groupoid morphisms, and its vertical 
space $\V\subseteq T(M\times M)$,
$\V(m,n)=\{(\xi_M(m),\xi_M(n))\mid \xi\in\lie h\}$
for all $m,n\in M$,  is multiplicative (see for instance \cite{Jotz11b}).
The Dirac structure $\V\oplus\V^\circ$ is then multiplicative,
but cannot be written as a pair Dirac structure on $M\times M$.
\end{remark}

\subsection{General properties of Dirac groupoids}\label{induced}
First, we study the characteristic distribution of an arbitrary Dirac groupoid.
The results here illustrate how the situation
in the case of Dirac  groupoids is different 
from the case of Dirac Lie groups.

The proof of the first proposition is straightforward.
\begin{proposition}\label{G_0_multiplicative}
Let $(G\rr P,\mathsf D_G)$ be a Dirac groupoid. Then the
subbundle $\mathsf{G_0}\subseteq TG$ is a (set) subgroupoid over $TP\cap\mathsf{G_0}$. 
\end{proposition}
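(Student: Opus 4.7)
The key observation is that $\mathsf{G_0}$ can be realized as the intersection of $\mathsf D_G$ with a canonical subgroupoid of the Pontryagin groupoid, namely
\[
\mathsf{G_0}\oplus\{0\} \;=\; \mathsf D_G\,\cap\,(TG\oplus 0^*_G),
\]
where $0^*_G\subseteq T^*G$ denotes the zero section of the cotangent bundle. The plan is first to verify that $TG\oplus 0^*_G$ is itself a set subgroupoid of $\mathsf P_G\rr TP\oplus A^*G$, and then to intersect with the assumed subgroupoid $\mathsf D_G$.

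For the first step I would check that $0^*_G$ is a subgroupoid of the cotangent groupoid $T^*G\rr A^*G$. The maps $\hat\s,\hat\tg:T^*G\to A^*G$ are fiberwise linear in the cotangent variable, by the explicit formulas
$\hat\s(\alpha_g)(u)=\alpha_g(TL_g u)$ and $\hat\tg(\alpha_g)(u)=\alpha_g(TR_g(u-T\s\,u))$, so $\hat\s(0_g)=0\in A^*_{\s(g)}G$ and $\hat\tg(0_g)=0\in A^*_{\tg(g)}G$. Composability of $(0_g,0_h)$ therefore amounts to $\s(g)=\tg(h)$, i.e., composability of $(g,h)$ in $G$, and the multiplication rule
\[
(\alpha_g\star\alpha_h)(v_g\star v_h)=\alpha_g(v_g)+\alpha_h(v_h)
\]
gives $0_g\star 0_h=0_{g\star h}$. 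Inverses are handled similarly. Combining this with the (obvious) fact that $TG$ is a subgroupoid of itself yields that $TG\oplus 0^*_G$ is a subgroupoid of $\mathsf P_G$, with unit set $TP\oplus\{0\}\subseteq TP\oplus A^*G$.

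Since the intersection of two set subgroupoids of a groupoid is again a set subgroupoid, and $\mathsf D_G$ is by hypothesis a (Lie) subgroupoid of $\mathsf P_G$, the set
$\mathsf D_G\cap(TG\oplus 0^*_G)$ is a set subgroupoid of $\mathsf P_G$. Projecting to the first factor, which is a groupoid morphism from $\mathsf P_G$ to $TG$, we obtain $\mathsf{G_0}$ as a set subgroupoid of $TG\rr TP$. Its units are the image in $TP$ of
\[
\mathsf D_G\cap\bigl((TP\oplus\{0\})\bigr)\;=\;(TP\cap\mathsf{G_0})\oplus\{0\},
\]
that is, precisely $TP\cap\mathsf{G_0}$, as claimed.

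\textbf{Main obstacle.} There is essentially none beyond bookkeeping; the only thing requiring care is the verification that $0^*_G$ is a subgroupoid of $T^*G\rr A^*G$, which reduces to the linearity of $\hat\s,\hat\tg$ and of the cotangent multiplication in the covector argument. This is precisely why the statement is about \emph{set} subgroupoids, no smoothness or constant-rank assumption on $\mathsf{G_0}$ is needed.
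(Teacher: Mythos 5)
Your proposal is correct and is essentially the straightforward argument the paper leaves to the reader: since $\mathsf{G_0}\oplus\{0\}=\mathsf D_G\cap(TG\oplus 0^*_G)$ and $0_g\star 0_h=0_{g\star h}$, multiplicativity of $\mathsf D_G$ immediately gives closure of $\mathsf{G_0}$ under the tangent groupoid operations, with units $TP\cap\mathsf{G_0}$. The only point worth making explicit is the last step: the image of a subgroupoid under a groupoid morphism need not be a subgroupoid in general, but here $\pr_{TG}$ restricts to an isomorphism of groupoids $TG\oplus 0^*_G\to TG$, so your projection argument is valid (equivalently, one can just verify closure under $\star$, inversion and units directly on pairs $(v_g,0_g)\in\mathsf D_G$).
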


In the group case, $\mathsf{G_0}$ is automatically the biinvariant image of an ideal in the Lie algebra
\cite{Ortiz08,Jotz11a}, hence involutive of constant rank. 
In general, the distribution $\mathsf{G_0}$ does not even need
to be smooth. 
Since each manifold 
can be seen as a (trivial) groupoid over itself 
(i.e., with $\tg=\s=\Id_M$), any Dirac manifold 
can be seen as a Dirac groupoid, which will, 
in general not satisfy these conditions. 
Thus,  trivial Dirac groupoids and pair Dirac groupoids 
yield already many examples of Dirac groupoids that do not have these properties.

If $\mathsf{G_0}$ associated to a \emph{closed}
Dirac groupoid $(G\rr P, \mathsf D_G)$ is \emph{assumed} to be a vector bundle on $G$, 
then we are in the same situation as in the group case.
Yet, we know by the considerations in \cite{Jotz11b} that, even 
if it is regular, the quotient $G/{\mathsf{G_0}}$ does 
not necessarily inherit a groupoid structure. 
If it does, we have the following result, which is shown in \cite{Jotz11b}.

\begin{theorem}\label{main}
Let $(G\rr P, \mathsf D_G)$ be a closed Dirac groupoid. 
Assume that $\mathsf{G_0}$ is a  subbundle of $TG$ and that it is complete \cite{Jotz11b}.
If the leaf spaces $G/\mathsf{G_0}$ and 
$P/\mathsf{G_0}$
have smooth manifold structures 
such that the projections are submersions,
then there is an induced multiplicative   Poisson  structure
on the Lie groupoid $G/\mathsf{G_0}\rightrightarrows P/\mathsf{G_0}$, such that
the projection $\pr:G\to G/\mathsf{G_0}$ is a forward Dirac map.
\end{theorem}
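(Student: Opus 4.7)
The plan is to reduce the problem to a direct application of the Dirac reduction machinery described in \S\ref{background2}, and then verify that the reduced Dirac structure is (a) the graph of a Poisson bivector and (b) multiplicative. First, I would establish that the two hypotheses needed to apply the reduction theorem of \cite{Zambon08} are satisfied. Since $\mathsf{G_0}\oplus\{0\}\subseteq \mathsf D_G$ by the very definition of $\mathsf{G_0}$, and $\mathsf D_G$ is Lagrangian with respect to \eqref{sym_bracket}, any $(v,\alpha)\in\mathsf D_G$ pairs trivially with every $(w,0)\in\Gamma(\mathsf{G_0}\oplus\{0\})\subseteq\Gamma(\mathsf D_G)$, so $\alpha(w)=0$ and thus $\mathsf D_G\subseteq TG\oplus\mathsf{G_0}^\circ$. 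This gives $\mathsf D_G\cap(TG\oplus\mathsf{G_0}^\circ)=\mathsf D_G$, which has constant rank. The second hypothesis, $[\Gamma(\mathsf D_G),\Gamma(\mathsf{G_0}\oplus\{0\})]\subseteq\Gamma(\mathsf D_G+(\mathsf{G_0}\oplus\{0\}))$, follows at once from closedness of $\mathsf D_G$. Completeness of $\mathsf{G_0}$ and the assumed smoothness of $G/\mathsf{G_0}$ let me invoke \cite{Zambon08} and produce a closed Dirac structure $q(\mathsf D_G)$ on $G/\mathsf{G_0}$ for which $q:G\to G/\mathsf{G_0}$ is forward Dirac; this takes care of the last claim of the theorem.

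Next I would show that the reduced Dirac structure is the graph of a Poisson bivector. If $(\bar v,0)\in q(\mathsf D_G)(\bar g)$, then by definition there is $X\in\mx(G)$ with $X\sim_q\bar v$ and $(X,q^*0)=(X,0)\in\Gamma(\mathsf D_G)$, forcing $X\in\Gamma(\mathsf{G_0})$ and hence $\bar v=T_gq\,X(g)=0$. Therefore the intersection of $q(\mathsf D_G)$ with $T(G/\mathsf{G_0})\oplus\{0\}$ is trivial, so $q(\mathsf D_G)=\graph(\bar\pi^\sharp)$ for a bivector $\bar\pi$ on $G/\mathsf{G_0}$, which is Poisson since $q(\mathsf D_G)$ is closed (Example~\ref{exPoisson}).

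It remains to verify multiplicativity of $\bar\pi$, which I would do through Example~\ref{Poisson_groupoid_example}: it suffices to check that $\mathsf D_{\bar\pi}=q(\mathsf D_G)$ is a Lie subgroupoid of the Pontryagin groupoid of $G/\mathsf{G_0}$. By Proposition~\ref{G_0_multiplicative}, $\mathsf{G_0}$ is a subgroupoid of $TG$ over $TP\cap\mathsf{G_0}$, and the assumption that the quotient projections are submersions promotes $q:G\to G/\mathsf{G_0}$ to a Lie groupoid morphism over $P\to P/\mathsf{G_0}$; its tangent $Tq$ and pullback $q^*$ are then groupoid morphisms between the corresponding tangent and cotangent groupoids. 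Given composable elements in $q(\mathsf D_G)$, the submersivity of $q$ and $Tq$ allows me to lift them to composable elements of $\mathsf D_G$; the multiplicativity of $\mathsf D_G$ then projects under $(Tq,q^*)$ to multiplicativity of $q(\mathsf D_G)$, and the set-theoretic inversion is handled identically. The main technical obstacle is precisely this step: verifying that composable lifts can always be arranged coherently and that the resulting pairs still lie in $\mathsf D_G$, and relating the groupoid operations on $T^*G$ (phrased via $\hat{\mathsf{s}},\hat{\mathsf{t}}$ involving $R_g$, $L_g$) with their counterparts on $T^*(G/\mathsf{G_0})$ through $q^*$; this is a careful but essentially diagrammatic check once the previous steps are in place.
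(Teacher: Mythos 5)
Your first two steps are fine, and in fact they reproduce what one would expect: since $\mathsf{G_0}\oplus\{0\}\subseteq\mathsf D_G$ and $\mathsf D_G$ is Lagrangian, indeed $\mathsf D_G\subseteq TG\oplus\mathsf{G_0}^\circ$, the constant-rank and bracket conditions \eqref{Dirac_red_condition} hold (note also that involutivity of $\mathsf{G_0}$, needed even to speak of its leaf space, follows from closedness of $\mathsf D_G$ and the subbundle hypothesis), the forward image $q(\mathsf D_G)$ is a closed Dirac structure with $q$ a forward Dirac map, and the pointwise observation that $(v_g,0)\in\mathsf D_G(g)$ forces $v_g\in\mathsf{G_0}(g)=\ker T_gq$ shows that $q(\mathsf D_G)$ meets $T(G/\mathsf{G_0})\oplus\{0\}$ trivially, hence is the graph of a Poisson bivector. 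Be aware, though, that the paper does not prove this theorem internally at all: it is quoted from \cite{Jotz11b}, so the substance of a proof lies exactly in the part you treat as routine.

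That part is the multiplicativity step, and there is a genuine gap there. You assert that Proposition \ref{G_0_multiplicative} plus the submersion hypothesis ``promotes $q:G\to G/\mathsf{G_0}$ to a Lie groupoid morphism over $P\to P/\mathsf{G_0}$''. This is precisely what is \emph{not} automatic: the paper states, immediately before the theorem, that even a regular multiplicative $\mathsf{G_0}$ need not yield a groupoid structure on $G/\mathsf{G_0}$. Concretely, if $\bar\s(\bar g)=\bar\tg(\bar h)$ one only knows that $\s(g)$ and $\tg(h)$ lie in the same leaf of $TP\cap\mathsf{G_0}$, not that they are equal; to define $\bar g\star\bar h$ one must move $h$ along its $\mathsf{G_0}$-leaf to match base points, which requires lifting leafwise paths in $P$ through $\tg$ (in particular surjectivity of $T_g\tg:\mathsf{G_0}(g)\to\mathsf{G_0}(\tg(g))\cap T_{\tg(g)}P$ and a global lifting property), and then independence of all choices must be checked. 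This is exactly what the completeness hypothesis of \cite{Jotz11b} provides — yet in your argument completeness is invoked only to apply \cite{Zambon08}, where it plays no role, and is never used where it is indispensable. The same difficulty resurfaces in your final ``diagrammatic check'': lifts of two composable elements of $q(\mathsf D_G)$ exist separately, but making the lifted points composable and their $\TT\s$/$\TT\tg$ values in $TP\oplus A^*G$ agree is nontrivial, since you may only adjust the lifts by elements of $\mathsf{G_0}\oplus\{0\}$, and again needs the surjectivity/lifting properties coming from completeness. Until the Lie groupoid structure on $G/\mathsf{G_0}\rr P/\mathsf{G_0}$ and the morphism property of $q$ are actually derived from completeness, multiplicativity of the reduced Poisson structure remains unproved.
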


\begin{remark}
In the Lie group case, the Poisson Lie group $(G/N, q(\mathsf{D}_G))$ associated to a closed Dirac Lie group
$(G, \mathsf D_G)$ satisfying the necessary regularity assumptions 
was also a \emph{Poisson homogeneous space}
of the Dirac Lie group. Here, the Poisson groupoid 
associated to the Dirac groupoid is, 
in general, \emph{not} a Poisson homogeneous space of the Dirac groupoid
since the quotient $G/\mathsf{G_0}$ is not a homogeneous space of the Lie groupoid $G\rr P$.
\end{remark}
\bigskip

For the sake of completeness, we show next
how the result in \cite{Weinstein88b} 
about the induced Poisson structure on the units 
of a Poisson groupoid can be generalized 
to the situation of Dirac groupoids. For that, we need to study the units of the Dirac groupoid.
It is natural to ask what the set of units of $\mathsf D_G$ is, when seen as a
subgroupoid of $(TG\oplus T^*G)\rr(TP\oplus A^*G)$. 
It is easy to see that  $\mathsf D_G$ is a Lie groupoid over 
$\mathsf D_G\cap(TP\oplus A^*G)$.
This intersection will be written $\mathfrak A(\mathsf D_G):=\mathsf D_G\cap(TP\oplus A^*G)$.
 Here, we  will
show  that it is a vector bundle over $P$.

\begin{definition}\label{def_I}
\begin{enumerate}
\item Let $(G\rr P, \mathsf D_G)$ be a  Dirac groupoid
and  $\mathfrak A(\mathsf D_G)$ the set of units of $\mathsf D_G$, i.e., the subdistribution
$\mathsf D_G\cap( TP\oplus A^*G)$ of $ TP\oplus A^*G$.
We write $\mathsf a_\star: \mathfrak A(\mathsf D_G)\to TP$ for the map defined by
$\mathsf a_\star(v_p,\alpha_p)=v_p$ for all $p\in P$, 
$(v_p,\alpha_p)\in \lie A_p(\mathsf D_G)$.
\item We write $\ker\TT\s$, respectively $\ker\TT\tg$  for the kernel
$T^\s G\oplus (T^\tg G)^\circ$ (respectively $T^\tg G\oplus (T^\s G)^\circ$)
of the source  map $\TT\s:\mathsf P_G\to TP\oplus A^*G$ 
(respectively the target map $\TT\tg:\mathsf P_G\to TP\oplus A^*G$).
We denote by $I^\s(\mathsf D_G)$ 
the restriction to $P$ of $\mathsf D_G\cap\ker\TT\s$, i.e.,
 $$I^\s(\mathsf D_G):=\mathsf D_G\cap(T_P^\s G\oplus(T_P^\tg G)^\circ)
=(\mathsf D_G\cap\ker\TT\s)\an{P}.$$
In the same manner, we write
 $I^\tg(\mathsf D_G):=\mathsf D_G\cap(T_P^\tg G\oplus(T_P^\s
G)^\circ)=(\mathsf D_G\cap\ker\TT\tg)\an{P}$.
\end{enumerate}
\end{definition}

\begin{theorem}\label{theorem_intersections}
Let $(G\rr P,\mathsf{D}_G)$ be a Dirac groupoid. Then the 
Dirac subspace $\mathsf D_G\an{P}$ splits as a direct sum
\begin{align*}
\mathsf D_G\an{P}&=\lie A(\mathsf D_G)\oplus  I^\tg(\mathsf D_G)
\end{align*}
and in the same manner
\begin{align*}
\mathsf D_G\an{P}&=\lie A(\mathsf D_G)\oplus I^\s(\mathsf D_G).
\end{align*}
The three intersections are smooth and have constant rank on $P$.
\end{theorem}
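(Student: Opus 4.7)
The plan is to produce both decompositions by a single abstract mechanism: restrict the groupoid target map $\TT\tg$ (resp.\ source map $\TT\s$) to the bundle $\mathsf P_G\an{P}$ and show that the result is a smooth idempotent bundle endomorphism whose image and kernel are exactly the complementary subbundles $L_1 := TP\oplus A^*G$ and $L_2 := T^\tg_P G\oplus(T^\s_P G)^\circ$. Both decompositions together with smoothness and constant rank then reduce to the elementary fact that the image and kernel of any smooth idempotent endomorphism of a vector bundle are smooth subbundles of locally constant rank, since the rank at each point equals the fiberwise trace, a smooth integer-valued function.

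First I would verify idempotence of $\TT\tg\an{\mathsf P_G\an{P}}$, viewed as an endomorphism of $\mathsf P_G\an{P}$ via the inclusion $TP\oplus A^*G\hookrightarrow \mathsf P_G\an{P}$. Since $TP\oplus A^*G$ is the unit bundle of the Pontryagin groupoid, this is equivalent to showing that $T\tg$ is the identity on $TP$ (clear, as $\tg\an{P}=\Id_P$) and that $\hat\tg$ is the identity on $A^*G$. The second point is a one-line check: for $\alpha_p\in A^*_pG=(T_pP)^\circ$ and $u\in A_pG$, the formula $\hat\tg(\alpha_p)(u)=\alpha_p(u-T_p\epsilon\,T_p\s u)$ collapses to $\alpha_p(u)$ because $T_p\epsilon\,T_p\s u\in T_pP$ lies in the annihilator of $\alpha_p$. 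From the definition of $\ker\TT\tg$ the kernel is exactly $L_2$, and the image is $L_1$, giving $\mathsf P_G\an{P}=L_1\oplus L_2$ as a smooth direct sum.

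Multiplicativity of $\mathsf D_G$ enters only now: because $\mathsf D_G\subseteq \mathsf P_G$ is a Lie subgroupoid, $\TT\tg$ maps $\mathsf D_G$ into its set of units $\lie A(\mathsf D_G)=\mathsf D_G\cap L_1\subseteq \mathsf D_G\an{P}$. Hence $\TT\tg\an{\mathsf D_G\an{P}}$ restricts to a smooth idempotent bundle endomorphism of $\mathsf D_G\an{P}$ whose image is $\lie A(\mathsf D_G)$ (surjectivity is automatic since $\TT\tg$ acts as the identity on $\lie A(\mathsf D_G)$) and whose kernel is $\mathsf D_G\an{P}\cap L_2=I^\tg(\mathsf D_G)$. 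The abstract fact about smooth idempotents then yields the decomposition $\mathsf D_G\an{P}=\lie A(\mathsf D_G)\oplus I^\tg(\mathsf D_G)$ with both summands smooth subbundles of constant rank on $P$. The source-side identity is obtained by replacing $\TT\tg$ with $\TT\s$ and $L_2$ with $T^\s_P G\oplus(T^\tg_P G)^\circ$, and running the same argument verbatim.

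The main obstacle is really just the bookkeeping in the first step: matching the definition of $\ker\TT\tg$ with the subbundle $L_2$, and confirming that $\hat\tg$ (resp.\ $\hat\s$) is the identity on $A^*G$ using the explicit cotangent-groupoid formulas. Once that computation is in place, no integrability of $\mathsf D_G$ and no further analytic or global input is required — everything follows from fiberwise linear algebra propagated by the smoothness of $\TT\tg$ and $\TT\s$.
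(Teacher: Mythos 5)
Your argument is correct, and for the splitting itself it coincides with the paper's: there one writes, for $(v_p,\alpha_p)\in\mathsf D_G(p)$, the decomposition $(v_p,\alpha_p)=\TT\tg(v_p,\alpha_p)+\bigl((v_p,\alpha_p)-\TT\tg(v_p,\alpha_p)\bigr)$ and checks that the summands lie in $\lie A(\mathsf D_G)$ and in $T^\tg_PG\oplus(T^\s_PG)^\circ$ respectively, multiplicativity entering exactly as in your write-up through $\TT\tg(\mathsf D_G)\subseteq\lie A(\mathsf D_G)$; your idempotent $\TT\tg\an{\mathsf D_G\an{P}}$ is this same decomposition packaged as a projection, and your verification that $\hat\tg$ is the identity on $A^*G=(TP)^\circ$ is the bookkeeping the paper uses implicitly. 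The genuine difference is in how smoothness and constant rank are obtained. The paper first shows $\lie A(\mathsf D_G)$ is smooth by extending a value $(v_p,\alpha_p)\in\lie A_p(\mathsf D_G)$ to a local section $(X,\alpha)$ of $\mathsf D_G$, restricting it to $P$ and applying $\TT\s$; it then invokes Proposition 4.4 of \cite{JoRaSn11} for the two Lagrangian subbundles $\mathsf D_G\an{P}$ and $TP\oplus A^*G$ to conclude that $\lie A(\mathsf D_G)$ has constant rank, and finally transfers constant rank (hence smoothness) to $I^\tg(\mathsf D_G)$ and $I^\s(\mathsf D_G)$ via the two splittings. You obtain all three statements at once from the elementary fact that the image and kernel of a smooth idempotent bundle endomorphism are smooth subbundles whose rank, being the fiberwise trace, is locally constant; this is more self-contained (no appeal to the Lagrangian-intersection result, and the self-orthogonality $\mathsf D_G=\mathsf D_G^\perp$ is never needed), whereas the paper's route introduces in passing the extension-of-sections device that reappears later in Proposition \ref{sdescending}. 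The only caveat is cosmetic: the trace argument gives rank constant on each connected component of $P$, which is all that is used in the sequel and agrees with the asserted constancy whenever $P$ is connected.
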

\begin{proof}
Choose $p\in P$ and $(v_p, \alpha_p)\in\mathsf D_G(p)$. Then we have 
$\TT\tg(v_p,\alpha_p)\in\mathsf D_G(p)$ and hence also
$(v_p,\alpha_p)-\TT\tg(v_p,\alpha_p)\in\mathsf D_G(p)$.
We find that $v_p-T_p\tg v_p\in T^\tg_p G$ and 
$T_p\tg(v_p)\in T_pP$, and in the same manner
$\hat\tg(\alpha_p)\in A^*_pG=(T_pP)^\circ$, by definition, and 
$\alpha_p-\hat\tg(\alpha_p)\in (T_p^\s G)^\circ$.

Since 
$$(v_p,\alpha_p)=\TT\tg(v_p,\alpha_p)+\left( (v_p,\alpha_p)-\TT\tg(v_p,\alpha_p)\right),$$
we have shown the first equality. The second formula can be shown in the same manner,
using the map $\TT\s:\mathsf D_G(p)\to \mathsf D_G(p)\cap(T_pP\times A^*_pG)$.

\medskip

Next, we show that the intersection of $\mathsf D_G$ with
$TP\oplus A^*G$ is smooth. Choose 
$p\in P$ and $(v_p, \alpha_p)\in \mathsf D_G(p)\cap (T_pP\times A^*_pG)$.
Since $\mathsf D_G$ is a smooth vector bundle
on $G$, we find a section $(X,\alpha)\in\Gamma(\mathsf D_G)$ defined
on a neighborhood of $p$ such that 
$(X,\alpha)(p)=(v_p, \alpha_p)$. The restriction 
$(X,\alpha)\an{P}$ is then a smooth section of $\mathsf D_G\an{P}$.
We have $\TT\s((X,\alpha)\an{P})\in\Gamma(\mathsf D_G\cap(TP\oplus A^*G))$ and 
$\TT\s(X,\alpha)(p)=(T_p\s v_p, \alpha_p\an{T^\tg_pG})=(v_p,\alpha_p)$
since $v_p\in T_pP$ and $\alpha_p\in A^*_pG=(T_pP)^\circ$.

Thus, we have found a smooth section of 
$\mathsf D_G\cap(TP\oplus A^*G)$ defined on a neighborhood of $p$ in $P$
and taking value $(v_p,\alpha_p)$ at $p$.

\medskip

Since  $\left(\mathsf D_G\an{P}\right)^\perp=\mathsf D_G\an{P}$ and 
$TP\oplus A^*G=\left(TP\oplus A^*G\right)^\perp$ are smooth  subbundles 
of $\mathsf P_G\an{P}$, we get from
Proposition 4.4
 in  \cite{JoRaSn11} that 
$\mathsf D_G\cap(TP\oplus A^*G)$ has constant rank on $P$.
By the splittings shown above and the fact that 
$\mathsf D_G\an{P}$ has constant rank on $P$, we find that the two other 
intersections have constant rank on $P$, and are thus smooth.
\end{proof}

In the case of a Dirac Lie group, the bundle $I^\s(\mathsf D_G)\to P$ 
is $\lie g_0\to \{e\}$, as is shown in  the next example. We will see later
that $I^\s(\mathsf D_G)$ has a crucial role in the construction of the Courant algebroid
associated to a Dirac groupoid $(G\rr P,\mathsf D_G)$. The fact that 
the left and right invariant images of this subspace are exactly the characteristic distribution of the 
Dirac structure is a very special and convenient feature in the group case, that makes the Dirac Lie groups
much easier to understand than arbitrary Dirac groupoids (see \cite{Jotz11a}).
\begin{example}
Let $(G,\mathsf D_G)$ be a Dirac Lie group (Example \ref{Dirac_lie_group_example}) and set
$\lie p_1=\mathsf P_1(e)\subseteq \lie g^*$ and 
$\lie g_0=\mathsf G_0(e)\subseteq \lie g$.
We have 
$P=\{e\}$ (the neutral element of $G$), 
$$\mathsf D_G(e)\cap(T_eP\times (T_eP)^\circ)
=\mathsf D_G(e)\cap(\{0\}\times\lie g^*)=\{0\}\times\lie p_1$$
and 
$$\mathsf D_G(e)\cap(T^\s_eG\times (T_e^\tg G)^\circ)
=\mathsf D_G(e)\cap(\lie g\times\{0\})=\lie g_0\times\{0\}.$$
We recover hence the equality $\mathsf D_G(e)=\lie g_0\times\lie p_1$ \cite{Jotz11a}.

In this particular case, $\mathsf D_G$ is a Poisson structure if and only if 
$\mathsf D_G(e)$ is equal to the set of units of $TG\oplus T^*G\rr\{0\}\times\lie g^*$, i.e.
$\lie g_0=\{0\}$ and $\lie p_1=\lie g^*$.
In the general case, this is not true. 
\end{example}

\begin{lemma}\label{const_rk_int_kern}
Let $(G\rr P,\mathsf D_G)$ be a Dirac groupoid.
For all $g\in G$, we have 
$$\mathsf D_G(g)\cap\ker\TT\tg=(0_g,0_g)
\star I_{\s(g)}^\tg(\mathsf D_G)$$
and 
$$\mathsf D_G(g)\cap\ker\TT\s=I_{\tg(g)}^\s(\mathsf D_G)
\star (0_g,0_g).$$
The intersections 
$\mathsf D_G\cap\ker\TT\tg$
and $\mathsf D_G\cap\ker\TT\s$
have consequently constant rank on $G$.
\end{lemma}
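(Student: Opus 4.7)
The plan is to exhibit left (respectively right) multiplication by the zero element $(0_g, 0_g)$ as an explicit bijection between the two spaces on either side of the claimed equality, and then deduce the constant rank assertion from the constant rank of $I^\tg(\mathsf D_G)$ on $P$ established in Theorem \ref{theorem_intersections}.

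The key observation is that, for any $g \in G$, the zero element $(0_g, 0_g)$ belongs to $\mathsf D_G(g)$ (since $\mathsf D_G$ is a vector subbundle) and its source and target in the Pontryagin groupoid are the identity elements at $\s(g)$ and $\tg(g)$, namely $(0_{\s(g)}, 0^*_{\s(g)})$ and $(0_{\tg(g)}, 0^*_{\tg(g)})$. In particular $(0_g, 0_g) \in \ker\TT\s \cap \ker\TT\tg$, and it is an invertible element of $\mathsf D_G$ with inverse $(0_{g\inv}, 0_{g\inv})$ (one checks $(0_g,0_g)\star(0_{g\inv},0_{g\inv})=(0_{\tg(g)},0^*_{\tg(g)})$).

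For the first equality, I would verify that left multiplication by $(0_g, 0_g)$ defines a map
\[
L_{(0_g,0_g)}\colon I^\tg_{\s(g)}(\mathsf D_G) \longrightarrow \mathsf D_G(g)\cap\ker\TT\tg.
\]
Composability is automatic because $\TT\s(0_g,0_g) = (0_{\s(g)}, 0^*_{\s(g)})$ coincides with $\TT\tg(X,\beta) = (0,0)$ for any $(X,\beta) \in I^\tg_{\s(g)}(\mathsf D_G)$; the product lies in $\mathsf D_G(g)$ because $\mathsf D_G$ is a subgroupoid; and the product lies in $\ker\TT\tg$ because $\TT\tg$ is a groupoid morphism, so the target of $(0_g,0_g)\star(X,\beta)$ equals the target of $(0_g,0_g)$, which is an identity. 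Conversely, for $(u_g,\alpha_g)\in\mathsf D_G(g)\cap\ker\TT\tg$, the element
\[
(X,\beta) := (0_{g\inv}, 0_{g\inv}) \star (u_g,\alpha_g)
\]
is composable (since $\TT\s(0_{g\inv},0_{g\inv}) = (0_{\tg(g)},0^*_{\tg(g)}) = \TT\tg(u_g,\alpha_g)$), lies in $\mathsf D_G(\s(g))$, and has target zero since the target of the product is the target of the first factor; hence $(X,\beta) \in I^\tg_{\s(g)}(\mathsf D_G)$. Moreover, associativity in the Pontryagin groupoid gives $(0_g,0_g)\star(X,\beta) = (0_g,0_g)\star(0_{g\inv},0_{g\inv})\star(u_g,\alpha_g) = (u_g,\alpha_g)$, so $L_{(0_g,0_g)}$ is a bijection and the first equality follows.

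The second equality is proved by the symmetric argument, using right multiplication by $(0_g, 0_g)$ instead: elements of $I^\s_{\tg(g)}(\mathsf D_G)$ are composable with $(0_g, 0_g)$ on the right, the product lies in $\mathsf D_G(g)\cap\ker\TT\s$, and $(0_{g\inv}, 0_{g\inv})$ provides the inverse as before. Finally, the constant rank statement on $G$ follows immediately: $\dim(\mathsf D_G(g)\cap\ker\TT\tg) = \dim I^\tg_{\s(g)}(\mathsf D_G)$ via the bijection, and the right-hand side is independent of $g$ by Theorem \ref{theorem_intersections}. I do not see any significant obstacle; the only point requiring care is the bookkeeping of sources, targets and identities in the Pontryagin groupoid, which is routine once one recognises $(0_g, 0_g)$ as an invertible element connecting two identity elements of the base.
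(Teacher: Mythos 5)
Your proposal is correct and follows essentially the same route as the paper: multiplication by the zero element $(0_g,0_g)$ and its inverse $(0_{g\inv},0_{g\inv})$ gives the bijection between $\mathsf D_G(g)\cap\ker\TT\tg$ and $I^\tg_{\s(g)}(\mathsf D_G)$ (and symmetrically for $\ker\TT\s$), after which constant rank on $G$ follows from the constant rank of the intersections over $P$ in Theorem \ref{theorem_intersections}. Your extra bookkeeping of sources, targets and identities just makes explicit what the paper leaves as ``it is easy to see''.
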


\begin{proof}
Choose $g\in G$ and $v_{\s(g)}\in T_{\s(g)}^\tg G$, $\alpha_{\s(g)}\in T_{\s(g)}^*P$
such that $(v_{\s(g)},(T_{\s(g)}\s)^*\alpha_{\s(g)})\in \mathsf D_G(\s(g))$.
Then we have $\TT\tg(u_{\s(g)},(T_{\s(g)}\s)^*\alpha_{\s(g)})=(0_{\s(g)}, 0_{\s(g)})$ 
and $(0_g, 0_g)\in\mathsf D_G(g)$ with
$\TT\s(0_g, 0_g)=(0_{\s(g)}, 0_{\s(g)})$. Thus, the product 
$$(0_g, 0_g)\star (u_{\s(g)},(T_{\s(g)}\s)^*\alpha_{\s(g)})$$
makes sense and is an element of $\mathsf D_G(g)\cap\ker\TT\tg$. 
Conversely, it is easy to see that 
$$ (0_{g\inv}, 0_{g\inv})\star (v_g,\alpha_g)
$$
makes sense and is an element of $\mathsf D_G(\s(g))\cap\ker\TT\tg=I_{\s(g)}^\tg(\mathsf D_G)$
for all
 $(v_g,\alpha_g)\in \mathsf D_G(g)\cap (T_g^\tg G\oplus(T_g^\s G)^\circ)$.
Since $(0_{g\inv},0_{g\inv})=(0_g,0_g)\inv$, 
this shows $(v_g,\alpha_g)\in (0_g,0_g)\star I_{\s(g)}^\tg(\mathsf D_G)$.

\medskip

There is hence an isomorphism 
$$\mathsf D_G(\s(g))\cap\left(T_{\s(g)}^\tg G\times (T_{\s(g)}^\s G)^\circ\right)
\leftrightarrow 
\mathsf D_G(g)\cap\left(T_{g}^\tg G\times (T_{g}^\s G)^\circ\right).$$
As a consequence, $\mathsf D_G\cap\ker\TT\tg$ has constant rank along 
$\s$-fibers. Since $\mathsf D_G\cap\ker\TT\tg$
has constant rank on $P$
by Theorem \ref{theorem_intersections}, it has hence constant rank on the whole of $G$.
\end{proof}

\begin{example}
If $(G\rr P, \pi_G)$ is a Poisson groupoid, 
then $\pi_G^\sharp(\dr(\s^* f))\in\Gamma(T^\tg G)$ for 
all $f\in C^\infty(P)$ (see \cite{Weinstein88b}).
The intersection
$\mathsf D_{\pi_G}\cap\ker\TT\tg$ is hence spanned 
by the sections 
$(\pi_G^\sharp(\dr(\s^* f)), \dr(\s^* f))$,
with $f\in C^\infty(P)$, and has constant rank.
The intersection 
 $\mathsf D_{\pi_G}\cap\ker\TT\s$ is spanned 
by the sections\linebreak
$(\pi_G^\sharp(\dr(\tg^* f)), \dr(\tg^* f))$
with $f\in C^\infty(P)$.
\end{example}

\bigskip

Using this, we will show the next main theorem of this section. We will need the following lemma.
\begin{lemma}\label{inverse}
Let $G\rr P$ be a Lie groupoid.
Choose $g\in G$ and set $p=\tg(g)$. Then, for all  $\alpha_p\in T_p^*P$, we have 
$$-(T_{g\inv}\s)^*\alpha_p
=\left((T_g\tg)^*\alpha_p\right)\inv.$$
\end{lemma}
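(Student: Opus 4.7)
The plan is to verify, via the defining formulas of the cotangent groupoid $T^*G\rr A^*G$ recalled in the excerpt, that the element $\gamma_{g\inv}:=-(T_{g\inv}\s)^*\alpha_p\in T^*_{g\inv}G$ is the groupoid inverse of $\beta_g:=(T_g\tg)^*\alpha_p\in T^*_gG$, and then invoke uniqueness of inverses in a groupoid. In particular I will never need to exhibit the inversion map in $T^*G$ explicitly; it suffices to check $\beta_g\star\gamma_{g\inv}=\hat\tg(\beta_g)$.

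First I would check compatibility of source and target. Using $T_g\tg\circ T_pR_g=T_p(\tg\circ R_g)=T_p\tg$ together with $T_p\tg\an{T^\tg_pG}=0$ and $T_p\tg\an{T_pP}=\Id$, the target formula in the excerpt gives $\hat\tg(\beta_g)(u_p)=-\alpha_p(T_p\s u_p)$ for every $u_p\in A_pG$. Symmetrically, using $\s\circ L_{g\inv}=\s$, the source formula yields $\hat\s(\gamma_{g\inv})(u_p)=-\alpha_p(T_p\s u_p)$, so the two agree as elements of $A^*_pG$.

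Next I would compute the product $\beta_g\star\gamma_{g\inv}\in T^*_pG$. For any composable pair $(v_g,w_{g\inv})\in T_{(g,g\inv)}(G\times_P G)$ the multiplication rule recalled in the excerpt yields
\[
(\beta_g\star\gamma_{g\inv})(v_g\star w_{g\inv})=\alpha_p\bigl(T_g\tg\, v_g-T_{g\inv}\s\, w_{g\inv}\bigr).
\]
Since $T\tg$ and $T\s$ are groupoid morphisms for the tangent groupoid, the right-hand side equals $\alpha_p\bigl(T_p\tg z_p-T_p\s z_p\bigr)$ with $z_p:=v_g\star w_{g\inv}\in T_pG$. Because the tangent multiplication restricted to composable pairs at $(g,g\inv)$ is a surjective submersion onto $T_pG$, this formula determines $\beta_g\star\gamma_{g\inv}$ as an element of $T^*_pG$.

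Finally I would identify this product with the unit $\hat\tg(\beta_g)$ seen inside $T^*_pG$ via the inclusion $A^*_pG=(T_pP)^\circ\hookrightarrow T^*_pG$. Using the splitting $T_pG=A_pG\oplus T_pP$, any $z_p$ decomposes as $z_p=u_p^A+T_p\tg z_p$ with $u_p^A\in A_pG$, whence $T_p\s u_p^A=T_p\s z_p-T_p\tg z_p$; substituting into $\hat\tg(\beta_g)(z_p)=-\alpha_p(T_p\s u_p^A)$ reproduces exactly $\alpha_p(T_p\tg z_p-T_p\s z_p)$. Hence $\beta_g\star\gamma_{g\inv}=\hat\tg(\beta_g)$, i.e.\ the unit at $\hat\tg(\beta_g)$, and by uniqueness of the inverse in a groupoid the identity claimed in the lemma follows. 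The only slightly delicate point is this last step, where one must carefully track the identification $A^*G=(TP)^\circ\subset T^*_PG$ in order to compare the product, a priori defined only on image vectors $v_g\star w_{g\inv}$, with the unit $\hat\tg(\beta_g)$ viewed as a full covector on $T_pG$.
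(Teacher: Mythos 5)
Your route is essentially the paper's: you work directly with the source, target and product formulas of the cotangent groupoid, and your streamlining --- computing only the single product $\beta_g\star\gamma_{g\inv}$ (with $\beta_g=(T_g\tg)^*\alpha_p$, $\gamma_{g\inv}=-(T_{g\inv}\s)^*\alpha_p$) and then invoking uniqueness of inverses --- is legitimate, since in a groupoid a one-sided inverse along a defined product is the inverse; the paper instead computes both products $\gamma_{g\inv}\star\beta_g$ and $\beta_g\star\gamma_{g\inv}$ explicitly. Your computations of $\hat\tg(\beta_g)$ and $\hat\s(\gamma_{g\inv})$, the evaluation of the product on decomposable vectors $v_g\star w_{g\inv}$ via the morphism property of $T\tg$, $T\s$, the surjectivity of the tangent multiplication onto $T_pG$, and the identification of the unit over $\hat\tg(\beta_g)$ with the covector $\hat\tg(\beta_g)\in (T_pP)^\circ\subset T_p^*G$ are all correct and coincide with (implicit or explicit) steps of the proof in the paper.

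The one genuine problem is that the composability condition you verify is not the one your argument needs. The equality $\hat\tg(\beta_g)=\hat\s(\gamma_{g\inv})$ (both equal to $-\alpha_p\circ T_p\s$ on $A_pG$) is the condition for the \emph{other} product $\gamma_{g\inv}\star\beta_g$, and it is logically idle in your one-sided argument. For the product $\beta_g\star\gamma_{g\inv}$ that you actually compute, the required condition is $\hat\s(\beta_g)=\hat\tg(\gamma_{g\inv})$, and this you never check. Both sides do vanish: $\hat\s((T_g\tg)^*\alpha_p)=0$ because $T_{\s(g)}L_g$ maps $A_{\s(g)}G$ into $T^\tg_gG=\ker T_g\tg$, and $\hat\tg(-(T_{g\inv}\s)^*\alpha_p)=0$ because $T_{\s(g)}R_{g\inv}\bigl(u_{\s(g)}-T_{\s(g)}\s\,u_{\s(g)}\bigr)\in T^\s_{g\inv}G=\ker T_{g\inv}\s$; these are exactly the two identities recorded in the paper's proof. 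Without this verification your product is not known to be defined, and the well-definedness of your formula for $(\beta_g\star\gamma_{g\inv})(v_g\star w_{g\inv})$ --- its independence of the chosen decomposition $z_p=v_g\star w_{g\inv}$ --- is precisely what composability guarantees. Adding this one-line check closes the gap and makes your proof complete.
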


\begin{proof}
For any $u_p\in A_pG$,  one gets easily
\begin{align*}
\hat\tg((T_g\tg)^*\alpha_p)(u_p)&
=-((T_p\s)^*\alpha_p)(u_p)
\end{align*}
and 
\begin{align*}
\hat\s(-(T_{g\inv}\s)^*\alpha_p)(u_p)&
=-((T_p\s)^*\alpha_p)(u_p).
\end{align*}
In the same manner,
\begin{align*}
\hat\tg(-(T_{g\inv}\s)^*\alpha_p)=0, \qquad 
\hat\s((T_g\tg)^*\alpha_p)=0.
\end{align*}

Hence, we can compute $((T_g\tg)^*\alpha_p)\star(-(T_{g\inv}\s)^*\alpha_p)$
and $(-(T_{g\inv}\s)^*\alpha_p)\star((T_g\tg)^*\alpha_p)$. By choosing
for any $u_q\in T_qG$ two vectors
$u_{g\inv}\in T_{g\inv}G$ and $u_{g}\in T_gG$
such that $u_q=u_{g\inv}\star u_{g}$, one gets immediately
\begin{align*}
\left((-(T_{g\inv}\s)^*\alpha_p)\star((T_g\tg)^*\alpha_p)\right)(u_q)
&=(-(T_{g\inv}\s)^*\alpha_p)(u_{g\inv})+((T_g\tg)^*\alpha_p)(u_g)=0,
\end{align*}
which shows that
$(-(T_{g\inv}\s)^*\alpha_p)\star((T_g\tg)^*\alpha_p)=0_q=\hat\s((T_g\tg)^*\alpha_p)$.
For any $w_p=w_{g}\star w_{g\inv}\in T_pG$, 
\begin{align*}
\left(((T_g\tg)^*\alpha_p)\star(-(T_{g\inv}\s)^*\alpha_p)\right)(w_p)
&=\alpha_p(T_g\tg w_g)-\alpha_p(T_{g\inv}\s w_{g\inv})
=\alpha_p\circ (T_p\tg-T_p\s)(w_p).
\end{align*}
Thus, $((T_g\tg)^*\alpha_p)\star(-(T_{g\inv}\s)^*\alpha_p)
=\hat\tg((T_g\tg)^*\alpha_p)$.
\end{proof}

\begin{remark}\label{computation}
If $(v_p,(T_p\s)^*\alpha_p)$ is such that $T_p\tg v_p=0_p$,
then $\TT\tg(v_p,(T_p\s)^*\alpha_p)=(0_p,0_p)$. If $g\in G$ is such that
$\s(g)=p$, then 
$(0_g, 0_g)\star (v_p,(T_p\s)^*\alpha_p)
=(T_pL_g v_p, (T_g\s)^*\alpha_p)$ for all $g\in\s\inv(p)$.

To see this, let $c:(-\varepsilon, \varepsilon)\to \tg\inv(p)$
be a curve such that
$c(0)=p$ and $\dot c(0)=v_p$.
We can then compute $0_g\star v_p=T_{(g,p)}\m(0_g,v_p)
=\left.\frac{d}{d\sigma}\right\an{\sigma=0}g\star c(\sigma)
=T_{p}L_g v_p$. If $v_g\in T_g G$, the equality
$v_g=v_g\star (T_g\s v_g)$
yields
$\left(0_g\star (T_{p}\s)^*\alpha_{p}\right)(v_g)
=0_g(v_g)+  \left((T_{p}\s)^*\alpha_{p}\right)(T_g\s v_g)
=\alpha_{p}(T_g\s v_g)
=\left((T_g\s)^*\alpha_{p}\right)(v_g)$.
\end{remark}

Now we can prove a generalization of the fact that the units of a Poisson groupoid inherit a Poisson structure 
such that the target map if a Poisson map and the source is anti-Poisson (see \cite{Weinstein88b}).
\begin{theorem}\label{induced_Dirac_on_P}
Assume that $(G\rr P,\mathsf D_G)$ is a  Dirac groupoid such that 
$TP\cap\mathsf{G_0}$ is smooth.
Define the subspace $\mathsf D_P$ of $\mathsf P_P$ by
\begin{equation}\label{Dirac_on_P}
\mathsf D_P(p)=\left\{(v_p,\alpha_p)\in\mathsf P_P(p)\left|\begin{array}{c}
\exists 
(w_p, (T_p\tg)^*\alpha_p)\in \mathsf D_G(p)\cap(T_pG\times (A_pG)^\circ)\\
\text{ such that } v_p=T_p\tg w_p
\end{array}\right.\right\}
\end{equation}
for all $p\in P$.
Then  $\mathsf D_P$ is a  Dirac structure on $P$.

Furthermore, if for all $g\in G$, the restriction to $\mathsf{G_0}(g)$
of the target map
$T_g\tg:\mathsf{G_0}(g)\to \mathsf{G_0}(\tg(g))\cap T_{\tg(g)}P$ 
is surjective, then the maps
 $\tg:(G,\mathsf D_G)\to (P,\mathsf D_P)$ 
  and $\s:(G,\mathsf D_G)\to (P,-\mathsf D_P)$ are
forward Dirac maps, where $-\mathsf D_P$ is the Dirac structure 
defined on $P$ by $\mathsf D_P(p)=\{(-v_p,\alpha_p)\in\mathsf P_P(p)\mid
(v_p,\alpha_p)\in\mathsf D_P(p)\}$.
\end{theorem}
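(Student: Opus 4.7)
The plan is to split the proof into two parts. For the first assertion, that $\mathsf D_P$ is a Dirac structure on $P$, I would argue by pointwise linear coisotropic reduction. For the forward Dirac property, I would transport a lift from the unit $p=\tg(g)$ to $g$ via the multiplicativity of $\mathsf D_G$, with the surjectivity hypothesis providing precisely the ingredient needed to match source and target in the groupoid product.

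Fix $p\in P$ and let $K_p:=A_pG\oplus\{0\}\subseteq\mathsf P_G(p)$; this is isotropic, with orthogonal $K_p^\perp=T_pG\oplus(A_pG)^\circ$. The unit splitting $T_pG=T_pP\oplus A_pG$ together with the isomorphism $(A_pG)^\circ\cong T_p^*P$ given by $(T_p\tg)^*$ yields a canonical identification
\[K_p^\perp/K_p \xrightarrow{\sim} T_pP\oplus T_p^*P=\mathsf P_P(p),\qquad (w_p,(T_p\tg)^*\alpha_p)+K_p\longmapsto (T_p\tg\,w_p,\alpha_p).\]
Under this identification, the definition \eqref{Dirac_on_P} makes $\mathsf D_P(p)$ coincide with the coisotropic reduction $(\mathsf D_G(p)\cap K_p^\perp+K_p)/K_p$ of the Lagrangian $\mathsf D_G(p)$ by the isotropic $K_p$, hence Lagrangian in $\mathsf P_P(p)$ and of constant fiber dimension $\dim P$. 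Smoothness of $\mathsf D_P$ I would then deduce by lifting any $(v_{p_0},\alpha_{p_0})\in\mathsf D_P(p_0)$ to a smooth section of the intersection $\mathsf D_G|_P\cap(TG\oplus(AG)^\circ)$ near $p_0$, using the direct-sum decomposition $\mathsf D_G|_P=\lie A(\mathsf D_G)\oplus I^\tg(\mathsf D_G)$ of Theorem \ref{theorem_intersections}, and then projecting; the role of the hypothesis that $TP\cap\mathsf{G_0}$ is smooth is precisely to make the relevant intersection constant-rank, which is what is needed to produce such smooth local lifts.

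For the forward Dirac property, fix $g\in G$ with $p=\tg(g)$ and a lift $(w_p,(T_p\tg)^*\alpha_p)\in\mathsf D_G(p)$, where $w_p=(v_p,\xi_p)$ in the splitting $T_pG=T_pP\oplus A_pG$. Applying $\TT\s$ yields $(T_p\s\,w_p,0)=(v_p+\mathsf a\xi_p,0)\in\lie A(\mathsf D_G)(p)\subseteq\mathsf D_G(p)$, so $v_p+\mathsf a\xi_p\in\mathsf{G_0}(p)\cap T_pP$. The surjectivity hypothesis then produces $v_g'\in\mathsf{G_0}(g)$ with $T_g\tg\,v_g'=v_p+\mathsf a\xi_p$, so that $(v_g',0)\in\mathsf D_G(g)$ has $\TT\tg$-image $(v_p+\mathsf a\xi_p,0)$ matching $\TT\s(w_p,(T_p\tg)^*\alpha_p)$. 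Multiplicativity of $\mathsf D_G$ then forces
\[(v_g,\alpha_g)\;:=\;(w_p,(T_p\tg)^*\alpha_p)\,\star\,(v_g',0)\;\in\;\mathsf D_G(g),\]
and by the tangent and cotangent product formulas one computes $T_g\tg\,v_g=T_p\tg\,w_p=v_p$ and $\alpha_g=(T_p\tg)^*\alpha_p\star 0=(T_g\tg)^*\alpha_p$; the latter equality follows by decomposing any $w_g\in T_gG$ as $T_p\epsilon(T_g\tg\,w_g)\star w_g$ and applying $(\alpha\star\beta)(v\star w)=\alpha(v)+\beta(w)$. The analogous statement for $\s$ and $-\mathsf D_P$ is obtained symmetrically, multiplying from the left and using Lemma \ref{inverse} to account for the sign.

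I expect the main obstacle to be the smoothness assertion in the first part, since translating the single hypothesis on $TP\cap\mathsf{G_0}$ into constant-rank control of $\mathsf D_G|_P\cap(TG\oplus(AG)^\circ)$ requires careful use of the splitting from Theorem \ref{theorem_intersections}; by contrast, once the lift is in hand the forward Dirac property is almost forced, because $\mathsf{G_0}(g)\oplus\{0\}$ is the only subspace of $\mathsf D_G(g)$ that can correct the tangent component without disturbing the cotangent one.
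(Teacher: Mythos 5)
Your pointwise argument for maximal isotropy (linear coisotropic reduction of the Lagrangian $\mathsf D_G(p)$ by the isotropic $K_p=A_pG\oplus\{0\}$) is correct and is just a repackaging of the paper's direct computation of $\mathsf D_P(p)^\perp$, and your forward Dirac argument for $\tg$ is also correct and essentially the paper's mechanism: the paper writes the lift as $(u_p,(T_p\tg)^*\alpha_p)\star(0_g,0_g)$ plus a $\mathsf G_0$-correction $z_g$, whereas you take the single product $(w_p,(T_p\tg)^*\alpha_p)\star(v_g',0_g)$; both use exactly the surjectivity hypothesis in the same way. The genuine gap is the smoothness of $\mathsf D_P$, which you yourself flag as ``the main obstacle'' and do not carry out, and the tool you point at does not close it: the splitting $\mathsf D_G\an{P}=\lie A(\mathsf D_G)\oplus I^\tg(\mathsf D_G)$ is the wrong one here, because for a lift $(w_p,(T_p\tg)^*\alpha_p)$ the $\lie A(\mathsf D_G)$-component $\TT\tg\bigl(w_p,(T_p\tg)^*\alpha_p\bigr)$ has cotangent part $\hat\tg\bigl((T_p\tg)^*\alpha_p\bigr)=-\bigl((T_p\s)^*\alpha_p\bigr)\an{A_pG}$ (this is the computation inside Lemma \ref{inverse}), which is nonzero in general; so neither summand of that decomposition stays inside $T_PG\oplus (AG)^\circ$, and it gives you no handle on the space of lifts.

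What does work, and is the paper's argument, is the other splitting of Theorem \ref{theorem_intersections}, via $\TT\s$: since $\hat\s\bigl((T_p\tg)^*\alpha_p\bigr)=0$, every element of $\mathsf D_G(p)\cap\bigl(T_pG\oplus(A_pG)^\circ\bigr)$ decomposes as $(T_p\s w_p,0)+\bigl(w_p-T_p\s w_p,(T_p\tg)^*\alpha_p\bigr)$ with the first summand in $(\mathsf{G_0}\cap TP)\oplus 0_{A^*G}$ and the second in $I^\s(\mathsf D_G)$, whence $\mathsf D_G\an{P}\cap\bigl(T_PG\oplus(AG)^\circ\bigr)=I^\s(\mathsf D_G)\oplus\bigl((\mathsf{G_0}\cap TP)\oplus 0_{A^*G}\bigr)$. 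This identity is precisely where the hypothesis enters: $I^\s(\mathsf D_G)$ is smooth of constant rank by Theorem \ref{theorem_intersections}, $\mathsf{G_0}\cap TP$ is smooth by assumption, so the space of lifts is spanned by smooth sections, and $\mathsf D_P$ --- whose fibers already have dimension $\dim P$ by your reduction argument --- is spanned near every point by the sections $(z,0)$ for $z\in\Gamma(\mathsf{G_0}\cap TP)$ together with $(T\tg X,\alpha)$ for $(X^r,\tg^*\alpha)\in\Gamma(\mathsf D_G\cap\ker\TT\s)$, hence is a smooth Lagrangian subbundle. A secondary caution on your last sentence: for the statement about $\s$, ``multiplying from the left'' by an element with vanishing cotangent part fails for the same reason ($\hat\tg\bigl((T_p\tg)^*\alpha_p\bigr)\neq 0$ in general); the paper instead applies the $\tg$-statement at $g\inv$ and transports it by groupoid inversion using Lemma \ref{inverse}, which is presumably what your appeal to that lemma should be spelled out as.
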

The characteristic distribution $\mathsf{G_0}^P$ of $(P,\mathsf D_P)$ 
is then equal to the intersection $\mathsf{G_0}\cap TP$.
Note that this theorem generalizes Theorem 4.2.3 in \cite{Weinstein88b} (see also
\cite{Weinstein87}, \cite{CoDaWe87} for the special case of symplectic groupoids), since 
in the Poisson case, we have $\mathsf{G_0}=0_{TG}$
and the hypotheses are consequently trivially satisfied.
If all conditions are satisfied, the Dirac structure on $P$ is just the push forward of the Dirac structure
on $G$ under the quotient map $\tg:G\to G/G\simeq P$ (see Example \ref{extreme_homogeneous}).

\begin{proof}
First note that $I^\s(\mathsf D_G)\oplus ((\mathsf{G_0}\cap TP)\oplus0_{A^*G})=
\mathsf D_G\cap (T_PG\oplus AG^\circ)$.
By the hypothesis on $\mathsf{G_0}\cap TP$, the
 intersection $\mathsf D_G\cap (T_PG\oplus AG^\circ)$
is hence smooth and has consequently constant rank on $P$
by a Proposition in \cite{JoRaSn11}.
The space $\mathsf D_P$ is smooth since it is spanned by 
the smooth sections of 
$\mathsf{G_0}\cap TP$ and the smooth sections 
$(T\tg X,\alpha)$ for all $(X^r,\tg^*\alpha)\in\Gamma(\mathsf D_G\cap\ker\TT\s)$.

The inclusion $\mathsf D_P\subseteq \mathsf D_P^\perp$ is obvious.
Conversely, 
if  $(v_p,\alpha_p)\in \mathsf D_P(p)^\perp\subseteq \mathsf P_P(p)$
and $y_p\in T_pG$ is chosen such that $T_p\tg y_p=v_p$,  then we have 
\[ 
\langle (w_p,(T_p\tg)^*\beta_p), (y_p,(T_p\tg)^*\alpha_p\rangle
=\langle (T_p\tg w_p,\beta_p), (v_p,\alpha_p)\rangle=0\]
for all 
$(w_p, (T_p\tg)^*\beta_p)\in \mathsf D_G(p)\cap(T_pG\times A_pG^\circ)$. 
Hence,  we get
\[(y_p, (T_p\tg)^*\alpha_p)\in 
(\mathsf D_G(p)\cap(T_pG\times (A_pG)^\circ))^\perp
=(\mathsf D_G(p)+A_pG\times\{0_p\})\]
and consequently 
$(y_p, (T_p\tg)^*\alpha_p)=(y'_p,(T_p\tg)^*\alpha_p)+(u_p,0)$
for some 
$(y'_p,(T_p\tg)^*\alpha_p)\in\mathsf D_G(p)$ and 
$u_p\in A_pG$. But then 
$T_p\tg y'_p=T_p\tg y_p=v_p$ and 
$(v_p,\alpha_p)\in\mathsf D_P(p)$.

\medskip

Assume that the target map $T_g\tg:\mathsf{G_0}(g)\to \mathsf{G_0}(\tg(g))\cap T_{\tg(g)}P$ 
is surjective for all $g\in G$.
We show  that $\tg:(G,\mathsf D_G)\to (P,\mathsf D_P)$ is a forward Dirac map.
Choose $p\in P$, $g\in \tg\inv(p)$ and 
$(v_p,\alpha_p)\in\mathsf D_P(p)$. We have to prove that there exists 
$(v_g,\alpha_g)\in\mathsf D_G(g)$ such that 
$\alpha_g=(T_g\tg)^*\alpha_p$
and $T_g\tg v_g=v_p$.
By definition of $\mathsf D_P$ and the considerations above, there exists $u_p\in T^\s_pG$ and 
$z_p\in\mathsf{G_0}(p)\cap T_pP$ such that 
$T_p\tg u_p+z_p=v_p$ and $(u_p, (T_p\tg)^*\alpha_p)\in I^\s_p(\mathsf D_G)$.
Then the pair $(T_pR_gu_p, (T_g\tg)^*\alpha_p)
=(u_p, (T_p\tg)^*\alpha_p)\star (0_g,0_g)$
is an element of $\mathsf D_G(g)$ 
and by hypothesis, we find $z_g\in \mathsf{G_0}(g)$ such that
$T_g\tg z_g=z_p$. The pair $(T_pR_gu_p +z_g, (T_g\tg)^*\alpha_p)$
is then an element of $\mathsf D_G(g)$ and 
 $T_g\tg(T_pR_gu_p +z_g)=T_p\tg u_p+z_p=v_p$.

It remains to  
prove that $\s: (G,\mathsf D_G)\to (P,-\mathsf D_P)$ is also a forward Dirac map.
Choose $p\in P$, $g\in\s\inv(p)$ and $(v_p,\alpha_p)\in -\mathsf D
_P(p)$. Then 
$(-v_p,\alpha_p)\in\mathsf D_P(p)$ and, since $\tg(g\inv)=\s(g)=p$,
there exists  by the considerations above
$w_g\in T_{g\inv}G$ such that
 $T_{g\inv}\tg w_{g\inv}=v_p$ and $(-w_{g\inv},(T_{g\inv}\tg)^*\alpha_p)
\in\mathsf D_G(g\inv)$.
But by Lemma \ref{inverse}, we have then 
$((-w_{g\inv})\inv,-(T_{g}\s)^*\alpha_p)$
$\in\mathsf D_G(g)$. This leads to 
$((w_{g\inv})\inv,(T_{g}\s)^*\alpha_p)
\in\mathsf D_G(g)$ and since $T_g\s((w_{g\inv})\inv)$$=T_{g\inv}\tg w_{g\inv}
=v_p$, the proof is finished.
\end{proof}

Note that the hypotheses on the distribution $\mathsf{G_0}$
in Theorem \ref{induced_Dirac_on_P} are rather strong.
The following example shows that this theorem can hold under weaker hypotheses.
\begin{example}
Assume that  $(M,\mathsf D_M)$ is a smooth Dirac manifold such that 
$\mathsf{G_0}$ is a singular distribution. Then, the induced pair Dirac groupoid
$(M\times M\,\rr\, M, \mathsf D_M\ominus\mathsf D_M)$ as in Example
\ref{ex_pair_Dirac_Lie}
does not satisfy the conditions for Theorem \ref{induced_Dirac_on_P}. 
The space $I^\s(\mathsf D_M\ominus\mathsf D_M)$
is here 
given by 
$$I^\s_{(m,m)}(\mathsf D_M\ominus\mathsf D_M)
=\{(v_m,0_m,\alpha_m,0_m)\mid (v_m,\alpha_m)\in\mathsf D_M(m)\}
$$
for all $m\in M$, and the space 
$\mathsf{G_0}\cap T\Delta_M$
is given by
$$\mathsf{G_0}(m,m)\cap T_{(m,m)}\Delta_M
=\{(v_m,v_m)\mid (v_m,0_m)\in\mathsf D_M(m)\}
$$
for all $m\in M$. Hence, we have 
\begin{align*}
&I^\s_{(m,m)}(\mathsf D_M\ominus\mathsf D_M)+(\mathsf{G_0}(m,m)\cap T_{(m,m)}\Delta_M)\times_{M\times M}\{0\}\\
=\,&\{(v_m,w_m,\alpha_m,0_m)\mid (v_m,\alpha_m)\in\mathsf D_M(m),
(w_m, 0_m)\in\mathsf D_M(m)\}
\end{align*}
and we find that 
the same construction as in Theorem \ref{induced_Dirac_on_P}
defines a  Dirac structure on $M\simeq \Delta_M$, which equals the original Dirac structure 
$\mathsf D_M$ on $M$ since its fiber over 
$m\in M$ is 
 $\{(v_m,\alpha_m)\mid (v_m,w_m,\alpha_m,0_m)
\in I^\s_{(m,m)}(\mathsf D_M\ominus\mathsf D_M)
+\mathsf{G_0}(m,m)\cap T_{(m,m)}\Delta_M\}$.
\end{example}

\begin{example}\label{ex_presymplectic_groupoid2}
Let $G\rr P$ be a Lie groupoid and 
% A Dirac structure on $P$ is said to be $\phi$-twisted if it is closed 
% under the \emph{$\phi$-twisted Courant bracket} defined 
% by $[(X,\alpha), (Y,\beta)]=([X,Y], \ldr{X}\beta-\ip{Y}\dr\alpha+\ip{Y}\ip{X}\phi)$
% for all $(X,\alpha), (Y,\beta)\in\Gamma(\mathsf{P}_P)$.
 $\omega_G\in\Omega^2(G)$
be a closed $2$-form on $G$.
Then $(G\rr P, \omega_G)$ is a  presymplectic groupoid \cite{BuCrWeZh04} if
$\omega_G$
is multiplicative, 
 $\dim G=2\dim P$ and 
 $(\ker\omega_G)(p)\cap T_p^\tg G\cap T_p^\s G=\{0_p\}$ for all $p\in P$.

Note that $(\ker\omega_G)(p)=\mathsf{G_0}(p)$, if 
$\mathsf{G_0}$ is the characteristic distribution 
associated to the Dirac groupoid 
$(G\rr P, \mathsf D_{\omega_G})$, see Example 
\ref{ex_presymplectic_groupoid}.
In \cite{BuCrWeZh04}, a multiplicative $2$-form is 
said to be \emph{of Dirac type}
if it has a property that is shown to be equivalent to 
 our hypothesis on surjectivity of $T\tg\an{\ker\omega_G}$.
If the bundle of Dirac structures defined as in 
\eqref{Dirac_on_P} by the multiplicative 
Dirac structure $\mathsf D_{\omega_G}$ associated to a multiplicative
$2$-form $\omega_G$ of Dirac type is smooth, 
then  $\mathsf D_P$ is a Dirac structure on $P$ such that
the target map $\mathsf t: (G, \mathsf D_G)\to (P,\mathsf D_P)$ is a forward Dirac map.
Thus, we recover here 
the two conditions in \cite{BuCrWeZh04} since we made the hypothesis 
on smoothness of $\mathsf{G_0}\cap TP$ to ensure the smoothness 
of $\mathsf D_P$.

It is shown in \cite{BuCrWeZh04} that a presymplectic groupoid
 $(G\rr P, \omega_G)$  satisfy automatically these  conditions and hence  that
 there exists 
a  Dirac structure $\mathsf D_P$ on $P$ such that
the target map 
$\mathsf t: (G, \mathsf D_{\omega_G})\to (P,\mathsf D_P)$ is a forward Dirac map.
\end{example}

\begin{remark}
In the situation of Theorem \ref{main},
the multiplicative 
subbundle $\mathsf{G_0}$  of $TG$ has constant rank on $G$. In particular, 
the intersection $TP\cap\mathsf{G_0}$ is a smooth vector bundle over $P$
and for each $g\in G$, the restriction 
to $\mathsf{G_0}(g)$
of the target map, $T_g\tg:\mathsf{G_0}(g)\to \mathsf{G_0}(\tg(g))\cap T_{\tg(g)}P$,
is surjective \cite{Jotz11b}. 
By Theorem \ref{induced_Dirac_on_P}, there exists
then a Dirac structure $\mathsf D_P$ on $P$ such that 
$\tg: (G, \mathsf D_G)\to (P,\mathsf D_P)$
is a forward Dirac map.
Since $(G/\mathsf{G_0}\rr P/\mathsf{G_0}, \pr(\mathsf D_G))$
is a Poisson groupoid,  we know also
by a theorem in \cite{Weinstein88b} that there is   
a Poisson structure $\{\cdot\,,\cdot\}_{P/\mathsf{G_0}}$ on $P/\mathsf{G_0}$ such that
$[\tg]:(G/\mathsf{G_0}, \pr(\mathsf D_G))\to (P/\mathsf{G_0},\{\cdot\,,\cdot\}_{P/\mathsf{G_0}})$
is a Poisson map.
It is easy to check that
the map $\pr_\circ:(P,\mathsf D_P)\to (P/\mathsf{G_0},\{\cdot\,,\cdot\}_{P/\mathsf{G_0}})$
is then also a forward Dirac map, i.e.,
the graph of the vector bundle homomorphism $T^*(P/\mathsf{G_0})\to T(P/\mathsf{G_0})$
defined by the Poisson structure is the forward Dirac image 
of $\mathsf D_P$ under $\pr_\circ$.  
\begin{align*}
\begin{xy}
\xymatrix{
(G,\mathsf D_G)\ar[r]^\tg\ar[d]_{\pr}&(P,\mathsf D_P)\ar[d]^{\pr_\circ}\\
(G/\mathsf{G_0},\pr(D_G))\ar[r]_{[\mathsf \tg]}&(P/\mathsf{G_0}, \{\cdot\,,\cdot\}_{P/\mathsf{G_0}})
}
\end{xy}
\end{align*}
\end{remark}

\subsection{The units of a Dirac groupoid}\label{units}
In this section, we 
discuss further properties of the set of units $\lie A(\mathsf D_G)$
of a multiplicative Dirac structure.

\begin{proposition}\label{sdescending}
Let $\bar\xi=(\bar{X_\xi},\bar{\theta_\xi})$ be a section of 
$\mathsf D_G\cap (TP\oplus A^*G)=\mathfrak A(\mathsf D_G)$.
Then there exists a smooth section $\xi=(X_\xi,\theta_\xi)$ of $\mathsf D_G$
such that 
$\xi\an{P}=\bar\xi$ and 
$\TT\s(\xi(g))=\bar\xi(\s(g))$ for all
$g\in \s\inv\left(\dom\left(\bar\xi\right)\right)$.
\end{proposition}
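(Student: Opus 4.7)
The plan is to construct $\xi$ by splitting a short exact sequence of smooth vector bundles over $G$. First I would observe that $\TT\s\colon \mathsf P_G\to TP\oplus A^*G$ is a vector bundle morphism covering $\s\colon G\to P$: its tangent component $T\s$ is fiberwise linear by construction, and $\hat\s$ is linear on each cotangent fiber $T^*_gG$. Since $\mathsf D_G\rr\lie A(\mathsf D_G)$ is a Lie subgroupoid, $\TT\s$ sends $\mathsf D_G$ into $\lie A(\mathsf D_G)$, so this restricts to a vector bundle morphism $\TT\s\colon \mathsf D_G\to\lie A(\mathsf D_G)$ over $\s$.

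The key technical input is that this restriction is fiberwise surjective with constant-rank kernel. Its kernel $\mathsf D_G\cap\ker\TT\s$ has constant rank $\rk I^\s(\mathsf D_G)$ on all of $G$ by Lemma \ref{const_rk_int_kern}. Combined with the decomposition $\mathsf D_G\an{P}=\lie A(\mathsf D_G)\oplus I^\s(\mathsf D_G)$ from Theorem \ref{theorem_intersections}, a dimension count gives $\dim \TT\s(\mathsf D_G(g))=\rk\lie A(\mathsf D_G)=\dim \lie A_{\s(g)}(\mathsf D_G)$; since the image sits inside $\lie A_{\s(g)}(\mathsf D_G)$, it must equal it. Viewing $\TT\s$ as a morphism $\mathsf D_G\to\s^*\lie A(\mathsf D_G)$ of vector bundles over $G$ then yields a short exact sequence
\begin{equation*}
0\longrightarrow\mathsf D_G\cap\ker\TT\s\longrightarrow\mathsf D_G\longrightarrow\s^*\lie A(\mathsf D_G)\longrightarrow 0,
\end{equation*}
which admits a smooth splitting $\sigma\colon \s^*\lie A(\mathsf D_G)\to\mathsf D_G$. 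Setting $\xi(g):=\sigma(g,\bar\xi(\s(g)))$ on $\s\inv(\dom(\bar\xi))$ produces a smooth section of $\mathsf D_G$ satisfying $\TT\s(\xi(g))=\bar\xi(\s(g))$.

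For the remaining condition $\xi\an{P}=\bar\xi$, I would choose $\sigma$ so that over $P\subseteq G$ it coincides with the canonical inclusion $\lie A(\mathsf D_G)\hookrightarrow\mathsf D_G\an{P}$ coming from the decomposition above; such a constraint is always achievable since a splitting of a smooth short exact sequence of vector bundles can be prescribed on a closed submanifold. An equivalent route is to start from any $\xi$ meeting only the source condition, note that $\xi\an{P}-\bar\xi$ is a section of $I^\s(\mathsf D_G)=\ker(\TT\s\vert_{\mathsf D_G\an{P}})$, extend it via Lemma \ref{const_rk_int_kern} to $\tilde\eta\in\Gamma(\mathsf D_G\cap\ker\TT\s)$ on $\s\inv(\dom(\bar\xi))$, and replace $\xi$ by $\xi-\tilde\eta$. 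The main obstacle is establishing the constant-rank kernel and fiberwise surjectivity in the middle step; once these are secured via Lemma \ref{const_rk_int_kern} and Theorem \ref{theorem_intersections}, everything else is routine vector-bundle linear algebra.
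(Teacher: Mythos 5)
Your proof is correct and is essentially the paper's argument: the paper also relies on Lemma \ref{const_rk_int_kern} to see that $\mathsf D_G\cap\ker\TT\s$ is a subbundle, and then uses the fiberwise bijective map $\mathsf D_G/(\mathsf D_G\cap\ker\TT\s)\to\mathfrak A(\mathsf D_G)$ over $\s$ to lift $\bar\xi$ and pick a representative restricting to $\bar\xi$ on $P$, which is the same mechanism as your splitting of the short exact sequence onto $\s^*\lie A(\mathsf D_G)$. Your explicit dimension count for surjectivity and the correction of the lift along $P$ just make precise steps the paper leaves implicit.
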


Following \cite{Mackenzie00}, 
we say that then that the section $\xi$ of $\mathsf D_G$ is a 
\emph{$\s$-star section} or simply \emph{star section}
and we write $\xi\sim_\s\bar\xi$.
Indeed, since $\bar{X_\xi}\in\Gamma(TP)$ and 
$T_g\s X_\xi(g)=\bar{X_\xi}(\s(g))$ for all $g\in G$ where this makes sense, 
the vector fields $X_\xi$ and $\bar{X_\xi}$ are $\s$-related, $X_\xi\sim_\s\bar{X_\xi}$.
Note that outside of $P$, $\xi$ 
is defined modulo sections of $\mathsf D_G\cap\ker\TT\s$.

% We will also consider the smooth section $\xi\inv\in\Gamma(\mathsf D_G)$
% defined by $\xi\inv(g)=(\xi(g\inv))\inv$ for 
% all $g\in G$. Then $\xi\inv$ is a 
% \emph{$\tg$-star section} of $\mathsf D_G$, 
% $\xi\inv\sim_\tg \bar\xi$.

\begin{proof}
We have shown in Lemma \ref{const_rk_int_kern}
that $\mathsf D_G\cap\ker\TT\s$ is a subbundle of $\mathsf D_G$.
Hence, we can consider the smooth 
vector bundle  $\mathsf D_G/ (\mathsf D_G\cap\ker\TT\s)$ over $G$.
Since $\mathsf D_G$ is a Lie subgroupoid of $\mathsf P_G\rr(TP\oplus A^*G)$,
we can consider the restriction to $\mathsf D_G$ of the source map, 
$\TT\s:\mathsf D_G\to \mathfrak A(\mathsf D_G)$. Since 
$\mathsf D_G\cap\ker\TT\s$ is the kernel of this map, 
we have an induced smooth vector bundle homomorphism
$\overline{\TT\s}:\mathsf D_G/ (\mathsf D_G\cap\ker\TT\s)\to \mathfrak A(\mathsf D_G)$
over the source map $\s:G\to P$, that is bijective in every fiber.
Hence, there exists a unique smooth section $[\xi]$
of $\mathsf D_G/(\mathsf D_G\cap\ker\TT\s)$ 
such that $\overline{\TT\s}([\xi](g))=\bar\xi(\s(g))$
for all $g\in G$.
If $\xi\in\Gamma(\mathsf D_G)$ is a representative of 
$[\xi]$
such that $\xi\an{P}=\bar\xi$, then 
$\TT\s(\xi(g))=\bar\xi(\s(g))$
for all $g\in G$.
\end{proof}

\begin{lemma}
Choose $\bar\xi, \bar\eta\in\Gamma(\mathfrak A(\mathsf D_G))$ and 
star sections $\xi\sim_\s \bar\xi$, 
$\eta\sim_\s\bar\eta$ of $\mathsf D_G$.
Then, if $\xi=(X_\xi,\theta_\xi)$ and $\eta=(X_\eta,\theta_\eta)$, the identity
\begin{equation}\label{key}
\theta_\eta(\ldr{Z^l}X_\xi)+\left(\ldr{Z^l}\theta_\xi\right)(X_\eta)=\s^*\Bigl( \bigl(
\theta_\eta(\ldr{Z^l}X_\xi)+(\ldr{Z^l}\theta_\xi)(X_\eta)\bigr)\an{P}
\Bigr)
\end{equation}
holds for any section $Z\in\Gamma(AG)$.
\end{lemma}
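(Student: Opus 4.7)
My plan is to prove that $h:=\theta_\eta(\ldr{Z^l}X_\xi)+(\ldr{Z^l}\theta_\xi)(X_\eta)$ is an $\s$-pullback, i.e.\ $h=\s^*(h\an{P})$.

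\medskip

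First, I rewrite the left-hand side as $\langle\ldr{Z^l}\xi,\eta\rangle$, where $\langle\cdot,\cdot\rangle$ is the symmetric fibrewise pairing on $\mathsf P_G$.  From the $\s$-star hypothesis $\TT\s\circ\xi=\bar\xi\circ\s$ I extract two infinitesimal identities that will be used throughout: $X_\xi(\s^*f)=\s^*(\bar X_\xi f)$ for every $f\in C^\infty(P)$ (the infinitesimal form of $X_\xi\sim_\s\bar X_\xi$), and
\[
\theta_\xi(Y^l)=\s^*\langle\bar\theta_\xi,Y\rangle\qquad\text{for every }Y\in\Gamma(AG),
\]
which follows from the definition $\hat\s(\theta_\xi(g))=\bar\theta_\xi(\s(g))$ together with $Y^l(g)=T_{\s(g)}L_gY(\s(g))$.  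Analogous identities hold for $\eta$.

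\medskip

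Next, I apply Cartan's formula $\ldr{Z^l}\theta_\xi=\ip{Z^l}\dr\theta_\xi+\dr(\theta_\xi(Z^l))$ and substitute $\theta_\xi(Z^l)=\s^*\langle\bar\theta_\xi,Z\rangle$ to obtain
\[
(\ldr{Z^l}\theta_\xi)(X_\eta)=\dr\theta_\xi(Z^l,X_\eta)+\s^*\bigl(\bar X_\eta\langle\bar\theta_\xi,Z\rangle\bigr).
\]
Symmetrically, $(\ldr{X_\xi}\theta_\eta)(Z^l)=X_\xi(\theta_\eta(Z^l))-\theta_\eta([X_\xi,Z^l])$ together with $\theta_\eta(Z^l)=\s^*\langle\bar\theta_\eta,Z\rangle$ gives
\[
\theta_\eta([Z^l,X_\xi])=(\ldr{X_\xi}\theta_\eta)(Z^l)-\s^*\bigl(\bar X_\xi\langle\bar\theta_\eta,Z\rangle\bigr).
\]
Adding the two and using $\dr\theta_\xi(Z^l,X_\eta)=-(\ip{X_\eta}\dr\theta_\xi)(Z^l)$ yields
\[
h=\bigl(\ldr{X_\xi}\theta_\eta-\ip{X_\eta}\dr\theta_\xi\bigr)(Z^l)+\s^*\bigl(\bar X_\eta\langle\bar\theta_\xi,Z\rangle-\bar X_\xi\langle\bar\theta_\eta,Z\rangle\bigr).
\]
Since the second summand is manifestly an $\s$-pullback, the claim reduces to showing that $\sigma(Z^l)$ is an $\s$-pullback, where $\sigma:=\ldr{X_\xi}\theta_\eta-\ip{X_\eta}\dr\theta_\xi$ is the $T^*G$-component of the Courant--Dorfman bracket $[\xi,\eta]_{CD}$.

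\medskip

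The hard step is precisely this last reduction, which is equivalent to saying that $[\xi,\eta]_{CD}$ is an $\s$-star section of $\mathsf P_G$.  Its $TG$-component $[X_\xi,X_\eta]$ is automatically $\s$-related to $[\bar X_\xi,\bar X_\eta]$ by the naturality of the Lie bracket, so the genuine content is to establish $\sigma(u^l)\in\s^*C^\infty(P)$ for every $u\in\Gamma(AG)$.  This fails for arbitrary pairs of star sections of $\mathsf P_G$, so the argument must use $\xi,\eta\in\Gamma(\mathsf D_G)$: both the Lagrangian property $\theta_\xi(X_\eta)+\theta_\eta(X_\xi)=0$ and the subgroupoid condition $\mathsf D_G\subseteq\mathsf P_G$.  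My strategy is to expand $\sigma(u^l)$ using the star-section identities of the first paragraph together with the Lagrangian property, which reduces the problem to controlling terms of the form $\theta_\eta([X_\xi,u^l])$ and $\theta_\xi([X_\eta,u^l])$, and then to exploit the multiplicativity of $\mathsf D_G$ to force these terms to combine into an $\s$-pullback.  Extracting this infinitesimal consequence of the subgroupoid condition---a differential identity for pairs of star sections of $\mathsf D_G$ that is unavailable at the level of $\mathsf P_G$ alone---is the principal technical obstacle of the proof.
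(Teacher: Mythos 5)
Your reduction is algebraically correct as far as it goes: the rewriting of $h=\theta_\eta(\ldr{Z^l}X_\xi)+(\ldr{Z^l}\theta_\xi)(X_\eta)$ as $\bigl(\ldr{X_\xi}\theta_\eta-\ip{X_\eta}\dr\theta_\xi\bigr)(Z^l)$ plus manifest $\s$-pullbacks, using $\theta_\xi(Z^l)=\s^*\langle\bar\theta_\xi,Z\rangle$ and $X_\xi\sim_\s\bar X_\xi$, is exactly the computation that appears (run in the other direction) in the paper's Proposition \ref{lie_structure_dual}. But this is where the genuine gap lies: the statement you reduce to --- that the covector part of the Courant--Dorfman bracket of two star sections of $\mathsf D_G$ evaluates on left-invariant vector fields to $\s$-pullbacks, i.e.\ that $[\xi,\eta]$ is again a star section --- is precisely the downstream result that Lemma \eqref{key} is designed to feed, and in the paper it is deduced \emph{from} \eqref{key}. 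You do not prove it; you explicitly defer "extracting the infinitesimal consequence of the subgroupoid condition" as the principal obstacle. So the proposal reformulates the lemma into an equivalent (arguably harder, and in the paper's logic circular) statement and stops before the actual content.

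The missing mechanism in the paper is global, not infinitesimal, and it is worth naming because it is the only place multiplicativity is really used. The flow of $Z^l$ is right translation by the bisections $\Exp(tZ)$, so $\ldr{Z^l}\xi$ is the $t$-derivative of $\bigl(R_{\Exp(tZ)}^*X_\xi,R_{\Exp(tZ)}^*\theta_\xi\bigr)$. Fix $g$ and $p=\s(g)$ and factor, inside the groupoid $\mathsf P_G\rr TP\oplus A^*G$, $\xi(g\star\Exp(tZ)(p))=\delta_t(g)\star\xi(\Exp(tZ)(p))$ with $\delta_t(g):=\xi(g\star\Exp(tZ)(p))\star\bigl(\xi(\Exp(tZ)(p))\bigr)\inv$; since $\mathsf D_G$ is a subgroupoid, $\delta_t(g)\in\mathsf D_G(g)$ for all $t$. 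Pairing with $\eta(g)=\eta(g)\star\bar\eta(p)$ (valid because $\eta$ is a star section) and using the additivity of the canonical pairing over the groupoid multiplication, the $t$-dependent pairing splits as $\langle\eta(g),\delta_t(g)\rangle+\langle\bar\eta(p),(R_{\Exp(tZ)}^*X_\xi,R_{\Exp(tZ)}^*\theta_\xi)(p)\rangle$; the first term vanishes identically because $\mathsf D_G(g)$ is isotropic, and differentiating at $t=0$ gives $h(g)=\bigl(\bar\theta_\eta(\ldr{Z^l}X_\xi)+(\ldr{Z^l}\theta_\xi)(\bar X_\eta)\bigr)(\s(g))$, which is the claimed pullback property. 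Without some version of this argument (or another concrete use of the subgroupoid property), your proposal does not establish the lemma.
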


\begin{proof}
Choose $g\in G$ and set $p=\s(g)$. 
For all $t\in (-\varepsilon, \varepsilon)$ for a small $\varepsilon$, 
we have 
\begin{align*}
\xi(g\star \Exp(tZ)(p))&=
\bigl(\xi(g\star \Exp(tZ)(p))\bigr)
\star \bigl(\xi(\Exp(tZ)(p))\bigr)\inv
\star (\xi(\Exp(tZ)(p))\bigr).
\end{align*}
The pair $$\bigl(\xi(g\star \Exp(tZ)(p))\bigr)
\star \bigl(\xi(\Exp(tZ)(p))\bigr)\inv$$
 is an element 
of $\mathsf D_G(g)$ for all $t\in (-\varepsilon, \varepsilon)$
and will be written $\delta_t(g)$ to simplify the notation.
Note that we have 
\begin{align}\label{formula_for_product}
&\bigl(T_{R_{\Exp(tZ)}(g)}R_{\Exp(-tZ)}X_\xi(g\star
\Exp(tZ)(p)), 
\theta_\xi(g\star\Exp(tZ)(p))\circ T_gR_{\Exp(tZ)}\bigr)\nonumber\\
=\,&\delta_t(g)\star\bigl(T_{\Exp(tZ)}R_{\Exp(-tZ)}X_\xi(\Exp(tZ)(p)),
\theta_\xi(\Exp(tZ)(p))\circ T_pR_{\Exp(tZ)}\bigr).
\end{align}
We compute 
\begin{align*}
&\left(\theta_\eta(\ldr{Z^l}X_\xi)+(\ldr{Z^l}\theta_\xi)(X_\eta)\right)(g)\\
=\,\,&\,\left\langle \eta(g), 
\left.\frac{d}{dt}\right\an{t=0}\left(\left(R_{\Exp(tZ)}^*X_\xi\right)(g), 
\left(R_{\Exp(tZ)}^*\theta_\xi\right)(g)\right)\right\rangle\\
\overset{\eqref{formula_for_product}}=&\,\left.\frac{d}{dt}\right\an{t=0}
\biggl\langle \eta(g)\star \bar\eta(p), \, \delta_t(g)\star\left(\left(R_{\Exp(tZ)}^*X_\xi\right)(p), 
\left(R_{\Exp(tZ)}^*\theta_\xi\right)(p)\right)
\biggr\rangle\\
=\,\,&\,\left.\frac{d}{dt}\right\an{t=0}
\bigl\langle \eta(g), \delta_t(g)\bigr\rangle +\left.\frac{d}{dt}\right\an{t=0}
\biggl\langle  \bar\eta(p),
 \left(\left(R_{\Exp(tZ)}^*X_\xi\right)(p), 
\left(R_{\Exp(tZ)}^*\theta_\xi\right)(p)\right)
\biggr\rangle\\
=\,\,&\,\left(\left.\frac{d}{dt}\right\an{t=0}0\right)+\bigl\langle \bar\eta,
 (\ldr{Z^l}X_\xi, \ldr{Z^l}\theta_\xi)
\bigr\rangle(p)=\left(\bar\theta_\eta(\ldr{Z^l}X_\xi)+(\ldr{Z^l}\theta_\xi)(\bar X_\eta)\right)(\s(g)).
\end{align*}
\end{proof}

\begin{proposition}\label{lie_structure_dual}
Let $(G\rr P, \mathsf D_G)$ be a Dirac groupoid.
Choose $\bar\xi, \bar\eta\in\Gamma(\mathfrak A(\mathsf D_G))$ and 
star sections $\xi\sim_\s \bar\xi$, 
$\eta\sim_\s \bar\eta$ of $\mathsf D_G$, as in Proposition \ref{sdescending}.
Then the Courant-Dorfman bracket
$$\left[\xi, \eta\right]
=\left([X_\xi, X_\eta], \ldr{X_\xi}\theta_\eta-\ip{X_\eta}\dr\theta_\xi\right)
$$
is a star section and 
its values on $P$ are elements of $TP\oplus A^*G$.
\end{proposition}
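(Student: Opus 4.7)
The plan is to verify two things: first, that $[\xi,\eta](p) \in T_pP \oplus A_p^*G$ for every $p \in P$; and second, that $[\xi,\eta]$ is $\s$-related to some section on $P$, in the sense that $\TT\s([\xi,\eta](g))$ depends only on $\s(g)$.

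For the first assertion, the restrictions $X_\xi\an{P} = \bar{X_\xi} \in \mx(P)$ and $X_\eta\an{P} = \bar{X_\eta} \in \mx(P)$ are tangent to $P$, so $[X_\xi, X_\eta]$ is tangent to $P$ along $P$, giving $[X_\xi,X_\eta](p) \in T_pP$. For the $1$-form component I will evaluate $(\ldr{X_\xi}\theta_\eta - \ip{X_\eta}\dr\theta_\xi)(p)$ on an arbitrary $u_p \in T_pP$ extended to a vector field $U$ tangent to $P$ along $P$. Expanding via Cartan's formula, every resulting term is either a derivative along a vector tangent to $P$ at $p$ of a function of the form $\theta_\xi(U)$ or $\theta_\eta(U)$, which vanishes on $P$ since $\theta_\xi\an{P}, \theta_\eta\an{P} \in \Gamma(A^*G)$ annihilate $TP$; or a pairing of $\theta_\xi(p)$ or $\theta_\eta(p)$ with a Lie bracket of vector fields tangent to $P$ along $P$, and such a bracket lies in $T_pP$ and is therefore annihilated by $\theta_\xi(p), \theta_\eta(p) \in (T_pP)^\circ$. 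Hence the $1$-form component lies in $(T_pP)^\circ = A_p^*G$.

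For the second assertion the vector part is routine: $X_\xi \sim_\s \bar{X_\xi}$ and $X_\eta \sim_\s \bar{X_\eta}$ imply $[X_\xi, X_\eta] \sim_\s [\bar{X_\xi}, \bar{X_\eta}]$. For the $1$-form part I need to show that $\hat\s$ applied pointwise to $\ldr{X_\xi}\theta_\eta - \ip{X_\eta}\dr\theta_\xi$ depends only on the source projection, which by the definition of $\hat\s$ reduces to showing that $(\ldr{X_\xi}\theta_\eta - \ip{X_\eta}\dr\theta_\xi)(Z^l)$ is $\s$-projectable for each $Z \in \Gamma(AG)$. Using $(\ldr{X_\xi}\theta_\eta)(Z^l) = X_\xi(\theta_\eta(Z^l)) - \theta_\eta([X_\xi, Z^l])$ and $(\ip{X_\eta}\dr\theta_\xi)(Z^l) = X_\eta(\theta_\xi(Z^l)) - (\ldr{Z^l}\theta_\xi)(X_\eta)$, I decompose
\[
(\ldr{X_\xi}\theta_\eta - \ip{X_\eta}\dr\theta_\xi)(Z^l) = \bigl(\theta_\eta([Z^l, X_\xi]) + (\ldr{Z^l}\theta_\xi)(X_\eta)\bigr) + \bigl(X_\xi(\theta_\eta(Z^l)) - X_\eta(\theta_\xi(Z^l))\bigr).
\]
The first parenthesis is $\s$-projectable by the identity \eqref{key} proved in the preceding lemma. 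For the second, the star-section property together with the definition of $\hat\s$ yields $\theta_\eta(Z^l) = \s^*(\bar{\theta_\eta}(Z))$ and $\theta_\xi(Z^l) = \s^*(\bar{\theta_\xi}(Z))$, so combining with $X_\xi \sim_\s \bar{X_\xi}$ and $X_\eta \sim_\s \bar{X_\eta}$ gives $X_\xi(\theta_\eta(Z^l)) - X_\eta(\theta_\xi(Z^l)) = \s^*\bigl(\bar{X_\xi}(\bar{\theta_\eta}(Z)) - \bar{X_\eta}(\bar{\theta_\xi}(Z))\bigr)$, which is also $\s$-projectable.

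The main obstacle I expect is the careful Cartan-calculus rearrangement needed to isolate precisely the combination $\theta_\eta([Z^l,X_\xi]) + (\ldr{Z^l}\theta_\xi)(X_\eta)$ to which \eqref{key} applies; once the decomposition above is in place, every remaining step is a direct consequence of the $\s$-star hypothesis on $\xi$ and $\eta$.
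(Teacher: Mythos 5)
Your proof is correct and follows essentially the same route as the paper's: the same expansion isolating the combination $\theta_\eta(\ldr{Z^l}X_\xi)+(\ldr{Z^l}\theta_\xi)(X_\eta)$ to which \eqref{key} applies, the star-section identity $\theta_\eta(Z^l)=\s^*(\bar{\theta_\eta}(Z))$ for the remaining terms, and the evaluation on vectors tangent to $P$ to see that the one-form component lies in $(T_pP)^\circ=A_p^*G$. The only cosmetic slip is that the term $U(\theta_\xi(X_\eta))$ in your Cartan expansion is not literally of the form $\theta_\xi(U)$ or $\theta_\eta(U)$, but it vanishes along $P$ for the same reason, since $\theta_\xi\an{P}$ annihilates $TP$ and $X_\eta\an{P}$ is tangent to $P$.
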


\begin{proof}
Since $X_\xi\sim_\s\bar{X_\xi}$ and $X_\eta\sim_\s\bar{X_\eta}$, we know that
$[X_\xi,X_\eta]\sim_\s[\bar{X_\xi}, \bar{X_\eta}]$. 
Since $X_\xi\an{P}=\bar{X_\xi}$, $X_\eta\an{P}=\bar{X_\eta}$ the value
of $[X_\xi,X_\eta]$ on points in $P$ is equal
to the value of $[\bar{X_\xi}, \bar{X_\eta}]\in\mx(P)$.
We check that for all $p\in P$, we have
$\hat\s\left((\ldr{X_\xi}\theta_\eta-\ip{X_\eta}\dr\theta_\xi)(g)\right)
=(\ldr{X_\xi}\theta_\eta-\ip{X_\eta}\dr\theta_\xi)(p)$ for any $g\in\s\inv(p)$.

We have  for any $Z\in\Gamma(AG)$:
\begin{align*}
\hat\s\left(\left(\ldr{X_\xi}\theta_\eta-\ip{X_\eta}\dr\theta_\xi\right)(g)\right)(Z(p))
&=\left(\ldr{X_\xi}\theta_\eta-\ip{X_\eta}\dr\theta_\xi\right)\left(Z^l\right)(g).
\end{align*}
Hence, we compute with \eqref{key}
\begin{align*}
&\left(\ldr{X_\xi}\theta_\eta-\ip{X_\eta}\dr\theta_\xi\right)(Z^l)\\
=\,&X_\xi(\theta_\eta(Z^l))+\theta_\eta(\ldr{Z^l}X_\xi)-X_\eta(\theta_\xi(Z^l))
+Z^l(\theta_\xi(X_\eta))-\theta_\xi(\ldr{Z^l}X_\eta)\\
=\,&X_\xi(\s^*(\bar{\theta_\eta}(Z)))+\theta_\eta(\ldr{Z^l}X_\xi)-X_\eta(\s^*(\bar{\theta_\xi}(Z)))
+(\ldr{Z^l}\theta_\xi)(X_\eta)\\
=\,&\s^*\Bigl(\bar{X_\xi}(\bar{\theta_\eta}(Z))
+\bar{\theta_\eta}(\ldr{Z^l}X_\xi)-{\bar X_\eta}(\bar{\theta_\xi}(Z))
+(\ldr{Z^l}\theta_\xi)(\bar{X_\eta})
\Bigr).
\end{align*}
We  have then also for $p\in P$:
\begin{align*}
\left(\ldr{X_\xi}\theta_\eta-\ip{X_\eta}\dr\theta_\xi\right)(Z(p))
&=\left(\bar{X_\xi}(\bar{\theta_\eta}(Z))+\bar{\theta_\eta}(\ldr{Z^l}X_\xi)-{\bar X_\eta}(\bar{\theta_\xi}(Z))
+(\ldr{Z^l}\theta_\xi)(\bar{X_\eta})
\right)(p).
\end{align*}
Choose $X\in\Gamma(TP)$, then
\begin{align*}
\left(\ldr{X_\xi}\theta_\eta-\ip{X_\eta}\dr\theta_\xi\right)(X(p))
=\,\,&\bar{X_\xi}(\bar{\theta_\eta}(X))(p)+\bar{\theta_\eta}([X,\bar{X_\xi}])(p)
-\bar{X_\eta}(\bar{\theta_\xi}(X))(p)\\
&+X(\bar{\theta_\xi}(\bar{X_\eta}))(p)-\bar{\theta_\xi}([X, \bar{X_\eta}])(p)=0
\end{align*}
since $\bar{\theta_\eta}, \bar{\theta_\xi}\in\Gamma(TP^\circ)$ and 
$X,  [X,\bar{X_\xi}], \bar{X_\eta}, [X, \bar{X_\eta}]\in\Gamma(TP)$.

Thus, we have shown that
$\left(\ldr{X_\xi}\theta_\eta-\ip{X_\eta}\dr\theta_\xi\right)\an{P}$ is a section 
of $A^*G=TP^\circ$ and
$$\hat\s\left(\left(\ldr{X_\xi}\theta_\eta-\ip{X_\eta}\dr\theta_\xi\right)(g)\right)
=\left(\ldr{X_\xi}\theta_\eta-\ip{X_\eta}\dr\theta_\xi\right)(\s(g))$$
for all $g\in G$.
\end{proof}

\begin{theorem}\label{lie_algebroid_dual}
Let $(G\rr P,\mathsf D_G)$ be a Dirac groupoid.
Then there is an induced antisymmetric bracket 
\begin{equation*}
\begin{array}{cccc}
[\cdot\,,\cdot]_\star&:\Gamma(\mathfrak A(\mathsf D_G))\times
\Gamma(\mathfrak A(\mathsf D_G))&\to &\Gamma( TP\oplus A^*G)
\end{array}
\end{equation*}
defined by $
[\bar\xi, \bar\eta]_\star=[\xi,\eta]\an{P}$
for any choice of star sections $\xi\sim_\s \bar\xi$, 
$\eta\sim_\s \bar\eta$ of $\mathsf D_G$.
If $(G\rr P,\mathsf D_G)$ is closed, then 
$(\mathfrak A(\mathsf D_G), [\cdot\,,\cdot]_\star, \mathsf a_\star)$ is a Lie algebroid
over $P$.
\end{theorem}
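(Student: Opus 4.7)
The plan is to verify in turn the four defining properties of a Lie algebroid: (i) the value of the Courant--Dorfman bracket $[\xi,\eta]$ on $P$ depends only on $\bar\xi,\bar\eta$, so that $[\bar\xi,\bar\eta]_\star := [\xi,\eta]\an{P}$ is well defined and lands in $\Gamma(TP\oplus A^*G)$ by Proposition \ref{lie_structure_dual}; (ii) $[\cdot,\cdot]_\star$ is antisymmetric; (iii) it satisfies the Leibniz identity relative to $\mathsf a_\star$; and, assuming $\mathsf D_G$ is closed, (iv) the bracket lands in $\Gamma(\mathfrak A(\mathsf D_G))$, satisfies the Jacobi identity, and $\mathsf a_\star$ is a homomorphism of brackets.

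For well-definedness, if $\xi_1,\xi_2$ are two star sections with $\xi_1\an{P}=\xi_2\an{P}=\bar\xi$, then $\chi := \xi_1-\xi_2$ is a section of $\mathsf D_G\cap\ker\TT\s$ vanishing on $P$. By Lemma \ref{const_rk_int_kern}, $\mathsf D_G\cap\ker\TT\s$ is a smooth vector bundle, so one may pick a local frame $\{\chi_i\}$ and write $\chi=\sum_i f_i\chi_i$ with $f_i\an{P}=0$. Combining the Leibniz identity \eqref{leibniz} with the antisymmetry of the Courant--Dorfman bracket on the Lagrangian subbundle $\mathsf D_G$, one obtains
\[
[\chi,\eta] \;=\; \sum_i\bigl(f_i\,[\chi_i,\eta]\,-\,X_\eta(f_i)\,\chi_i\bigr).
\]
At $p\in P$ the first summand vanishes because $f_i(p)=0$, and the second because $X_\eta(p)=\bar X_\eta(p)\in T_pP$ contracts trivially with $\dr f_i(p)\an{T_pP}=0$. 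The symmetric argument in the $\eta$ slot shows that $[\bar\xi,\bar\eta]_\star$ is intrinsically defined; antisymmetry is then inherited from that of the Courant--Dorfman bracket on $\Gamma(\mathsf D_G)$. For the Leibniz rule, given $f\in C^\infty(P)$, the section $(\s^*f)\eta$ is a star section over $f\bar\eta$, and \eqref{leibniz} combined with $X_\xi\sim_\s \bar X_\xi$ yields
\[
[\bar\xi,f\bar\eta]_\star \;=\; \bigl((\s^*f)[\xi,\eta]+X_\xi(\s^*f)\,\eta\bigr)\an{P} \;=\; f[\bar\xi,\bar\eta]_\star+\mathsf a_\star(\bar\xi)(f)\,\bar\eta.
\]

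Now assume $\mathsf D_G$ is closed. Then $[\xi,\eta]\in\Gamma(\mathsf D_G)$ for any star sections $\xi,\eta$, so $[\xi,\eta]\an{P}\in\mathsf D_G\an{P}\cap(TP\oplus A^*G)=\mathfrak A(\mathsf D_G)$; moreover Proposition \ref{lie_structure_dual} shows that $[\xi,\eta]$ is itself a star section over $[\bar\xi,\bar\eta]_\star$. Compatibility of the anchor, $\mathsf a_\star([\bar\xi,\bar\eta]_\star)=[\mathsf a_\star\bar\xi,\mathsf a_\star\bar\eta]$, follows from the identity $[X_\xi,X_\eta]\an{P}=[\bar X_\xi,\bar X_\eta]$ already noted in the proof of Proposition \ref{lie_structure_dual}. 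Finally, for Jacobi, the Courant--Dorfman Jacobiator vanishes on $\Gamma(\mathsf D_G)$ by closedness; restricting
\[
[[\xi,\eta],\zeta]+[[\eta,\zeta],\xi]+[[\zeta,\xi],\eta] \;=\; 0
\]
to $P$, and using that each inner bracket is a star section over the corresponding $\star$-bracket, delivers the Jacobi identity for $[\cdot,\cdot]_\star$.

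The principal technical hurdle is well-definedness: the Courant--Dorfman bracket is only manifestly $C^\infty(G)$-Leibniz in its second argument, so the vanishing of $[\chi,\eta]\an{P}$ for $\chi$ in the ambiguity bundle $\mathsf D_G\cap\ker\TT\s$ vanishing on $P$ requires both the constant-rank statement of Lemma \ref{const_rk_int_kern} (to produce a local frame) and the antisymmetry of the bracket on the Lagrangian $\Gamma(\mathsf D_G)$ (to convert Leibniz in the first argument into a useable form); everything else reduces to a clean restriction to $P$ of the corresponding identities holding on $\Gamma(\mathsf D_G)$.
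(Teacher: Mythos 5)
Your proof is correct, and its overall skeleton (well-definedness, antisymmetry, then in the closed case values in $\Gamma(\mathfrak A(\mathsf D_G))$, anchor compatibility, Leibniz and Jacobi by restriction to $P$) matches the paper's; the genuine difference is in the key well-definedness step. The paper takes a star section $\nu\sim_\s 0$ and shows $[\nu,\xi]\an{P}=0$ by a direct computation: pairing the covector part against left-invariant vector fields $Z^l$ and invoking the identity \eqref{key} from the lemma preceding Proposition \ref{lie_structure_dual}, together with the fact that the restriction to $P$ of the covector part lies in $A^*G=TP^\circ$, so that vanishing on $\Gamma(AG)$ suffices. You instead write the ambiguity section $\chi\in\Gamma(\mathsf D_G\cap\ker\TT\s)$, $\chi\an{P}=0$, in a local frame with coefficients vanishing on $P$ (legitimate by the constant-rank Lemma \ref{const_rk_int_kern}), and use the Leibniz rule \eqref{leibniz} together with antisymmetry of the Courant--Dorfman bracket on the Lagrangian $\Gamma(\mathsf D_G)$ (valid without closedness, since the symmetric correction $\tfrac12\dr\langle\cdot\,,\cdot\rangle$ vanishes) to kill both resulting terms at points of $P$. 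This is a more elementary mechanism for that step, avoiding the $\Exp$-flow identity \eqref{key} there — although \eqref{key} is not really bypassed globally, since you still need Proposition \ref{lie_structure_dual} (whose proof uses it) to know the bracket of star sections restricts to $\Gamma(TP\oplus A^*G)$ and is again a star section; your frame-and-Leibniz technique is in fact the same device the paper itself deploys later (e.g.\ in Lemma \ref{lem} and Theorem \ref{theorem_courant_algebroid}). A small bonus of your write-up is that the Jacobi step is made explicit: you record that in the closed case $[\xi,\eta]$ is a star section of $\mathsf D_G$ over $[\bar\xi,\bar\eta]_\star$, which is exactly what is needed to transfer the Jacobi identity from $\Gamma(\mathsf D_G)$ to $[\cdot\,,\cdot]_\star$, a point the paper leaves implicit.
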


\begin{proof}
By Proposition \ref{lie_structure_dual}, if 
$ \xi\sim_\s \bar\xi$, 
$\eta\sim_\s \bar\eta$
then 
$$[(X_\xi,\theta_\xi), (X_\eta,\theta_\eta)]\sim_\s
\left([\bar{X_\xi},\bar{X_\eta}],
  \left(\ldr{X_\xi}\theta_\eta-\ip{X_\eta}\dr\theta_\xi\right)\an{P}\right).$$
Thus, we have first to show that
the right-hand side of this equation does not depend on the choice of the sections
$\xi$ and 
$\eta$. Choose a star section $\nu\sim_\s0$ of $\mathsf D_G$,
i.e., $\nu\in\Gamma(\mathsf D_G\cap\ker\TT\s)$ with $\nu\an{P}=0$.
For any $Z\in\Gamma(AG)$, we find  as in the proof of Proposition 
\ref{lie_structure_dual}
\begin{align*}
\left(\ldr{X_\nu}\theta_\xi-\ip{X_\xi}\dr\theta_\nu\right)\left(Z^l\right)
&= \s^*(\bar{X_\nu}(\bar{\theta_\xi}(Z)))+\theta_\xi(\ldr{Z^l}X_\nu)
-\s^*({\bar X_\xi}(\bar{\theta_\nu}(Z)))+(\ldr{Z^l}\theta_\nu)(X_\xi)\\
&=\theta_\xi(\ldr{Z^l}X_\nu)+(\ldr{Z^l}\theta_\nu)(X_\xi)
\qquad  \text{ since } \bar X_\nu=0 \text{ and } \bar{\theta_\nu}=0\\
&=\ldr{Z^l}\langle (X_\nu,\theta_\nu), (X_\xi, \theta_\xi)\rangle
-\langle\ldr{Z^l}(X_\xi, \theta_\xi), (X_\nu,\theta_\nu)\rangle\\
&= -\langle\ldr{Z^l}(X_\xi, \theta_\xi), (X_\nu,\theta_\nu)\rangle.
\end{align*}
Hence, at any $p\in P$, we find 
\begin{align*}
(\ldr{X_\nu}\theta_\xi-\ip{X_\xi}\dr\theta_\nu)(Z(p))
&= -\langle\ldr{Z^l}(X_\xi, \theta_\xi)(p), (\bar{X_\nu},\bar{\theta_\nu})(p)\rangle\\
&=-\langle\ldr{Z^l}(X_\xi, \theta_\xi)(p), (0_p,0_p)\rangle=0.
\end{align*}
Thus, we find $(\ldr{X_\nu}\theta_\xi-\ip{X_\xi}\dr\theta_\nu)(p)=0_p$
since we know by the previous proposition that 
$(\ldr{X_\nu}\theta_\xi-\ip{X_\xi}\dr\theta_\nu)(p)\in A_p^*G=T_pP^\circ$.
We get hence
\begin{align*}
\left[(X_\nu,\theta_\nu), (X_\xi,\theta_\xi)\right](p)
&=\left([\bar{X_\nu}, \bar{X_\xi}],  
\ldr{X_\nu}\theta_\xi-\ip{X_\xi}\dr\theta_\nu\right)(p)=([0,\bar{X_\xi}]_p, 0_p)=(0_p, 0_p).
\end{align*}
This shows that the bracket on $\Gamma(\mathfrak A(\mathsf D_G))$ is well-defined.
It is antisymmetric because
the Courant-Dorfman bracket on sections  of $\mathsf D_G$ is antisymmetric.

\medskip

If $\mathsf D_G$ is closed, then for all star sections
$\xi,\eta\in\Gamma(\mathsf D_G)$, the bracket
$[\xi,\eta]$ is also a section of $\mathsf D_G$ and 
its restriction to $P$
is a section of $\mathfrak A(\mathsf D_G)$ since it is a section 
of $TP\oplus A^*G$.
The Jacobi identity is satisfied by $[\cdot\,,\cdot]_\star$ because the Courant-Dorfman
 bracket on sections
of $\mathsf D_G$ satisfies the Jacobi identity. 
For any $\bar\xi, \bar\eta\in\Gamma(\mathfrak A(\mathsf D_G))$ and 
$f\in C^\infty(P)$, we have 
$$\mathsf a_\star\left[\bar\xi, \bar\eta\right]_\star
=\left[\bar{X_\xi},\bar{X_\eta}\right]=\left[\mathsf a_\star(\bar\xi), 
\mathsf a_\star(\bar\eta)\right]$$
and
\begin{align*}
\left[\bar\xi, f\cdot\bar\eta\right]_\star(p)
&=\left[ (X_\xi,\theta_\xi), (\s^*f)(X_\eta, \theta_\eta)\right](p)\\
&=X_\xi(\s^*f)(X_\eta, \theta_\eta)(p)+(\s^*f)\left[ (X_\xi,\theta_\xi), (X_\eta, \theta_\eta)\right](p)\\
&=\bar{X_\xi}(f)(p)\cdot (\bar X_\eta,\bar{\theta_\eta})(p)
+ f(p)\cdot\left[ \bar\xi, \bar\eta\right]_\star(p)\\
&=\mathsf a_\star(\bar\xi)(f)(p)\cdot \bar\eta(p)
+ f(p)\cdot\left[ \bar\xi, \bar\eta\right]_\star(p)
 \end{align*} 
for all $p\in P$.
\end{proof}

\medskip

Let $G\rr P$ be a Lie groupoid, $TG\rr TP$ its tangent prolongation and 
$(A\to P, \mathsf a, [\cdot\,,\cdot]_{\mathsf a})$ a Lie algebroid over $P$.
Let $\Omega$ be a smooth manifold. The quadruple $(\Omega; G, A; P)$ is 
a \emph{$\mathcal L A$-groupoid} \cite{Mackenzie00} if 
$\Omega$ has  both a  Lie groupoid structure over $A$ and a Lie algebroid
structure   over $G$ such that the two structures on $\Omega$ commute in the sense 
that the maps defining  the groupoid structure are all Lie algebroid 
morphisms.
(The bracket on sections of $\lie A(\mathsf D_G)$ 
can be defined in the same manner with the target map, and the fact that the multiplication
in $T^*G\oplus TG$ is a Lie algebroid morphism is shown in \cite{Ortiz08t}.) 
 The double source map $(\tilde q, \tilde \s):\Omega\to G\oplus A$ 
has furthermore to be a surjective submersion.

Recall from \cite{Courant90a} that if $\mathsf D_G$
is closed, then $\mathsf D_G\to G$ has the structure of a Lie algebroid
with the Courant-Dorfman bracket
and the projection on $TM$ as anchor. Thus, the previous theorem 
shows that the quadruple $(\mathsf D_G; G, \lie A(\mathsf D_G); P)$ 
is a $\mathcal L A$-groupoid (see also \cite{Ortiz08t}):
\begin{displaymath}\begin{xy}
\xymatrix{ 
 \mathsf D_G\ar@<.6ex>^{\TT\s}[rr]\ar@<-.6ex>_{\TT\tg}[rr]
\ar[dd]_{q}\ar[dr]^{\pi_{TG}}&&\lie A(\mathsf D_G)\ar@{-->}[dd]\ar[dr]^{\mathsf a_\star}&\\
& TG \ar@<.6ex>^{\qquad T\s}[rr]\ar@<-.6ex>_{\qquad T\tg}[rr]\ar[dl]&&TP\ar[dl]\\
G\ar@<.6ex>^{\s}[rr]\ar@<-.6ex>_{\tg}[rr]&&P&
}
\end{xy}
\end{displaymath}

We recover hence our  star sections of $\mathsf D_G$
as  the star sections of  the $\mathcal L$A-groupoid
$(\mathsf D_G; G, \lie A(\mathsf D_G); P)$ as in \cite{Mackenzie00}.
It is shown in  \cite{Mackenzie00}
  (see also \cite{Mackenzie92}), that the bracket
of two star sections is again a star section.
Here, we have shown this fact in Proposition \ref{lie_structure_dual}
and get as a consequence the fact that $\lie A(\mathsf D_G)$ has the structure
of a Lie algebroid over $P$.

The next interesting object in \cite{Mackenzie00} is the \emph{core} $K$ of $\Omega$.
It is defined as the pullback vector bundle across $\epsilon:P\hookrightarrow G$
of the kernel $\ker(\tilde \s:\Omega\to A)$. Hence, it is here 
exactly the vector bundle  $I^\s(\mathsf D_G)$ over $P$. It comes 
equipped with the vector bundle morphisms
$\delta_{\lie A(\mathsf D_G)}: I^\s(\mathsf D_G)\to \lie A(\mathsf D_G)$,
$(v_p,\alpha_p)\mapsto\TT \tg(v_p,\alpha_p)$ and 
$\delta_{\widetilde{AG}}:I^\s(\mathsf D_G)\to \widetilde{AG}$,
$(v_p,\alpha_p)\mapsto v_p$.
We have then $\tilde{\mathsf a}\circ \delta_{\widetilde{AG}}
=\mathsf a_\star\circ \delta_{\lie A(\mathsf D_G)}=:\mathsf k$.
Furthermore,
there is an induced bracket $[\cdot\,,\cdot]_{I^\s(\mathsf D_G)}$ on sections of $I^\s(\mathsf D_G)$
such that $(I^\s(\mathsf D_G), [\cdot\,,\cdot]_{I^\s(\mathsf D_G)}, \mathsf k)$ is a Lie algebroid
over $P$. We prove this fact for our special situation in the following proposition.

Recall that if $(v_p,\alpha_p)$, $p\in P$, is an element of 
$I^\s_p(\mathsf D_G)$, then $\alpha_p$ can be written $(T_p\tg)^*\beta_p$ with some 
$\beta_p\in T_p^*P$. Furthermore, 
if $\sigma$ is a section of $I^\s(\mathsf D_G)\subseteq 
(T^\s G\oplus (T^\tg G)^\circ)\an{P}$, then 
$\sigma^r$ defined
by $\sigma^r(g)=\sigma(\tg(g))\star (0_g,0_g)$ for all
$g\in G$
is a section of $\mathsf D_G\cap\ker\TT\s$
 by Lemma \ref{const_rk_int_kern}
and Remark \ref{computation}.
We write 
$\sigma^r=(X_\sigma^r, \tg^*\alpha_\sigma)$
with some $X_\sigma\in\Gamma\left(\widetilde{AG}\right)$ and $\alpha_\sigma\in\Omega^1(P)$.

\begin{proposition}\label{other_algebroid}
Let $(G\rr P, \mathsf D_G)$ be a Lie groupoid. 
Define $[\cdot\,,\cdot]_{I^\s(\mathsf D_G)}:\Gamma(I^\s(\mathsf D_G))\times \Gamma(I^\s(\mathsf D_G))
\to\Gamma((\ker\TT\s)\an{P})$ by 
$$\left([\sigma,\tau]_{I^\s(\mathsf D_G)}\right)^r=\left[\sigma^r,\tau^r\right]$$
for all sections $\sigma,\tau\in \Gamma(I^\s(\mathsf D_G))$, i.e.,
$$[\sigma,\tau]_{I^\s(\mathsf D_G)}
=\left([X_\sigma,X_\tau]_{\widetilde{AG}}, 
(\tg^*(\ldr{\tilde{\mathsf a}(X_\sigma)}\alpha_\tau
-\ip{\tilde{\mathsf a}(X_\tau)}\dr\alpha_\sigma))\an{P}\right).$$ 
If $\mathsf D_G$ is closed, this bracket has image in $\Gamma(I^\s(\mathsf D_G))$
and $I^\s(\mathsf D_G)$ has the structure of a Lie algebroid over $P$ with 
the anchor map $\mathsf k$
defined by $\mathsf k(v_p,\alpha_p)=T_p\tg v_p$ for all $(v_p,\alpha_p)\in I^\s_p(\mathsf D_G)$, $p\in P$.
\end{proposition}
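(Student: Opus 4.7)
The plan is to realize $\Gamma(I^\s(\mathsf D_G))$ as a $C^\infty(P)$-submodule of $\Gamma(\mathsf D_G\cap\ker\TT\s)$ via the right-invariant extension $\sigma\mapsto\sigma^r$, and to transport the Courant--Dorfman bracket through this embedding. Concretely, $\sigma^r=(X_\sigma^r,\tg^*\alpha_\sigma)$ is right-invariant in the sense that $\sigma^r(g)=\sigma(\tg(g))\star(0_g,0_g)$, and by Lemma~\ref{const_rk_int_kern} together with Remark~\ref{computation} this gives a bijection between $\Gamma(I^\s(\mathsf D_G))$ and the space of such right-invariant sections of $\mathsf D_G\cap\ker\TT\s$. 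The proof then amounts to checking that this subspace is closed under Courant--Dorfman and that the induced bracket on $\Gamma(I^\s(\mathsf D_G))$ satisfies the Lie algebroid axioms.

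Next, I would compute $[\sigma^r,\tau^r]$ explicitly and match the stated formula. For the vector-field component, the Lie bracket of two right-invariant vector fields on $G$ is right-invariant (this is the standard fact used in the paragraph preceding Proposition~\ref{exp_map}), so $[X_\sigma^r,X_\tau^r]=[X_\sigma,X_\tau]_{\widetilde{AG}}^r$. For the $1$-form component, the key identity is $X^r\sim_\tg\tilde{\mathsf a}(X)$, where $\tilde{\mathsf a}(X)(p)=T_p\tg\,X(p)$; combined with the naturality of $\dr$ and Cartan's magic formula this gives $\ip{X^r}\tg^*\beta=\tg^*(\ip{\tilde{\mathsf a}(X)}\beta)$ and $\ldr{X^r}\tg^*\beta=\tg^*(\ldr{\tilde{\mathsf a}(X)}\beta)$ for every $\beta\in\Omega^1(P)$. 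Applying these to $\ldr{X_\sigma^r}(\tg^*\alpha_\tau)-\ip{X_\tau^r}\dr(\tg^*\alpha_\sigma)$ produces the $\tg$-pullback of $\ldr{\tilde{\mathsf a}(X_\sigma)}\alpha_\tau-\ip{\tilde{\mathsf a}(X_\tau)}\dr\alpha_\sigma$, establishing the claimed formula on restriction to $P$.

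I would then verify well-definedness as a map into $\Gamma((\ker\TT\s)|_P)$ without any integrability hypothesis: $[X_\sigma^r,X_\tau^r]$ lies in $\Gamma(T^\s G)$ because $T^\s G$ is involutive, and the $1$-form component, being a $\tg$-pullback, automatically annihilates $T^\tg G$. Hence $[\sigma^r,\tau^r]$ is a section of $\ker\TT\s$ whose restriction to $P$ belongs to $(T_P^\s G)\oplus(T_P^\tg G)^\circ$. When $\mathsf D_G$ is closed, $[\sigma^r,\tau^r]$ is additionally a section of $\mathsf D_G$, and restricting to $P$ puts it into $\mathsf D_G\cap(T_P^\s G\oplus(T_P^\tg G)^\circ)=I^\s(\mathsf D_G)$, showing the bracket takes values in $\Gamma(I^\s(\mathsf D_G))$ as required.

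Finally, I would deduce the Lie algebroid axioms from those of the Courant--Dorfman bracket on $\Gamma(\mathsf D_G)$. Antisymmetry and the Jacobi identity transfer immediately via the $\R$-linear embedding $\sigma\mapsto\sigma^r$. The anchor compatibility $\mathsf k([\sigma,\tau]_{I^\s(\mathsf D_G)})=[\mathsf k(\sigma),\mathsf k(\tau)]$ follows because $\mathsf k(\sigma)=\tilde{\mathsf a}(X_\sigma)$ and $X_\sigma^r\sim_\tg\tilde{\mathsf a}(X_\sigma)$, so the bracket of the right-invariant extensions projects under $T\tg$ to the bracket of the anchors. The Leibniz rule follows from $(f\sigma)^r=(\tg^*f)\sigma^r$ combined with \eqref{leibniz} applied to $[\sigma^r,(\tg^*f)\tau^r]$, noting that $X_\sigma^r(\tg^*f)=\tg^*(\tilde{\mathsf a}(X_\sigma)(f))$ restricts to $\mathsf k(\sigma)(f)$ on $P$. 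The main obstacle is really the second step: ensuring that the Courant--Dorfman bracket of two right-invariant sections is itself right-invariant in the precise sense required; everything else is then a formal consequence of the ambient Courant--Dorfman structure on $\Gamma(\mathsf D_G)$.
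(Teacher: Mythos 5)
Your proposal is correct and follows essentially the same route as the paper: extend $\sigma,\tau$ to the right-invariant sections $\sigma^r,\tau^r$ of $\mathsf D_G\cap\ker\TT\s$, verify the explicit identity $[\sigma^r,\tau^r]=\bigl(([X_\sigma,X_\tau]_{\widetilde{AG}})^r,\ \tg^*(\ldr{\tilde{\mathsf a}(X_\sigma)}\alpha_\tau-\ip{\tilde{\mathsf a}(X_\tau)}\dr\alpha_\sigma)\bigr)$, use closedness of $\mathsf D_G$ to conclude the bracket is a right-invariant section of $\mathsf D_G\cap\ker\TT\s$ restricting into $I^\s(\mathsf D_G)$, and then inherit Jacobi and Leibniz from the Courant--Dorfman bracket. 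Your write-up merely makes explicit some points the paper leaves implicit (the $\tg$-relatedness identities, involutivity of $T^\s G$, and the Leibniz computation), which is fine.
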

Note that this bracket on $I^\s(\mathsf D_G)$ is the restriction to
$I^\s(\mathsf D_G)$ of a bracket defined in the same manner
on the sections of $(\ker\TT\s)\an{P}$. 
Note also that, if 
$\mathsf D_G$ is closed, 
the space $I^\tg(\mathsf D_G)$ has in the same manner the structure of an algebroid over $P$.

\begin{proof}
Choose $\sigma,\tau\in \Gamma(I^\s(\mathsf D_G))$
and assume that $\mathsf D_G$ is closed.
 The
 bracket $$[\sigma^r,\tau^r]
=\left[\left(X_\sigma^r, \tg^*\alpha_\sigma\right), 
\left(X_\tau^r, \tg^*\alpha_\tau\right)\right]$$ is then itself
a section of $\mathsf D_G$. The identity 
\begin{align*}
\left[\left(X_\sigma^r, \tg^*\alpha_\sigma\right), \left(X_\tau^r, \tg^*\alpha_\tau\right)\right]
=\left( \left([X_\sigma,X_\tau]_{\widetilde{AG}}\right)^r, \tg^*(\ldr{\tilde{\mathsf a}(X_\sigma)}\alpha_\tau
-\ip{\tilde{\mathsf a}(X_\tau)}\dr\alpha_\sigma)\right)
\end{align*}
shows hence that
$\left[\sigma^r, \tau^r\right]\in\Gamma(\mathsf D_G\cap\ker\TT\s)$
is right invariant 
and consequently
$$[\sigma, \tau]_{I^\s(\mathsf D_G)}
=\left[(X_\sigma^r, \tg^*\alpha_\sigma), (X_\tau^r, \tg^*\alpha_\tau)\right]\an{P}
\in\Gamma(I^\s(\mathsf D_G)).$$
The bracket $[\cdot\,,\cdot]_{I^\s(\mathsf D_G)}$ satisfies then the Jacobi identity because the Courant 
bracket on sections of $\mathsf D_G$ satisfies it.
 The Leibniz rule is easy to check.
\end{proof}

As in \cite{Mackenzie00}, we have thus 
four Lie algebroids over $P$:

\begin{displaymath}\begin{xy}\label{diagram}
\xymatrix{ 
 I^\s(\mathsf D_G)\ar[dr]\ar[dd]_{\delta_{\widetilde{AG}}}\ar[rr]^{\delta_{\lie A(\mathsf D_G)}}
&&\lie A(\mathsf D_G)\ar[dl]\ar[dd]^{\mathsf a_\star}\\
&P&\\
\widetilde{AG}\ar[ur]\ar[rr]_{\tilde{\mathsf a}}&&TP\ar[ul]
}
\end{xy}
\end{displaymath}
The anchors $\tilde{\mathsf a}$, $\mathsf a_\star$ and
  the map $\delta_{\widetilde{AG}}$ are obviously Lie algebroid morphisms
and 
the theory in \cite{Mackenzie92}, 
\cite{Mackenzie00} 
yields that 
 $\delta_{\lie A(\mathsf D_G)}$ is also a Lie algebroid morphism.

\bigskip

Next, we compute the Lie algebroid $\lie A(\mathsf D_G)\to P$ for our three ``standard'' examples.
\begin{example}\label{ex_dual_of_lie_algebroid_poisson}
Let $(G\rr P,\pi_G)$ be a Poisson groupoid and $\mathsf D_{\pi_G}$ the graph
of the vector bundle homomorphism $\pi_G^\sharp:T^*G\to TG$ associated
to $\pi_G$. The pair $(G\rr P,\mathsf D_{\pi_G})$ is
a closed Dirac groupoid. The set of units
$\lie A(\mathsf  D_{\pi_G})$ of $\mathsf D_{\pi_G}$
equals here  $\graph\left(\left.\pi_G^\sharp\right\an{A^*G}:A^*G\to TP\right)$
and is hence isomorphic to $A^*G$ as a vector bundle, 
via the maps 
$\Theta:=\pr_{A^*G}:\lie A(\mathsf  D_{\pi_G})\to A^*G$
and $\Theta\inv=\left(\left.\pi_G^\sharp\right\an{A^*G},\Id_{A^*G}\right): A^*G\to \lie A(\mathsf  D_{\pi_G})$
over $\Id_P$.

The vector bundle $A^*G$ has the structure of a Lie algebroid over $P$ 
with anchor map given by  $A^*G\to TP$, $\alpha_p\mapsto \pi_G^\sharp(\alpha_p)\in T_pP$
and with  bracket
the restriction to $A^*G$ of the bracket $[\cdot\,,\cdot]_{\pi_G}$
on $\Omega^1(G)$ defined 
by $\pi_G$:
$[\alpha,\beta]_{\pi_G}=\ldr{\pi_G^\sharp(\alpha)}\beta
-\ldr{\pi_G^\sharp(\beta)}\alpha-\dr\pi_G(\alpha,\beta)$
for all $\alpha,\beta\in\Omega^1(G)$
(\cite{CoDaWe87}).
Thus, $A^*G$ with this Lie algebroid structure 
and $\lie A(\mathsf  D_{\pi_G})$ 
are isomorphic as Lie algebroids via $\Theta$ and $\Theta\inv$.
\end{example}

\begin{example}
Let $\omega_G$ be a multiplicative closed
$2$-form on a Lie  groupoid $G\rr P$ and consider 
the associated multiplicative Dirac structure $\mathsf D_{\omega_G}$
on $G$. The Lie algebroid 
$\lie A(\mathsf D_{\omega_G})\to P$
 is here equal to 
$$\lie A(\mathsf D_{\omega_G})=\graph\left(\omega_G^\flat\an{TP}:TP\to A^*G\right)$$
with anchor map $\mathsf a_\star:\lie A(\mathsf D_{\omega_G})\to TP$
given by
$\mathsf a_\star\left(v_p,\omega_G^\flat(v_p)\right)=v_p$.
The bracket of two sections 
$\left(\bar X, \omega_G^\flat(\bar X)\right), \left(\bar Y, \omega_G^\flat(\bar Y)\right)
\in\Gamma(\lie A(\mathsf D_{\omega_G}))$
is simply given by 
$$\left[\left(\bar X, \omega_G^\flat(\bar X)\right), \left(\bar Y, \omega_G^\flat(\bar Y)\right)\right]
=\left([\bar X, \bar Y], \omega_G^\flat\left([\bar X, \bar Y]\right)\right).$$

The Lie algebroid $\lie A(\mathsf D_{\omega_G})$
is obviously isomorphic to the tangent Lie algebroid $TP\to P$ of $P$, 
via the maps $\pr_{TP}:\lie A(\mathsf D_{\omega_G})\to TP$ (the anchor map)
and 
$\left(\Id_{TP},\omega_G^\flat\an{TP}\right): TP\to \lie A(\mathsf D_{\omega_G})$.

Note that 
if $(G\rr P, \omega)$ is a Lie groupoid endowed with a multiplicative closed $2$-form,
 then
$\lie A(\mathsf D_{\omega})$
is the graph of the  dual of the map $\sigma_\omega: AG\to T^*P$ in 
\cite{BuCaOr09}. 
\end{example}

\begin{example}\label{algebroid_of_pair_Dirac}
Let $(M,\mathsf D_M)$ be a smooth Dirac manifold and 
$(M\times M\,\rr\, M, \mathsf D_M\ominus\mathsf D_M)$ the associated pair Dirac groupoid
as in Example \ref{ex_pair_Dirac_Lie}. The set 
$\lie A(\mathsf D_M\ominus\mathsf D_M)$ is defined here
by $$\lie A(\mathsf D_M\ominus\mathsf D_M)_{(m,m)}=
\TT\tg((\mathsf D_M\ominus\mathsf D_M)(m,m))
=\{(v_m,v_m,\alpha_m,-\alpha_m)\mid (v_m,\alpha_m)\in\mathsf D_M(m)\}
$$
for all $m\in M$. Hence, we have 
an isomorphism 
$\lie A(\mathsf D_M\ominus\mathsf D_M)\to \mathsf D_M$ 
over the map $\pr_{1}:\Delta_M\to M$. 
Sections of $\lie A(\mathsf D_M\ominus\mathsf D_M)$
are exactly the sections 
$(X,X,\alpha,-\alpha)\an{\Delta_M}$ for sections 
 $(X,\alpha)\in\mathsf D_M$. The section
$(X,X,\alpha,-\alpha)$ of $\mathsf D_M\ominus \mathsf D_M$
defined on $M\times M$ by 
$(X,X,\alpha,-\alpha)(m,n)=(X(m), X(n),\alpha(m),-\alpha(n))$ 
for all $(m,n)\in M\times M$ is then
easily shown to be star sections $\s$-related to 
$(X,X,\alpha,-\alpha)\an{\Delta_M}$.
Using this, one can check that, if $(M,\mathsf D_M)$ is closed,
 the Lie algebroid structure 
on $ \lie A(\mathsf D_M\ominus\mathsf D_M)$
corresponds to the Lie algebroid structure on 
$(M,\mathsf D_M)$ (see \cite{Courant90a}).
\end{example}

\subsection{Integrability criterion}\label{int_crit}
The main  theorem of this section shows that the integrability of a Dirac groupoid is 
completely encoded in its square of Lie algebroids. The proof, 
which is very technical, will be only summarized here. It can be found with more details in \cite{thesis}.
We begin by showing 
a derivation formula  for star sections, that will also be useful later.
\begin{theorem}\label{lie_der_of_xi_section}
Let $(G\rr P,\mathsf D_G)$ be a Dirac groupoid,
$\xi\sim_{\s}\bar{\xi}$ a star section of 
$\mathsf D_G$ and $Z\in\Gamma(AG)$. Then the derivative 
$\ldr{Z^l}(X_\xi,\theta_\xi)$ 
can be written 
as a sum 
\begin{equation}\label{der_of_s_section}
\ldr{Z^l}(X_\xi,\theta_\xi)=\left(X_{\mathcal L_Z\xi}, \theta_{\mathcal L_Z\xi}\right)+
\left(Y_{\xi,Z}^l,\s^*\alpha_{\xi,Z}\right)=:\mathcal L_Z\xi+(\sigma_{\xi,Z})^l
\end{equation}
with 
$Y_{\xi,Z}\in\Gamma(AG)$, $\alpha_{\xi,Z}\in\Omega^1(P)$ 
and $\mathcal L_Z\xi:=\left(X_{\mathcal L_Z\xi}, \theta_{\mathcal L_Z\xi}\right)$
a star section of $\mathsf
D_G$. 
We have $\mathcal L_Z\xi\sim_\s\TT\tg\left(\ldr{Z^l}(X_\xi,\theta_\xi)\an{P}\right)$
in the sense that 
$$\TT\s\left(X_{\mathcal L_Z\xi}(g), \theta_{\mathcal L_Z\xi}(g)\right)
=\TT\tg\left(\ldr{Z^l}(X_\xi,\theta_\xi)(\s(g))\right)$$
for all $g\in G$.

In addition, if $(X_\nu, \theta_\nu)\sim_s(0,0)$, 
then $\ldr{Z^l}(X_\nu, \theta_\nu)\in\Gamma(\mathsf D_G\cap\ker\TT\s)$.
In particular, its restriction to $P$ is a section of $I^\s(\mathsf D_G)$.
\end{theorem}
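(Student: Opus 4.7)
The strategy is to exploit that the flow of the left-invariant vector field $Z^l$ is right translation by the local bisection $\Exp(tZ)$, so
$$
\ldr{Z^l}(X_\xi,\theta_\xi)\;=\;\tfrac{d}{dt}\Big|_{t=0}R_{\Exp(tZ)}^{*}(X_\xi,\theta_\xi),
$$
and then to rewrite this pullback as a product in the Lie groupoid $\mathsf P_G$ before differentiating. Each factor of the product will naturally contribute one of the two summands in \eqref{der_of_s_section}: a star section $\mathcal L_Z\xi$ and a section of the explicit form $(Y^l,\s^*\alpha)$.

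The first step uses the identity already established in the proof of Proposition \ref{lie_structure_dual}:
$$
R_{\Exp(tZ)}^{*}\xi(g)\;=\;\delta_t(g)\star\zeta_\xi(t,p),\qquad p=\s(g),
$$
with $\delta_t(g):=\xi(g\star\Exp(tZ)(p))\star\xi(\Exp(tZ)(p))\inv\in\mathsf D_G(g)$ and $\zeta_\xi(t,p):=R_{\Exp(tZ)}^{*}\xi(p)\in\mathsf D_G(p)$. Both factors lie in $\mathsf D_G$ because $\xi\in\Gamma(\mathsf D_G)$ and $\mathsf D_G$ is a subgroupoid of $\mathsf P_G$, so $\ldr{Z^l}\xi\in\Gamma(\mathsf D_G)$ as well. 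Noting that $\delta_0(g)=\xi(g)\star\bar\xi(p)\inv$ and $\zeta_\xi(0,p)=\bar\xi(p)$ is a unit of $\mathsf P_G$, I would then differentiate the product at $t=0$ using the multiplication on the tangent groupoid $T\mathsf P_G\rr T(TP\oplus A^*G)$; because one of the basepoints is a unit, the derivative splits cleanly into two $\mathsf D_G$-valued summands.

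The first summand, coming from differentiating $\delta_t(g)$, only involves the behaviour of $\xi$ along the $\s$-fibre of $g$ and along $P$, and therefore defines a section $\mathcal L_Z\xi$ of $\mathsf D_G$ whose value at $g$ is fully determined on the source side by $\s(g)$; this gives the star-section property. The second summand $\delta_0(g)\star\dot\zeta_\xi(0,p)$ is the left translation by $\xi(g)$ of the element $\dot\zeta_\xi(0,p)=\ldr{Z^l}\xi(p)$, which lies in $\mathsf D_G(p)\cap\ker\TT\s$ (see the last paragraph). Applying Remark \ref{computation}, such a left translation takes the explicit form $(Y_{\xi,Z}^l,\s^*\alpha_{\xi,Z})$ for suitable $Y_{\xi,Z}\in\Gamma(AG)$ and $\alpha_{\xi,Z}\in\Omega^1(P)$. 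To check the relation $\mathcal L_Z\xi\sim_\s\TT\tg(\ldr{Z^l}\xi\an{P})$, I would apply $\TT\s$ to the decomposition: the $(Y^l,\s^*\alpha)$-piece has vanishing $\hat\s$-component by a one-line computation ($\hat\s(\s^*\alpha(g))=0$), so only $\mathcal L_Z\xi$ survives on the $A^*G$-factor, and matching this with the restriction at $p$ via the identity $\TT\s=\TT\tg$ on units yields the stated $\s$-relation.

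For the final assertion, assume $\nu\sim_\s 0$: then $\TT\s(\nu(g))=\bar\nu(\s(g))=(0,0)$ for every $g\in G$, so $\nu\in\Gamma(\mathsf D_G\cap\ker\TT\s)$ globally. Since right translation by $\Exp(tZ)$ alters the source of a point only through the diffeomorphism $\bar\phi_t$ of $P$, a direct pullback computation shows $R_{\Exp(tZ)}^{*}(\mathsf D_G\cap\ker\TT\s)\subseteq\mathsf D_G\cap\ker\TT\s$; differentiating at $t=0$ therefore yields $\ldr{Z^l}\nu\in\Gamma(\mathsf D_G\cap\ker\TT\s)$, whose restriction to $P$ is by definition a section of $I^\s(\mathsf D_G)$. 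The main obstacle is the clean justification of the product-rule splitting inside the tangent groupoid $T\mathsf P_G$: one must verify that each of the two resulting summands genuinely lies in $\mathsf D_G(g)$ and pinpoint the second in the explicit left-invariant plus source-pullback form via Remark \ref{computation}. This bookkeeping is where the argument becomes technically dense, even though the conceptual content reduces to differentiating a product with one unit factor.
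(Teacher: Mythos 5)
Your proposal rests on a claim that is false in general and that carries the whole argument: you assert that in the factorization $R_{\Exp(tZ)}^{*}\xi(g)=\delta_t(g)\star\bigl(R_{\Exp(tZ)}^{*}\xi\bigr)(\s(g))$ ``both factors lie in $\mathsf D_G$'', and conclude $\ldr{Z^l}\xi\in\Gamma(\mathsf D_G)$. Only the first factor $\delta_t(g)$ lies in $\mathsf D_G$; the second factor is the $R_{\Exp(tZ)}$-pullback of $\xi$ evaluated at a unit, and since a multiplicative Dirac structure is not invariant under right translations, it is merely an element of $\mathsf P_G(\s(g))$. Consequently $R_K^{*}\xi$ and $\ldr{Z^l}\xi$ are in general only sections of $\mathsf D_G+\ker\TT\tg$, not of $\mathsf D_G$ (already in the Poisson case, $\ldr{Z^l}(\pi_G^\sharp\theta,\theta)$ leaves $\graph(\pi_G^\sharp)$ because $\ldr{Z^l}\pi_G\neq 0$). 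For the same reason your identification of the second summand is wrong: you claim $\ldr{Z^l}\xi(p)\in\mathsf D_G(p)\cap\ker\TT\s$ ``by the last paragraph'', but that paragraph only concerns star sections $\s$-related to $(0,0)$; for a general $\xi$ one has $\TT\s\bigl(\ldr{Z^l}\xi(p)\bigr)=\bigl([\mathsf a(Z),\bar X_\xi](p),\ldots\bigr)\neq(0_p,0_p)$, and the correct second summand is $(0_g,0_g)\star\bigl(\ldr{Z^l}\xi(\s(g))-\TT\tg(\ldr{Z^l}\xi(\s(g)))\bigr)$, i.e.\ the unit-translate of the $\ker\TT\tg$-component at the unit, not $\xi(g)\star\ldr{Z^l}\xi(\s(g))$. (A smaller slip of the same kind: $\hat\s\bigl((T_g\s)^*\alpha\bigr)$ is not $0$ but $\mathsf a^{*}\alpha$, constant along $\s$-fibres; that constancy, not vanishing, is what the $\s$-relation argument needs.)

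Because of this, your scheme never addresses the actual crux of the theorem, namely that $\mathcal L_Z\xi$ is a section of $\mathsf D_G$: your justification of that membership is exactly the false claim above. The paper's proof has to work harder at precisely this point: it first shows only $\ldr{Z^l}\xi\in\Gamma(\mathsf D_G+\ker\TT\tg)$ by pairing with the right-invariant sections $\sigma^r\in\Gamma(\mathsf D_G\cap\ker\TT\s)$ and invoking Lemma \ref{der_of_kers}, then subtracts the explicitly defined $(Y_{\xi,Z}^l,\s^*\alpha_{\xi,Z})$, and finally proves $\mathcal L_Z\xi\in\Gamma(\mathsf D_G^{\perp})=\Gamma(\mathsf D_G)$ by pairing against $\sigma^r$'s and against arbitrary star sections, where identity \eqref{key} is essential. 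Your proposal contains no substitute for this orthogonality argument, so the main statement is not proved. Your treatment of the final assertion (for $\nu\sim_\s(0,0)$: $R_{\Exp(tZ)}^{*}$ preserves $\mathsf D_G\cap\ker\TT\s$ because that subbundle is spanned by the $R_K$-invariant sections $\sigma^r$, hence the derivative stays in it) is a legitimate alternative to the paper's computation for that part, but it does not repair the main decomposition.
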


% For instance, recall from \cite{Jotz11a} that in the Dirac Lie group case, we have
% $\ldr{x^l}(X_\xi,\xi^l)=\linebreak\left(X_{\ad_x^*\xi}+(\ad_\xi^* x)^l,
% (\ad_x^*\xi)^l\right)$ for all $x\in\lie g$ and $\xi\in\lie p_1$,
% with $X_{\ad_x^*\xi}$ defined modulo sections of $\mathsf{G_0}$.
The following lemma will be useful for the proof of this theorem.
The proof is easy and shall be omitted.

\begin{lemma}\label{der_of_kers}
Let $G\rr P$ be a Lie groupoid.
Choose $(X,(\tg^*\alpha)\an{P})\in\Gamma((\ker\TT\s)\an{P})$ and 
$Z\in\Gamma(AG)$.
Then we have 
$$\ldr{Z^l}(X^r,\tg^*\alpha)=0.$$
\end{lemma}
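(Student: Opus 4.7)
The plan is to split the Lie derivative into its two components and handle each separately, exploiting the fact that $Z^l$ is tangent to $\tg$-fibers while $X^r$ is built from a right-translation.

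For the vector field component, I would invoke the fact recalled in \S\ref{background1} that left-invariant and right-invariant vector fields on $G$ commute: the excerpt already notes that $[Y^r, X^l] = 0$ for all $Y \in \Gamma(T^\s_P G)$ and $X \in \Gamma(AG)$, because the flows (right translations by bisections of $\tg$-type and left translations by bisections of $\s$-type) commute. Applying this with roles swapped, $[Z^l, X^r] = 0$, so $\ldr{Z^l} X^r = 0$.

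For the covector component, I would apply Cartan's magic formula,
\[
\ldr{Z^l}(\tg^*\alpha) = \ip{Z^l}\dr(\tg^*\alpha) + \dr\bigl(\ip{Z^l}\tg^*\alpha\bigr) = \ip{Z^l}\tg^*(\dr\alpha) + \dr\bigl(\tg^*\alpha(Z^l)\bigr).
\]
Since $Z \in \Gamma(AG) = \Gamma(T^\tg_P G)$ and $Z^l(g) = T_{\s(g)}L_g\, Z(\s(g))$ lies in $T^\tg G$ (because left translations preserve $\tg$-fibers), one has $T\tg \circ Z^l = 0$. Consequently both contractions vanish: $\tg^*\alpha(Z^l) = \alpha(T\tg\, Z^l) = 0$, and similarly $\ip{Z^l}\tg^*(\dr\alpha) = (\dr\alpha)(T\tg\, Z^l,\,\cdot\,) = 0$. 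Hence $\ldr{Z^l}(\tg^*\alpha) = 0$.

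Combining the two computations yields $\ldr{Z^l}(X^r, \tg^*\alpha) = (0,0)$, as claimed. No serious obstacle is expected here: the only point worth double-checking is that the formula is applied to $\tg^*\alpha$ as a $1$-form on $G$ (not merely its restriction to $P$), which is legitimate since $\tg$ is a globally defined submersion and $Z^l$ is a well-defined vector field on $\dom(Z^l) \subseteq G$.
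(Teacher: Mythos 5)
Your proof is correct, and since the paper omits the argument as ``easy'', there is nothing to reconcile: you use exactly the ingredients the paper has on hand, namely the commutation $[X^r,Z^l]=0$ for $X\in\Gamma(T^\s_PG)$ and $Z\in\Gamma(AG)$ recalled in \S\ref{background1}, and the vanishing of both Cartan terms in $\ldr{Z^l}(\tg^*\alpha)$ because $T\tg\circ Z^l=0$. Equivalently, both components can be handled at once by observing that the flow of $Z^l$ is the right translation $R_{\Exp(tZ)}$, which satisfies $\tg\circ R_{\Exp(tZ)}=\tg$ and preserves right-invariant vector fields, hence leaves the pair $(X^r,\tg^*\alpha)$ invariant; this is precisely the form in which the lemma is used later in the paper.
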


\begin{proof}[of Theorem \ref{lie_der_of_xi_section}]
Note first that, in general $\ldr{Z^l}(X_\xi,\theta_\xi)$ is  a section 
of $\mathsf D_G+\ker\TT\tg$: 
for all $\sigma^r=(X_\sigma^r,\tg^*\alpha_\sigma)\in\Gamma(\mathsf D_G\cap\ker\TT\s)$, we have
$$\langle\ldr{Z^l}(X_\xi,\theta_\xi), \sigma^r\rangle
=\ldr{Z^l}\left(\langle(X_\xi,\theta_\xi), (X_\sigma^r,\tg^*\alpha_\sigma)\rangle\right)
-\langle (X_\xi,\theta_\xi), \ldr{Z^l}(X_\sigma^r,\tg^*\alpha_\sigma)\rangle=0$$
 using $\mathsf D_G=\mathsf D_G^\perp$ and  Lemma \ref{der_of_kers}.
This leads to $\ldr{Z^l}(X_\xi,\theta_\xi)\in\Gamma\left((\mathsf
D_G\cap\ker\TT\s)^\perp\right)
=\Gamma(\mathsf D_G+\ker\TT\tg)$.
Choose $g\in G$. Then 
\begin{align*}
T_g\s(\ldr{Z^l}X_\xi)(g)&=T_g\s\left[Z^l, X_\xi\right](g)=\left[\mathsf a(Z), \bar X_\xi\right](\s(g))
\end{align*}
and for any $W\in\Gamma(AG)$
\begin{align*}
\hat\s(\ldr{Z^l}\theta_\xi(g))(W(\s(g)))
&=(\ldr{Z^l}\theta_\xi)(W^l)(g)=(Z^l(\s^*(\bar{\theta_\xi}(W)))-\s^*(\bar{\theta_\xi}([Z,W]_{AG})))(g)\\
&=\bigl((\mathsf a(Z))(\bar{\theta_\xi}(W))-\bar{\theta_\xi}([Z,W]_{AG})\bigr)(\s(g)).
\end{align*}
This shows that $\TT\s\left(\ldr{Z^l}\left(X_\xi,\theta_\xi\right)\right)(g)$
depends only on the values of $Z,\bar X_\xi, \bar{\theta_\xi}$ at $\s(g)$.

Set $$(Y_{\xi,Z}^l,\s^*\alpha_{\xi,Z})(g):=
(0_g, 0_g)\star \Bigl( 
(\ldr{Z^l}X_\xi, \ldr{Z^l}\theta_\xi)(\s(g))-\TT\tg\left((\ldr{Z^l}X_\xi, \ldr{Z^l}\theta_\xi)(\s(g))\right)
\Bigr)$$
 and 
$$(X_{\mathcal L_Z\xi}, \theta_{\mathcal L_Z\xi})(g)
:=\left(\ldr{Z^l}X_\xi, \ldr{Z^l}\theta_\xi\right)(g)
-\left(Y_{\xi,Z}^l,\s^*\alpha_{\xi,Z}\right)(g)$$
for all $g\in G$.
Then $\left(Y_{\xi,Z}^l,\s^*\alpha_{\xi,Z}\right)$ is a smooth 
section of $\ker\TT\tg$ 
satisfying $$\TT\s\left(\left(Y_{\xi,Z}^l,\s^*\alpha_{\xi,Z}\right)(g)\right)
=\TT\s\left(\left(Y_{\xi,Z}^l,\s^*\alpha_{\xi,Z}\right)(\s(g))\right)$$
by construction for all $g\in G$ and
$\mathcal L_Z\xi=(X_{\mathcal L_Z\xi}, \theta_{\mathcal L_Z\xi})$ is consequently
 a star section
if we can show that 
$$\TT\s((X_{\mathcal L_Z\xi}, \theta_{\mathcal L_Z\xi})(g))
=(X_{\mathcal L_Z\xi}, \theta_{\mathcal L_Z\xi})(\s(g))$$
for all $g\in G$.
Using the computations 
above for $\TT\s\left(\ldr{Z^l}(X_\xi,\theta_\xi)\right)$, it is 
 easy to see that, for  $g\in G$, we have 
$$\TT\s\left((X_{\mathcal L_Z\xi}, \theta_{\mathcal L_Z\xi})(g)\right)=\TT\tg\left(\ldr{Z^l}X_\xi,
\ldr{Z^l}\theta_\xi\right)(\s(g)),$$
which, by definition, is equal to $(X_{\mathcal L_Z\xi}, \theta_{\mathcal L_Z\xi})(\s(g))$.

It remains hence to show that $(X_{\mathcal L_Z\xi}, \theta_{\mathcal L_Z\xi})$ is 
a section of $\mathsf D_G$. The equality 
\begin{align*}
\left\langle\sigma^r, (X_{\mathcal L_Z\xi}, \theta_{\mathcal L_Z\xi})\right\rangle
=\left\langle\sigma^r, (\ldr{Z^l}X_\xi,
  \ldr{Z^l}\theta_\xi)\right\rangle
-\left\langle\sigma^r, (Y_{\xi,Z}^l,\s^*\alpha_{\xi,Z})\right\rangle
=0-0
\end{align*}
holds
for all $\sigma^r\in\Gamma(\ker\TT\s\cap\mathsf D_G)$, and 
for all star sections $(X_\eta,\theta_\eta)$ 
of $\mathsf D_G$, we compute
\begin{align*}
&\left\langle\left(X_\eta, \theta_\eta\right), (X_{\mathcal L_Z\xi}, \theta_{\mathcal L_Z\xi})\right\rangle(g)\\
=\,\,&\left\langle\left(X_\eta, \theta_\eta\right)(g), \left(\ldr{Z^l}X_\xi,
  \ldr{Z^l}\theta_\xi\right)(g)-\left(Y_{\xi,Z}^l,\s^*\alpha_{\xi,Z}\right)(g)\right\rangle\\
=\,\,&\left\langle\left(X_\eta, \theta_\eta\right)(g), \left(\ldr{Z^l}X_\xi,
  \ldr{Z^l}\theta_\xi\right)(g)\right\rangle\\
&-\bigl\langle\left(X_\eta, \theta_\eta\right)(g)\star\left(\bar{X_\eta},\bar{\theta_\eta}\right)(\s(g)),
\\ 
&\hspace*{2cm}(0_g,0_g)\star\left( \left(\ldr{Z^l}X_\xi, \ldr{Z^l}\theta_\xi\right)(\s(g))
-\TT\tg\left(\ldr{Z^l}X_\xi, \ldr{Z^l}\theta_\xi\right)(\s(g))
\right)\bigr\rangle\\
\overset{\eqref{key}}=&\left\langle\left(\bar{X_\eta}, \bar{\theta_\eta}\right)(\s(g)), 
\left(\ldr{Z^l}X_\xi,
  \ldr{Z^l}\theta_\xi\right)(\s(g))\right\rangle\\
&-\left\langle\left(\bar{X_\eta},\bar{\theta_\eta}\right)(\s(g)), 
 \left(\ldr{Z^l}X_\xi, \ldr{Z^l}\theta_\xi\right)(\s(g))-
\TT\tg\left(\ldr{Z^l}X_\xi, \ldr{Z^l}\theta_\xi\right)(\s(g))
\right\rangle\\
=\,\,&\left\langle\left(\bar{X_\eta},\bar{\theta_\eta}\right)(\s(g)), 
\TT\tg\left(\ldr{Z^l}X_\xi, \ldr{Z^l}\theta_\xi\right)(\s(g))\right\rangle=0
\end{align*}
since $TP\oplus A^*G=\left(TP\oplus A^*G\right)^\perp$.
Thus, we have shown that 
$(X_{\mathcal L_Z\xi}, \theta_{\mathcal L_Z\xi})\in\Gamma({\mathsf D_G}^\perp)=\Gamma(\mathsf D_G)$.

\medskip

For the proof of the second statement, assume that
$(X_\nu,\theta_\nu)$ is a smooth section of $\mathsf D_G$ that
is a star section $\s$-related to $(\bar X_\nu, \bar{\theta_\nu})=(0,0)$.
For all left invariant sections $(Y^l,\s^*\gamma)$ of $\ker\TT\tg$, 
we have 
\begin{align*}
\left\langle \ldr{Z^l}(X_\nu,\theta_\nu), (Y^l,\s^*\gamma)\right\rangle
&=\ldr{Z^l}\left\langle (X_\nu,\theta_\nu), (Y^l,\s^*\gamma)\right\rangle
-\left\langle (X_\nu,\theta_\nu), \ldr{Z^l}(Y^l,\s^*\gamma)\right\rangle\\
&=\ldr{Z^l}\left(\s^*(\gamma(\bar X_\nu)+\bar{\theta_\nu}(Y))\right)
-\s^*\left(\bar{\theta_\nu}([Z,Y]_{AG})+(\ldr{\mathsf a(Z)}\gamma)(\bar X_\nu)\right)\\
&=0
\end{align*}
since $\bar X_\nu=0$ and $\bar\theta_\nu=0$.
Choose any star section $\xi=(X_\xi,\theta_\xi)$ of $\mathsf D_G$. Then 
\begin{align*}
\left\langle \ldr{Z^l}(X_\nu,\theta_\nu), (X_\xi,\theta_\xi)\right\rangle
&=\ldr{Z^l}\left\langle (X_\nu,\theta_\nu), (X_\xi,\theta_\xi)\right\rangle
-\left\langle (X_\nu,\theta_\nu), \ldr{Z^l}(X_\xi,\theta_\xi)\right\rangle=0
\end{align*}
since $\ldr{Z^l}(X_\xi,\theta_\xi)\in\Gamma\left((\mathsf D_G\cap\ker\TT\s)^\perp\right)$.
We have  also  $\ldr{Z^l}(X_\nu,\theta_\nu)\in\Gamma\left((\mathsf D_G\cap\ker\TT\s)^\perp\right)$
and, because the star sections of $\mathsf D_G$ and the sections of 
$\mathsf D_G\cap\ker\TT\s$ span $\mathsf D_G$, this shows that
$\ldr{Z^l}(X_\nu,\theta_\nu)\in\Gamma\left((\mathsf D_G+\ker\TT\tg)^\perp\right)
=\Gamma(\mathsf D_G\cap\ker\TT\s)$. 
\end{proof}

We have also for any star section 
$\xi$ of $\mathsf D_G$, any section $\sigma\in\Gamma(I^\s(\mathsf D_G))$ 
and $Z\in\Gamma(AG)$:
\begin{align*}
\frac{d}{dt}\left\langle R_{\Exp(tZ)}^*\xi,\sigma^r\right\rangle(g)
&=\frac{d}{dt}\left\langle \xi, R_{\Exp(-tZ)}^*\sigma^r\right\rangle(R_{\Exp(tZ)}(g))\\
&=\frac{d}{dt}\left\langle \xi, \sigma^r\right\rangle(R_{\Exp(tZ)}(g))=0
\end{align*}
since $R_K^*\sigma^r=\sigma^r$ for all bisections $K\in\mathcal B(G)$.
Hence, we get $$\left\langle R_{\Exp(tZ)}^*\xi,\sigma^r\right\rangle(g)
=\left\langle R_{\Exp(0\cdot Z)}^*\xi,\sigma^r\right\rangle(g)
=\left\langle \xi,\sigma^r\right\rangle(g)=0$$ for all 
$g\in G$, $\sigma\in\Gamma(I^\s(\mathsf D_G))$
 and $t\in\R$ where this makes sense  
 and we find consequently
$R_{\Exp(tZ)}^*\xi\in\Gamma(\mathsf D_G+\ker\TT\tg)$.
If the $\s$-fibers of $G\rr P$ are connected,
 the set of bisections 
of $G$ is generated  by the bisections $\Exp(tZ)$, 
$t\in\R$ small enough and $Z\in\Gamma(AG)$
(see  \cite{MaXu00}). 
We know then that 
$R_K^*\xi\in\Gamma(\mathsf D_G+\ker\TT\tg)$ for any bisection $K\in\mathcal B(G)$.

\bigskip

We denote here by $\mathsf S(\mathsf D_G)$ the set 
of star sections  of $\mathsf D_G$.
Note that
 $\mathsf D_G$ is spanned on $G\setminus P$ by the values of the elements 
of $\mathsf S(\mathsf D_G)$, 
since $\mathsf D_G\cap\ker\TT\s$ is spanned there by the values of 
the star sections that vanish on $P$.

\bigskip

Consider 
the vector bundle $\mathsf E:=
\mathsf D_G/(\mathsf D_G\cap\ker\TT\tg)\simeq (\mathsf D_G+\ker\TT\tg)/\ker\TT\tg$ 
over $G$.  
Since the fiber $\mathsf D_G(g)$ over $g$ of the 
 Dirac structure  is spanned for each  $g\in G\setminus P$ by the 
values of the elements of $\mathsf S(\mathsf D_G)$ at $g$
and, for each $p\in P$, the vector space $\mathsf E(p)$ is spanned by the 
classes $\bar\xi(p)+I_p^\tg(\mathsf D_G)$ for all star
sections $\xi$ of $\mathsf D_G$, 
we find that the vector bundle $\mathsf E$ is spanned at each point $g\in G$ by the elements
$\xi(g)+(\mathsf D_G\cap\ker\TT\tg)(g)$
for all $\xi\in\mathsf S(\mathsf D_G)$. To simplify the notation, we  write 
$\tilde\xi$ for the image of the section $\xi\in\mathsf S(\mathsf D_G)$ in $\mathsf E$, 
and $\widetilde{\mathsf S(\mathsf D_G)}$ for the set of these special sections of $\mathsf E$.
By the considerations above,
for any $K\in\mathcal B(G)$ and $\xi\in\mathsf S(\mathsf D_G)$, 
we can define 
$R_K^*\tilde \xi:=\widetilde{R_K^*\xi}$.
If we set in the same manner 
$$\ldr{Z^l}\tilde\xi=\widetilde{\ldr{Z^l}\xi}
\overset{\eqref{der_of_s_section}}=
\widetilde{\mathcal L_Z\xi+{\sigma_{\xi,Z}}^l}=\widetilde{\mathcal L_Z\xi}
\in\widetilde{\mathsf S(\mathsf D_G)}$$

for all $\xi\in\mathsf S(\mathsf D_G)$ and $Z\in\Gamma(AG)$, we find 
for any $g\in G$:
\begin{align*}
\left.\frac{d}{dt}\right\an{t=0}\left(R_{\Exp(tZ)}^*\tilde \xi\right)(g)&=
\left.\frac{d}{dt}\right\an{t=0}\widetilde{R_{\Exp(tZ)}^*\xi}(g)\\
&=\widetilde{\left.\frac{d}{dt}\right\an{t=0}R_{\Exp(tZ)}^*\xi}(g)\\
&=\widetilde{\ldr{Z^l}\xi}(g)=\widetilde{\mathcal L_Z\xi}(g)=\ldr{Z^l}\tilde\xi(g).
\end{align*}

\bigskip

\emph{Assume here that the bracket on sections of $\lie A(\mathsf D_G)$
induced by $\mathsf D_G$ as in Theorem \ref{lie_algebroid_dual} has image 
in $\Gamma(\lie A(\mathsf D_G))$.}
Recall from \cite{Courant90a} that the integrability 
of a Dirac structure is measured 
by the  \emph{Courant $3$-tensor $\mathsf T$}
defined   on sections of $\mathsf D_G$ by
\[\mathsf T\Bigl(\xi,\eta,\zeta
\Bigr)=\left\langle [\xi,\eta],\zeta\right\rangle
\]
for all $\xi,\eta,\zeta\in\Gamma(\mathsf D_G)$.
We show that 
$\mathsf T$ induces a tensor 
$\tilde{\mathsf T}\in\Gamma\left(\bigwedge^3\mathsf E^*\right)$.  
By the considerations above, we can define a $3$-tensor $\tilde{\mathsf T}$ by its values on the elements
of $\widetilde{\mathsf S(\mathsf D_G)}$.
Set $$\tilde{\mathsf T}\left(\tilde \xi, \tilde\eta,\tilde \zeta\right)=\mathsf T(\xi,\eta,\zeta)$$
for all $\xi,\eta,\zeta\in\mathsf S(\mathsf D_G)$. To see that $\tilde{\mathsf T}$ is well-defined, 
choose $g\in G$ and $\sigma_g\in(\mathsf D_G\cap\ker\TT\tg)(g)$.
Then there exists $\sigma\in\Gamma(I^\tg(\mathsf D_G))$ such that
$\sigma^l(g)=\sigma_g$. Then, since 
$[\xi,\eta]$ is a star section $\s$-related to $[\bar\xi,\bar\eta]_\star$, we have:
$$\mathsf T\left(\xi(g),\eta(g),\sigma_g\right)
=\mathsf T\left(\xi,\eta,\sigma^l\right)(g)=
\left\langle[\xi,\eta], \sigma^l\right\rangle(g)=
\left\langle[\bar\xi,\bar\eta]_\star, \sigma\right\rangle(\s(g))=0$$
since $[\bar\xi,\bar\eta]_\star(\s(g))\in \mathsf D_G(\s(g))$ by hypothesis.

\bigskip

For any bisection $K\in\mathcal B(G)$, we can define
the $3$-tensor $R_K^*\tilde{\mathsf T}$ by 
$$\left(R_K^*\tilde{\mathsf T}\right)\left(\tilde\xi,\tilde \eta,\tilde\zeta\right)
=R_K^*\left(\tilde{\mathsf T}
\left(R_{K\inv}^*\tilde\xi,R_{K\inv}^*\tilde \eta,R_{K\inv}^*\tilde\zeta\right)\right)$$
for all $\xi,\eta,\zeta\in\mathsf S(\mathsf D_G)$.
For $Z\in\Gamma(AG)$, we 
can thus define $\ldr{Z^l}\tilde{\mathsf T}$ by
$$\ldr{Z^l}\tilde{\mathsf T}=\left.\frac{d}{dt}\right\an{t=0}
R_{\Exp(tZ)}^*\tilde{\mathsf T}.
$$
We have 
$$R_{\Exp(tZ)}^*\left(\tilde{\mathsf T}\left(\tilde \xi,\tilde \eta,\tilde \zeta\right)\right)
=\left(R_{\Exp(tZ)}^*\tilde{\mathsf T}\right)
\left(R_{\Exp(tZ)}^*\tilde \xi,R_{\Exp(tZ)}^*\tilde \eta,R_{\Exp(tZ)}^*\tilde \zeta\right)$$
for all $\xi,\eta,\zeta\in\mathsf S(\mathsf D_G)$,
which yields easily 
\begin{align}
\ldr{Z^l}\left(\mathsf T\left(\xi,\eta,\zeta\right)\right)
&=\left(\ldr{Z^l}\tilde{\mathsf T}\right)\left(\tilde\xi,\tilde \eta,\tilde\zeta\right)
+{\mathsf T}(\mathcal L_Z\xi,\eta,\zeta)
+{\mathsf T}(\xi,\mathcal L_Z\eta,\zeta)
+{\mathsf T}(\xi,\eta,\mathcal L_Z\zeta)\label{eq}.
\end{align}

We will need the following  lemma for the proof of the main result of this subsection.
\begin{lemma}\label{lem}
Let $(G\rr P,\mathsf D_G)$ be a Dirac groupoid.
Consider three star sections $\xi\sim_\s\bar\xi$, $\eta\sim_\s\bar\eta$ and 
$\zeta\sim_\s\bar\zeta$ of $\mathsf D_G$. Then, 
if $[\bar\xi,\bar\eta]_\star\in\Gamma(\lie A(\mathsf D_G))$, we have
\begin{equation*}
\left[\bar\zeta,\left[\bar\xi,\bar\eta\right]_\star\right]_\star
=[\zeta,[\xi,\eta]]\an{P}
\end{equation*}
where the bracket on the right-hand side is the Courant-Dorfman 
bracket on sections of $TG\oplus T^*G$.
\end{lemma}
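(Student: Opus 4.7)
The plan is to write $[\bar\zeta,[\bar\xi,\bar\eta]_\star]_\star$ using the definition of the bracket on $\lie A(\mathsf D_G)$ and then show that the difference between that and $[\zeta,[\xi,\eta]]\an{P}$ vanishes. Since $[\bar\xi,\bar\eta]_\star\in\Gamma(\lie A(\mathsf D_G))$ by hypothesis, Proposition \ref{sdescending} produces a star section $\rho\sim_\s[\bar\xi,\bar\eta]_\star$ of $\mathsf D_G$, so by the definition of $[\cdot,\cdot]_\star$ one has $[\bar\zeta,[\bar\xi,\bar\eta]_\star]_\star=[\zeta,\rho]\an{P}$. On the other hand, Proposition \ref{lie_structure_dual} tells us that $[\xi,\eta]$ is itself a star section of $\mathsf P_G$ (not a priori of $\mathsf D_G$) with $[\xi,\eta]\an{P}=[\bar\xi,\bar\eta]_\star$. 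Hence the difference $\nu:=\rho-[\xi,\eta]\in\Gamma(\mathsf P_G)$ satisfies $\nu\an{P}=0$, and by $\R$-bilinearity of the Courant-Dorfman bracket it suffices to prove $[\zeta,\nu]\an{P}=0$.

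The key step is this computation. Write $\zeta=(X_\zeta,\theta_\zeta)$ and $\nu=(X_\nu,\theta_\nu)$, so
\[
[\zeta,\nu]=\left([X_\zeta,X_\nu],\,\ldr{X_\zeta}\theta_\nu-\ip{X_\nu}\dr\theta_\zeta\right).
\]
Fix $p\in P$. Because $\zeta$ is a star section, $X_\zeta(p)=\bar X_\zeta(p)\in T_pP$; this is the crucial property that makes the argument work, and it is independent of $\nu$ being in $\mathsf D_G$. Because $\nu\an{P}=0$ we have $X_\nu(p)=0_p$ and $\theta_\nu(p)=0_p$, and consequently $X_\nu(f)\an{P}=0$ for every $f\in C^\infty(G)$ and $\theta_\nu(V)\an{P}=0$ for every $V\in\mx(G)$. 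Since $X_\zeta(p)$ is tangent to $P$, any directional derivative at $p$ of a function that vanishes identically on $P$ is zero, which kills $X_\zeta(X_\nu(f))(p)$ and $X_\zeta(\theta_\nu(V))(p)$. The remaining terms
\[
X_\nu(X_\zeta(f))(p),\quad \theta_\nu([X_\zeta,V])(p),\quad \dr\theta_\zeta(X_\nu,\cdot)(p)
\]
all vanish because $X_\nu(p)=0_p$ and $\theta_\nu(p)=0_p$. Hence $[X_\zeta,X_\nu](p)=0_p$ and $(\ldr{X_\zeta}\theta_\nu-\ip{X_\nu}\dr\theta_\zeta)(p)=0_p$, giving $[\zeta,\nu]\an{P}=0$.

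Putting the two steps together,
\[
[\zeta,[\xi,\eta]]\an{P}=[\zeta,\rho]\an{P}-[\zeta,\nu]\an{P}=[\zeta,\rho]\an{P}=[\bar\zeta,[\bar\xi,\bar\eta]_\star]_\star,
\]
which is the desired identity. The main subtle point is conceptual rather than computational: one must resist the temptation to replace $[\xi,\eta]$ by a genuine section of $\mathsf D_G$ before carrying out the comparison, since closedness of $\mathsf D_G$ is not assumed. What saves the argument is that the auxiliary difference $\nu$ lives in the larger bundle $\mathsf P_G$ but vanishes on $P$, and the star section property of $\zeta$ (rather than any Dirac condition) is exactly what is needed to deduce that the Courant-Dorfman bracket $[\zeta,\nu]$ vanishes on $P$.
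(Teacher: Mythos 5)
Your proof is correct, and its skeleton matches the paper's: replace $[\bar\zeta,[\bar\xi,\bar\eta]_\star]_\star$ by $[\zeta,\tau]\an{P}$ for a star section $\tau\sim_\s[\bar\xi,\bar\eta]_\star$ of $\mathsf D_G$ (your $\rho$), and reduce to showing that the bracket of $\zeta$ with the difference $\tau-[\xi,\eta]$, which vanishes on $P$, restricts to zero on $P$. Where you diverge is in how this last vanishing is established. The paper first observes that the difference is a section $\chi$ of $\ker\TT\s$ vanishing on $P$, writes it locally as $\chi=\sum_i f_i\sigma_i^r$ with right-invariant frame sections $\sigma_i^r$ and coefficients $f_i$ vanishing on $P$, and then applies the Leibniz rule \eqref{leibniz} together with tangency of $X_\zeta$ to $P$ to kill $[\zeta,\chi]\an{P}$. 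You instead prove the more general pointwise statement that the Courant--Dorfman bracket of a star section with \emph{any} section of $\mathsf P_G$ vanishing on $P$ vanishes along $P$, using only $X_\nu(p)=0_p$, $\theta_\nu(p)=0_p$, and the fact that $X_\zeta(p)\in T_pP$ annihilates derivatives of functions vanishing identically on $P$; your evaluations of $[X_\zeta,X_\nu](p)$ and $(\ldr{X_\zeta}\theta_\nu-\ip{X_\nu}\dr\theta_\zeta)(p)$ are all sound. This buys you a slightly cleaner argument: you never need the difference to lie in $\ker\TT\s$, nor the local right-invariant frame decomposition, while the paper's route stays closer to the groupoid-adapted machinery (right-invariant sections and the Leibniz rule) already set up for the other proofs. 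Your closing remark is also the right conceptual point: closedness of $\mathsf D_G$ is not assumed, and it is precisely the hypothesis $[\bar\xi,\bar\eta]_\star\in\Gamma(\lie A(\mathsf D_G))$, via Proposition \ref{sdescending}, that supplies the star section of $\mathsf D_G$ needed to make sense of the iterated bracket.
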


\begin{proof}
If $\bar\tau:=\left[\bar\xi,\bar\eta\right]_\star\in\Gamma(\lie A(\mathsf D_G))$, 
then there exists a star section $\tau$ of $\mathsf D_G$ such that
$\tau\sim_\s\bar\tau$.
Since $[\xi,\eta]\an{P}=\left[\bar\xi,\bar\eta\right]_\star=\tau\an{P}$
and $\TT\s\left([\xi,\eta](g)\right)=\left[\bar\xi,\bar\eta\right]_\star(\s(g))
=\TT\s\left(\tau(g)\right)$ for all 
$g\in G$, there exists then 
a section $\chi$ of $\ker\TT\s$ that is vanishing on $P$ such that
$\tau-[\xi,\eta]=\chi$. 
Choose $p\in P$. Then, on a neighborhood $U$ of $p$ in $G$, 
the section $\chi$ of $\ker\TT\s$ can be written 
$\chi=\sum_{i=1}^nf_i\sigma_i^r$ with 
functions $f_1,\ldots,f_n\in C^\infty(U)$ that vanish on $P\cap U$ and basis
sections $\sigma_1,\ldots,\sigma_n$ of $(\ker\TT\s)\an{P}$ on $U\cap P$.
We have then
\begin{align*}
\left[\bar\zeta,\left[\bar\xi,\bar\eta\right]_\star\right]_\star&=[\zeta,\tau]\an{P}=[\zeta,[\xi,\eta]+\chi]\an{P}
=[\zeta,[\xi,\eta]]\an{P}+[\zeta,\chi]\an{P}.
\end{align*}
 If we write $\zeta=(X_\zeta,\omega_\zeta)$, we can compute using \eqref{leibniz}
\begin{align*}
[\zeta,\chi]
=\sum_{i=1}^n\left(f_i\left[\zeta,\sigma_i^r\right]+X_\zeta(f_i)\sigma_i^r\right).
\end{align*}
Since $X_\zeta$ is tangent to $P$ on $P$ and $f_1,\ldots,f_n$ vanish on $P$,
we have $X_\zeta(f_i)\an{P}=0$. This shows that
$[\zeta,\chi](p)=0$ for all $p\in P$.
Hence, we have $\left[\bar\zeta,\left[\bar\xi,\bar\eta\right]_\star\right]_\star(p)
=[\zeta,[\xi,\eta]](p)$.
\end{proof}

Now we can  state the main theorem of this section.
\begin{theorem}\label{int_crit_thm}
Let $(G\rr P,\mathsf D_G)$ be a Dirac groupoid. Assume that $G\rr P$ is $\tg$-connected.
Then the Dirac structure $\mathsf D_G$ is closed if and only if:
\begin{enumerate}
\item 
the induced bracket as in Theorem \ref{lie_algebroid_dual} has image 
in $\Gamma(\lie A(\mathsf D_G))$ and satisfies the Jacobi identity 
$$\left[\bar\zeta,\left[\bar\xi,\bar\eta\right]_\star\right]_\star+
\left[\bar\eta,\left[\bar\zeta,\bar\xi\right]_\star\right]_\star
+\left[\bar\xi,\left[\bar\eta,\bar\zeta\right]_\star\right]_\star=0\qquad \text{ for all } 
\bar\xi,\bar\eta,\bar\zeta\in\Gamma(\lie A(\mathsf D_G))$$
and
\item the induced bracket on sections of $I^\s(\mathsf D_G)$
as in Proposition \ref{other_algebroid}
has image in $\Gamma(I^\s(\mathsf D_G))$.
\end{enumerate}
\end{theorem}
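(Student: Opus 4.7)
The \emph{only if} direction follows immediately from Theorem \ref{lie_algebroid_dual} and Proposition \ref{other_algebroid}: closedness of $\mathsf D_G$ endows $\lie A(\mathsf D_G)$ and $I^\s(\mathsf D_G)$ with Lie algebroid structures, so their induced brackets take values in the prescribed spaces of sections and satisfy the Jacobi identity.

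For the \emph{if} direction, the strategy is to show that the Courant $3$-tensor $\mathsf T(\xi,\eta,\zeta) := \langle[\xi,\eta],\zeta\rangle$ vanishes on $\Gamma(\mathsf D_G)^{\otimes 3}$, which is equivalent to closedness of $\mathsf D_G$. Note first that on $\Gamma(\mathsf D_G)$ the tensor $\mathsf T$ is $C^\infty(G)$-linear and totally skew-symmetric, thanks to the standard Courant algebroid identities and $\mathsf D_G=\mathsf D_G^\perp$. Since $\mathsf D_G$ is generated by the set $\mathsf S(\mathsf D_G)$ of star sections together with the right-invariant extensions $\sigma^r$ (for $\sigma\in\Gamma(I^\s(\mathsf D_G))$) spanning $\mathsf D_G\cap\ker\TT\s$, the verification splits into two cases. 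For a triple involving at least one $\sigma^r$ factor, total skew-symmetry reorders it so that the first two slots are either both star sections (whose bracket is a star section of $\mathsf D_G$ by Proposition \ref{lie_structure_dual}) or both right-invariant $I^\s$-extensions (whose bracket equals $([\sigma,\tau]_{I^\s(\mathsf D_G)})^r$ and hence lies in $\Gamma(\mathsf D_G)$ by condition (2) and Proposition \ref{other_algebroid}); pairing with the third slot in $\Gamma(\mathsf D_G)$ then vanishes by Lagrangianity.

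The remaining case is a triple $(\xi,\eta,\zeta)$ of star sections. Condition (1) guarantees the descent of $\mathsf T$ to the tensor $\tilde{\mathsf T}$ on $\mathsf E=\mathsf D_G/(\mathsf D_G\cap\ker\TT\tg)$ constructed in the excerpt. At $p\in P$ we have $[\xi,\eta](p)\in T_pP\oplus A_p^*G$ by Proposition \ref{lie_structure_dual} and $\bar\zeta(p)\in\lie A_p(\mathsf D_G)\subset T_pP\oplus A_p^*G$, so their pairing vanishes because $TP\oplus A^*G$ is Lagrangian in $\mathsf P_G\an{P}$; thus $\tilde{\mathsf T}\an{P}\equiv 0$. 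To propagate this to all of $G$, the plan is to compute $(\ldr{Z^l}\tilde{\mathsf T})(\tilde\xi,\tilde\eta,\tilde\zeta)$ using formula \eqref{eq}, the derivation formula of Theorem \ref{lie_der_of_xi_section}, and Lemma \ref{lem}; at points of $P$ the expression collapses (via the Courant--Dorfman Jacobi identity and Lemma \ref{lem}) to the Jacobiator of $[\cdot\,,\cdot]_\star$ applied to $\bar\xi,\bar\eta,\bar\zeta$, which vanishes by condition (1). Since the flow of $Z^l$ is right translation by $\Exp(tZ)$ and $\tg$-connectedness of $G$ implies that every point is reached from $P$ by a finite composition of such flows (as recalled after Proposition \ref{exp_map}), iterating the infinitesimal invariance extends $\tilde{\mathsf T}\equiv 0$ from $P$ to all of $G$. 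The main obstacle is precisely this propagation step: passing from infinitesimal vanishing at $P$ to global vanishing along composed bisection flows requires careful tracking of how star sections and their classes in $\mathsf E$ transform under right translations by $\Exp(tZ)$, which is the source of the ``very technical'' nature of the full proof referenced in the excerpt.
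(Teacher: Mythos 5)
Your overall architecture is the paper's: deduce necessity from Theorem \ref{lie_algebroid_dual} and Proposition \ref{other_algebroid}, and for sufficiency show the Courant tensor $\mathsf T$ vanishes by descending it to $\tilde{\mathsf T}$ on $\mathsf E=\mathsf D_G/(\mathsf D_G\cap\ker\TT\tg)$, checking vanishing over the units, and propagating with $\ldr{Z^l}$, the flows $R_{\Exp(tZ)}$ and $\tg$-connectedness. But your handling of the triples containing a factor $\sigma^r$ has a genuine error: you claim that for two star sections $\xi,\eta$ the bracket $[\xi,\eta]$ is ``a star section \emph{of} $\mathsf D_G$'' by Proposition \ref{lie_structure_dual}. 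That proposition only asserts that $[\xi,\eta]$ is a star section of $\mathsf P_G$ ($\TT\s$-related to its restriction, which lies in $TP\oplus A^*G$); it does \emph{not} place $[\xi,\eta]$ in $\Gamma(\mathsf D_G)$ --- indeed that membership is essentially what the whole theorem is about, and if it held automatically then $\mathsf T(\xi,\eta,\zeta)=\langle[\xi,\eta],\zeta\rangle$ would vanish for \emph{all} triples of sections of $\mathsf D_G$ with no hypotheses, i.e.\ every multiplicative almost Dirac structure would be closed, which is false. The paper evaluates the $\sigma^r$-triples only at points $p\in P$, where conditions (1) and (2) guarantee that the restricted brackets $[\bar\xi,\bar\eta]_\star$ and $[\sigma,\tau]_{I^\s(\mathsf D_G)}$ take values in $\mathsf D_G\an{P}$, so Lagrangianity applies there; away from $P$ these triples need no separate treatment, because on $G\setminus P$ the star sections already span $\mathsf D_G$ and the descent to $\mathsf E$ (condition (1)) kills the kernel directions, so $\tilde{\mathsf T}=0$ implies $\mathsf T=0$ everywhere.

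The second gap is the propagation step itself, which you explicitly leave open, and your sketch of it would not work as stated: knowing $(\ldr{Z^l}\tilde{\mathsf T})(\tilde\xi,\tilde\eta,\tilde\zeta)=0$ only \emph{at points of $P$} is first-order information at the units and cannot be ``iterated'' along composed flows --- to conclude $R_{\Exp(tZ)}^*\tilde{\mathsf T}=\tilde{\mathsf T}$ you need $\ldr{Z^l}\tilde{\mathsf T}=0$ on all of $G$, which is exactly what you are trying to prove along each flow line. The missing ingredient, and the heart of the paper's argument, is the computation showing that $(\ldr{Z^l}\tilde{\mathsf T})(\tilde\xi_1,\tilde\xi_2,\tilde\xi_3)$ is an $\s$-pullback, i.e.\ its value at $g$ equals its value at $\s(g)$; combined with \eqref{eq}, Theorem \ref{lie_der_of_xi_section}, the vanishing of $\mathsf T$ at units and Lemma \ref{lem} (where the Jacobi identity of condition (1) enters), this forces $\ldr{Z^l}\tilde{\mathsf T}\equiv 0$ globally, and only then do flow-invariance and $\tg$-connectedness extend $\tilde{\mathsf T}\an{P}=0$ to $\tilde{\mathsf T}=0$. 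Since you identify this very step as ``the main obstacle'' without supplying the basic-ness argument, the proposal does not close the central part of the sufficiency proof.
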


\begin{example}
\begin{enumerate}
\item In the Poisson case, the integrability is ensured by the fact that $A^*G$ is an 
algebroid, 
since it defines then a linear Poisson structure $\pi_A$ on $AG$
such that $\pi_A^\sharp:T^*A\to TA$ is a Lie algebroid morphism, that integrates modulo 
canonical identifications to $\pi_G^\sharp:T^*G\to TG$.
\item In the case of a multiplicative  $2$-form, 
closedness of the  form is ensured by the 
condition of compatibility of the corresponding 
IM-$2$-form with the Lie algebroid bracket \cite{BuCaOr09}. 
This is exactly
the same as the condition on $I^\tg(\mathsf D_G)$ to be a Lie algebroid.
\item In the pair Dirac groupoid case, we have seen that 
$I^\tg(\mathsf D_M\ominus\mathsf D_M)$
and $\lie A(\mathsf D_M\ominus\mathsf D_M)$ are both isomorphic to $\mathsf D_M$.
We know already that $\mathsf D_M\ominus\mathsf D_M$ is closed if and only if 
$\mathsf D_M$ is.
\end{enumerate}
\end{example}

\begin{proof}[of Theorem \ref{int_crit_thm}]
We have shown in Theorem \ref{lie_algebroid_dual} and Proposition
\ref{other_algebroid} that the integrability of 
$\mathsf D_G$ implies 1) and 2).

\medskip

Conversely, assume that 1) and 2) hold. We will show that $\mathsf D_G$ is closed.
First choose $p\in P$. The fiber $\mathsf D_G(p)$
of $\mathsf D_G$ over $p$ is spanned by the values of the sections 
in $I^\s(\mathsf D_G)$
 defined at $p$, and the values at $p$ of the star sections of $\mathsf D_G$.
Since the brackets on sections of $I^\s(\mathsf D_G)$ and 
$\lie A(\mathsf D_G)$ have values in 
$\Gamma(I^\s(\mathsf D_G))$, and respectively $\Gamma(\lie A(\mathsf D_G))$, we find 
for all $\sigma_1,\sigma_2,\sigma_3\in\Gamma(I^\s(\mathsf D_G))$
and $\xi_1,\xi_2,\xi_3\in\mathsf S(\mathsf D_G)$:
\begin{align*}
\mathsf T(\sigma_1^r,\sigma_2^r,\sigma_3^r)(p)
&=\langle [\sigma_1,\sigma_2]^r,\sigma_3^r\rangle(p)
=\langle [\sigma_1,\sigma_2]_{I^\s(\mathsf D_G)},\sigma_3\rangle(p)=0\\
\mathsf T(\sigma_1^r,\sigma_2^r,\xi_3)(p)
&=\langle [\sigma_1,\sigma_2]_{I^\s(\mathsf D_G)},\bar\xi_3\rangle(p)=0\\
\mathsf T(\xi_1,\xi_2,\sigma^r_3)(p)
&=\langle [\bar\xi_1,\bar\xi_2]_\star,\sigma_3\rangle(p)=0\\
\mathsf T(\xi_1,\xi_2,\xi_3)(p)
&=\langle [\bar\xi_1,\bar\xi_2]_\star,\bar\xi_3\rangle(p)=0.
\end{align*}
Hence, $\mathsf T$
vanishes over points in $P$.

\medskip

Consider the $3$-tensor $\tilde{\mathsf T}$ induced on the sections of 
$\mathsf E=\mathsf D_G/(\mathsf D_G\cap\ker\TT\tg)$ by $\mathsf T$
and choose $\xi_1,{\xi_2},{\xi_3}\in\mathsf S(\mathsf D_G)$ and $Z\in\Gamma(AG)$.
We show that $(\ldr{Z^l}\tilde{\mathsf T}) ({\xi_1},{\xi_2},{\xi_3})=0$.
A long but straightforward computation yields
\begin{align*}
Z^l(\mathsf T({\xi_1},{\xi_2},{\xi_3}))
=\,&\mathsf T\left(\mathcal L_Z{\xi_1},{\xi_2},{\xi_3}\right)
+\s^*\left(\alpha_{{\xi_1},Z}\left(\left[\bar X_{\xi_2},\bar X_{\xi_3}\right]\right)\right)
+\s^*\left(\bar\omega_{\xi_1}\left(Y_{{\xi_3},Y_{{\xi_2},Z}}-Y_{{\xi_2},Y_{{\xi_3},Z}}\right)\right)\\
&+\s^*\left(\bar X_{\xi_1}\left(\alpha_{{\xi_2},Z}\left(\bar X_{\xi_3}\right)\right)\right)
+\s^*\left(\bar X_{\xi_1}\left(\bar\omega_{\xi_2}\left(Y_{{\xi_3},Z}\right)\right)\right)\\
&+\s^*\left(\alpha_{{\xi_2},Y_{{\xi_1},Z}}\left(\bar X_{\xi_3}\right)\right)
+\s^*\left(\bar\omega_{\xi_2}\left(Y_{{\xi_3},Y_{{\xi_1},Z}}\right)\right)\\
&+{\rm c.p.}.
\end{align*}
By \eqref{eq}, this leads to
\begin{align*}
(\ldr{Z^l}\tilde{\mathsf T})\left(\tilde{\xi_1},\tilde{\xi_2},\tilde{\xi_3}\right)(g)
=\,&\s^*\left(\alpha_{{\xi_1},Z}\left(\left[\bar X_{\xi_2},\bar X_{\xi_3}\right]\right)\right)
+\s^*\left(\bar\omega_{\xi_1}\left(Y_{{\xi_3},Y_{{\xi_2},Z}}-Y_{{\xi_2},Y_{{\xi_3},Z}}\right)\right)\\
&+\s^*\left(\bar X_{\xi_1}\left(\alpha_{{\xi_2},Z}\left(\bar X_{\xi_3}\right)\right)\right)
+\s^*\left(\bar X_{\xi_1}\left(\bar\omega_{\xi_2}\left(Y_{{\xi_3},Z}\right)\right)\right)\\
&+\s^*\left(\alpha_{{\xi_2},Y_{{\xi_1},Z}}\left(\bar X_{\xi_3}\right)\right)
+\s^*\left(\bar\omega_{\xi_2}\left(Y_{{\xi_3},Y_{{\xi_1},Z}}\right)\right)\\
&+{\rm c.p.}\\
=\,&\left(\ldr{Z^l}\tilde{\mathsf T}\right)\left(\tilde{\xi_1},\tilde{\xi_2},\tilde{\xi_3}\right)(\s(g))
\end{align*} 
for all $g\in G$.
But since $\mathsf T$ vanishes on the units by hypothesis, we find 
by \eqref{eq} that 
$$(\ldr{Z^l}\tilde{\mathsf T})(\tilde{\xi_1},\tilde{\xi_2},\tilde{\xi_3})(\s(g))
=Z^l(\mathsf T({\xi_1},{\xi_2},{\xi_3}))(\s(g)).
$$
A quick computation shows 
that the cotangent part
of $[{\xi_1},[{\xi_2},{\xi_3}]]+[{\xi_2},[{\xi_3},{\xi_1}]]+[{\xi_3},[{\xi_1},{\xi_2}]]$ 
is equal to $\dr(\mathsf T({\xi_1},{\xi_2},{\xi_3}))$, see
also \cite{Courant90a}. 
Using this and Lemma \ref{lem}, we find finally
\begin{align*}
(\ldr{Z^l}\tilde{\mathsf T})(\tilde{\xi_1},\tilde{\xi_2},\tilde{\xi_3})(g)
&=Z^l\left(\mathsf T({\xi_1},{\xi_2},{\xi_3})\right)(\s(g))\\
&=\Bigl\langle
\bigl([{\xi_1},[{\xi_2},{\xi_3}]]+[{\xi_2},[{\xi_3},{\xi_1}]]+[{\xi_3},[{\xi_1},{\xi_2}]]\bigr)(\s(g)),
(Z,0)(\s(g))\Bigr\rangle\\
&=\Bigl\langle\bigl
([\bar{\xi_1},[\bar{\xi_2},\bar{\xi_3}]_\star]_\star
+[\bar{\xi_2},[\bar{\xi_3},\bar{\xi_1}]_\star]_\star
+[\bar{\xi_3},[\bar{\xi_1},\bar{\xi_2}]_\star]_\star\bigr)(\s(g)),
(Z,0)(\s(g))\Bigr\rangle\\
&=0
\end{align*}
since by condition 1), $[\cdot\,,\cdot]_\star$ satisfies the Jacobi identity.

\medskip

Hence, we have shown that $\ldr{Z^l}\tilde{\mathsf T}=0$ for all $Z\in\Gamma(AG)$.
This yields that 
$R_{\Exp(tZ)}^*\tilde{\mathsf T}=\tilde{\mathsf T}$ for all $Z\in\Gamma(AG)$
and $t\in \R$ where this makes sense and hence, since $G$ is $\tg$-connected
and $\tilde{\mathsf T}$ vanishes on the units, we find $\tilde{\mathsf T}=0$.
Thus, $\mathsf T=0$ on $G$
and the proof is finished.
\end{proof}

\begin{remark}
For $Z\in\Gamma(AG)$, define
$\nabla_Z:\Gamma(\mathsf E)\to\Gamma(\mathsf E)$ by 
$\nabla_Z\tilde\xi=\widetilde{\mathcal L_Z\xi}$
for all $\tilde \xi\in\widetilde{\mathsf S(\mathsf D_G)}$,
and $\nabla_Z\left(\sum_{i=1}^nf_i\tilde\xi_i\right)
=\sum_{i=1}^n\left(Z^l(f_i)\tilde\xi_i+f_i\nabla_Z\tilde\xi_i\right)$ 
for all $f_1,\ldots,f_n\in C^\infty(G)$ and $\tilde\xi_1, \ldots,\tilde\xi_n\in
\widetilde{\mathsf S(\mathsf D_G)}$.
Then $\nabla_Z$ is a derivative endomorphism of $\mathsf E$ over $Z^l$.
The map $\Gamma(AG)\to\Gamma(\mathcal D(\mathsf E))$, $Z\to\nabla_Z$
is a \emph{derivative representation of $AG$ on $\mathsf E$
associated to the action 
of $AG$ on $\s:G\to P$, $Z\in\Gamma(AG)\to Z^l$}
(see  \cite{KoMa02}).
\end{remark}

\subsection{The Courant algebroid associated to a closed Dirac groupoid}
\label{Courant}
The dual space of $\mathfrak A(\mathsf D_G)$ can be identified with
$\mathsf P_G\an{P}/ \mathfrak A(\mathsf D_G)^\perp$. 
Since $$\mathfrak A(\mathsf D_G)^\perp=\mathsf D_G\an{P}+ (TP\oplus A^*G)
=I^\tg(\mathsf D_G)\oplus (TP\oplus A^*G)$$ and 
$$\mathsf P_G\an{P}=(TP\oplus A^*G)+\ker\TT\tg\an{P},$$
we have 
$$\left(\mathfrak A(\mathsf D_G)\right)^*\simeq \frac{\ker\TT\tg\an{P}}{I^\tg(\mathsf D_G)}.$$

Since $\mathsf D_G\an{P}\subseteq  \mathfrak A(\mathsf
D_G)\oplus \ker\TT\tg\an{P}$,
we have $I^\s(\mathsf D_G)\subseteq  \mathfrak A(\mathsf
D_G)\oplus \ker\TT\tg\an{P}$ and  the quotient
$$\lie B(\mathsf D_G):=\frac{\mathfrak A(\mathsf D_G)\oplus \ker\TT\tg\an{P}}{I^\s(\mathsf D_G)}$$
is a smooth vector bundle over $P$.
Consider the map
$$\Psi: \ker\TT\tg\an{P}\oplus \mathfrak A(\mathsf D_G)\to \lie B(\mathsf D_G),$$ 
$$\Psi\left(\sigma+\bar\xi\right)=
\sigma+\bar\xi+I^\s(\mathsf D_G)$$
for all $\sigma\in\Gamma(\ker\TT\tg\an{P})$ and
$\bar\xi\in\Gamma(\lie A(\mathsf D_G))$. 
If 
$\Psi(\sigma+\bar\xi)=I^\s(\mathsf D_G)$, 
then we have 
$\sigma+\bar\xi\in\Gamma(\mathsf D_G\an{P})$
and hence $\sigma\in\Gamma(\mathsf D_G\an{P})$
since $\bar\xi\in\Gamma(\mathsf D_G\an{P})$. 
This yields 
$\sigma\in\Gamma(I^\tg(\mathsf D_G))$ 
and the map $\Psi$ factors 
to a vector bundle homomorphism
$$\bar\Psi:(\mathfrak A(\mathsf D_G))^*\oplus \mathfrak A(\mathsf D_G)\to  \lie B(\mathsf D_G)$$
over the identity $\Id_P$.

Set $r=\operatorname{rank} I^\s(\mathsf D_G)$, $n=\dim G$. Then we have also
$r=\operatorname{rank} I^\tg(\mathsf D_G)$
and we can compute
$\operatorname{rank} \lie B(\mathsf D_G)=
\operatorname{rank}(\ker\TT\tg)+\operatorname{rank}(\mathfrak A(\mathsf D_G))
-\operatorname{rank}I^\s(\mathsf D_G)
=n+(n-r)-r=2n-2r$.
We have also
$\operatorname{rank} ((\mathfrak A(\mathsf D_G))^*\oplus \mathfrak A(\mathsf D_G))
=n-r+ n-r=2n-2r$
and since $\bar\Psi$ is surjective, it is 
hence a vector bundle isomorphism.

\medskip

Since $\left(\ker\TT\tg\an{P}\oplus\mathfrak A(\mathsf D_G)\right)^\perp
=\left(\ker\TT\tg\an{P}+\mathsf D_G\an{P}\right)^\perp
=I^\s(\mathsf D_G)$, 
the bracket $\langle\cdot\,,\cdot\rangle$
restricts to  a non degenerate symmetric bracket 
on $\lie B(\mathsf D_G)$, that will also be written $\langle\cdot\,,\cdot\rangle$
in the following.

\medskip

Recall from Example \ref{ex_dual_of_lie_algebroid_poisson}
 that if $(G\rr P,\mathsf D_{\pi_G})$ is a Poisson groupoid, the bundle
$\mathfrak A(\mathsf D_{\pi_G})$ is equal to
$\graph(\pi_G^\sharp\an{A^*G})\simeq A^*G$, 
$\mathsf a_\star(\xi)=\pi_G^\sharp(\xi)$ for all $\xi\in\Gamma(A^*G)$ and the bracket 
on 
sections of $\mathfrak A(\mathsf D_G)$ is the bracket induced by the Poisson structure.
In the same manner, we have 
$(\mathfrak A(\mathsf D_G))^*=\ker\TT\tg\an{P}/I^\tg(\mathsf D_G)
= \ker\TT\tg\an{P}/\graph\left(\pi^\sharp_G\an{(T^\s_PG)^\circ}\right)$
which is isomorphic as a vector bundle  to 
$AG$.
The vector bundle $\lie B(\mathsf D_{\pi_G})$ is thus the vector bundle underlying 
the Courant algebroid 
associated to $(G\rr P, \pi)$
We will study this example in more detail in Example \ref{bialgebroid_of_Poisson}, where we will
show that $\lie B(\mathsf D_{\pi_G})$ carries a natural Courant algebroid structure 
that makes it isomorphic as a Courant algebroid to $AG\oplus A^*G$.

We show here  that if the Dirac  groupoid 
$(G\rr P,\mathsf D_G)$ is closed, 
the vector bundle $\lie B(\mathsf D_G)\to P$
always inherits the structure of a Courant algebroid 
from the ambient standard Courant algebroid 
structure of $\mathsf P_G$.

Because of the special case of Poisson groupoids, we have chosen the notation 
$\lie B(\mathsf D_G)$: this Courant algebroid will play the role 
of  the ``Lie bialgebroid of the Dirac groupoid
$(G\rr P,\mathsf D_G)$''.

\begin{theorem}\label{theorem_courant_algebroid}
Let $(G\rr P, \mathsf D_G)$ be a closed Dirac groupoid and $$\lie B(\mathsf
D_G)=\frac{\mathfrak A(\mathsf D_G)\oplus \ker\TT\tg\an{P}}{I^\s(\mathsf D_G)}\to P$$
the associated vector bundle over $P$.
Set $\mathsf b:\lie B(\mathsf D_G)\to TP$, 
$\mathsf b(v_p,\alpha_p)=T_p\s v_p$.
Define
$$[\cdot\,,\cdot]:\Gamma(\lie B(\mathsf D_G))\times \Gamma(\lie B(\mathsf D_G))
\to \Gamma(\lie B(\mathsf D_G))$$ by 
\begin{align*}
\left[\bar\xi+\sigma+I^\s(\mathsf D_G), \bar\eta+\tau+I^\s(\mathsf D_G)\right]
=\left.\left[\xi+\sigma^l, 
\eta+\tau^l\right]\right\an{P}
+I^\s(\mathsf D_G)
\end{align*}
for all $\sigma,\tau\in\Gamma\left(\ker\TT\tg\an{P}\right)$,
$\bar\xi,\bar\eta\in\Gamma(\lie A(\mathsf D_G))$
and star sections $\xi\sim_\s\bar\xi$, $\eta\sim_\s\bar\eta$ of $\mathsf D_G$, 
where the bracket on the right-hand side of this equation
is the Courant bracket on sections of the Courant algebroid 
$\mathsf P_G$.
This bracket is well-defined
and  $(\lie B(\mathsf D_G), \mathsf b, [\cdot\,,\cdot], \langle\cdot\,,\cdot\rangle)$
is a Courant algebroid.
\end{theorem}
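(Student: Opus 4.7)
The plan is to interpret $\lie B(\mathsf D_G)$ as a canonical sub-quotient of the ambient Courant algebroid $\mathsf P_G$ and to transport the Courant axioms through this identification. The argument splits naturally into three stages: well-definedness of the bracket, verification that the output lies in $\Gamma(\lie B(\mathsf D_G))$, and verification of the Courant algebroid axioms.

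For well-definedness I would track the sources of ambiguity in the defining formula. Given a class $\bar\xi+\sigma+I^\s(\mathsf D_G)$, a star-section lift $\xi\sim_\s\bar\xi$ is unique only modulo $\Gamma(\mathsf D_G\cap\ker\TT\s)$ vanishing on $P$, while the representative $(\bar\xi,\sigma)$ of the class is unique only modulo $\Gamma(I^\s(\mathsf D_G))$. For each admissible change, the induced change in $[\xi+\sigma^l,\eta+\tau^l]|_P$ must lie in $\Gamma(I^\s(\mathsf D_G))$. The key tool is the second statement of Theorem \ref{lie_der_of_xi_section} (if $\nu\sim_\s 0$ then $\ldr{Z^l}\nu\in\Gamma(\mathsf D_G\cap\ker\TT\s)$) combined with Lemma \ref{der_of_kers}, which control the Lie derivatives that appear when the Courant bracket is expanded as Lie derivatives of vector and cotangent parts plus pairings.

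To see that the bracket takes values in $\Gamma(\lie B(\mathsf D_G))$, I would expand
\[
[\xi+\sigma^l,\eta+\tau^l]=[\xi,\eta]+[\xi,\tau^l]-[\eta,\sigma^l]+[\sigma^l,\tau^l]
\]
and identify each term restricted to $P$. Theorem \ref{lie_algebroid_dual} gives $[\xi,\eta]|_P\in\Gamma(\lie A(\mathsf D_G))$, while the analogue of Proposition \ref{other_algebroid} obtained by swapping the roles of $\s$ and $\tg$ places $[\sigma^l,\tau^l]|_P$ in $\Gamma(I^\tg(\mathsf D_G))\subseteq\Gamma(\ker\TT\tg|_P)$. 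For the cross-terms $[\xi,\tau^l]|_P$ and $[\eta,\sigma^l]|_P$, the derivation formula \eqref{der_of_s_section} decomposes $\ldr{Z^l}\xi$ into a star-section part plus a left-invariant-extension part; a parallel treatment of the cotangent component of the Courant bracket shows that these cross-terms sit in $\Gamma(\lie A(\mathsf D_G)+\ker\TT\tg|_P)$ modulo $\Gamma(I^\s(\mathsf D_G))$.

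For the Courant algebroid axioms, the corresponding identities on $\mathsf P_G$ descend by restriction and projection: anchor compatibility $\mathsf b[e_1,e_2]=[\mathsf b(e_1),\mathsf b(e_2)]$ reduces to $\mathsf b=T\s\circ\pr_{TG}$ on $P$ together with the $\s$-projectability of star sections; the Leibniz rule reduces to the Leibniz rule on $\mathsf P_G$ and to the fact that multiplication by $\s^*f$ preserves star sections; and the symmetric Jacobiator, the anchor derivation identity for $\langle\cdot\,,\cdot\rangle$, and $\mathsf b\circ\mathcal D=0$ are obtained analogously. I expect the main obstacle to be the well-definedness step, and specifically the verification that the cross-terms $[\xi,\tau^l]|_P$ depend only on $\bar\xi$ and $\sigma$ modulo $I^\s(\mathsf D_G)$: the delicate bookkeeping of these mixed derivative contributions, rather than any single axiom, is the genuine computational burden of the theorem.
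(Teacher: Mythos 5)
Your overall strategy coincides with the paper's: realize $\lie B(\mathsf D_G)$ as a sub-quotient of the ambient Courant algebroid $\mathsf P_G$, compute the bracket of lifts of the form $\xi+\sigma^l$, and let the Courant axioms descend. However, the well-definedness step, which you yourself single out as the genuine burden, is not covered by the tools you cite. The second statement of Theorem \ref{lie_der_of_xi_section} together with Lemma \ref{der_of_kers} does handle the ambiguity in the star-section lift (a change of $\xi$ by a section $\nu\in\Gamma(\mathsf D_G\cap\ker\TT\s)$ vanishing on $P$) and the brackets of right-invariant sections with left-invariant sections of $\ker\TT\tg$. But there is a second ambiguity: the representative $\bar\xi+\sigma$ of a class is only defined modulo $\Gamma(I^\s(\mathsf D_G))$, and for $\kappa\in\Gamma(I^\s(\mathsf D_G))$ one must show that $\left[\kappa^r,\eta\right]\an{P}\in\Gamma(I^\s(\mathsf D_G))$ for every star section $\eta$ of $\mathsf D_G$. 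This bracket is not a Lie derivative along a left-invariant vector field, so neither cited result applies; the paper first splits the offending difference as a star section $\s$-related to $0$ plus $\kappa^r$, then uses the \emph{closedness} of $\mathsf D_G$ to place $\left[\kappa^r,\eta\right]$ in $\Gamma(\mathsf D_G)$, and finally a separate pairing computation against left-invariant sections $(Y^l,\s^*\gamma)$ to place it in $\Gamma(\ker\TT\s)$. Your sketch never invokes the integrability hypothesis at this point (nor anywhere in the well-definedness discussion), yet without it the bracket on $\lie B(\mathsf D_G)$ is not well-defined at all — exactly the reason the paper only obtains a vector bundle, not a Courant algebroid, in the non-closed case. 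This is the missing ingredient you need to supply.

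A smaller inaccuracy: you claim that a "$\s\leftrightarrow\tg$-swapped" Proposition \ref{other_algebroid} puts $\left[\sigma^l,\tau^l\right]\an{P}$ in $\Gamma(I^\tg(\mathsf D_G))$. That proposition concerns sections of $I^\tg(\mathsf D_G)=\mathsf D_G\cap\ker\TT\tg\an{P}$, whereas your $\sigma,\tau$ are arbitrary sections of $\ker\TT\tg\an{P}$, so their left-invariant extensions need not be sections of $\mathsf D_G$ and the conclusion is false in general. What is true, and all that is needed, is that the same bracket formula closes on $\Gamma(\ker\TT\tg\an{P})$ — as noted in the remark following Proposition \ref{other_algebroid} — so $\left[\sigma^l,\tau^l\right]\an{P}\in\Gamma(\ker\TT\tg\an{P})$, which suffices for the image of your bracket to land in $\Gamma(\lie A(\mathsf D_G)\oplus\ker\TT\tg\an{P})$ modulo $I^\s(\mathsf D_G)$. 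The rest of your plan (using Theorem \ref{lie_algebroid_dual} for $[\xi,\eta]\an{P}$, Theorem \ref{lie_der_of_xi_section} for the cross-terms, and descending the five Courant axioms from $\mathsf P_G$) matches the paper's proof.
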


\begin{proof}
This proof can be found with more detailed computations in \cite{thesis}.
The  map $\mathsf b$ is well-defined since $T_p\s v_p=0_p$ for all
$(v_p,\alpha_p)\in I^\s(\mathsf D_G)$.
We show that the bracket on sections of $\lie B(\mathsf D_G)$
is well-defined, that is, that it has image in $\Gamma(\lie B(\mathsf D_G))$ 
and does not depend on the choice
of the sections  $\bar\xi+\sigma$ 
and $\bar\eta+\tau$ representing
$\bar\xi+\sigma+I^\s(\mathsf D_G)$
and $\bar\eta+\tau+I^\s(\mathsf D_G)$.
We have, writing 
$\sigma^l=\left(X^l,\s^*\alpha\right)$ and 
$\tau^l=\left(Y^l,\s^*\beta\right)$,
\begin{align}
&\left[(X_\xi+X^l, \theta_\xi+\s^*\alpha), (X_\eta+Y^l, \theta_\eta+\s^*\beta)\right]\nonumber\\
=&\left[(X_\xi, \theta_\xi), (X_\eta,\theta_\eta)\right]
+\ldr{X^l}\left(X_\eta,\theta_\eta\right)-\ldr{Y^l}\left(X_\xi,\theta_\xi\right)\nonumber\\
&\quad +\Biggl([X,Y]_{AG}^l, \s^*\left(\ldr{\bar{X_\xi}+\mathsf a(X)}\beta-
\ldr{\bar{X_\eta}+\mathsf a(Y)}\alpha\right)\label{important_bracket}\\
&\hspace*{3.5cm}+\s^*\left(\frac{1}{2}\dr
\bigl(\alpha(\bar{X_\eta}+\mathsf a(Y))+\bar{\theta_\xi}(Y)
-\beta(\bar{X_\xi}+\mathsf a(X))-\bar{\theta_\eta}(X) \bigr)\right)\Biggr).\nonumber
\end{align}
By Theorems \ref{lie_algebroid_dual} and  \ref{lie_der_of_xi_section}, the restriction
of this to $P$ is a section of $\mathfrak A(\mathsf D_G)\oplus\ker\TT\tg\an{P}$
and depends on the choice of the star sections 
$(X_\xi,\theta_\xi)$, $(X_\eta,\theta_\eta)$
only by sections of $I^\s(\mathsf D_G)$.

Choose $\sigma\in\Gamma(I^\s(\mathsf D_G))$. Then we have 
for all $(Y^l, \s^*\beta)
\in\Gamma(\ker\TT\tg)$:
\begin{align*}
&\left[\sigma^r, (Y^l, \s^*\beta)\right]
=\left(0, \ldr{X_\sigma^r}(\s^*\beta)-\ldr{Y^l}(\tg^*\alpha_\sigma)
+\frac{1}{2}\dr\left((\tg^*\alpha_\sigma)(Y^l)-(\s^*\beta)(X_\sigma^r)\right)\right)=(0, 0).
\end{align*}
We have used Lemma \ref{der_of_kers}.
If $(X_\nu,\theta_\nu)$ is a section of $\mathsf D_G$ that 
is a star section $\s$-related to 
$(\bar X_\nu,\bar \theta_\nu)=(0,0)$, then we have 
$(X_\nu,\theta_\nu)\in\Gamma(\mathsf D_G\cap\ker\TT\s)$
and we find smooth sections $\sigma_1^r, \ldots,
\sigma_k^r\in\Gamma(\mathsf D_G\cap\ker\TT\s)$
and functions $f_1,\ldots,f_k\in C^\infty(G)$
such that 
$(X_\nu,\theta_\nu)=\sum_{i=1}^kf_i\sigma_i^r$. Then we get easily
for all $(Y^l, \s^*\beta)
\in\Gamma(\ker\TT\tg)$
\begin{align*}
\left[(X_\nu,\theta_\nu), \left(Y^l, \s^*\beta\right)\right]
&=-\sum_{i=1}^k
Y^l(f_i)\sigma_i^r,
\end{align*}
which is a section of $\mathsf D_G\cap \ker\TT\s$.
Hence, the restriction to $P$
of $\left[\left( X_\nu,\theta_\nu\right), \left(Y^l, \s^*\beta\right)\right]$
is a section of $I^\s(\mathsf D_G)$.
In the same manner,  for  $i=1,\ldots,k$ and any star section
$(X_\xi,\theta_\xi)\sim_\s(\bar X_\xi,\bar{\theta_\xi})$,
\begin{align*}
\left[\sigma_i^r, (X_\xi,\theta_\xi)\right]&=
\left(\left[X_{\sigma_i}^r,X_\xi\right], \ldr{X_{\sigma_i}^r}\theta_\xi-\ip{X_\xi}\dr (\tg^*\alpha_{\sigma_i})\right)
\end{align*}
 is a section of $\mathsf D_G$ since $\mathsf D_G$ is closed.
Since $X_\xi\sim_\s\bar{X_\xi}$ and 
$X_{\sigma_i}^r\sim_\s 0$, we have 
$\left[X_{\sigma_i}^r,X_\xi\right]\sim_\s [0, \bar{X_\xi}]=0$ and we compute
for any $Y\in\Gamma(AG)$, 
using the equality $ (\tg^*\alpha_{\sigma_i})(X_\xi)=-\theta_\xi(X_{\sigma_i}^r)$:
\begin{align*}
 \left(\ldr{X_{\sigma_i}^r}\theta_\xi-\ip{X_\xi}\dr (\tg^*\alpha_{\sigma_i})\right)\left(Y^l\right)
=&\,X_{\sigma_i}^r\left(\theta_\xi\left(Y^l\right)\right)-\theta_\xi\left(\left[X_{\sigma_i}^r,Y^l\right]\right)
-X_\xi\left(\left(\tg^*\alpha_{\sigma_i}\right)\left(Y^l\right)\right) \\
&\hspace*{2cm}+Y^l\left((\tg^*\alpha_{\sigma_i})(X_\xi)\right)
+(\tg^*\alpha_{\sigma_i})\left(\left[X_\xi, Y^l\right]\right)\\
=&\,X_{\sigma_i}^r(\s^*(\bar\theta_\xi(Y)))+(\ldr{Y^l}(\tg^*\alpha_{\sigma_i}))(X_\xi)=0.
\end{align*}
This shows that 
$\left[\left(X_{\sigma_i}^r,\tg^*\alpha_{\sigma_i}\right), \left(X_\xi,\theta_\xi\right)\right]$
is a section of $\ker\TT\s\cap\mathsf D_G$ for $i=1,\ldots,k$.
Then we get as above 
\begin{align*}
[( X_\nu,\theta_\nu), (X_\xi, \theta_\xi)]
&=\sum_{i=1}^k\left(f_i\left[\sigma_i^r, (X_\xi, \theta_\xi)\right]
-X_\xi(f_i)\sigma_i^r
+\frac{1}{2}\langle \sigma_i^r, (X_\xi, \theta_\xi)\rangle
(0,\dr f_i)\right)\\
&=\sum_{i=1}^k
\left(f_i\left[\sigma_i^r, (X_\xi, \theta_\xi)\right]-X_\xi(f_i)\sigma_i^r
\right),
\end{align*}
which is a section of $\mathsf D_G\cap\ker\TT\s$ by the considerations
above. Hence, its restriction 
to $P$ is a section of $I^\s(\mathsf D_G)$.
If $\bar\xi+\sigma\in\Gamma(I^\s(\mathsf D_G))$, then as above,
we find that $\sigma\in\Gamma(I^\tg(\mathsf D_G))$. The section
$\xi+\sigma^l-(\bar\xi+\sigma)^r$ is then a section of $\mathsf D_G$ 
that is  a star section $\s$-related to $0$. Since by the considerations above, we know 
that 
$$\left.\left[  \xi+\sigma^l-(\bar\xi+\sigma)^r, \eta+\tau^l\right]\right\an{P}
\in\Gamma(I^\s(\mathsf D_G))$$
and 
$$\left.\left[  (\bar\xi+\sigma)^r, \eta+\tau^l\right]\right\an{P}
\in\Gamma(I^\s(\mathsf D_G))$$
for all star sections
 $\eta\sim_\s\bar\eta$ of $\mathsf D_G$ and $\tau\in\Gamma((\ker(\TT\tg))\an{P})$, 
we have shown that
 the bracket
does not depend on the choice of the representatives for
$(\bar X_\xi,\bar\theta_\xi)+(X,(\s^*\alpha)\an{P})+I^\s(\mathsf D_G)$ and 
$(\bar X_\eta,\bar\theta_\eta)+(Y,(\s^*\beta)\an{P})+I^\s(\mathsf D_G)$.
\medskip

We show now that 
$(\lie B(\mathsf D_G), \mathsf b, [\cdot\,,\cdot], \langle\cdot\,,\cdot\rangle)$
is a Courant algebroid. The map $$\mathcal D: C^\infty(P)\to \Gamma(\lie B(\mathsf D_G))$$
is simply given by 
$$\mathcal D f=\frac{1}{2}(0,\s^*\dr f)+I^\s(\mathsf D_G)$$
since $$\left\langle \mathcal D f, \overline{(v_p,\alpha_p)}\right\rangle
=\frac{1}{2}\mathsf b\left(\overline{(v_p,\alpha_p)}\right)(f)
=\frac{1}{2}T_p\s v_p(f)$$ for all $\overline{(v_p,\alpha_p)}\in \lie B_p(\mathsf D_G)$.
We  check all the Courant algebroid axioms.
Choose  $$(\bar{X_\xi}+X, \bar\theta_\xi+(\s^*\alpha)\an{P})+I^\s(\mathsf D_G),\qquad 
(\bar{X_\eta}+Y, \bar\theta_\eta+(\s^*\beta)\an{P})+I^\s(\mathsf D_G)$$
and 
$$(\bar{X_\tau}+Z, \bar\theta_\tau+(\s^*\gamma)\an{P})
+I^\s(\mathsf D_G)\in\Gamma(\lie B(\mathsf D_G))$$
and let $f$ be  an  arbitrary element of $C^\infty(P)$. 
\begin{enumerate}
\item By \eqref{important_bracket}, the bracket 
$$\left[(X_\xi+X^l, \theta_\xi+\s^*\alpha), (X_\eta+Y^l, \theta_\eta+\s^*\beta)\right]$$
can be taken as the section extending 
$$\left[(\bar{X_\xi}+X, \bar\theta_\xi+(\s^*\alpha)\an{P})+I^\s(\mathsf D_G), 
(\bar{X_\eta}+Y, \bar\theta_\eta+(\s^*\beta)\an{P})+I^\s(\mathsf D_G)\right]$$
to compute its bracket with $(\bar{X_\tau}+Z, \bar\theta_\tau+(\s^*\gamma)\an{P})
+I^\s(\mathsf D_G)$.
Since $\mathsf P_G$ is a Courant algebroid, we have 
\begin{align*}
&\left[\left[(X_\xi+X^l, \theta_\xi+\s^*\alpha), (X_\eta+Y^l, \theta_\eta+\s^*\beta)\right], 
(X_\tau+Z^l, \theta_\tau+\s^*\gamma)\right]+\rm{c.p.}\\
=&\frac{1}{6}\left(0, \dr\left(\left\langle 
\left[\left(X_\xi+X^l, \theta_\xi
+\s^*\alpha\right), \left(X_\eta+Y^l, \theta_\eta+\s^*\beta\right)\right], 
\left(X_\tau+Z^l, \theta_\tau+\s^*\gamma\right)
\right\rangle\right)\right)\\
&\hspace*{12cm}+\rm{c.p.}.
\end{align*}
If we write $e_{\xi,X,\alpha}$
for 
$(\bar{X_\xi}+X, \bar\theta_\xi+(\s^*\alpha)\an{P})+I^\s(\mathsf D_G)$, etc,
this can be checked to restrict to
\begin{align*}
&[e_{\xi,X,\alpha}, [e_{\eta,Y,\beta}, 
e_{\tau,Z,\gamma}]]
+[e_{\eta,Y,\beta}, [e_{\tau,Z,\gamma}, 
e_{\xi,X,\alpha}]]
+[e_{\tau,Z,\gamma}, [e_{\xi,X,\alpha}, 
e_{\eta,Y,\beta}]]\\
=&\frac{1}{3}\mathcal D\left(\left\langle[e_{\xi,X,\alpha}, e_{\eta,Y,\beta}], 
e_{\tau,Z,\gamma}
\right\rangle+
\left\langle[e_{\eta,Y,\beta}, e_{\tau,Z,\gamma}], 
e_{\xi,X,\alpha}\right\rangle
+\left\langle[e_{\tau,Z,\gamma}, e_{\xi,X,\alpha}], 
e_{\eta,Y,\beta}\right\rangle
\right)
\end{align*} on $P$.
\item We have 
\begin{align*}
\mathsf b[e_{\xi,X,\alpha}, e_{\eta,Y,\beta}]
&=\left.T\s\left[X_\xi+X^l, X_\eta+Y^l\right]\right\an{P}
=\left[T\s\left(X_\xi+X^l\right), T\s\left(X_\eta+Y^l\right)\right]\\
&=\left[\bar{X_\xi}+\mathsf a(X), \bar{X_\eta}+\mathsf a(Y)\right]
=\left[\mathsf b(e_{\xi,X,\alpha}), \mathsf b(e_{\eta,Y,\beta})\right].
\end{align*}
\item We compute 
\begin{align*}
&\left[e_{\xi,X,\alpha}, f\cdot e_{\eta,Y,\beta}\right]\\
=\,\,&\left.\left[(X_\xi+X^l, \theta_\xi+\s^*\alpha), 
(\s^*f)\cdot (X_\eta+Y^l, \theta_\eta+\s^*\beta)\right]\right\an{P}
+I^\s(\mathsf D_G)\\
=\,\,&\Bigl((\s^*f)\left[(X_\xi+X^l, \theta_\xi+\s^*\alpha), 
 (X_\eta+Y^l, \theta_\eta+\s^*\beta)\right]\\
&\qquad+(X_\xi+X^l)(\s^*f)\cdot \left(X_\eta+Y^l, \theta_\eta+\s^*\beta\right)\\
&\qquad\left. -\left\langle(X_\xi+X^l, \theta_\xi+\s^*\alpha), 
 (X_\eta+Y^l, \theta_\eta+\s^*\beta)\right\rangle\cdot \frac{1}{2}(0,\dr(\s^*f))
\Bigr)\right\an{P}
+I^\s(\mathsf D_G)\\
=\,\,&f[e_{\xi,X,\alpha}, e_{\eta,Y,\beta}]
+\mathsf b(e_{\xi,X,\alpha})(f) \cdot e_{\eta,Y,\beta}\\
&\qquad\qquad\qquad -\langle\left(\bar{X_\xi}+X, \bar\theta_\xi+\s^*\alpha\right), 
 \left(\bar{X_\eta}+Y, \bar\theta_\eta+\s^*\beta\right)\rangle\mathcal D f\\
=\,\,&f\left[e_{\xi,X,\alpha}, e_{\eta,Y,\beta}\right]
+\mathsf b\left(e_{\xi,X,\alpha}\right)(f) \cdot e_{\eta,Y,\beta}
-\left\langle e_{\xi,X,\alpha}, e_{\eta,Y,\beta}\right\rangle\mathcal D f.
\end{align*}
\item We have obviously $\mathsf b\circ \mathcal D=0$.
\item Finally, since $\mathsf P_G$ is a Courant algebroid, 
the corresponding equality for sections of $\mathsf P_G$ 
% we have the equality
% \begin{align*}
% &(X_\xi+X^l)\left(\left\langle \left(X_\eta+Y^l, \theta_\eta+\s^*\beta\right), 
% \left(X_\tau+Z^l, \theta_\tau+\s^*\gamma\right)\right\rangle\right)\\
% =&\left\langle \left[\left(X_\xi+X^l,\theta_\xi+\s^*\alpha\right), 
% \left(X_\eta+Y^l, \theta_\eta+\s^*\beta\right)\right], 
% \left(X_\tau+Z^l, \theta_\tau+\s^*\gamma\right)
% \right\rangle\\
% &+\frac{1}{2}\left\langle 
% \left(0,\dr\left\langle\left(X_\xi+X^l,\theta_\xi+\s^*\alpha\right), 
% \left(X_\eta+Y^l, \theta_\eta+\s^*\beta\right)\right\rangle\right),
% \left(X_\tau+Z^l, \theta_\tau+\s^*\gamma\right)
% \right\rangle\\
% &+\left\langle 
% \left[\left(X_\xi+X^l,\theta_\xi+\s^*\alpha\right), 
% \left(X_\tau+Z^l, \theta_\tau+\s^*\gamma\right)\right],
% \left(X_\eta+Y^l, \theta_\eta+\s^*\beta\right)\right\rangle\\
% &+
% \frac{1}{2}\left\langle\left(0,\dr\left\langle\left(X_\xi+X^l,\theta_\xi+\s^*\alpha\right), 
% \left(X_\tau+Z^l, \theta_\tau+\s^*\gamma\right)\right\rangle\right), 
% \left(X_\eta+Y^l, \theta_\eta+\s^*\beta\right)\right\rangle,
% \end{align*}
% which
yields easily, with the same computations as in the previous points
\begin{align*}
\mathsf b(e_{\xi,X,\alpha})\langle e_{\eta,Y,\beta}, e_{\tau, Z,\gamma}\rangle
=&\bigl\langle [e_{\xi,X,\alpha}, e_{\eta,Y,\beta}]+
\mathcal D\langle e_{\xi,X,\alpha}, e_{\eta,Y,\beta}\rangle,
e_{\tau, Z,\gamma}\bigr\rangle\\
&\qquad +\bigl\langle e_{\eta,Y,\beta},
[e_{\xi,X,\alpha}, e_{\tau, Z,\gamma}]+
\mathcal D\langle e_{\xi,X,\alpha}, e_{\tau, Z,\gamma}\rangle\bigr\rangle.
\end{align*}
\end{enumerate}
\end{proof}

\begin{example}\label{bialgebroid_of_Poisson}
We see in this example that in the special case of a Poisson groupoid
$(G\rr P, \mathsf D_{\pi_G})$, the obtained
Courant algebroid is isomorphic to the Courant algebroid defined 
by the Lie bialgebroid associated to $(G\rr P,\pi_G)$, 
see \cite{LiWeXu97}, \cite{LiWeXu98}.
This shows how the Courant algebroid structure 
on $AG\oplus A^*G$ induced by the Lie bialgebroid 
of the Poisson groupoid $(G\rr P, \pi_G)$
 can be related to the standard Courant algebroid 
structure on $\mathsf P_G=TG\oplus T^*G$.

% In this example, we will write 
% sections of $\lie A(\mathsf D_{\pi_G})$ 
% as pairs $(\pi_G^\sharp(\xi),\xi)$, with $\xi\in\Gamma(A^*G)$.
% A section of $\mathsf D_{\pi_G}$ that is a star section $\s$-related
% to $(\pi_G^\sharp(\xi),\xi)$ will be written $(X_\xi,\theta_\xi)$.

Recall that the Courant algebroid $\mathsf E_{\pi_G}=AG\oplus A^*G$
associated to the Lie bialgebroid $(AG, A^*G)$ of 
$(G\rr P,\pi_G)$ is endowed with the anchor 
$\rho:AG\oplus A^*G\to TP$ defined by $\rho(v_p,\alpha_p)=\mathsf a(v_p)+\pi_G^\sharp(\alpha_p)$
for all $p\in P$ and $(v_p,\alpha_p)\in A_pG\times A^*_pG$ and
the symmetric bracket $\langle\cdot\,,\cdot\rangle$ 
defined by $\langle(v_p,\alpha_p), (w_p,\beta_p)\rangle
=\alpha_p(w_p)+\beta_p(v_p)$
for all  $p\in P$ and $(v_p,\alpha_p), (w_p,\beta_p)\in A_pG\times A^*_pG$.
Its Courant bracket is given by 
\begin{align}\label{Courant_bracket_bialgebroid}
[(X,\xi),(Y,\eta)]=&\Bigl([X,Y]_{AG}+\mathfrak L^*_{\xi}Y-\mathfrak L^*_{\eta}X-\frac{1}{2}\mathsf d_*(\xi(Y)-\eta(X)),
\nonumber\\
&\hspace*{3cm}[\xi,\eta]_\star+\mathfrak L_{X}\eta-\mathfrak L_{Y}\xi+\frac{1}{2}\mathsf d (\xi(Y)-\eta(X))
\Bigr),
\end{align}
where, if $\bar X_\xi:=\pi_G^\sharp(\xi)\in\Gamma(TP)$ for $\xi\in\Gamma(A^*G)$,
\begin{align*}
\mathfrak L^*_{\xi}Y\in\Gamma(AG),
&\quad \tau(\mathfrak L^*_{\xi}Y)=\bar X_\xi(\tau(Y))-[\xi,\tau](Y) \qquad \forall \tau\in\Gamma(A^*G)\\
\mathfrak L_{X}\eta\in\Gamma(A^*G),
&\quad (\mathfrak L_{X}\eta)(Z)=\mathsf a(X)(\eta(Z))-\eta([X,Z]_{AG}) \qquad \forall Z\in\Gamma(AG)
\end{align*}
and, for any $f\in C^\infty(P)$, 
\begin{align*}
(\mathsf d_*f)(\tau)&=\bar X_\tau (f)=X_\tau (\s^*f)\an{P}=-\tau(X_{\s^*f}), 
\qquad  \text{ hence }  \mathsf d_*f=-X_{\s^*f}\an{P}\\
(\mathsf d f)(Z)&=\mathsf a(Z)(f)=Z(\s^*(f)), \qquad  \text{ hence }  \mathsf d f=\dr(\s^*f)\an{AG}=\hat\s(\dr(\s^*f)).
\end{align*}

The isomorphism $\Psi:AG\oplus A^*G\to \lie B(\mathsf D_{\pi_G})$
is given by 
$$\Psi(X(p),\xi(p))=(X+\bar X_\xi, \xi)(p)+I_p^\s(\mathsf D_{\pi_G}),$$ 
with inverse 
$$\Psi\inv((v_p,\alpha_p)+I_p^\s(\mathsf D_{\pi_G}))
=(v_p-\pi_G^\sharp(\alpha_p), \hat\s(\alpha_p)).
$$
The verification of the equalities $\Psi\inv\circ\Psi=\Id_{\mathsf E_{\pi_G}}$ and 
$\Psi\circ\Psi\inv=\Id_{\lie B(\mathsf D_{\pi_G})}$ is easy, in the same spirit
as the corresponding equalities in the following example. 
In \cite{thesis}, it is shown that
\begin{align*}
&\Psi\inv\left([\Psi(X,\xi), \Psi(Y,\eta)]_{\lie B(\mathsf D_{\pi_G})}\right)
=\,[(X,\xi), (Y,\eta)]
\end{align*}
for $(X,\xi),(Y,\eta)\in\Gamma(AG\oplus A^*G)$. Since the computations are long, but
straightforward, we omit them here.
\end{example}

\begin{example}\label{ex_presymplectic_courant}
Consider a  Lie groupoid $G\rr P$  endowed with a closed
multiplicative $2$-form $\omega_G\in\Omega^2(G)$.
The Courant algebroid $\lie B(\mathsf D_{\omega_G})$
is given here by 
$$\lie B(\mathsf D_{\omega_G})=\left(\graph(\omega_G^\flat\an{TP}:TP\to A^*G)
+\ker\TT\tg\an{P}\right)/\graph\left(\omega_G^\flat\an{T^\s_PG}:T^\s_PG\to (T_P^\tg G)^\circ\right).$$
We show that it is isomorphic as a Courant algebroid 
to the standard Courant algebroid $\mathsf P_P=TP\oplus T^*P$.
For this, consider the maps
$$\Lambda:\lie B(\mathsf D_{\omega_G})\to TP\oplus T^*P, \qquad 
\Lambda\left(\overline{(v_p,\alpha_p)}\right)
=(T_p\s v_p, \beta_p),$$ where $(T_p\s)^*\beta_p=\alpha_p-\omega_G^\flat(v_p)$,
and 
$$\Lambda\inv:TP\oplus T^*P\to \lie B(\mathsf D_{\omega_G}), \qquad 
\Lambda\inv\left(v_p,\alpha_p\right)
=\overline{(\epsilon(v_p), (T_p\s)^*\alpha_p+\omega_G^\flat(\epsilon(v_p)))}.
$$
Note that $\Lambda$ is well-defined: 
if $(v_p,\alpha_p)\in \lie A_p(\mathsf D_{\omega_G})+\ker\TT\tg$, 
then $\hat\tg(\alpha_p)=\omega_G^\flat(T_p\tg v_p)$
and hence 
$\hat\tg(\alpha_p-\omega_G^\flat(v_p))=\hat\tg(\alpha_p)-\omega_G^\flat(T_p\tg v_p)=0_p$.
Thus, the covector $\alpha_p-\omega_G^\flat(v_p)$ can be written
$(T_p\s)^*\beta_p$ with some $\beta_p\in T_p^*P$.
(For simplicity, we will identify elements $\beta_p$ of 
$T_p^*P$  with $(T_p\s)^*\beta_p\in (T_p^\s G)^\circ\subseteq T_p^*G$
and $v_p\in T_pP$ with $\epsilon(v_p)\in T_pP\subseteq T_pG$
in the following.)
Furthermore, if $u_p\in T_p^\s G$, we have 
$\Lambda\left(\overline{(u_p,  \omega_G^\flat(u_p))}\right)
=(T_p\s u_p, \omega_G^\flat(u_p)-\omega_G^\flat(u_p))=(0_p, 0_p)$.
The map $\Lambda\inv$ has image in $\lie B(\mathsf D_{\omega_G})$
because 
for any $(v_p,\alpha_p)\in\mathsf P_P(p)$, we have 
\begin{align*}
\left(v_p, (T_p\s)^*\alpha_p+\omega_G^\flat(v_p)\right)
=&\TT\tg\left(v_p, \omega_G^\flat(v_p)\right)+(0, (T_p\s)^*\alpha_p)
 \in 
\lie A_p(\mathsf D_{\omega_G})+(\ker\TT\tg)_p.
\end{align*}
Choose now $p\in P$, 
$\overline{(v_p,\alpha_p)}\in\lie B_p(\mathsf D_{\omega_G})$
and compute
\begin{align*}
(\Lambda\inv\circ \Lambda)\left(\overline{(v_p,\alpha_p)}\right)
&=\Lambda\inv(T_p\s v_p, \alpha_p-\omega_G^\flat(v_p))\\
&=\overline{(T_p\s v_p, \alpha_p-\omega_G^\flat(v_p)+\omega_G^\flat(T_p\s v_p))}\\
&=\overline{(v_p,\alpha_p)+(T_p\s v_p-v_p, \omega_G^\flat(T_p\s v_p-v_p))}
=\overline{(v_p,\alpha_p)}.
\end{align*}
In the same manner, if $(v_p,\alpha_p)\in\mathsf P_P(p)$, we have 
$$(\Lambda\circ \Lambda\inv)\left(v_p,\alpha_p\right)
=\Lambda\left(\overline{(v_p,\alpha_p+\omega_G^\flat(v_p))}\right)
=(T_p\s v_p,\alpha_p+\omega_G^\flat(v_p)-\omega_G^\flat(v_p))
=(v_p,\alpha_p)$$ since $v_p\in T_pP$.
The equality $\mathsf b\circ \Lambda\inv=\pr_{TP}$ is immediate.

Now if $(\bar X,\bar \alpha), (\bar Y,\bar \beta)\in\Gamma(\mathsf P_P)$, 
we choose $X,Y\in \mx(G)$ such that
$X\sim_\s \bar X$, $X\an{P}=\bar X$,  $Y\sim_\s \bar Y$,
$Y\an{P}=\bar Y$ and we compute
\begin{align*}
&\Lambda\left[\Lambda\inv(\bar X,\bar \alpha), \Lambda\inv(\bar Y,\bar \beta)\right]\\
=&\,\Lambda\left(\left.\left([X,Y], 
\omega_G^\flat([X,Y])+\ldr{X}(\s^*\bar\beta)-\ldr{Y}(\s^*\bar\alpha)
+\frac{1}{2}\dr((\s^*\bar\alpha)(Y)-(\s^*\bar\beta)(X))
\right)\right\an{P}\!\!\!\!+I^\s(\mathsf D_{\omega_G})\right)\\
=&\,\Lambda\left(\overline{\left([\bar X, \bar Y], 
\omega_G^\flat([\bar X, \bar Y])+\s^*\left(\ldr{\bar X}\bar\beta-\ldr{\bar Y}\bar\alpha
+\frac{1}{2}\dr(\bar\alpha(\bar Y)-\bar\beta(\bar X))\right)\right)}\right)\\
=&\left([\bar X, \bar Y], 
\ldr{\bar X}\bar\beta-\ldr{\bar Y}\bar\alpha
+\frac{1}{2}\dr(\bar\alpha(\bar Y)-\bar\beta(\bar X))\right),
\end{align*}
and we recover the standard Courant bracket 
of the two sections
$(\bar X,\bar \alpha), (\bar Y,\bar \beta)\in\Gamma(\mathsf P_P)$.
\end{example}

\begin{example}\label{iso_pair_case}
Consider the  pair Dirac  groupoid
$(M\times M\,\rr\, M, \mathsf D_M\ominus\mathsf D_M)$ associated
to a closed Dirac manifold $(M,\mathsf D_M)$ (see Example \ref{ex_pair_Dirac_Lie}).
The vector bundle 
$\lie B(\mathsf D_M\ominus\mathsf D_M)\to \Delta_M$ 
is defined here by
$$\lie B_{(m,m)}(\mathsf D_M\ominus\mathsf D_M)
=\frac{\lie A_{(m,m)}(\mathsf D_M\ominus\mathsf D_M)+\{0\}\times T_mM\times\{0\}\times T^*_mM}
{\{(v_m,0_m,\alpha_m,0_m)\mid (v_m,\alpha_m)\in\mathsf D_M(m)\}}
$$
for all $m\in M$ (recall that we have computed
$\lie A(\mathsf D_M\ominus\mathsf D_M)$ in Example
\ref{algebroid_of_pair_Dirac}).
Hence, we get an isomorphism 
\begin{equation}\label{iso_Courant_pair}
\Pi:\lie B(\mathsf D_M\ominus\mathsf D_M)\to TM\times_M T^*M, \quad 
\overline{(v_m,w_m,\alpha_m,\beta_m)}\mapsto (w_m,\beta_m)
\end{equation}
over $\pr_1:\Delta_M\to M$,
with inverse 
$$\Pi\inv: TM\times_M T^*M \to \lie B(\mathsf D_M\ominus\mathsf D_M), \quad 
(w_m,\beta_m)\mapsto \overline{(0_m,w_m,0_m,\beta_m)}.$$

The Courant bracket on $\lie B(\mathsf D_M\ominus\mathsf D_M)$ is easily seen
to correspond via this isomorphism to the standard Courant bracket 
on $\mathsf P_M=TM\times_M T^*M$ (and hence,  does not depend on $\mathsf D_M$).
\end{example}

\begin{remark}
Consider a Dirac groupoid as in Theorem 
\ref{main}. Set $N:=G/\mathsf{G_0}$ and $Q=:P/(TP\cap\mathsf{G_0})$. Then the Courant algebroid
$TG\oplus T^*G$ projects under $\pr$ to the Courant algebroid $TN\times_N T^*N$.
We have a map $\TT\pr:TG\oplus\mathsf{P_1}\to TN\times_N T^*N$,
$(v_g,\alpha_g)\mapsto (T_g\pr(v_g), \alpha_{[g]})$, 
where $\alpha_{[g]}$ is such that
$\alpha_g=(T_g\pr)^*\alpha_{[g]}$. By definition of the reduced Dirac structure $\pr(\mathsf D_G)=D_\pi$,
the restriction
of this map to $\left(\lie A(\mathsf D_G)\oplus\ker(\TT\tg)\an{P}\right)\cap(TG\oplus\mathsf{P_1})$
has image $\lie A(\mathsf D_\pi)+(\ker\TT\tg_N)\an{Q}$. Furthermore,
we find that $(v_p,(T_p\pr)^*\alpha_{[p]})\in I^\s_p(\mathsf D_G)+\mathsf{G_0}(p)\times 0_p$
if and only if $(T_p\pr v_p,\alpha_{[p]})\in I^{\s_N}_{[p]}(\mathsf D_\pi)$.

Hence, the map $\TT\pr$ factors to
a map $\lie B(\mathsf D_G)\to \lie B(\mathsf D_\pi)$. It is straightforward to check that
this is a morphism of Courant algebroids.
\end{remark}

\subsection{Induced action of the group of bisections on $\lie B(\mathsf D_G)$}
\label{subsec_action}
We show here how  the action of $G$ on $\lie g/\lie g_0\times \lie p_1$ found in \cite{Jotz11a}
in the Lie group can be generalized to the setting of Dirac groupoids.
In this section, the Dirac groupoids that we consider
are not necessarily closed. Hence, the vector bundle 
$\lie B(\mathsf D_G)$ exists, but does not necessarily have a Courant 
algebroid structure.  

We begin with a lemma, which will also be useful in the following section 
about Dirac homogeneous spaces.
\begin{lemma}\label{sections_of_D'}
Let $(G\rr P, \mathsf D_G)$ be a Dirac groupoid and $(v_p,\alpha_p)\in
\lie A_p(\mathsf D_G)\oplus (\ker\TT\tg)\an{P}$ for some $p\in
P$. If  $\,\TT\tg(v_p,\alpha_p)=(\bar{X_\xi}(p), \bar\theta_\xi(p))\in\lie A_p(\mathsf D_G)$, then 
$(v_p,\alpha_p)=(\bar{X_\xi}(p), \bar\theta_\xi(p))+(u_p,(T_p\s)^*\gamma_p)$ with some
$u_p\in A_pG$ and $\gamma_p\in T_p^*P$ and 
$$ \left((X_\xi,\theta_\xi)(g)\right)\star (v_p,\alpha_p)
=(X_\xi(g)+T_{p}L_gu_{p}, \theta_\xi(g)+(T_g\s)^*\gamma_p)
$$
for any $g\in\s\inv(p)$ and $(X_\xi,\theta_\xi)\in\Gamma(\mathsf D_G)$ such that
$(X_\xi,\theta_\xi)\sim_\s(\bar X_\xi,\bar\theta_\xi)$.
\end{lemma}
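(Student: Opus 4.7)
The plan is to first produce the additive decomposition of $(v_p,\alpha_p)$ from the hypothesis on $\TT\tg(v_p,\alpha_p)$, then split the groupoid product into two composable summands using linearity of the tangent and cotangent groupoid multiplications, and finally recognize the resulting terms as a multiplication by a unit and exactly the computation already carried out in Remark \ref{computation}.

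For the first assertion, set $u_p := v_p - \bar{X_\xi}(p)$ and $\delta_p := \alpha_p - \bar\theta_\xi(p)$. Since $\bar{X_\xi}(p)\in T_pP$ and $\bar\theta_\xi(p)\in A^*_pG$ are units, $T_p\tg$ restricts to the identity on $T_pP$ and $\hat\tg$ restricts to the identity on $A^*_pG$, so the hypothesis $\TT\tg(v_p,\alpha_p)=(\bar{X_\xi}(p),\bar\theta_\xi(p))$ forces $T_p\tg\, u_p = 0$ and $\hat\tg(\delta_p) = 0$, i.e.\ $u_p\in A_pG$ and $\delta_p\in\ker\bigl(\hat\tg\an{T_p^*G}\bigr)=(T_p^\s G)^\circ$ (the equality of these two kernels follows from the formula for $\hat\tg$ at a unit, together with a dimension count). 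Since $T_p\s\colon T_pG\to T_pP$ is surjective with kernel $T_p^\s G$, its transpose identifies $T_p^*P$ with $(T_p^\s G)^\circ$, so $\delta_p = (T_p\s)^*\gamma_p$ for a unique $\gamma_p \in T_p^*P$.

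For the product, $\TT\s(X_\xi(g),\theta_\xi(g)) = (\bar{X_\xi}(p),\bar\theta_\xi(p)) = \TT\tg(v_p,\alpha_p)$ since $\xi\sim_\s(\bar X_\xi,\bar\theta_\xi)$, so the product is defined. Using the decomposition just established, the plan is to split
\[
(X_\xi(g),v_p) \;=\; (X_\xi(g),\bar{X_\xi}(p)) + (0_g,u_p),
\]
and analogously on the cotangent side with $(0_g^*,(T_p\s)^*\gamma_p)$ in place of $(0_g,u_p)$. Each of the summands is composable in $T_{(g,p)}(G\times_P G)$ (respectively in the cotangent analogue): the first because the $\s$- and $\tg$-images both return $\bar{X_\xi}(p)$ (resp.\ $\bar\theta_\xi(p)$), the second because all the relevant $\s$- and $\tg$-images vanish. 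Linearity of $T_{(g,p)}\m$ on $T_{(g,p)}(G\times_P G)$ and of the cotangent multiplication then splits the groupoid product additively into the two pieces. The first piece equals $(X_\xi(g),\theta_\xi(g))$, because $(\bar{X_\xi}(p),\bar\theta_\xi(p))$ is the source unit of $(X_\xi(g),\theta_\xi(g))$; the second piece, $(0_g,0_g^*)\star(u_p,(T_p\s)^*\gamma_p)$, is exactly the situation of Remark \ref{computation}, which yields $(T_pL_g u_p,(T_g\s)^*\gamma_p)$. Adding the two produces the stated identity. There is no real obstacle here; the only care needed is to verify composability of each of the two summands before invoking bilinearity, after which Remark \ref{computation} does the remaining work.
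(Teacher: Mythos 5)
Your proof is correct. The decomposition step coincides with the paper's: there one simply notes that $(v_p,\alpha_p)-\TT\tg(v_p,\alpha_p)$ lies in $(\ker\TT\tg)\an{P}=T^\tg_PG\oplus(T^\s_PG)^\circ$, which is exactly what your kernel computations for $T_p\tg$ and $\hat\tg$ at a unit re-derive. Where you genuinely diverge is in the computation of the product. The paper computes the tangent component via the explicit formula $X_\xi(g)\star v_p=X_\xi(g)+T_pL_Kv_p-T_pL_K(T_p\tg v_p)$ for a bisection $K$ through $g$ (quoted from \cite{Xu95}, see also \cite{Mackenzie05}), and then evaluates the cotangent product directly on vectors $v_g=v_g\star(T_g\s v_g)$, a calculation parallel to the one in Remark \ref{computation}. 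You instead use fiberwise linearity of $T\m$ and of the cotangent multiplication to split the product into $\bigl((X_\xi,\theta_\xi)(g)\bigr)\star\bigl((\bar{X_\xi},\bar\theta_\xi)(p)\bigr)$ plus $(0_g,0_g)\star(u_p,(T_p\s)^*\gamma_p)$, absorb the first factor by the unit law, and quote Remark \ref{computation} for the second. This is legitimate: composability is a linear condition, so each summand is composable as you check, and the identification of $(\bar{X_\xi}(p),\bar\theta_\xi(p))\in T_pP\oplus A^*_pG$ with the corresponding unit of $\mathsf P_G\rr(TP\oplus A^*G)$ is exactly the convention the paper itself uses freely (for instance when it replaces $\eta(g)$ by $\eta(g)\star\bar\eta(p)$ in the proof of \eqref{key}). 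What your route buys is independence from the bisection formula of \cite{Xu95} and a more structural explanation of the identity (additivity of the tangent and cotangent multiplications plus the special case already treated in the Remark); the paper's route is more computational but self-contained at the level of explicit formulas, since it does not pass through the splitting of the left-hand factor.
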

\begin{proof}
If $(v_p,\alpha_p)\in
\lie A_p(\mathsf D_G)\oplus(\ker\TT\tg)\an{P}$, 
then $\TT\tg(v_p,\alpha_p)\in \lie A_p(\mathsf D_G)$ and hence 
$\TT\tg(v_p,\alpha_p)=(\bar{X_\xi},\bar\theta_\xi)(p)$ for some 
section $(\bar{X_\xi},\bar\theta_\xi)\in \Gamma(\lie A_p(\mathsf D_G))$. The difference
$(v_p,\alpha_p)-(\bar{X_\xi},\bar\theta_\xi)(p)=(v_p,\alpha_p)-\TT\tg(v_p,\alpha_p)$
is then an element of $(\ker\TT\tg)\an{P}$ and there exists 
$\gamma_p\in T_p^*P$ such that $(v_p,\alpha_p)-(\bar{X_\xi},\bar\theta_\xi)(p)
=(u_p, (T_p\s)^*\gamma_p)$ if we set 
$u_p=v_p-T_p\tg v_p$.

Since the section $(X_\xi,\theta_\xi)\in\Gamma(\mathsf D_G)$ is a pair
that is  a star section $\s$-related to $(\bar{X_\xi}, \bar\theta_\xi)$,
the product 
$(X_\xi,\theta_\xi)(g)\star (v_p,\alpha_p)$ is defined for any $g\in\s\inv(p)$.

We compute, using a bisection $K$ through $g$,
\[X_\xi(g)\star v_p= X_\xi(g)+T_pL_Kv_p-T_pL_K(T_p\tg v_p)
=X_\xi(g)+T_pL_K(u_p)=X_\xi(g)+T_pL_g(u_p).
\]
For the first equality, we have used the formula proved in \cite{Xu95}, see
also
\cite{Mackenzie05}.

We have also, for any $v_g=v_g\star(T_g\s v_g)\in T_gG$ 
\begin{align*}
(\theta_\xi(g)\star \alpha_p)(v_g)&=(\theta_\xi(g)\star \alpha_p)(v_g\star(T_g\s v_g))
=\theta_\xi(g)(v_g)+\alpha_p(T_g\s v_g)\\
&=\theta_\xi(g)(v_g)+(\alpha_p-\hat\tg(\alpha_p))(T_g\s v_g)
=\theta_\xi(g)(v_g)+(T_p\s^*\gamma_p)(T_g\s v_g)\\
&=\left(\theta_\xi(g)+(T_g\s^*\gamma_p)\right)(v_g).
\end{align*}
\end{proof}

\begin{theorem}\label{action}
Let $(G\rr P, \mathsf D_G)$ be a Dirac groupoid.
Choose a bisection $K\in\mathcal B(G)$ and consider
\[
r_K: \mathfrak A(\mathsf D_G)\oplus\ker\TT\tg\an{P}\to\lie B(\mathsf D_G)
\]
\[r_K(v_p,\alpha_p)=\Bigl(T_{K(p)\inv}R_K(v_{K(p)\inv}\star v_p), 
\left(T_{(\s\circ K)(p)}R_K\inv\right)^*
(\alpha_{K(p)\inv}\star\alpha_p)\Bigr)+I^\s_{(\s\circ K)(p)}(\mathsf D_G),\]
where 
$(v_{K(p)\inv},\alpha_{K(p)\inv})\in\mathsf D_G(K(p)\inv)$
is such that $$\TT\s\left(v_{K(p)\inv},\alpha_{K(p)\inv}\right)=\TT\tg(v_p,\alpha_p).$$
The map $r_K$ is well-defined and induces the right translation by $K$,
\[\begin{array}{cccc}
\rho_K: &\lie B(\mathsf D_G)&\to& \lie B(\mathsf D_G)\\
&(v_p,\alpha_p)+I_p^\s(\mathsf D_G)&\mapsto &r_K(v_p,\alpha_p).
\end{array}\]
The map $\rho:\mathcal B(G)\times \Gamma(\lie B(\mathsf D_G))\to
\Gamma(\lie B(\mathsf D_G))$ is a right action.
\end{theorem}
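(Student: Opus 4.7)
The plan is to verify well-definedness of $r_K$, its descent to the quotient $\lie B(\mathsf D_G)$, and the right action axioms, with the groupoid structure of $\mathsf D_G \rr \lie A(\mathsf D_G)$ doing most of the work.

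For existence: given $(v_p,\alpha_p) \in \lie A_p(\mathsf D_G) \oplus \ker\TT\tg\an{P}$, the element $\TT\tg(v_p,\alpha_p)$ lies in $\lie A_p(\mathsf D_G)$. By Proposition \ref{sdescending}, I would extend it to a star section $\xi = (X_\xi,\theta_\xi)$ of $\mathsf D_G$ and set $(v_{K(p)\inv},\alpha_{K(p)\inv}) := \xi(K(p)\inv)$; the source condition $\TT\s(\xi(K(p)\inv)) = \TT\tg(v_p,\alpha_p)$ holds by the star section property, so the Pontryagin groupoid product is composable. Using Lemma \ref{sections_of_D'} to expand $\xi(K(p)\inv)\star(v_p,\alpha_p)$ explicitly and then applying the tangent and cotangent versions of $R_K$ at $K(p)\inv$ produces an element over $1_{(\s\circ K)(p)}$; a direct computation, using that $\mathsf D_G$ is a subgroupoid of $\mathsf P_G$ together with Theorem \ref{theorem_intersections}, shows this element lies in $\lie A_{(\s\circ K)(p)}(\mathsf D_G) \oplus \ker\TT\tg\an{(\s\circ K)(p)}$, which is the ambient space of $\lie B(\mathsf D_G)$.

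Independence from the choice of $(v_{K(p)\inv},\alpha_{K(p)\inv})$ follows from Lemma \ref{const_rk_int_kern}: two such choices differ by $\tilde\sigma \star (0_{K(p)\inv},0_{K(p)\inv})$ with $\tilde\sigma \in I^\s_{(\s\circ K)(p)}(\mathsf D_G)$. Linearity of the Pontryagin groupoid multiplication keeps the difference of the two products inside $(\mathsf D_G \cap \ker\TT\s)(K(p)\inv)$, and $R_K$ transports this into $(\mathsf D_G \cap \ker\TT\s)(1_{(\s\circ K)(p)}) = I^\s_{(\s\circ K)(p)}(\mathsf D_G)$, which is precisely what is quotiented out in $\lie B(\mathsf D_G)$. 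Descent on the domain side is analogous: if $(v_p,\alpha_p) \in I^\s_p(\mathsf D_G)$, then $\TT\s(v_p,\alpha_p)=0$ forces the whole product to have vanishing $\TT\s$-image, so it lies in $(\mathsf D_G \cap \ker\TT\s)(K(p)\inv)$ and its $R_K$-image lies in $I^\s_{(\s\circ K)(p)}(\mathsf D_G)$. Hence $\rho_K$ is a well-defined bundle map on $\lie B(\mathsf D_G)$.

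For the right action axioms, $\rho_\epsilon = \Id$ is immediate from $R_\epsilon = \Id_G$ and $K(p)\inv = 1_p$ when $K=\epsilon$. For the composition $\rho_{L\star K} = \rho_K \circ \rho_L$, I would combine the factorization $(L\star K)(p)\inv = K((\s\circ L)(p))\inv \star L(p)\inv$ with the relation $R_{L\star K} = R_K \circ R_L$ from Section \ref{background1}: a star section $\xi \sim_\s \TT\tg(v_p,\alpha_p)$ computes $\rho_L(v_p,\alpha_p)$, producing a representative at $(\s\circ L)(p)$; a second star section $\xi'$ extending the new $\TT\tg$-value is then used for $\rho_K$. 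Associativity of the Pontryagin groupoid multiplication combines the two successive products into one, and its image under $R_K\circ R_L = R_{L\star K}$ coincides with the result of a single application of $r_{L\star K}$, modulo the $I^\s$-ambiguity already absorbed in $\lie B(\mathsf D_G)$. The main obstacle will be precisely this last identification: tracking how the two sequentially chosen star sections $\xi,\xi'$ relate to a single star section used to define $r_{L\star K}$, and controlling the intermediate $I^\s$-ambiguities via Lemma \ref{const_rk_int_kern} and the linearity of the Pontryagin groupoid multiplication, so that the two computations agree unambiguously in $\lie B(\mathsf D_G)$.
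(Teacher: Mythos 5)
Your proposal is correct and follows essentially the same route as the paper: well-definedness is reduced to fiberwise linearity together with the observation that the correction terms are carried by the $R_K$-transport into $I^\s(\mathsf D_G)$ (via Lemma \ref{const_rk_int_kern} and the right-invariance of $\mathsf D_G\cap\ker\TT\s$), and the composition law $\rho_K\circ\rho_L=\rho_{L\star K}$ is obtained by using the product of the two correction elements as a single correction element over $(L\star K)(p)\inv$. The one refinement to make is that the gluing in the composition step rests not on associativity alone but on the commutation identity $T_{g\star h}R_K(v_g\star v_h)=v_g\star\left(T_hR_Kv_h\right)$ and its cotangent analogue (which the paper isolates as a separate lemma), and that once independence of the choice of correction element is established, the matching of your two star sections $\xi,\xi'$ with a single one --- the ``main obstacle'' you flag --- is automatic, since any element of $\mathsf D_G\left((L\star K)(p)\inv\right)$ with the correct $\TT\s$-image may be used to compute $r_{L\star K}$.
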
 

For the proof of this theorem, we will need the following lemma, which proof is straighforward.
\begin{lemma}
Let $G\rr P$ be a Lie groupoid. 
Choose $g,h\in G$ and  $K\in\mathcal B(G)$. 
Choose $(v_h,\alpha_h)\in \mathsf P_G(h)$, 
$\left(v_{g},\alpha_{g}\right)\in \mathsf P_G(g)$ such that 
$\TT\s\left(v_{g},\alpha_{g}\right)=\TT\tg\left(v_h,\alpha_h\right)$.
Then  
\begin{equation}\label{util_tangent}
T_{g\star h}R_K(v_{g}\star v_h)
=v_{g}\star \left(T_{h}
R_{K}v_h\right)
\end{equation}
and 
\begin{equation}\label{util_cotangent}
 \alpha_{g}\star \left(\left(T_{R_K(h)}R_{K}\inv\right)^*\alpha_h\right)
=\left(T_{R_K(g\star h)}R_{K}\inv\right)^*
(\alpha_{g}\star \alpha_h).
\end{equation}
\end{lemma}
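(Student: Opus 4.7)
The plan is to reduce both identities to the single groupoid identity $R_K(g\star h)=g\star R_K(h)$, which is immediate from the definition $R_K(x)=x\star K(\s(x))$ and associativity of the partial multiplication: indeed $R_K(g\star h)=(g\star h)\star K(\s(h))=g\star (h\star K(\s(h)))=g\star R_K(h)$.

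For \eqref{util_tangent}, I would choose smooth curves $c_g:(-\varepsilon,\varepsilon)\to G$ through $g$ with $\dot c_g(0)=v_g$ and $c_h$ through $h$ with $\dot c_h(0)=v_h$, where the composability hypothesis $\TT\s(v_g,\alpha_g)=\TT\tg(v_h,\alpha_h)$ (in particular $T\s v_g=T\tg v_h$) allows us to arrange $\s(c_g(t))=\tg(c_h(t))$ for all $t$, so that $c_g(t)\star c_h(t)$ is defined. Applying $R_K$ and using $R_K(c_g(t)\star c_h(t))=c_g(t)\star R_K(c_h(t))$, differentiating at $t=0$ gives $T_{g\star h}R_K(v_g\star v_h)=v_g\star T_hR_K(v_h)$, where the tangent groupoid multiplication on the right is well-defined because $T\tg\circ TR_K=T(\tg\circ R_K)=T\tg$ ensures $T\tg(T_hR_K v_h)=T\tg(v_h)=T\s(v_g)$.

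For \eqref{util_cotangent} I would evaluate both covectors at an arbitrary $w\in T_{R_K(g\star h)}G=T_{g\star R_K(h)}G$. First I need to know the product on the left is defined: from $R_{R_K(h)}=R_K\circ R_h$ (a direct consequence of $R_K(g\star h)=g\star R_K(h)$) one gets $R_K\inv\circ R_{R_K(h)}=R_h$, and plugging this into the formula for $\hat\tg$ in the cotangent groupoid gives $\hat\tg((T_{R_K(h)}R_K\inv)^*\alpha_h)=\hat\tg(\alpha_h)=\hat\s(\alpha_g)$, so the left-hand side makes sense; it lies in $T^*_{g\star R_K(h)}G$ as required. Now decompose $w=v_g\star u$ with $v_g\in T_gG$, $u\in T_{R_K(h)}G$ and $T\s(v_g)=T\tg(u)$. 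Applying \eqref{util_tangent} to $R_K\inv=R_{K\inv}$ yields
\begin{equation*}
T_{g\star R_K(h)}R_K\inv(w)=T_{g\star R_K(h)}R_K\inv(v_g\star u)=v_g\star T_{R_K(h)}R_K\inv(u),
\end{equation*}
and since $T\tg(T_{R_K(h)}R_K\inv(u))=T\tg(u)=T\s(v_g)$ this is a composable decomposition in $TG\rr TP$. Hence by the definition of cotangent multiplication,
\begin{align*}
\bigl((T_{R_K(g\star h)}R_K\inv)^*(\alpha_g\star\alpha_h)\bigr)(w)
&=(\alpha_g\star\alpha_h)\bigl(v_g\star T_{R_K(h)}R_K\inv(u)\bigr)\\
&=\alpha_g(v_g)+\alpha_h(T_{R_K(h)}R_K\inv(u))\\
&=\alpha_g(v_g)+\bigl((T_{R_K(h)}R_K\inv)^*\alpha_h\bigr)(u)\\
&=\bigl(\alpha_g\star(T_{R_K(h)}R_K\inv)^*\alpha_h\bigr)(w).
\end{align*}

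The only real subtlety is bookkeeping: checking that the two cotangent products that appear are well-defined (i.e.\ satisfy the $\hat\s=\hat\tg$ matching condition) and that every tangent-groupoid product used is composable. Both are handled by the two identities $\tg\circ R_K=\tg$ and $R_{R_K(h)}=R_K\circ R_h$, which are manipulations of the same content as \eqref{util_tangent}. Apart from that, the argument is a direct application of \eqref{util_tangent} inside the pairing that defines cotangent multiplication.
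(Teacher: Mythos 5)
Your proof is correct; the paper states this lemma without proof (it is declared ``straightforward''), so your argument fills a gap rather than diverging from an existing one. The reduction of everything to the single identity $R_K(g\star h)=g\star R_K(h)$ is the right organizing idea: the curve argument for \eqref{util_tangent} is sound (composable curves through $g$ and $h$ exist precisely because $T\s\,v_g=T\tg\,v_h$), and the evaluation argument for \eqref{util_cotangent}, including the verification via $R_{K}\inv\circ R_{R_K(h)}=R_h$ that $\hat\tg\bigl((T_{R_K(h)}R_K\inv)^*\alpha_h\bigr)=\hat\tg(\alpha_h)=\hat\s(\alpha_g)$, is complete. Two cosmetic remarks. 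First, you reuse the symbol $v_g$ both for the vector fixed in the statement and for the generic first factor in the decomposition $w=v_g\star u$; a different letter would avoid confusion. Second, when you invoke \eqref{util_tangent} for $R_{K\inv}$ you only have tangent composability $T\s(v_g)=T\tg(u)$ and no accompanying cotangent data, so strictly you are using the (true) fact that the tangent identity holds whenever the product $v_g\star u$ is defined; this is worth saying explicitly, and it is harmless because your curve proof uses only the tangent half of the composability hypothesis.
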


\begin{proof}[of Theorem \ref{action}]
First, we check that the map $r_K$ is well-defined, that is, 
that it
has image in $\lie B(\mathsf D_G)$ and does not depend on the choices made.

Choose $p\in P$, $(v_p,\alpha_p)\in 
\lie A_p(\mathsf D_G)\oplus(\ker\TT\tg)_p$ and $K\in\mathcal B(G)$.
Set $K(p)=g$.
Since the map $r_K$ is linear in every fiber of 
$\lie A(\mathsf D_G)\oplus(\ker\TT\tg)\an{P}$,
 it suffices to show that the image
of $(0_p,0_p)$ is $I^\s_{\s(g)}(\mathsf D_G)$
for any choice of  $(v_{g\inv},\alpha_{g\inv})\in\mathsf D_G(g\inv)$
such that $\TT\s(v_{g\inv},\alpha_{g\inv})=(0_p, 0_p)$
to prove that it is well-defined.
Using \eqref{util_tangent} and \eqref{util_cotangent}, we get
\begin{align*}
T_{g\inv}R_K (v_{g\inv}\star 0_p)&
=v_{g\inv}\star\left(T_pR_K 0_p\right)
=v_{g\inv}\star 0_{g}\\
(\alpha_{g\inv}\star 0_p)\circ T_{\s(g)}R_{K\inv}&=
\alpha_{g\inv}\star\left(0_p\circ T_{g}R_{K\inv}\right)
=\alpha_{g\inv}\star0_g.
\end{align*}
Thus, we have shown that
$$r_K(0_p, 0_p)
=(v_{g\inv},\alpha_{g\inv})\star(0_g, 0_g) \in\mathsf
D_G(\s(g))\cap\ker\TT\s=I_{\s(g)}^\s(\mathsf D_G).$$

\medskip

Choose next
$(v_p,\alpha_p)\in 
\mathfrak A(\mathsf D_G)\oplus(\ker\TT\tg)\an{P}$ such that 
 $(v_p,\alpha_p)\in I^\s_p(\mathsf D_G)$, that 
is, such that $\overline{(v_p,\alpha_p)}=0$ in $\lie B_p(\mathsf D_G)$. Choose 
$(v_{g\inv},\alpha_{g\inv})\in
\mathsf D_G(g\inv)$ such that
$\TT\s(v_{g\inv},\alpha_{g\inv})=\TT\tg(v_p,\alpha_p)$.
Then we have 
$T_{g\inv}R_K(v_{g\inv}\star v_p)=T_{g\inv}R_K(v_{g\inv}\star v_p\star 0_p)=v_{g\inv}\star v_p\star 0_g$,
since $T_p\s v_p=0$. We have also 
$\hat\s(\alpha_p)=0$, and by \eqref{util_cotangent}:
$$(T_{\s(g)}R_{K}\inv)^*(\alpha_{g\inv}\star\alpha_p)
=(T_{\s(g)}R_{K}\inv)^*(\alpha_{g\inv}\star\alpha_p\star 0_p)
=\alpha_{g\inv}\star\alpha_p\star 0_g.
$$
Thus, 
$r_K(v_p,\alpha_p)
=(v_{g\inv},\alpha_{g\inv})\star (v_p,\alpha_p)
\star(0_g,0_g)
\in I_{\s(K(p))}^\s(\mathsf D_G)$.
The map $\rho_K:\lie B(\mathsf D_G)\to \lie B(\mathsf D_G)$ is consequently
well-defined.

\medskip

We show now that $\rho: \mathcal B(G)\times \lie B(\mathsf D_G)\to  \lie B(\mathsf D_G)$
defines an action of the group of bisections of $G\rr P$ on $\Gamma(\lie B(\mathsf D_G))$.

Choose $K,L\in\mathcal B(G)$, $p\in P$ 
and  $\overline{(v_p,\alpha_p)}$
in $\lie B_p(\mathsf D_G)$.
Set $K(p)=g$ and choose  a pair $(v_{g\inv},\alpha_{g\inv})\in\mathsf D_G(g\inv)$
such that $\TT\s(v_{g\inv},\alpha_{g\inv})=\TT\tg(v_p,\alpha_p)$.
Set also $h:=L(\s(g))$
and choose  $(w_{h\inv},\beta_{h\inv})\in\mathsf D_G(h\inv)$
such that $\TT\s(w_{h\inv},\beta_{h\inv})
=\TT\tg\left(v_{g\inv},\alpha_{g\inv}\right)$. Then we have 
 $(K\star L)(p)=g\star h$ and we compute, using
\eqref{util_tangent} and \eqref{util_cotangent}:
\begin{align*}
&\rho_L\left(\rho_K\left(\overline{(v_p,\alpha_p)}\right)\right)\\
=\,&\rho_L\left( \left(T_{g\inv}R_K,\left(T_{\s(g)}R_{K}\inv\right)^*\right) 
\left(\left(v_{g\inv},\alpha_{g\inv}\right)\star
(v_p,\alpha_p)\right)\right)\\
=&\left(T_{(g\star h)\inv}R_{K\star L},(T_{\s(g\star h)}R_{K\star L}\inv)^*\right) 
\left((w_{h\inv},\beta_{h\inv})
\star \left(v_{g\inv},\alpha_{g\inv}\right)\star(v_p,\alpha_p)\right)+I^\s_{\s(h)}(\mathsf D_G)\\
=\,& \rho_{K\star L}\left(\overline{(v_p,\alpha_p)}\right)
\end{align*}
since $(w_{h\inv},\beta_{h\inv})\star\left(v_{g\inv},\alpha_{g\inv}\right)$
is an element of $\mathsf D_G((g\star h)\inv)$ satisfying
$$\TT\s\left((w_{h\inv},\beta_{h\inv})\star\left(v_{g\inv},\alpha_{g\inv}\right)
\right)
=\TT\s\left(v_{g\inv},\alpha_{g\inv}\right)
=(v_p,\alpha_p).$$
\phantom{bla}
\end{proof}

\begin{example}
Consider a Poisson groupoid $(G\rr P, \pi_G)$. We will compute 
the action of the bisections $\mathcal B(G)$ 
on $\lie B(\mathsf D_{\pi_G})\simeq AG\oplus A^*G$.

Choose $K\in\mathcal B(G)$,
 $p\in P$ and $\Psi(X(p),\xi(p))\in \lie B_p(\mathsf D_{\pi_G})$,
$(X(p),\xi(p))\in A_pG\times A^*_pG$
(recall that $\Psi$ has been defined in Example \ref{bialgebroid_of_Poisson}).
If $K(p)=g$ and $\theta_\xi\in\Omega^1(G)$ is a star section that is $\s$-related
to $\xi$, then  $\rho_K(\Psi(X(p),\xi(p)))$ 
is given by
\begin{align*}
\rho_K(\Psi(X(p),\xi(p)))&=\left(T_{g\inv}R_K(\pi_G^\sharp(\theta_\xi(g\inv))
+X^l(g\inv)),
((R_K\inv)^*\theta_\xi)(\s(g))\right)
\end{align*}
and corresponds to
\begin{align*}
&(\Psi\inv\circ\rho_K\circ\Psi)(X(p),\xi(p))\\
=\,&\left(T_{g\inv}R_K(\pi_G^\sharp(\theta_\xi(g\inv))+X^l(g\inv))-\pi_G^\sharp((R_K\inv)^*\theta_\xi(\s(g))),
\hat\s\left(((R_K\inv)^*\theta_\xi)(\s(g))\right)\right)\\
=\,&\Bigl( T_{p}(L_{g\inv}\circ R_K)X(p)+
T_{g\inv}R_K(\pi_G^\sharp(\theta_\xi(g\inv)))-\pi_G^\sharp((R_K\inv)^*\theta_\xi(\s(g))),\\
&\hspace*{5cm}(T_{\s(g)}(L_g\circ R_{K\inv}))^*\xi(p)\Bigr).
\end{align*}
Note that by the general theorem, 
this does not depend on the choice of 
$\theta_\xi$. 
In the case of a Poisson Lie group, we recover the action 
of $G$ on the Lie bialgebroid, see \cite{Drinfeld93}.
In the case of a trivial Poisson groupoid, i.e., with $\pi_G=0$, this 
is simply the pair of maps on $AG$ and $A^*G$ generalizing
$\Ad$ and $\Ad^*$ in the Lie group case. 
\end{example}

\begin{example}
Consider a  Lie groupoid $G\rr P$  endowed with a closed
multiplicative $2$-form $\omega_G\in\Omega^2(G)$.
 We will compute 
the action of the bisections $\mathcal B(G)$ 
on $\lie B(\mathsf D_{\omega_G})\simeq TP\oplus T^*P$.

Choose a bisection $K\in\mathcal B(G)$,  a vector 
$\Lambda\inv(v_p,\alpha_p)
=\overline{(v_p, (T_p\s)^*\alpha_p+\omega_G^\flat(v_p))}
\in\lie B_p(\mathsf D_{\omega_G})$,
 $(v_p,\alpha_p)\in\mathsf P_P(p)$
(recall Example \ref{ex_presymplectic_courant})
and set $K(p)=g\in G$. Then 
we have 
$$\TT\tg(v_p,(T_p\s)^*\alpha_p+\omega_G^\flat(v_p))
=(v_p, \omega_G^\flat(v_p))\in\lie A_p(\mathsf D_{\omega_G}).$$
Hence, any vector $v_{g\inv}\in T_{g\inv}G$ such that
$T_{g\inv}\s v_{g\inv}=v_p$  leads to 
$(v_{g\inv},\omega_G^\flat(v_{g\inv}))\in\mathsf D_{\omega_G}(v_{g\inv})$
and $\TT\s(v_{g\inv},\pi_G^\flat(v_{g\inv}))=\TT\tg(v_p, (T_p\s)^*\alpha_p+\omega_G^\flat(v_p))$.

The vector $\rho_K\left(\Lambda\inv(v_p,\alpha_p)\right)
\in\lie B_{\s(g)}(\mathsf D_{\omega_G})$
is then given by
\begin{align*}
&\rho_K\left(\Lambda\inv(v_p,\alpha_p)\right)\\
=&\left(T_{g\inv}R_K(v_{g\inv}\star v_p), 
(T_{\s(g)}R_K\inv)^*\left(\omega_G^\flat(v_{g\inv})\star
\left((T_p\s)^*\alpha_p+\omega_G^\flat(v_p)\right)\right)\right)\\
=&\left(T_{g\inv}R_Kv_{g\inv}, 
(T_{\s(g)}R_K\inv)^*\left(\omega_G^\flat(v_{g\inv})+(T_{g\inv}\s)^*
\alpha_p\right)\right).
\end{align*}
Thus, we get 
\begin{align*}
&\,(\Lambda\circ\rho_K\circ\Lambda\inv)(v_p,\alpha_p)\\
=\,&\left(T_{g\inv}(\s\circ R_K)v_{g\inv}, (T_{\s(g)}R_K\inv)^*\left(\omega_G^\flat(v_{g\inv})+(T_{g\inv}\s)^*
\alpha_p\right)-\omega_G^\flat\left(T_{g\inv}R_Kv_{g\inv}\right)
\right)\\
=\,&\left(T_p(s\circ K)v_p, \left(T_{\s(g)}(\s\circ K)\inv\right)^*\alpha_p
+(T_{\s(g)}R_K\inv)^*\omega_G^\flat(v_{g\inv})-\omega_G^\flat\left(T_{g\inv}R_Kv_{g\inv}\right)
\right).
\end{align*}
Again, by the general theorem, 
this does not depend on the choice of 
$v_{g\inv}$. 
In the trivial case $\omega_G=0$, 
$\rho_K$ is simply the map
$(T(\s\circ K), (T(\s\circ K)\inv)^*)$ 
induced by the diffeomorphism $s\circ K$ on $\mathsf P_P$. 
\end{example}

\begin{example}\label{action_pair_case}Let $(M,\mathsf D)$ be a smooth Dirac 
manifold 
and consider the pair Dirac  groupoid 
$(M\times M\,\rr\,M, \mathsf D\ominus\mathsf D)$ associated to it.
The set of bisections of $M\times M\,\rr\, M$ is equal to
$\mathcal B(M\times M)=\{\Id_M\}\times\operatorname{Diff}(M)$.
Choose $K=(\Id_M, \phi_K)\in\mathcal B(M\times M)$, 
$p:=(m,m)\in\Delta_M$ and 
$\overline{(v_m,w_m,\alpha_m,\beta_m)}\in\lie B_p(\mathsf D_M\ominus \mathsf D_M)$.
Then we have 
$\TT\tg(v_m,w_m,\alpha_m,\beta_m)=(v_m,v_m,\alpha_m,-\alpha_m)\in\lie A_p(\mathsf D_M\ominus\mathsf D_M)$.
Set $n:=\phi_K(m)$. Then we have 
$K(p)\inv=(n,m)$ and 
$(0_n,v_m,0_n,-\alpha_m)\in(\mathsf D_M\ominus\mathsf D_M)(n,m)$
is such that 
$\TT\s(0_n,v_m,0_n,-\alpha_m)=(v_m,v_m,\alpha_m,-\alpha_m)=
\TT\tg(v_m,w_m,\alpha_m,\beta_m)$.

The vector $\rho_K\left(\overline{(v_m,w_m,\alpha_m,\beta_m)}\right)$
is thus given by 
\begin{align*}
\rho_K\left(\overline{(v_m,w_m,\alpha_m,\beta_m)}\right)
&=\overline{(T_{(n,m)}R_K(0_n,w_m), (T_{(n,n)}R_K\inv)^*(0_n,\beta_m))}\\
&=\overline{(0_n, T_m\phi_Kw_m, 0_n, (T_n\phi_K\inv)^*\beta_m)}.
\end{align*}
Recall that $\lie B(\mathsf D_M\ominus\mathsf D_M)$ is isomorphic to 
$\mathsf P_M$ via 
\eqref{iso_Courant_pair}. It is easy to see 
that the action of $\mathcal B(M\times M)$ on 
$\lie B(\mathsf D_M\ominus\mathsf D_M)$
corresponds via this identification to the action
of $\operatorname{Diff}(M)$ on $\mathsf P_M$ 
given by 
$\phi\cdot (v_m,\alpha_m)=(T_m\phi v_m, (T_{\phi(m)}\phi\inv)^*\alpha_m)$
for all $\phi\in \operatorname{Diff}(M)$
and $(v_m,\alpha_m)\in\mathsf P_M(m)$.
\end{example}

\section{Dirac homogeneous spaces and the classification}\label{homogeneous}
In this section, we show that 
the Courant algebroid found in Section \ref{section_geometry_dirac}
is the right ambient Courant algebroid for the classification of the Dirac
homogeneous spaces of a Dirac groupoid.

We prove our main theorem (Theorem \ref{Drinfeld})
about the correspondence between (closed) Dirac homogeneous
spaces of a (closed) Dirac  groupoid  and Lagrangian subspaces (subalgebroids)
of the vector bundle (Courant algebroid) $\lie B(\mathsf D_G)$.
This result generalizes 
the result of \cite{Drinfeld93}
about the Poisson homogeneous spaces of Poisson Lie groups, of
\cite{LiWeXu98} about Poisson homogeneous spaces of Poisson groupoids
and the result in \cite{Jotz11a}
about the Dirac homogeneous spaces of Dirac Lie groups.
To be able to define the notion of a homogeneous Dirac structure on 
a homogeneous space of a Lie groupoid, 
we have to prove the following proposition. The proof is straightforward and will be omitted here.
For more details, see \cite{thesis}.
 
\begin{proposition}\label{action_hom_cot}
Let $G\rr P$ be a Lie groupoid acting on a smooth manifold $M$ with momentum map $\mathsf J:M\to P$. Then there
is an induced action of $TG\,\rr\, TP$ on $T\mathsf J:TM\to TP$.

Assume that $M\simeq G/H$ is a smooth homogeneous space of $G$
and let $q:G\to G/H$ be the projection.
The map 
$\widehat{\mathsf J}:T^*(G/H)\to A^*G$,
 $\widehat{\mathsf J}(\alpha_{gH})=\hat \tg\left((T_gq)^*\alpha_{gH}\right)$
for all $gH\in G/H$ is well-defined and
 $\widehat\Phi:T^*G\times_{A^*G}T^*(G/H)\to T^*(G/H)$ given by
\[\left(\widehat\Phi(\alpha_{g'}, \alpha_{gH})\right)
\left(T_{(g',gH)}\Phi(v_{g'},v_{gH})\right)=
\alpha_{g'}(v_{g'})+\alpha_{gH}(v_{gH})\]
defines an
action of $T^*G\,\rr\, A^*G$ on $\widehat{\mathsf J}:T^*(G/H)\to A^*G$.
\end{proposition}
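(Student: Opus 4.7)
The two assertions have essentially the same flavor: both construct a lifted groupoid action by transporting the given $\Phi\colon G\times_P M\to M$ through a functorial construction (tangent or cotangent), so the strategy is to build the candidate maps, verify that they are well-defined, and then reduce the action axioms to the corresponding axioms for $\Phi$.

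For the tangent action, the plan is to apply the tangent functor everywhere. Since $\s\colon G\to P$ and $\mathsf J\colon M\to P$ are surjective submersions, the pullback $G\times_P M$ is a smooth manifold and its tangent bundle satisfies $T(G\times_P M)=TG\times_{TP}TM$, where the fiber product is formed using $T\s$ and $T\mathsf J$. Differentiating $\Phi\colon G\times_P M\to M$ thus gives a smooth map $T\Phi\colon TG\times_{TP}TM\to TM$. The compatibility $\mathsf J\circ\Phi=\tg\circ\pr_G$ yields $T\mathsf J\circ T\Phi=T\tg\circ\pr_{TG}$, which is the moment-map compatibility for the tangent groupoid $TG\rr TP$. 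Associativity and the unit axiom for $T\Phi$ follow by applying $T$ to the corresponding identities for $\Phi$ (here one uses $T1_{J(m)}v_m=1_{T\mathsf J\, v_m}$ in the tangent groupoid).

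For the cotangent action, the plan is to mimic the construction of the cotangent groupoid $T^*G\rr A^*G$. First I would verify that $\widehat{\mathsf J}\colon T^*(G/H)\to A^*G$ is well-defined: for $g,g'\in q^{-1}(gH)$ we have $g'=g\star h$ with $h\in H$, and since $\tg(g')=\tg(g)$ and the two pullbacks $(T_gq)^*\alpha_{gH}$ and $(T_{g'}q)^*\alpha_{gH}$ vanish on $\ker T_gq=\mathcal H(g)$ (respectively at $g'$), a direct computation using the explicit formula for $\hat\tg$ and the fact that right translation by $h$ intertwines the target data shows independence of the representative. Next, given $(\alpha_{g'},\alpha_{gH})\in T^*G\times_{A^*G}T^*(G/H)$, I would define $\widehat\Phi(\alpha_{g'},\alpha_{gH})$ by the displayed formula. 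To see this is a well-defined covector on $T_{g'\cdot gH}(G/H)$, I would check two things: that $T_{(g',gH)}\Phi$ is surjective (which follows because $\Phi(g',\cdot)$ restricts to a diffeomorphism between $\mathsf J$-fibers and $q$ is a submersion), and that the right-hand side vanishes on $\ker T_{(g',gH)}\Phi$. This last point is the core calculation: an element of the kernel is a pair of vectors whose images under $T_{g'}\Phi(\cdot,\star)$ and $T_{gH}\Phi(g',\cdot)$ cancel, and the hypothesis $\hat\tg(\alpha_{g'})=\widehat{\mathsf J}(\alpha_{gH})$ is precisely the matching condition that makes $\alpha_{g'}(v_{g'})+\alpha_{gH}(v_{gH})$ vanish on such kernel pairs. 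Finally the action axioms $\widehat\Phi(1_{\widehat{\mathsf J}(\alpha_{gH})},\alpha_{gH})=\alpha_{gH}$ and $\widehat\Phi(\alpha_{g_1}\star\alpha_{g_2},\alpha_{gH})=\widehat\Phi(\alpha_{g_1},\widehat\Phi(\alpha_{g_2},\alpha_{gH}))$ are obtained by testing both sides against a tangent vector and using the corresponding identity for $\Phi$ together with the definition of the product in $T^*G\rr A^*G$.

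The main obstacle is the well-definedness of $\widehat\Phi$, i.e.\ verifying that the defining formula is independent of the choice of tangent lift $(v_{g'},v_{gH})$ of the target vector. All the other assertions are formal once the cotangent action is constructed: associativity and the unit law are direct translations, via the pairing with tangent vectors, of the corresponding identities for $\Phi$ and the groupoid structure on $T^*G$. I expect the verification to mirror closely the standard proof that $T^*G\rr A^*G$ is a groupoid, only with the self-action of $G$ on $\tg\colon G\to P$ replaced by its action on $\mathsf J\colon G/H\to P$.
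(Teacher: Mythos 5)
The paper itself omits the proof of this proposition (it is called straightforward and deferred to \cite{thesis}), so there is no written argument to compare against line by line; your plan — lift the action through the tangent functor, then define $\widehat{\mathsf J}$ and $\widehat\Phi$ by the displayed formulas and check well-definedness of $\widehat{\mathsf J}$ on representatives, surjectivity of $T\Phi$, vanishing of the defining expression on $\ker T\Phi$, and finally the action axioms — is exactly the natural argument the paper alludes to, and it does mirror the construction of the cotangent groupoid with the self-action on $\tg$ replaced by the action on $\mathsf J:G/H\to P$.

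One concrete correction is needed, because as written the sentence carrying the core calculation is false: the matching condition that makes $\alpha_{g'}(v_{g'})+\alpha_{gH}(v_{gH})$ vanish on $\ker T_{(g',gH)}\Phi$ is $\hat\s(\alpha_{g'})=\widehat{\mathsf J}(\alpha_{gH})$, not $\hat\tg(\alpha_{g'})=\widehat{\mathsf J}(\alpha_{gH})$; the fiber product $T^*G\times_{A^*G}T^*(G/H)$ is taken over $\hat\s$ on the first factor (source of the acting covector equals moment of the acted one), which is also how the action is used later in the paper, e.g.\ in the proof of Theorem \ref{unicity}, where one chooses $(w_{g\inv},\beta_{g\inv})$ with $\TT\s(w_{g\inv},\beta_{g\inv})=\TT\mathsf J(T_gqv_g,\alpha_{gH})$ before acting. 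To see why the source condition is the right one: with $p=\s(g')=\mathsf J(gH)=\tg(g)$ and a curve $k(t)\in\tg\inv(p)$, $k(0)=1_p$, $\dot k(0)=u\in A_pG$, the kernel of $T_{(g',gH)}\Phi$ is spanned by the vectors $\left.\frac{d}{dt}\right\an{t=0}\bigl(g'\star k(t),\,k(t)\inv\cdot gH\bigr)=\bigl(T_pL_{g'}u,\,-T_gq\bigl(T_pR_g(u-T_p\s u)\bigr)\bigr)$, and pairing with $(\alpha_{g'},\alpha_{gH})$ gives exactly $\hat\s(\alpha_{g'})(u)-\widehat{\mathsf J}(\alpha_{gH})(u)$; this vanishes for all $u\in A_pG$ precisely under the source condition, and in general does not under the target condition you wrote. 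Your own unit axiom $\widehat\Phi\bigl(1_{\widehat{\mathsf J}(\alpha_{gH})},\alpha_{gH}\bigr)=\alpha_{gH}$ already presupposes source $=$ moment, so this is a slip rather than a structural flaw; with it corrected, the remaining steps (well-definedness of $\widehat{\mathsf J}$ via $q\circ R_h=q$ for $h\in H$, surjectivity of $T\Phi$, and the reduction of associativity and the unit law to the corresponding identities for $\Phi$ and the multiplication in $T^*G\rr A^*G$) go through as you describe.
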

In the following, we   write $\alpha_g\cdot \alpha_{g'H}$
for $\widehat\Phi(\alpha_g,\alpha_{g'H})$.

\begin{corollary}
If $G/H$ is a smooth homogeneous space of $G\rr P$, there is an induced action
$\TT\Phi=(T\Phi,\widehat\Phi)$
of \[(TG\oplus T^*G)\,\rr \,(TP\oplus A^*G)\]
 on
\[\TT\mathsf J:=T\mathsf J\times \widehat{\mathsf J}:( T(G/H)\times_{G/H} T^*(G/H))
\to (TP\oplus A^*G).\]
\end{corollary}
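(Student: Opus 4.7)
The plan is to exhibit $\TT\Phi$ as the component-by-component assembly of the two actions already produced in Proposition \ref{action_hom_cot} and then verify that the groupoid action axioms relative to the Pontryagin groupoid structure reduce to the corresponding axioms for the tangent and cotangent actions, which are already known.

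First I would define, for a composable pair $\bigl((v_g,\alpha_g),(v_{gH},\alpha_{gH})\bigr)\in (TG\oplus T^*G)\times_{TP\oplus A^*G}(T(G/H)\oplus T^*(G/H))$, the expression
\[
\TT\Phi\bigl((v_g,\alpha_g),(v_{gH},\alpha_{gH})\bigr)
:=\bigl(T\Phi(v_g,v_{gH}),\widehat\Phi(\alpha_g,\alpha_{gH})\bigr).
\]
The key observation is that the source map $\TT\s:TG\oplus T^*G\to TP\oplus A^*G$ is by definition the pair $(T\s,\hat\s)$, and in the same way $\TT\mathsf J=(T\mathsf J,\widehat{\mathsf J})$. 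Hence the composability condition $\TT\s(v_g,\alpha_g)=\TT\mathsf J(v_{gH},\alpha_{gH})$ is equivalent to the simultaneous composability conditions $T\s(v_g)=T\mathsf J(v_{gH})$ and $\hat\s(\alpha_g)=\widehat{\mathsf J}(\alpha_{gH})$ for the tangent and cotangent actions. Thus $T\Phi(v_g,v_{gH})$ and $\widehat\Phi(\alpha_g,\alpha_{gH})$ are both defined, the first lying in $T(G/H)$ over $T\tg(v_g)$ and the second in $T^*(G/H)$ over $\hat\tg(\alpha_g)$, so the pair lives in $T(G/H)\oplus T^*(G/H)$.

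Next I would check the action axioms. Covariance of the momentum map, $\TT\mathsf J\circ\TT\Phi=\TT\tg\circ\pr_1$, splits into two independent statements, one for $T\Phi$ and one for $\widehat\Phi$, each of which holds by Proposition \ref{action_hom_cot}. The unit axiom $\TT\Phi\bigl(1_{\TT\mathsf J(v_{gH},\alpha_{gH})},(v_{gH},\alpha_{gH})\bigr)=(v_{gH},\alpha_{gH})$ follows likewise componentwise, noting that the unit in $TG\oplus T^*G$ over $(u_p,\beta_p)\in TP\oplus A^*G$ is the pair of the units of $u_p\in TP$ in $TG$ and of $\beta_p\in A^*G$ in $T^*G$. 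Associativity $\TT\Phi\bigl((v_g,\alpha_g)\star(v_{g'},\alpha_{g'}),(v_{gH},\alpha_{gH})\bigr)=\TT\Phi\bigl((v_g,\alpha_g),\TT\Phi((v_{g'},\alpha_{g'}),(v_{gH},\alpha_{gH}))\bigr)$ again separates into its two components, each of which is precisely the associativity axiom verified in the proof of Proposition \ref{action_hom_cot}.

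There is essentially no obstacle here: once one has identified $\TT\s,\TT\tg,\TT\mathsf J$ as componentwise pairs and the multiplication on $TG\oplus T^*G$ as the componentwise product (by construction of the Pontryagin groupoid), the whole statement is a diagonal assembly of the two separate actions. The only mild care needed is to ensure that the canonical identification $(T(G/H)\oplus T^*(G/H))\times_{TP\oplus A^*G}(TG\oplus T^*G)\simeq (TG\times_{TP}T(G/H))\oplus(T^*G\times_{A^*G}T^*(G/H))$ used above is compatible with the pairing in Proposition \ref{action_hom_cot} defining $\widehat\Phi$, which is immediate.
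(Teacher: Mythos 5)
Your proposal is correct and is essentially the proof the paper intends: the corollary's very statement $\TT\Phi=(T\Phi,\widehat\Phi)$ indicates that the action is the componentwise assembly of the tangent and cotangent actions of Proposition \ref{action_hom_cot}, with the composability condition and the action axioms splitting componentwise because $\TT\s$, $\TT\tg$, $\TT\mathsf J$ and the multiplication on $TG\oplus T^*G$ are all defined componentwise. The paper treats this as immediate and gives no separate argument, so your verification fills in exactly the omitted routine checks.
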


We will show that the following definition generalizes in a natural manner the notion of Poisson homogeneous
space of a Poisson groupoid.
\begin{definition}\label{def_hom_space}
Let $(G\rr P,\mathsf D_G)$ be a Dirac groupoid, and $G/H$ a smooth homogeneous 
space of $G\rr P$ endowed with a Dirac structure $\mathsf D_{G/H}$.
The pair $(G/H,\mathsf D_{G/H})$ is a \emph{Dirac homogeneous space}
of the Dirac  groupoid $(G\rr P,\mathsf D_G)$ if the induced 
action of 
$(TG\oplus T^*G)\,\rr\, (TP\oplus A^*G)$
on $\,\TT\mathsf  J:( T(G/H)\times_{G/H}  T^*(G/H))
\to (TP\oplus A^*G)$
restricts to an action 
of \[\mathsf D_G\,\rr\, \lie A(\mathsf D_G)
\qquad\text{ 
on }
\qquad 
\TT\mathsf J\an{\mathsf D_{G/H}}:\mathsf D_{G/H}\to \lie A(\mathsf D_G).\]
\end{definition}

\medskip

Let $(G,\mathsf{D}_G)$ be a Dirac Lie group and $H$ a closed connected 
Lie subgroup of $G$. Let 
$G/H=\{gH\mid g\in G\}$ be the homogeneous space defined as the quotient space 
by the right action of $H$ on $G$. Let $q:G\to G/H$ be the quotient map.
For $g\in G$, let $\sigma_g:G/H\to G/H$ be the map
defined by $\sigma_g(g'H)=gg'H$. 

\begin{proposition}\label{Dirac_hom_of_group}
Let $(G,\mathsf{D}_G)$ be a Dirac Lie group and $H$ a closed connected 
Lie subgroup of $G$. Let 
$G/H$ be  endowed with a Dirac structure
$\mathsf{D}_{G/H}$.
The pair $(G/H, \mathsf D_{G/H})$ is a 
\emph{Dirac homogeneous space of $(G,\mathsf{D}_G)$} if and only if
the left action
\[\sigma:G\times G/H\to G/H, \quad \sigma_g(g'H)=gg'H\]
is a forward Dirac map, where $G\times G/H$
is endowed with the product Dirac structure 
$\mathsf D_G\oplus\mathsf D_{G/H}$.
\end{proposition}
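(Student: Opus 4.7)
The plan is to rephrase both conditions as statements about the pushforward $\sigma_*(\mathsf D_G\oplus\mathsf D_{G/H})$ and exploit the Lagrangian condition. Because $\sigma$ is a surjective submersion (each left translation $\sigma_g$ is a diffeomorphism), $\sigma_*(\mathsf D_G\oplus\mathsf D_{G/H})$ is a well-defined smooth Lagrangian subbundle of $\mathsf P_{G/H}$ of the same rank as $\mathsf D_{G/H}$; consequently the forward-Dirac inclusion $\mathsf D_{G/H}\subseteq\sigma_*(\mathsf D_G\oplus\mathsf D_{G/H})$ is equivalent to equality and to the reverse inclusion.

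First I would unpack the pushforward at $gxH$: a pair $(w,\beta)$ lies in $\sigma_*(\mathsf D_G\oplus\mathsf D_{G/H})(gxH)$ iff there exist $(v_g,\alpha_g)\in\mathsf D_G(g)$ and $(v_{xH},\alpha_{xH})\in\mathsf D_{G/H}(xH)$ with $T_{(g,xH)}\sigma(v_g,v_{xH})=w$ and $\sigma^*\beta=(\alpha_g,\alpha_{xH})$. Using the splitting $T_{(g,xH)}\sigma(v_g,v_{xH})=T_gR_{xH}v_g+T_{xH}\sigma_g v_{xH}$ and the dual formula for $\sigma^*$, a direct identification shows that this decomposition is precisely the tangent/cotangent groupoid action of Proposition \ref{action_hom_cot}: $w=v_g\cdot v_{xH}$ and $\beta=\alpha_g\cdot\alpha_{xH}$, with the source-target compatibility $\hat{\s}(\alpha_g)=\widehat{\mathsf J}(\alpha_{xH})$ forced by the mere existence of the splitting $\sigma^*\beta=(\alpha_g,\alpha_{xH})$.

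For ($\Leftarrow$), the forward-Dirac assumption gives $\sigma_*(\mathsf D_G\oplus\mathsf D_{G/H})=\mathsf D_{G/H}$, so in particular the product $(v_g,\alpha_g)\cdot(v_{xH},\alpha_{xH})$ of any compatible pair lies in $\mathsf D_{G/H}$, which is the action axiom of Definition \ref{def_hom_space}. The required momentum condition $\widehat{\mathsf J}(\mathsf D_{G/H})\subseteq\lie A(\mathsf D_G)=\{0\}\oplus\lie p_1$ follows by applying forward Dirac at $g=e$: the resulting lift of any $(v_{xH},\alpha_{xH})\in\mathsf D_{G/H}(xH)$ forces an element $(v_e,\alpha_e)\in\mathsf D_G(e)=\lie g_0\oplus\lie p_1$ with $\alpha_e=\widehat{\mathsf J}(\alpha_{xH})$, whence $\widehat{\mathsf J}(\alpha_{xH})\in\lie p_1$.

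For ($\Rightarrow$), fix $(w,\beta)\in\mathsf D_{G/H}(gxH)$ and any preimage $(g',(g')^{-1}gxH)$ under $\sigma$. The homogeneity assumption gives $\widehat{\mathsf J}(w,\beta)\in\lie A(\mathsf D_G)$, and since $\mathsf D_G$ is a Lie subgroupoid the source restricts to a surjective submersion $\hat{\s}\colon\mathsf D_G(g')\to\lie A(\mathsf D_G)$; pick $s=(v_{g'},\alpha_{g'})\in\mathsf D_G(g')$ with $\hat{\s}(s)=\widehat{\mathsf J}(w,\beta)$. Its groupoid inverse $s^{-1}\in\mathsf D_G((g')^{-1})$ satisfies $\hat{\tg}(s^{-1})=\hat{\s}(s)=\widehat{\mathsf J}(w,\beta)$, so the assumed action yields $t:=s^{-1}\cdot(w,\beta)\in\mathsf D_{G/H}((g')^{-1}gxH)$, and $s\cdot t=(w,\beta)$ by associativity together with the unit axiom. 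By the identification of the second paragraph this exhibits $(w,\beta)$ in the pushforward at $(g',(g')^{-1}gxH)$, proving the forward-Dirac condition. The main technical point is precisely that identification of the pullback splitting $\sigma^*\beta=(\alpha_g,\alpha_{xH})$ with the abstract cotangent product $\alpha_g\cdot\alpha_{xH}$ of Proposition \ref{action_hom_cot}; once it is in place, both directions reduce to the familiar duality between forward-Dirac maps under submersions and compatible groupoid actions.
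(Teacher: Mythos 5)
Your core strategy is sound, and most of it is correct: the pointwise identification between forward-Dirac lifts along $\sigma$ and the action of Proposition \ref{action_hom_cot} does hold (the splitting $\sigma^*\beta=(\alpha_g,\alpha_{xH})$ indeed forces $\hat\s(\alpha_g)=\widehat{\mathsf J}(\alpha_{xH})$, since both sides evaluate on $u\in\lie g$ to $\beta\bigl(\left.\tfrac{d}{dt}\right\an{t=0}g\exp(tu)xH\bigr)$), and the ($\Leftarrow$) direction goes through: the pointwise forward image of a Lagrangian subspace is Lagrangian, so the forward-Dirac inclusion is an equality, and the evaluation at $g=e$ (where $\sigma_e=\Id$ and the $T_e^*G$-component of $\sigma^*\beta$ is exactly $\widehat{\mathsf J}(\beta)$) yields the moment condition $\TT\mathsf J(\mathsf D_{G/H})\subseteq\{0\}\oplus\lie p_1=\lie A(\mathsf D_G)$. (The smoothness of the pushforward bundle that you assert is neither needed nor justified; only the pointwise statement is used. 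Also note the paper states this proposition without a printed proof, so the comparison here is purely on correctness.)

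The ($\Rightarrow$) direction, however, contains a step that fails as written. With the paper's conventions a product $\omega\cdot m$ is defined when $\s(\omega)=J(m)$, so $t:=s^{-1}\cdot(w,\beta)$ requires $\TT\s(s^{-1})=\TT\mathsf J(w,\beta)$, i.e.\ $\TT\tg(s)=\TT\mathsf J(w,\beta)$; you chose $s$ with $\TT\s(s)=\TT\mathsf J(w,\beta)$ and justify composability by $\TT\tg(s^{-1})=\TT\s(s)$, which is the wrong matching — source and target of elements of $\mathsf D_G$ differ in general (in the group case by the coadjoint action), so your $t$ need not be defined, and consequently neither is $s\cdot t$. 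The repair is immediate: choose $s\in\mathsf D_G(g')$ with prescribed \emph{target} $\TT\tg(s)=\TT\mathsf J(w,\beta)$; then $t=s^{-1}\cdot(w,\beta)\in\mathsf D_{G/H}((g')^{-1}gxH)$ by homogeneity, $\TT\s(s)=\TT\tg(s^{-1})=\TT\mathsf J(t)$, and $s\cdot t=(s\star s^{-1})\cdot(w,\beta)=(w,\beta)$, which exhibits the required lift. Note also that the existence of such an $s$ over the given point $g'$ — fibrewise surjectivity of $\TT\tg$ (equivalently $\TT\s$, via inversion) from $\mathsf D_G(g')$ onto $\lie A(\mathsf D_G)$ — is not a formal consequence of ``$\mathsf D_G$ is a Lie subgroupoid''; it is the content of Proposition \ref{sdescending} (fibrewise injectivity of $\overline{\TT\s}$ on $\mathsf D_G/(\mathsf D_G\cap\ker\TT\s)$ together with equality of ranks with $\lie A(\mathsf D_G)$). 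With these two corrections your argument is complete.
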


\begin{example}
Consider a Poisson homogeneous space $(G/H,\pi)$ of a 
Poisson groupoid $(G\rr P, \pi_G)$, i.e., 
the graph 
$\operatorname{Graph}(\Phi)\subseteq G\times G/H\times \overline{G/H}$
is a coisotropic submanifold (see \cite{LiWeXu98}). 

Consider the Dirac groupoid $(G\rr P, \mathsf D_{\pi_G})$ defined 
by $(G\rr P, \pi_G)$
and the Dirac manifold $(G/H,\mathsf D_{G/H})$, defined by
$\mathsf D_{G/H}=\operatorname{Graph}(\pi^\sharp:T^*(G/H)\to T(G/H))$.
The  verification of the fact 
that $(G/H,\mathsf D_{G/H})$ is a Dirac homogeneous space of the Dirac 
groupoid $(G\rr P,\mathsf D_{\pi_G})$.

Conversely,
we show in a similar manner that if $\TT\Phi$  restricts
to an action of $\mathsf D_{\pi_G}$ on $\mathsf D_\pi$,  
then the graph of the left action of $G$ on $G/H$ is coisotropic.
\end{example}

\begin{example}
Let $(G\rr P, \omega_G)$ be a presymplectic groupoid 
and $H$ a wide subgroupoid of $G\rr P$.
Assume that $G/H$ has a smooth manifold structure such that the projection
$q:G\to G/H$ is a surjective submersion. Let $\omega$ be a closed 
$2$-form on $G/H$ such that
the action $\Phi:G\oplus (G/H)\to G/H$ is a presymplectic groupoid action, i.e.,
$\Phi^*\omega=\pr_{G/H}^*\omega+\pr_G^*\omega_G$ \cite{BuCr05}.
Let $\mathsf D_\omega$ be the graph of the vector bundle map
$\omega^\flat:T(G/H)\to T^*(G/H)$ associated to $\omega$.
It is easy to check that the pair 
$(G/H,\mathsf D_\omega)$ is  a closed Dirac homogeneous space of the closed Dirac
 groupoid $(G\rr P,\mathsf D_{\omega_G})$, 
see Example \ref{ex_presymplectic_groupoid}.
\end{example}

\begin{example}\label{easy_example}
Let $(G\rr P,\mathsf D_G)$ be a Dirac groupoid. 
Then $(\tg:G\to P,\mathsf D_G)$ is a Dirac homogeneous space of $(G\rr P,\mathsf D_G)$.
\end{example}

\subsection{The moment map $\mathsf J$}
In the Poisson case,
if $(G/H,\mathsf D_{G/H})$ is a Poisson
homogeneous space of a Poisson groupoid 
$(G\rr P,\pi_G)$, then the  map $\mathsf J:G/H\to P$ is a Poisson map
(see \cite{LiWeXu98}). This is also
true here under some regularity conditions on the characteristic distributions
of the involved Dirac structures. 

\begin{theorem}
Let $(G\rr P,\mathsf D_G)$ be a  Dirac groupoid
such that Theorem \ref{induced_Dirac_on_P} holds and
$(G/H,\mathsf D_{G/H})$ a Dirac homogeneous space of $(G\rr P,\mathsf D_G)$.
Assume  that the map 
$\mathsf J\an{\mathsf{G_0}^{G/H}}:\mathsf{G_0}^{G/H}\to \mathsf{G_0}\cap TP$
is surjective in every fiber, where 
$\mathsf{G_0}^{G/H}$ is the characteristic distribution defined
on $G/H$ by $ \mathsf D_{G/H}$.
Then the  map $\mathsf J:(G/H,\mathsf D_{G/H})\mapsto (P,\mathsf D_P)$
is a forward Dirac map.
\end{theorem}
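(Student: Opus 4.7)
The plan is to unpack the definition of forward Dirac map from \S\ref{background2} and reduce the task to the following: for each $p\in P$, each $gH\in\mathsf J^{-1}(p)$, and each $(v_p,\alpha_p)\in\mathsf D_P(p)$, I must construct $(v_{gH},\alpha_{gH})\in\mathsf D_{G/H}(gH)$ satisfying $T_{gH}\mathsf J\,v_{gH}=v_p$ and $\alpha_{gH}=(T_{gH}\mathsf J)^*\alpha_p$. Since Theorem~\ref{induced_Dirac_on_P} is assumed to apply, its proof furnishes the decomposition
$$\mathsf D_G\cap(T_PG\oplus AG^\circ)=I^\s(\mathsf D_G)\oplus\bigl((\mathsf{G_0}\cap TP)\oplus 0_{A^*G}\bigr),$$
which lets me write the element of $\mathsf D_G(p)$ producing $(v_p,\alpha_p)$ as a sum $(u_p,(T_p\tg)^*\alpha_p)+(z_p,0)$, with $(u_p,(T_p\tg)^*\alpha_p)\in I^\s_p(\mathsf D_G)$, $z_p\in\mathsf{G_0}(p)\cap T_pP$, and $v_p=T_p\tg u_p+z_p$.

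I then treat the two summands separately. The vector $z_p$ lifts at once: the surjectivity hypothesis on $\mathsf J\an{\mathsf{G_0}^{G/H}}$ furnishes $z_{gH}\in\mathsf{G_0}^{G/H}(gH)$ with $T_{gH}\mathsf J\,z_{gH}=z_p$, and by definition of $\mathsf{G_0}^{G/H}$ the pair $(z_{gH},0)$ belongs to $\mathsf D_{G/H}(gH)$. For the other summand I will extend $(u_p,(T_p\tg)^*\alpha_p)$ to a local section $\sigma$ of $I^\s(\mathsf D_G)$ and form the right-invariant extension $\sigma^r$ described just before Proposition~\ref{other_algebroid}; this yields a section of $\mathsf D_G\cap\ker\TT\s$ with value $\sigma^r(g)=(T_pR_gu_p,(T_g\tg)^*\alpha_p)\in\mathsf D_G(g)$.

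Next I will combine $\sigma^r(g)$ with the zero element of $\mathsf D_{G/H}(1_{\s(g)}H)$ via the action of $\mathsf D_G\rr\lie A(\mathsf D_G)$ on $\mathsf D_{G/H}$. The source moment $\TT\s(\sigma^r(g))=(0_{\s(g)},0)$ coincides with $\TT\mathsf J(0,0)$, so the Dirac-homogeneity condition of Definition~\ref{def_hom_space} guarantees that the product $\sigma^r(g)\cdot(0,0)$, computed by the action formulas of Proposition~\ref{action_hom_cot}, lies in $\mathsf D_{G/H}(g\cdot 1_{\s(g)}H)=\mathsf D_{G/H}(gH)$. The tangent component is $T_gq(T_pR_gu_p)$, since $\Phi(\cdot,1_{\s(g)}H)$ agrees with $q$ on $\s^{-1}(\s(g))$; the cotangent component equals $(T_{gH}\mathsf J)^*\alpha_p$, using that $\mathsf J\circ\Phi(g',g''H)=\tg(g')$ (so $T_{gH}\mathsf J\circ T_{(g,1_{\s(g)}H)}\Phi$ reduces to $T_g\tg$ on the $G$-factor) together with $(T_g\tg)^*\alpha_p=(T_gq)^*(T_{gH}\mathsf J)^*\alpha_p$.

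Adding the two lifts yields $(v_{gH},\alpha_{gH}):=(T_gq(T_pR_gu_p)+z_{gH},(T_{gH}\mathsf J)^*\alpha_p)\in\mathsf D_{G/H}(gH)$, with $T_{gH}\mathsf J\,v_{gH}=T_p\tg u_p+z_p=v_p$ as required. The main obstacle is the careful verification that the action of Proposition~\ref{action_hom_cot} really delivers the tangent and cotangent components claimed at the base point $1_{\s(g)}H$: this will rest on the elementary identities $\tg\circ R_g=\tg$, $\mathsf J\circ q=\tg$, and the constancy of $\mathsf J\circ\Phi$ in its second argument, all of which follow directly from the groupoid and homogeneous-space axioms.
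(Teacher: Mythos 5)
Your proposal is correct and follows essentially the same route as the paper: decompose the element of $\mathsf D_G(p)$ over $(v_p,\alpha_p)$ into an $I^\s(\mathsf D_G)$-part and a $\mathsf{G_0}\cap TP$-part, push the former to $\mathsf D_{G/H}(gH)$ via the homogeneity action on a zero element, lift the latter with the surjectivity hypothesis, and add. The only cosmetic difference is that you first right-translate the $I^\s$-element to $g$ and act on the zero over $\s(g)H$, whereas the paper lets the element at the unit $p$ act directly on $(0_{gH},0_{gH})$; by associativity of the groupoid action these give the same element, so the arguments coincide.
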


\begin{proof}
Choose $(v_p,\alpha_p)\in\mathsf D_P(p)$, for some $p\in P$ and  $gH\in G/H$ such that $\tg(g)=p$.
Then there exists $w_p\in T_p^\s G$ and $u_p\in \mathsf{G_0}(p)\cap T_pP$
such that $(w_p, (T_p\tg)^*\alpha_p)\in I^\s_p(\mathsf D_G)$ and $v_p=u_p+T_p\tg w_p$.
Since $\TT\s(w_p, (T_p\tg)^*\alpha_p)=(0_p,0_p)$, 
and $\mathsf D_G$ acts on $\mathsf D_{G/H}$, we find that
$(w_p, (T_p\tg)^*\alpha_p)\cdot (0_{gH},0_{gH})\in\mathsf D_{G/H}(gH)$.
We have 
$w_p\cdot 0_{gH}=T_p(q\circ R_g)w_p$
and, for all $ v_{gH}\in T_{gH}(G/H)$:
\begin{align*}
\widehat\Phi\left( (T_p\tg)^*\alpha_p, 0_{gH}\right)(v_{gH})
&=\widehat\Phi\left( (T_p\tg)^*\alpha_p, 0_{gH}\right)(T_{gH}\mathsf Jv_{gH}\cdot v_{gH})\\
&=((T_p\tg)^*\alpha_p)(T_{gH}\mathsf J(v_{gH}))+0_{gH}(v_{gH})\\
&=\alpha_p(T_{gH}(\tg\circ \mathsf J)v_{gH})
=\alpha_p(T_{gH}\mathsf J\,v_{gH})=((T_{gH}\mathsf J)^*\alpha_p)(v_{gH}).
\end{align*}
Thus, we have shown that 
$(T_p(q\circ R_g)w_p, (T_{gH}\mathsf J)^*\alpha_p)\in\mathsf D_{G/H}(gH)$.
We have $T_{gH}\mathsf J(T_p(q\circ R_g)w_p)=T_p(\mathsf J\circ q\circ R_g)w_p
=T_p(\tg\circ R_g)w_p=T_p\tg w_p=v_p-u_p$.
Choose $u_{gH}\in \mathsf{G_0}^{G/H}(gH)$ such that 
$T_{gH}\mathsf J(u_{gH})=u_p$.
Then the pair $(T_p(q\circ R_g)w_p+u_{gH}, (T_{gH}\mathsf J)^*\alpha_p)\in\mathsf D_{G/H}(gH)$
is such that $T_{gH}\mathsf J\left(T_p(q\circ R_g)w_p+u_{gH}\right)=v_p$.
\end{proof}

\subsection{The homogeneous Dirac structure on the classes of the units}
Let $G\rr P$ be a  Lie groupoid  and 
$G/H$ a smooth homogeneous space of $G\rr P$ endowed 
with a Dirac structure $\mathsf D_{G/H}$.
Consider the Dirac bundle $\lie D= q^*(\mathsf D_{G/H})\an{P}\subseteq
\mathsf P_G\an{P}$ over the units $P$. More explicitly, we have 
\begin{equation}\label{lieD}
\lie D(p)=\left\{(v_p,\alpha_p)\in  T_pG\times T_p^*G
\left| \begin{array}{c}
\exists (v_{pH},\alpha_{pH})\in\mathsf 
D_{G/H}(pH) \quad \text{ such that }\\
\alpha_p=(T_pq)^*\alpha_{pH} \text{ and } 
T_pqv_p=v_{pH}
\end{array}
\right.\right\}
\end{equation}
for all $p\in P$.

\begin{proposition}
Let $(G\rr P,\mathsf D_G)$ be a Dirac groupoid and $(G/H,\mathsf D_{G/H})$
 a Dirac homogeneous space of $(G\rr P,\mathsf D_G)$. 
Then  $\lie D\subseteq \mathsf P_G\an{P}$ defined as in \eqref{lieD}
satisfies
\begin{equation}
\label{lieDequation}
I^\s(\mathsf D_G)\subseteq \lie D
\subseteq 
 \lie A(\mathsf D_G)\oplus(\ker\TT\tg)\an{P}.
\end{equation} Thus,
the quotient $\bar{\lie D}=\lie D/I^\s(\mathsf D_G)$
is a smooth subbundle of $\lie B(\mathsf D_G)$.
We have  by definition $AH\oplus\{0\}\subseteq \lie D$.
\end{proposition}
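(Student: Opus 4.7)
The plan is to establish the three asserted inclusions in sequence, then verify smoothness of the quotient by a rank count. First, for $\lie D\subseteq \lie A(\mathsf D_G)\oplus(\ker\TT\tg)\an{P}$, given $(v_p,\alpha_p)\in\lie D(p)$ with companion $(v_{pH},\alpha_{pH})\in\mathsf D_{G/H}(pH)$, I will use the direct sum decomposition $\mathsf P_G\an{P}=(TP\oplus A^*G)\oplus(\ker\TT\tg)\an{P}$ and split $(v_p,\alpha_p)=\TT\tg(v_p,\alpha_p)+\bigl((v_p,\alpha_p)-\TT\tg(v_p,\alpha_p)\bigr)$. The first summand is $(T_p\tg v_p,\hat\tg\alpha_p)$, which, via $\mathsf J\circ q=\tg$, coincides with $\TT\mathsf J(v_{pH},\alpha_{pH})$; since $\mathsf D_G\rr\lie A(\mathsf D_G)$ acts on $\TT\mathsf J\an{\mathsf D_{G/H}}:\mathsf D_{G/H}\to\lie A(\mathsf D_G)$, this momentum lies in $\lie A_p(\mathsf D_G)$. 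The containment $AH\oplus\{0\}\subseteq\lie D$ is immediate: for $v\in A_pH=\ker T_pq$, both $T_pq\,v=0$ and $(T_pq)^*0=0$ hold, so $(0_{pH},0_{pH})\in\mathsf D_{G/H}(pH)$ witnesses the requirement.

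The core of the argument is the inclusion $I^\s(\mathsf D_G)\subseteq\lie D$. Given $(v_p,\alpha_p)\in I^\s_p(\mathsf D_G)$, the condition $\alpha_p\in(T_p^\tg G)^\circ=(A_pG)^\circ$ in particular makes $\alpha_p$ annihilate $A_pH$, so $\alpha_p=(T_pq)^*\alpha_{pH}$ for a unique $\alpha_{pH}\in T^*_{pH}(G/H)$; set also $v_{pH}=T_pq(v_p)$. I must show $(v_{pH},\alpha_{pH})\in\mathsf D_{G/H}(pH)$. The idea is to act $(v_p,\alpha_p)\in\mathsf D_G(p)$ on the zero element $(0_{pH},0_{pH})\in\mathsf D_{G/H}(pH)$, which is composable since $\TT\s(v_p,\alpha_p)=(0,0)=\TT\mathsf J(0_{pH},0_{pH})$. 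By Dirac homogeneity the result $\TT\Phi((v_p,\alpha_p),(0_{pH},0_{pH}))$ lies in $\mathsf D_{G/H}(pH)$, and it suffices to recognize this result as $(v_{pH},\alpha_{pH})$.

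The main obstacle is the cotangent identification $\widehat\Phi(\alpha_p,0_{pH})=\alpha_{pH}$. The tangent identity $T\Phi(v_p,0_{pH})=T_pq(v_p)=v_{pH}$ is immediate from $v_p\in T_p^\s G$ via a curve calculation. For the cotangent side I will invoke the standard formula $T_{(p,p)}\m(v,w)=v+w-T_p\epsilon(T_p\s v)$ for the tangent multiplication at a unit, yielding $T\Phi(v',v''_{pH})=T_pq(v')+v''_{pH}-T_p(q\circ\epsilon)(T_p\s v')$ on an arbitrary composable pair $(v',v''_{pH})$, and then verify the defining identity $\widehat\Phi(\alpha_p,0_{pH})(T\Phi(v',v''_{pH}))=\alpha_p(v')$ using $\alpha_p=(T_pq)^*\alpha_{pH}$. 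The discrepancy will reduce to $\alpha_{pH}(v''_{pH}-T_p(q\circ\epsilon)(T_p\s v'))$, and the argument of $\alpha_{pH}$ lies in $\ker T_{pH}\mathsf J=T_pq(A_pG)$ by composability combined with the identity $T_{pH}\mathsf J\circ T_p(q\circ\epsilon)=\Id_{T_pP}$; and $\alpha_{pH}$ annihilates $T_pq(A_pG)$ exactly because $\alpha_p\in(A_pG)^\circ$, a defining feature of $I^\s$.

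Smoothness of $\bar{\lie D}=\lie D/I^\s(\mathsf D_G)$ will follow from a rank count: $\lie D(p)$ is the preimage of $\mathsf D_{G/H}(pH)$ under the surjective linear map $(v_p,\alpha_p)\mapsto (T_pq\,v_p,\alpha_{pH})$ defined on pairs with $\alpha_p\in(A_pH)^\circ$, whose kernel is $A_pH\oplus 0$, giving $\rk\lie D(p)=\dim(G/H)+\rk(AH)$, constant on $P$. Combined with the constancy of $\rk I^\s(\mathsf D_G)$ from Lemma \ref{const_rk_int_kern}, this yields constant rank for $\bar{\lie D}$; together with smoothness of $\lie D$ (obtained by locally lifting smooth sections of $\mathsf D_{G/H}$ through the submersion $q$), this identifies $\bar{\lie D}$ as a smooth subbundle of $\lie B(\mathsf D_G)$.
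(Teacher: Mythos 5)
Your proof is correct, and it reaches the conclusion by a partially different route than the paper. The core inclusion $I^\s(\mathsf D_G)\subseteq \lie D$ is obtained exactly as in the paper: act with $(v_p,\alpha_p)\in I^\s_p(\mathsf D_G)$ on $(0_{pH},0_{pH})\in\mathsf D_{G/H}(pH)$ and recognize the result as a witness for membership in $\lie D(p)$; you merely make the cotangent identification $\widehat\Phi(\alpha_p,0_{pH})=\alpha_{pH}$ explicit (via the formula for the tangent multiplication at units and $\alpha_p\in (A_pG)^\circ$), where the paper reads it off from the identity $(T_pq)^*(\alpha_p\cdot 0_{pH})=\alpha_p\star\left((T_pq)^*0_{pH}\right)=\alpha_p$ coming from Proposition \ref{action_hom_cot}. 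The genuine divergence is the upper inclusion: the paper deduces $\lie D\subseteq \lie A(\mathsf D_G)\oplus(\ker\TT\tg)\an{P}$ from the lower one purely by orthogonality, $\lie D=\lie D^\perp\subseteq (I^\s(\mathsf D_G))^\perp=\mathsf D_G\an{P}+(\ker\TT\tg)\an{P}=\lie A(\mathsf D_G)\oplus(\ker\TT\tg)\an{P}$, whereas you prove it directly by splitting off $\TT\tg(v_p,\alpha_p)$ and invoking the requirement, built into Definition \ref{def_hom_space}, that $\TT\mathsf J$ carries $\mathsf D_{G/H}$ into $\lie A(\mathsf D_G)$. Both are valid: the paper's argument is shorter and uses only that $\lie D$ is Lagrangian, while yours pinpoints which part of the homogeneity hypothesis forces the upper bound and does not pass through $\lie D=\lie D^\perp$. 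Your verifications of $AH\oplus\{0\}\subseteq\lie D$ and of the constant rank and smoothness of $\bar{\lie D}$ fill in details the paper dismisses with ``by definition'' and ``Thus'', and they are sound; the only small point left tacit is that identifying $\widehat\Phi(\alpha_p,0_{pH})$ with $\alpha_{pH}$ uses that the values $T\Phi(v',v''_{pH})$ over composable pairs exhaust $T_{pH}(G/H)$, which is immediate since units act trivially.
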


\begin{proof}
Choose $p\in P$ and $(v_p,\alpha_p)\in I^\s_p(\mathsf D_G)=
\mathsf D_G(p)\cap\ker\TT\s$.
Then $\TT\s(v_p,\alpha_p)=(0_p,0_p)$ and the product 
$(v_p,\alpha_p)\cdot (0_{pH}, 0_{pH})$ makes sense.
Since $( 0_{pH}, 0_{pH})\in\mathsf D_{G/H}(pH)$,
we have then
$(T_pqv_p,\alpha_p\cdot 0_{pH})
=(v_p,\alpha_p)\cdot (0_{pH}, 0_{pH})\in\mathsf D_{G/H}(pH)$.
But $\alpha_p\cdot 0_{pH}$ is such that
$(T_pq)^*(\alpha_p\cdot 0_{pH})=\alpha_p\star ((T_pq)^*0_{pH})=\alpha_p$, and
we have  hence $(v_p,\alpha_p)\in\lie D(p)$ by definition of $\lie D$.

The inclusion $I^\s(\mathsf D_G)\subseteq\lie D$
yields immediately $\lie D=\lie D^\perp\subseteq (I^\s(\mathsf D_G))^\perp
=\mathsf D_G\an{P}+(\ker\TT\tg)\an{P}=\mathfrak A(\mathsf D_G)\oplus(\ker\TT\tg)\an{P}$.
\end{proof}

\begin{theorem}\label{LieD}
Let $(G\rr P,\mathsf D_G)$ be a Dirac groupoid
and $\lie D$  a Dirac subspace of $\mathsf P_G\an{P}$
satisfying \eqref{lieDequation}. Then the set
$\mathsf D=\mathsf D_G\cdot \lie D \subseteq \mathsf P_G$ defined
by 
\[\mathsf D(g)=\left\{(v_g,\alpha_g)\star(v_{\s(g)},\alpha_{\s(g)})\left|
\begin{array}{c}
(v_g,\alpha_g)\in\mathsf D_G(g), \\ (v_{\s(g)},\alpha_{\s(g)})\in\lie D(\s(g)),\\
\TT\s(v_g,\alpha_g)=\TT\tg(v_{\s(g)},\alpha_{\s(g)})
\end{array}
\right.\right\}
\]
is a Dirac structure on $G$ and $(G,\mathsf D)$ is a Dirac homogeneous
space of $(G\rr P,\mathsf D_G)$.
\end{theorem}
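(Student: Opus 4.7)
The plan has four steps: (i) verify $\mathsf D \subseteq \mathsf D^\perp$, (ii) count ranks to get $\mathsf D = \mathsf D^\perp$, (iii) check smoothness, and (iv) confirm the action property.

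First, the Pontryagin pairing interacts well with the groupoid products: for composable pairs $(\xi_g, \eta_p), (\xi'_g, \eta'_p) \in \mathsf P_G(g) \times \mathsf P_G(\s(g))$, the defining property of the cotangent groupoid product yields
\[
\langle \xi_g \star \eta_p,\, \xi'_g \star \eta'_p\rangle \;=\; \langle \xi_g, \xi'_g\rangle + \langle \eta_p, \eta'_p\rangle.
\]
Since $\mathsf D_G(g)$ and $\lie D(\s(g))$ are Lagrangian, both summands vanish, so $\mathsf D \subseteq \mathsf D^\perp$.

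Second, to show $\dim \mathsf D(g) = n = \dim G$, I would first note $\mathsf D_G(g) \cap \ker\TT\s \subseteq \mathsf D(g)$, since any such $\sigma_g$ equals $\sigma_g \star 0_{\s(g)}$ with $0_{\s(g)} \in \lie D(\s(g))$. Set $\bar{\mathsf D}(g) := \mathsf D(g) / (\mathsf D_G(g) \cap \ker\TT\s)$ and define
\[
\bar\iota_g : \lie D(\s(g)) \to \bar{\mathsf D}(g),\qquad \eta \mapsto [\xi^*(g) \star \eta],
\]
where $\xi^*$ is a star section of $\mathsf D_G$ $\s$-related to $\TT\tg(\eta) \in \lie A(\mathsf D_G)$ (available via Proposition \ref{sdescending}, since the hypothesis $\lie D \subseteq \lie A(\mathsf D_G) \oplus \ker\TT\tg|_P$ forces $\TT\tg(\eta) \in \lie A(\mathsf D_G)$). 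Lemma \ref{sections_of_D'} shows that $\xi_g \star \eta$ is an affine function of $\xi_g$ whose translation depends only on $\eta$, so changing the lift $\xi^*(g)$ by $\delta \in \mathsf D_G(g) \cap \ker\TT\s$ changes $\xi^*(g) \star \eta$ by exactly $\delta$; hence $\bar\iota_g$ is well defined, and it is surjective by the construction of $\mathsf D(g)$. For injectivity, if $\xi^*(g) \star \eta \in \mathsf D_G(g) \cap \ker\TT\s$, then $\TT\s(\eta) = 0$ and, because $\mathsf D_G$ is a Lie subgroupoid closed under inversion, $\eta = (\xi^*(g))^{-1} \star (\xi^*(g) \star \eta) \in \mathsf D_G(\s(g)) \cap \ker\TT\s = I^\s(\mathsf D_G)(\s(g))$; the reverse inclusion is immediate. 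Thus $\ker \bar\iota_g = I^\s(\mathsf D_G)(\s(g))$, and
\[
\dim \mathsf D(g) \;=\; r + \dim \bar{\mathsf D}(g) \;=\; r + (n - r) \;=\; n,
\]
where $r = \operatorname{rank} I^\s(\mathsf D_G)$, invoking Lemma \ref{const_rk_int_kern} for the rank of $\mathsf D_G \cap \ker\TT\s$. Combined with step one, this forces $\mathsf D = \mathsf D^\perp$.

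Third, smoothness comes from exhibiting a local spanning family: a local frame $\{e_1, \ldots, e_n\}$ of $\lie D$ on $U \subseteq P$ together with smooth star sections $\xi^*_i$ lifting $\TT\tg(e_i)$ produces smooth sections $g \mapsto \xi^*_i(g) \star e_i(\s(g))$ of $\mathsf D$ over $\s^{-1}(U)$; augmenting by a smooth local frame of the constant-rank bundle $\mathsf D_G \cap \ker\TT\s \to G$ gives $n + r$ smooth sections spanning the rank-$n$ fibers at every point, so $\mathsf D$ is a smooth subbundle and, by the first two steps, a Dirac structure on $G$. Finally, the action property follows from associativity: for $\zeta_h \in \mathsf D_G(h)$ and $\tilde\xi_g = \xi_g \star \eta \in \mathsf D(g)$ with $\TT\s(\zeta_h) = \TT\tg(\tilde\xi_g) = \TT\tg(\xi_g)$, one has $\zeta_h \star \tilde\xi_g = (\zeta_h \star \xi_g) \star \eta \in \mathsf D_G(h\star g) \star \lie D(\s(g)) = \mathsf D(h\star g)$ because $\mathsf D_G$ is a subgroupoid. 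The main obstacle is step two, whose heart is the identification $\ker \bar\iota_g = I^\s(\mathsf D_G)(\s(g))$; this crucially uses the closure of $\mathsf D_G$ under inversion to reconstruct $\eta$ from $\xi^*(g) \star \eta$.
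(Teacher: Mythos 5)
Your proposal is correct, and three of its four steps (the isotropy computation via $\langle \xi_g\star\eta_p,\xi'_g\star\eta'_p\rangle=\langle\xi_g,\xi'_g\rangle+\langle\eta_p,\eta'_p\rangle$, the smoothness argument via spanning sections built from star sections and a frame of $\mathsf D_G\cap\ker\TT\s$, and the action property via associativity of the groupoid multiplication) coincide in substance with the paper's proof. Where you genuinely diverge is the maximality step $\mathsf D^\perp\subseteq\mathsf D$. The paper argues directly: given $(w_g,\beta_g)\in\mathsf D(g)^\perp$, it first observes $(w_g,\beta_g)\in(\mathsf D_G(g)\cap\ker\TT\s)^\perp=(\mathsf D_G+\ker\TT\tg)(g)$, so $\TT\tg(w_g,\beta_g)\in\lie A(\mathsf D_G)$; it then chooses a star section $\xi$ through this value and shows by pairing against arbitrary products that $\xi(g\inv)\star(w_g,\beta_g)\in\lie D(\s(g))^\perp=\lie D(\s(g))$, whence $(w_g,\beta_g)=(\xi(g\inv))\inv\star\bigl(\xi(g\inv)\star(w_g,\beta_g)\bigr)\in\mathsf D(g)$ — no rank bookkeeping needed, and the same "translate back to $P$" mechanism is reused later (Lemma \ref{lemma}, Theorem \ref{unicity}). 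You instead count dimensions via the quotient map $\bar\iota_g:\lie D(\s(g))\to\mathsf D(g)/(\mathsf D_G(g)\cap\ker\TT\s)$, identifying its kernel with $I^\s_{\s(g)}(\mathsf D_G)$ by inverting in $\mathsf D_G$, and invoking Lemma \ref{const_rk_int_kern}; this is valid and has the virtue of making explicit that $\mathsf D(g)/(\mathsf D_G(g)\cap\ker\TT\s)\cong\lie D(\s(g))/I^\s_{\s(g)}(\mathsf D_G)$, which anticipates the role of $\bar{\lie D}\subseteq\lie B(\mathsf D_G)$ in Theorem \ref{drinfeld1}, but it does lean on two points you treat implicitly: that $\mathsf D(g)$ is a linear subspace and that $\bar\iota_g$ is linear, both of which follow from the fiberwise linearity of the partial multiplication of $\mathsf P_G$ over composable pairs (the same fact underlying your "affine in $\xi_g$" remark and Lemma \ref{sections_of_D'}); a sentence making this explicit would close the argument completely.
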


Note that $\mathsf D_G\cap\ker\TT\s\subseteq \mathsf D$ by construction:
for all $(v_g,\alpha_g)\in\mathsf D_G(g)\cap\ker\TT\s$, 
we have $\TT\s(v_g,\alpha_g)=(0_{\s(g)}, 0_{\s(g)})\in\lie D(\s(g))$
and hence $(v_g,\alpha_g)=(v_g,\alpha_g)\star (0_{\s(g)}, 0_{\s(g)})
\in\mathsf D(g)$. 

\begin{proof}
By Lemma \ref{sections_of_D'}, 
$\mathsf D$ is spanned by sections 
$\xi+\sigma^l$ 
such that 
$\bar\xi+\sigma$
is a section of $\lie D$ (with 
$\bar\xi\in\Gamma(\lie A(\mathsf D_G))$
and $\sigma\in\Gamma((\ker\TT\tg)\an{P})$ )
 and all the sections of $\mathsf D_G\cap\ker\TT\s$. 
This shows that $\mathsf D$ is smooth.

Choose $(v_g,\alpha_g)\star (v_{\s(g)},\alpha_{\s(g)})$
and 
$(w_g,\beta_g)\star (w_{\s(g)},\beta_{\s(g)})
\in\mathsf D(g)$, that is, with\linebreak
$(v_g,\alpha_g), (w_g,\beta_g)\in\mathsf D_G(g)$ and 
$(v_{\s(g)},\alpha_{\s(g)}), (w_{\s(g)},\beta_{\s(g)})\in\lie D(\s(g))$.
We have then 
\begin{align*}
&\bigl\langle (v_g,\alpha_g)\star (v_{\s(g)},\alpha_{\s(g)}),
(w_g,\beta_g)\star (w_{\s(g)},\beta_{\s(g)})\bigr\rangle\\
=\,&\alpha_g(w_g)+\alpha_{\s(g)}(w_{\s(g)})+
\beta_g(v_g)+\beta_{\s(g)}(v_{\s(g)})\\
=\,&\langle (v_g, \alpha_g), (w_g, \beta_g)\rangle
+\langle (v_{\s(g)},\alpha_{\s(g)}), (w_{\s(g)},\beta_{\s(g)})\rangle=0.
\end{align*}
This shows $\mathsf D\subseteq \mathsf D^\perp$.

For the converse inclusion, choose $(w_g,\beta_g)\in\mathsf D(g)^\perp$.
Then $$(w_g,\beta_g)\in(\mathsf D_G(g)\cap\ker\TT\s)^\perp
=(\mathsf D_G+ \ker\TT\tg)(g)$$
and consequently, we get the fact that 
$\TT\tg(w_g,\beta_g)\in\TT\tg(\mathsf D_G(g))=\lie A_{\tg(g)}(\mathsf D_G)$.
We write $\tg(g)=p$ and $\TT\tg(w_g,\beta_g)=\bar\xi(p)$
for some section $\bar\xi\in\Gamma(\mathfrak A(\mathsf D_G))$.
Consider a section $\xi\in\Gamma(\mathsf D_G)$
such that $\xi\sim_\s\bar\xi$. Then 
we have for all $(v_{\s(g)},\alpha_{\s(g)})\in\lie D(\s(g))$
and $(v_g,\alpha_g)\in\mathsf D_G(g)$ such that
$\TT\tg(v_{\s(g)},\alpha_{\s(g)})=\TT\s(v_g,\alpha_g)$:
\begin{align*}
\bigl\langle \xi(g\inv)\star (w_g, \beta_g),
\left(v_{\s(g)},\alpha_{\s(g)}\right)
\bigr\rangle
=\,&\bigl\langle \xi(g\inv)\star (w_g, \beta_g),
(v_g,\alpha_g)\inv\star(v_g,\alpha_g)\star (v_{\s(g)},\alpha_{\s(g)})
\bigr\rangle\\
=\,& \langle (w_g, \beta_g), (v_g,\alpha_g)\star (v_{\s(g)},\alpha_{\s(g)})\rangle
+\langle \xi(g\inv), (v_g,\alpha_g)\inv\rangle\\
=\,&0,
\end{align*}
since $(v_g,\alpha_g)\star (v_{\s(g)},\alpha_{\s(g)})\in\mathsf D(g)$
and $(v_g,\alpha_g)\inv\in\mathsf D_G(g\inv)$.
This proves that 
$$\xi(g\inv)\star (w_g, \beta_g)\in\lie D(\s(g))^\perp=\lie D(\s(g)),$$
and hence, if we write 
$\xi(g\inv)\star (w_g, \beta_g)=(w_{\s(g)},\beta_{\s(g)})
\in\lie D(\s(g))$,
$$(w_g, \beta_g)=\left(\xi(g\inv)\right)\inv
\star (w_{\s(g)},\beta_{\s(g)})\in\mathsf D(g).$$

\medskip

The second claim is obvious since the restriction to 
$\mathsf D$ of the map $\TT\mathsf J$ has image in $\TT\tg(\mathsf D_G)
=\lie A(\mathsf D_G)$ and, by
construction
of $\mathsf D$, the map $\mathsf D_G\times_{\mathfrak A(\mathsf D_G)}\mathsf D$,
$((v_g,\alpha_g),(v_h,\alpha_h))\mapsto (v_g,\alpha_g)\star (v_h,\alpha_h)$
is a well-defined Lie groupoid action. 
 \end{proof}

\begin{theorem}\label{unicity}
In the situation of the preceding theorem, if 
$\lie D$ is the restriction to $P$ of the pullback  $q^*(\mathsf D_{G/H})$ (as in \eqref{lieD})
for some Dirac homogeneous space 
$(G/H, \mathsf D_{G/H})$ of $(G\rr P,\mathsf D_G)$, then 
$\mathsf D=q^*(\mathsf D_{G/H})$.
\end{theorem}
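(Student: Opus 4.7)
The plan is to show the inclusion $\mathsf D\subseteq q^*(\mathsf D_{G/H})$ and then conclude equality by a rank argument. Both sides are Dirac subbundles of $\mathsf P_G$: the left-hand side by Theorem \ref{LieD}, and the right-hand side because $q$ is a surjective submersion, so it is a backward Dirac image of a Dirac structure. Hence both are Lagrangian of rank $\dim G$, and any inclusion between them is automatically an equality.

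To establish $\mathsf D\subseteq q^*(\mathsf D_{G/H})$, I would fix $g\in G$ and pick an arbitrary generator $(v_g,\alpha_g)\star(v_{\s(g)},\alpha_{\s(g)})\in\mathsf D(g)$, with $(v_g,\alpha_g)\in\mathsf D_G(g)$, $(v_{\s(g)},\alpha_{\s(g)})\in\lie D(\s(g))$ and $\TT\s(v_g,\alpha_g)=\TT\tg(v_{\s(g)},\alpha_{\s(g)})$. By the very definition of $\lie D=q^*(\mathsf D_{G/H})\an{P}$, there exists $(v_{\s(g)H},\alpha_{\s(g)H})\in\mathsf D_{G/H}(\s(g)H)$ with $T_{\s(g)}q\,v_{\s(g)}=v_{\s(g)H}$ and $\alpha_{\s(g)}=(T_{\s(g)}q)^*\alpha_{\s(g)H}$. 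Since $(G/H,\mathsf D_{G/H})$ is a Dirac homogeneous space of $(G\rr P,\mathsf D_G)$, the action $\TT\Phi$ of $\mathsf D_G\rr\lie A(\mathsf D_G)$ on $\mathsf D_{G/H}$ produces
\[
(v_g,\alpha_g)\cdot(v_{\s(g)H},\alpha_{\s(g)H})
=\bigl(T_{(g,\s(g)H)}\Phi(v_g,v_{\s(g)H}),\,\widehat\Phi(\alpha_g,\alpha_{\s(g)H})\bigr)\in\mathsf D_{G/H}(gH).
\]

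The heart of the proof is then the identification, at the level of the Pontryagin groupoid, of the product $(v_g,\alpha_g)\star(v_{\s(g)},\alpha_{\s(g)})$ with the pullback of the above element under $q$. The tangent part follows by differentiating the intertwining identity $q\circ\m=\Phi\circ(\id_G\times q)$ at $(g,\s(g))$, which gives $T_gq(v_g\star v_{\s(g)})=T_{(g,\s(g)H)}\Phi(v_g,v_{\s(g)H})$. For the cotangent part, I would pair $\alpha_g\star\alpha_{\s(g)}$ with an arbitrary $w_g\in T_gG$, decompose it as $w_g=w_g^1\star w_{\s(g)}^1$ via the tangent groupoid multiplication, and use the defining formulas of the cotangent groupoid product and of $\widehat\Phi$ together with $\alpha_{\s(g)}=(T_{\s(g)}q)^*\alpha_{\s(g)H}$ to obtain
\[
(\alpha_g\star\alpha_{\s(g)})(w_g)=\alpha_g(w_g^1)+\alpha_{\s(g)H}(T_{\s(g)}q\,w_{\s(g)}^1)
=\widehat\Phi(\alpha_g,\alpha_{\s(g)H})\bigl(T_gq\,w_g\bigr).
\]
Thus $\alpha_g\star\alpha_{\s(g)}=(T_gq)^*\widehat\Phi(\alpha_g,\alpha_{\s(g)H})$, and the pair $(v_g\star v_{\s(g)},\alpha_g\star\alpha_{\s(g)})$ indeed lies in $q^*(\mathsf D_{G/H})(g)$.

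The only real obstacle is the cotangent compatibility formula, since it requires keeping track of the non-canonical decomposition $w_g=w_g^1\star w_{\s(g)}^1$ and invoking simultaneously the definitions of the product in $T^*G\rr A^*G$ and of the action $\widehat\Phi$ from Proposition \ref{action_hom_cot}; the tangent identity and the final rank count are routine. Once these verifications are in place the proof of Theorem \ref{unicity} is complete.
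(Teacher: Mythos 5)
Your proof is correct, but it runs in the opposite direction to the paper's. You establish the inclusion $\mathsf D\subseteq q^*(\mathsf D_{G/H})$ by showing that each defining product $(v_g,\alpha_g)\star(v_{\s(g)},\alpha_{\s(g)})$ of $\mathsf D(g)$ is the $q$-pullback of the element $(v_g,\alpha_g)\cdot(v_{\s(g)H},\alpha_{\s(g)H})\in\mathsf D_{G/H}(gH)$ produced by the homogeneity action, using the identity $q\circ\m=\Phi\circ(\Id_G\times q)$ and its cotangent counterpart coming from the defining formula of $\widehat\Phi$ in Proposition \ref{action_hom_cot}. The paper instead proves $q^*(\mathsf D_{G/H})\subseteq\mathsf D$: given $(v_g,\alpha_g)$ in the pullback, it observes that $\TT\tg(v_g,\alpha_g)\in\lie A_{\tg(g)}(\mathsf D_G)$, chooses $(w_{g\inv},\beta_{g\inv})\in\mathsf D_G(g\inv)$ with matching source, acts on the projected element to land in $\mathsf D_{G/H}(pH)$ and hence concludes $(w_{g\inv},\beta_{g\inv})\star(v_g,\alpha_g)\in\lie D(p)$ (via the same two compatibility identities you use), and finally recovers $(v_g,\alpha_g)=(w_{g\inv},\beta_{g\inv})\inv\star\bigl((w_{g\inv},\beta_{g\inv})\star(v_g,\alpha_g)\bigr)\in\mathsf D(g)$, which requires that $\mathsf D_G$ be closed under inversion and that $\TT\s:\mathsf D_G\to\lie A(\mathsf D_G)$ be surjective. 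Both arguments end with the same maximality step: an inclusion of Lagrangian subbundles of $\mathsf P_G$ of rank $\dim G$ is an equality. Your route is marginally more economical, since it works directly with the generators of $\mathsf D$ and needs only the forward action axiom, with no auxiliary element of $\mathsf D_G(g\inv)$ and no inversion inside $\mathsf D_G$; the only point you leave implicit is the composability check $\TT\s(v_g,\alpha_g)=\TT\mathsf J(v_{\s(g)H},\alpha_{\s(g)H})$, which follows at once from $\mathsf J\circ q=\tg$ and from $\widehat{\mathsf J}(\alpha_{\s(g)H})=\hat\tg\left((T_{\s(g)}q)^*\alpha_{\s(g)H}\right)=\hat\tg(\alpha_{\s(g)})$, so it is a routine verification rather than a gap.
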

\begin{proof}
Choose $(v_g,\alpha_g)\in q^*\left(\mathsf D_{G/H}\right)(g)$. Then $\alpha_g$
is equal to $(T_gq)^*\alpha_{gH}$ for some
$\alpha_{gH}\in T^*_{gH}(G/H)$ such that $(T_gqv_g,\alpha_{gH})\in\mathsf
D_{G/H}(gH)$.
Then $\TT\mathsf J(T_gqv_g,\alpha_{gH})=\TT\tg(v_g,\alpha_g)\in \lie A_{\tg(g)}(\mathsf
D_G)$ and 
there exists $(w_{g\inv},\beta_{g\inv})\in\mathsf D_G(g\inv)$ such that
$$\TT\s(w_{g\inv},\beta_{g\inv})=\TT\mathsf J(T_gqv_g,\alpha_{gH}).$$
Set $p=\s(g)$ and consider 
$(u_{pH},\gamma_{pH}):=(w_{g\inv},\beta_{g\inv})\cdot (T_gqv_g,\alpha_{gH})\in\mathsf D_{G/H}(pH)$.
Then we have $$(T_pq)^*\gamma_{pH}=\beta_{g\inv}\star((T_gq)^*\alpha_{gH})=
\beta_{g\inv}\star\alpha_g$$ by Proposition \ref{action_hom_cot}
about the action of $T^*G\rr A^*G$ on $\widehat{\mathsf J}:T^*(G/H)\to A^*G$, 
and 
$$u_{pH}=w_{g\inv}\cdot (T_gqv_g)=T_{(g\inv, gH)}\Phi(w_{g\inv}, T_gqv_g)
=T_pq(w_{g\inv}\star v_g).$$
Thus, $(u_p,\gamma_p):=(w_{g\inv},\beta_{g\inv})\star (v_g,\alpha_g)$ is an element 
of $\lie D(p)$, and we have 
$(v_g,\alpha_g)=(w_{g\inv},\beta_{g\inv})\inv\star(u_p,\gamma_p)$.
Since $\mathsf D_G$ is multiplicative and $(w_{g\inv},\beta_{g\inv})\in\mathsf D_G(g\inv)$, the pair
$(w_{g\inv},\beta_{g\inv})\inv$ is an element of $\mathsf D_G(g)$ 
and we have shown that 
$(v_g, \alpha_g)\in\mathsf D(g)$.

Since $q^*(\mathsf D_{G/H})\subseteq\mathsf D$ is an inclusion of Dirac structures, we have then equality.
\end{proof}

\begin{remark}
Note that 
 Theorem \ref{unicity} shows 
that if $(G\rr P,\mathsf D_G)$ is a Dirac groupoid, 
a $\mathsf D_G$-homogeneous Dirac structure on $G/H$ is uniquely
determined by its restriction to $q(P)\subseteq G/H$. 
\end{remark}

\begin{example}
We have seen in Example \ref{easy_example}
that if $(G\rr P, \mathsf D_G)$ is a Dirac groupoid, then
 $(\tg:G\to P, \mathsf D_G)$ is
a Dirac homogeneous space of $( G\rr P, \mathsf D_G)$. 

The space $\lie D$ is here the direct sum $I^\s(\mathsf D_G)\oplus \mathfrak A(\mathsf
D_G)$.
The corresponding Dirac structure $\mathsf D$ is equal to $\mathsf D_G$ by the
last theorem. This can also be seen directly from the definition of $\mathsf
D$, since $\mathsf D$ is spanned by the sections
$(X_\xi,\theta_\xi)$ for $(\bar{X_\xi},\bar\theta_\xi)\in\Gamma(\mathfrak A(\mathsf D_G))$ and the
sections  $\sigma^r$
for all $\sigma\in\Gamma( I^\s(\mathsf D_G))$, 
which are spanning sections for $\mathsf D_G$.
\end{example}

\subsection{The Theorem of Drinfel$'$d}
Recall that if $(G\rr P, \mathsf D_G)$ is a Dirac groupoid, then 
there is an induced action of the set of bisections $\mathcal B(G)$ of $G$
on the vector bundle $\lie B(\mathsf D_G)$ associated to $\mathsf D_G$
(see Theorem \ref{action}).
If $H$ is a wide Lie subgroupoid of $G\rr P$, this action restricts to an action
of $\mathcal B(H)$ on $\lie B(\mathsf D_G)$.
We use this action to characterize 
$\mathsf D_G$-homogeneous Dirac structures on $G/H$.

\begin{theorem}\label{drinfeld1}
Let $(G\rr P,\mathsf D_G)$ be a Dirac  groupoid,
$H$ a $\tg$-connected wide subgroupoid of $G$ such that 
the homogeneous space $G/H$ has a smooth manifold structure 
and  $q:G\to G/H$ is a smooth surjective submersion.
Let $\lie D$ be a Dirac subspace of $\mathsf P_G\an{P}$
satisfying \eqref{lieDequation}
and such that $AH\oplus\{0\}\subseteq \lie D$. 
Then the following are equivalent:
\begin{enumerate}
\item $\lie D$ is the pullback  $q^*(\mathsf D_{G/H})\an{P}$ as in \eqref{lieD},
where $\mathsf D_{G/H}$ is some  $\mathsf D_G$-homogeneous Dirac structure 
 on $G/H$.
\item $\bar{\lie D}=\lie D/I^\s(\mathsf D_G)\subseteq\lie B(\mathsf D_G)$
is invariant under the induced action of $\mathcal B(H)$
on $\lie B(\mathsf D_G)$.
\item The $\mathsf D_G$-homogeneous Dirac structure 
$\mathsf D=\mathsf D_G\cdot\lie D \subseteq \mathsf P_G$ as in Theorem
\ref{LieD}
pushes-forward to a ($\mathsf D_G$-homogeneous) 
Dirac structure on  the quotient $G/H$.
 \end{enumerate}
\end{theorem}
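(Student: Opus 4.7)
The plan is to establish the cycle $(1) \Rightarrow (3) \Rightarrow (1)$ together with the equivalence $(2) \Leftrightarrow (3)$, leaning heavily on Theorems \ref{LieD} and \ref{unicity} for the first cycle and on the Dirac reduction criterion (via condition \eqref{condition_reduction}) for the bisection equivalence.

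For $(1) \Rightarrow (3)$: If $\lie D = q^*(\mathsf D_{G/H})\an{P}$ for some $\mathsf D_G$-homogeneous Dirac structure $\mathsf D_{G/H}$ on $G/H$, then Theorem \ref{unicity} gives $\mathsf D = q^*(\mathsf D_{G/H})$, and since $AH\oplus\{0\}\subseteq \lie D$ the vertical distribution $\mathcal H \oplus \{0\}$ lies inside $q^*(\mathsf D_{G/H})$ by left-translation of sections of $(AH,0)\subseteq \lie D$; hence $q(\mathsf D)=\mathsf D_{G/H}$ is a smooth Dirac structure on $G/H$. For $(3) \Rightarrow (1)$: assuming $\mathsf D$ pushes forward to a Dirac structure $\bar{\mathsf D}$ on $G/H$, first observe that $AH\oplus\{0\}\subseteq \lie D$ implies, via Lemma \ref{sections_of_D'} and the construction of Theorem \ref{LieD}, that $X^l\in\Gamma(\mathsf D)$ for every $X\in\Gamma(AH)$, whence $\mathcal H\oplus\{0\}\subseteq \mathsf D$. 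Since $\mathcal H$ is the left-invariant image of $AH$ and coincides with $\ker Tq$, we get $\mathsf D = q^*(\bar{\mathsf D})$, and restricting to $P$ recovers $\lie D = q^*(\bar{\mathsf D})\an{P}$. The $\mathsf D_G$-homogeneity of $\bar{\mathsf D}$ is inherited from that of $\mathsf D$ (Theorem \ref{LieD}) because the Pontryagin-groupoid action of $\mathsf D_G \rr \lie A(\mathsf D_G)$ on $\mathsf D$ commutes with the right $\mathcal B(H)$-action on $G$ (left and right multiplications commute), so it descends to an action on $\bar{\mathsf D}$.

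For $(3) \Leftrightarrow (2)$: Recall from \S\ref{background2} that if $\mathcal H\oplus\{0\}\subseteq \mathsf D$, the push-forward $q(\mathsf D)$ is a well-defined Dirac structure on $G/H$ as soon as $\mathsf D$ is stable under right translations by $\mathcal B(H)$ (condition \eqref{condition_reduction}); since $H$ is $\tg$-connected, $\mathcal B(H)$ is generated by $\Exp(tZ)$ with $Z\in\Gamma(AH)$ (Proposition \ref{exp_map}), so this stability is equivalent to $R_K^*\Gamma(\mathsf D)\subseteq \Gamma(\mathsf D)$ for every $K\in\mathcal B(H)$. The equivalence with $(2)$ then follows from matching this invariance with the defining formula of $\rho_K$ in Theorem \ref{action}: evaluating $R_K^*(\xi+\sigma^l)$ at $\s(K(p))$ and using the product identity $\xi(K(p)\inv)\star\bigl((\xi+\sigma^l)(K(p))\bigr)\in \lie A(\mathsf D_G)\oplus (\ker\TT\tg)\an{P}$ (an element of $\lie D$ if and only if $R_K^*(\xi+\sigma^l)$ stays in $\mathsf D$), one identifies the class of $R_K^*(\xi+\sigma^l)\an{P}$ in $\lie B(\mathsf D_G)$ with $\rho_K\bigl((\bar\xi+\sigma)+I^\s(\mathsf D_G)\bigr)$. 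Thus invariance of $\mathsf D$ under $R_K$ translates precisely to $\rho_K$-invariance of $\bar{\lie D}$. The converse amounts to spreading the invariance on $P$ out to all of $G$, which is achieved because $\mathsf D$ is spanned on each $\s$-fiber by the star-section extensions of elements of $\lie D$ together with right-invariant sections in $\Gamma(\mathsf D_G\cap\ker\TT\s)$, and the latter are automatically $R_K$-invariant.

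The main obstacle is precisely this last matching, i.e., showing that the formal action $\rho_K$ on $\lie B(\mathsf D_G)$ (defined through products of elements in $\mathsf D_G$ and $\ker\TT\tg$ modulo $I^\s(\mathsf D_G)$) coincides with the germ at $P$ of the honest pullback $R_K^*$ on sections of $\mathsf D\subseteq \mathsf P_G$. This will require unpacking Lemma \ref{sections_of_D'} to rewrite $(\xi+\sigma^l)(K(p))$ as a product $\xi(K(p))\star\bar\xi(p)^{-1}\star(\bar\xi+\sigma)(p)$ and then applying the multiplicativity of $\mathsf D_G$ together with the computation \eqref{util_tangent}--\eqref{util_cotangent} to recognise the result, modulo $I^\s(\mathsf D_G)$, as the image under $\rho_K$.
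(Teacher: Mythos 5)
Your proposal is essentially correct, but it organizes the equivalences differently from the paper: the paper proves the cycle $(1)\Rightarrow(2)\Rightarrow(3)\Rightarrow(1)$, where $(1)\Rightarrow(2)$ is a direct computation using the action of $\mathsf D_G$ on the homogeneous structure $\mathsf D_{G/H}$ together with $q\circ R_K=q$, and $(2)\Rightarrow(3)$ is the product computation (via Lemma \ref{sections_of_D'}, \eqref{util_tangent}--\eqref{util_cotangent} and the spanning of $\mathsf D$ by star-section extensions of $\Gamma(\lie D)$ plus the $R_K$-invariant right-invariant sections of $\mathsf D_G\cap\ker\TT\s$) that verifies the reduction condition \eqref{condition_reduction}; you instead prove $(1)\Leftrightarrow(3)$ directly through Theorem \ref{unicity}, Lemma \ref{lemma} and the identity $\mathsf D=q^*(q(\mathsf D))$, and reduce $(2)\Leftrightarrow(3)$ to the single statement that $R_K$-invariance of $\mathsf D$ is equivalent to $\rho_K$-invariance of $\bar{\lie D}$ — which is exactly the technical core of the paper's $(1)\Rightarrow(2)$ and $(2)\Rightarrow(3)$ steps, and your sketch of it invokes the right tools. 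Your route has the advantage of isolating that matching once and making Theorem \ref{unicity} and the reduction machinery carry the rest; the paper's cycle avoids ever needing the implication ``push-forward exists $\Rightarrow$ $\mathsf D$ is $R_K$-invariant''. That implication is the one thin spot in your assembly of $(3)\Rightarrow(2)$: condition \eqref{condition_reduction} is only a sufficient criterion for reducibility, so you cannot read the invariance of $\mathsf D$ off the mere existence of $q(\mathsf D)$; you must first use $\mathcal H\oplus\{0\}\subseteq\mathsf D$ (which, as you note, follows from $AH\oplus\{0\}\subseteq\lie D$ and the construction of $\mathsf D$) to get $\mathsf D=q^*(q(\mathsf D))$, and then observe that any pullback along $q$ is automatically $R_K$-invariant because $q\circ R_K=q$ — you already derive $\mathsf D=q^*(\bar{\mathsf D})$ in your $(3)\Rightarrow(1)$ paragraph, so this is a matter of wiring, not of a missing idea. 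Two further small points: in the matching step your product naturally produces $\rho_{K\inv}$ rather than $\rho_K$ (as in the paper's computation), which is harmless since $\mathcal B(H)$ is a group; and the homogeneity of the pushed-forward structure, which you dispatch by saying the groupoid action ``commutes with the right $\mathcal B(H)$-action'', should be checked on elements as in the paper (the action of $(w_{g'},\beta_{g'})\in\mathsf D_G$ on $(v_{gH},\alpha_{gH})\in q(\mathsf D)$ is performed by lifting to $\mathsf D$ and projecting back), but this is routine.
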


Note that, together with Theorem \ref{unicity}, this shows that 
a Dirac structure $\mathsf D_{G/H}$ on $G/H$ is $\mathsf D_G$-homogeneous
if and only if 
$I^\s(\mathsf D_G)\subseteq (q^*\mathsf D_{G/H})\an{P}$
and
$q^*\mathsf D_{G/H}=\mathsf D_G\cdot (q^*\mathsf D_{G/H})\an{P}$, that is, 
$(G/H,\mathsf D_{G/H})$ is $(G\rr P,\mathsf D_G)$-homogeneous
if and only if $(G,q^*\mathsf D_{G/H})$ is.

For the proof of Theorem \ref{drinfeld1}, we will need the following
Lemma.
\begin{lemma}\label{lemma}
In the situation of Theorem \ref{LieD}, 
we have $\lie D=\mathsf D\an{P}$.
\end{lemma}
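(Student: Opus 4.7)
The plan is to establish $\lie D \subseteq \mathsf D\an{P}$ directly, and then close the gap by a dimension argument using that both subspaces are Lagrangian in the fibers of $\mathsf P_G\an{P}$.

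For the easy inclusion, I would fix $p \in P$ and $(v_p,\alpha_p) \in \lie D(p)$. The inclusion $\lie D \subseteq \lie A(\mathsf D_G) \oplus (\ker\TT\tg)\an{P}$ built into the hypothesis forces $(a,b) := \TT\tg(v_p,\alpha_p)$ to lie in $\lie A_p(\mathsf D_G) \subseteq \mathsf D_G(p)$. The element $(a,b)$ is a unit of the Pontryagin groupoid $\mathsf P_G \rr TP\oplus A^*G$, hence $\TT\s(a,b) = (a,b) = \TT\tg(v_p,\alpha_p)$, so the product $(a,b) \star (v_p,\alpha_p)$ is defined and equals $(v_p,\alpha_p)$ because units act as identities on the left. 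This exhibits $(v_p,\alpha_p)$ in exactly the form required by the definition of $\mathsf D(p)$ in Theorem \ref{LieD}, with first factor $(a,b) \in \mathsf D_G(p)$ and second factor $(v_p,\alpha_p) \in \lie D(p)$; hence $(v_p,\alpha_p) \in \mathsf D(p)$.

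For the reverse inclusion, I would invoke dimensions. By hypothesis $\lie D$ is a Lagrangian subbundle of $\mathsf P_G\an{P}$, so $\dim \lie D(p) = \dim G$ at every $p$; and by Theorem \ref{LieD}, $\mathsf D$ is a Dirac structure on $G$, so $\dim \mathsf D(p) = \dim G$ as well. The inclusion proved above then forces the fiberwise equality $\lie D(p) = \mathsf D(p)$, which is the desired $\lie D = \mathsf D\an{P}$.

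No real obstacle arises; the only thing to keep in mind is the left-unit identity $(a,b) \star (v_p,\alpha_p) = (v_p,\alpha_p)$ in the Pontryagin groupoid, which is a formal consequence of the functorial construction of the tangent and cotangent groupoids and is already implicit in Remark \ref{computation}. A completely direct argument for the reverse inclusion, decomposing $(v,\alpha) \in \mathsf D_G(p)$ along $\mathsf D_G\an{P} = \lie A(\mathsf D_G) \oplus I^\tg(\mathsf D_G)$ and expanding the product via bilinearity of the tangent and cotangent groupoid multiplications, runs into composability bookkeeping that the dimension argument cleanly bypasses.
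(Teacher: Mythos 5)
Your proposal is correct and matches the paper's own proof: the paper likewise writes $(v_p,\alpha_p)=\TT\tg(v_p,\alpha_p)\star(v_p,\alpha_p)$ with $\TT\tg(v_p,\alpha_p)\in\lie A_p(\mathsf D_G)\subseteq\mathsf D_G(p)$ to get $\lie D\subseteq\mathsf D\an{P}$, and then concludes equality because both Lagrangian bundles have the same rank. The extra justification you give for the left-unit identity is fine but not needed beyond what the paper already assumes.
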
 

\begin{proof}
Choose $p\in P$ and $(v_p,\alpha_p)\in \lie D(p)$. Then 
$\TT\tg(v_p, \alpha_p)\in\lie A_p(\mathsf D_G)\subseteq \mathsf D_G(p)$
and $(v_p, \alpha_p)=\TT\tg(v_p, \alpha_p)\star(v_p, \alpha_p)\in\mathsf D(p)$.
This shows $\lie D\subseteq \mathsf D\an{P}$ and we are done since
both vector bundles have the same rank.
\end{proof}

\begin{proof}[of Theorem \ref{drinfeld1}]
Assume first that $\lie D=q^*(\mathsf D_{G/H})\an{P}$ for some\linebreak
 $\mathsf D_G$-homogeneous Dirac structure 
 $\mathsf D_{G/H}$ on $G/H$ and choose $K\in\mathcal B(H)$ and 
$(v_p,\alpha_p)\in \lie D(p)$, $p\in P$. Then there exists $\alpha_{pH}\in
T_{pH}^*(G/H)$
such that $\alpha_p=(T_pq)^*\alpha_{pH}$
and $(T_pqv_p,\alpha_{pH})\in\mathsf D_{G/H}(pH)$.
If we set $K(p)=:h\in H$ and  write
$\overline{(v_p,\alpha_p)}$ 
for $(v_p,\alpha_p)+I_p^\s(\mathsf D_G)\in \bar{\lie D}(p)\subseteq \lie B_p(\mathsf D_G)$, 
we have 
$$\rho_K\left(\overline{(v_p,\alpha_p)}\right)
=\left(T_{h\inv}R_K(v_{h\inv}\star v_p), (T_{\s(h)}R_K\inv)^*(\alpha_{h\inv}\star \alpha_p)\right)
+I_{\s(h)}^\s(\mathsf D_G)$$
for any $(v_{h\inv},\alpha_{h\inv})\in\mathsf D_G(h\inv)$
satisfying $\TT\s(v_{h\inv},\alpha_{h\inv})=\TT\tg(v_p,\alpha_p)$.
Since $$\TT\mathsf J(T_pqv_p, \alpha_{pH})=\TT\tg(v_p,\alpha_p)
=\TT\s(v_{h\inv}, \alpha_{h\inv}),$$ the product
$(v_{h\inv}, \alpha_{h\inv})\cdot (T_pqv_p, \alpha_{pH})$
makes sense and is an element 
of $\mathsf D_{G/H}(\s(h)H)$.
Note that since $K\in\mathcal B(H)$, we have $q\circ R_K=q$.
The pair  
$(T_{h\inv}R_K(v_{h\inv}\star v_p), (T_{\s(h)}R_K\inv)^*(\alpha_{h\inv}\star \alpha_p))$
satisfies 
$T_{\s(h)}q(T_{h\inv}R_K(v_{h\inv}\star v_p))\in T_{\s(h)H}(G/H)$,
$$T_{\s(h)}q(T_{h\inv}R_K(v_{h\inv}\star v_p))
=T_{h\inv}(q\circ R_K)(v_{h\inv}\star v_p)
=T_{h\inv}q(v_{h\inv}\star v_p)
=v_{h\inv}\cdot (T_pqv_p)$$
and 
\begin{align*}
(T_{\s(h)}R_K\inv)^*(\alpha_{h\inv}\star \alpha_p)
&=(T_{\s(h)}R_K\inv)^*(\alpha_{h\inv}\star (T_pq)^*\alpha_{pH})\\
&=(T_{\s(h)}R_K\inv)^*\left((T_{h\inv}q)^*(\alpha_{h\inv}\cdot \alpha_{pH})\right)
=(T_{\s(h)}q)^*(\alpha_{h\inv}\cdot\alpha_{pH}).
\end{align*}
Thus, 
$(T_{h\inv}R_K(v_{h\inv}\star v_p), (T_{\s(h)}R_K\inv)^*(\alpha_{h\inv}\star \alpha_p))$
is an element of $\lie D(\s(h))$ and 
$\rho_K\left(\overline{(v_p,\alpha_p)}\right)$
is an element of $\bar{\lie D}(\s(h))$.
This shows $(1)\Rightarrow (2)$.

\medskip

Assume now that $\bar{\lie D}$ is invariant under the action 
of $\mathcal B(H)$ on $\lie B(\mathsf D_G)$.
Recall the backgrounds about Dirac reduction
in Section \ref{background2}.
Set $\mathcal K=\mathcal H\oplus0_{T^*G}$, 
and hence $\mathcal K^\perp=TG\oplus \mathcal H^\circ$.

We have  $AH\oplus\{0\}\subseteq \lie D$ by hypothesis.
By definition of $\mathsf D$ and $\mathcal H$, 
this yields immediately 
$\mathcal K=\mathcal H\oplus\{0\}\subseteq\mathsf D$, 
hence 
 $\mathsf D\subseteq \mathcal K^\perp$ and 
$\mathsf D\cap\mathcal K^\perp=\mathsf D$ has constant rank on $G$.
By \eqref{condition_reduction}, we
 have  to show that $\mathsf D$ is invariant under the right action 
of $\mathcal B(H)$
on $G$. 
We will use the fact that $\mathsf D$ is spanned 
by the sections $\sigma^r\in\Gamma(\mathsf D_G\cap\ker\TT\s)$ 
for all $\sigma\in\Gamma(I^\s(\mathsf D_G))$ 
and  
$(X_\xi,\theta_\xi)+(X^l,\s^*\alpha)$
for all sections 
$(\bar X_\xi,\bar\theta_\xi)+(X,(\s^*\alpha)\an{P})\in\Gamma(\lie D)\subseteq\Gamma(\lie A(\mathsf D_G)
\oplus(\ker\TT\tg)\an{P})$.

Choose $K\in\mathcal B(H)$.
It is easy to verify that
$$(R_K^*Z^r, R_K^*(\tg^*\gamma))=(Z^r,\tg^*\gamma)\quad \text{ 
for all }\quad (Z,(\tg^*\gamma)\an{P})\in\Gamma(\ker\TT\s\an{P}).$$
Choose a section $(X_\xi,\theta_\xi)+(X^l,\s^*\alpha)$
of $\mathsf D$. We want to show that $(R_K^*(X_\xi+X^l), R_K^*(\theta_\xi+\s^*\alpha))$
is then also a section of $\mathsf D$.
Choose $g\in G$ and set for simplicity $h=K(\s(g))\in H$, $p=\s(h)$, $q=\tg(h)=\s(g)$
and $(\bar{X_\xi}+X,\bar\theta_\xi+\s^*\alpha)(p)=:(u_p,\gamma_p)\in\lie D(p)$. Then 
$\left((X_\xi,\theta_\xi)+(X^l,\s^*\alpha)\right)(g\star h)=
(X_\xi,\theta_\xi)(g\star h)\star (u_p,\gamma_p)$ and we can compute
\begin{align*}
&\left(R_K^*(X_\xi+X^l), R_K^*(\theta_\xi+\s^*\alpha)\right)(g)\\
=&\left(T_{g\star h}R_K\inv(X_\xi+X^l)(g\star h), 
(T_{g}R_K)^*((\theta_\xi+\s^*\alpha)(g\star h))\right)\\
=&\left(T_{g\star h}R_K\inv \left(X_\xi(gh)\star u_p\right),
(T_{g}R_K)^*\left(\theta_\xi(gh)\star\gamma_p\right) \right).
\end{align*}
Choose $(v_g,\alpha_g)\in\mathsf D_G(g)$
such that $\TT\tg(v_g,\alpha_g)=\TT\tg(X_\xi(gh), \theta_\xi(gh))$. Then 
the product $(w_h,\alpha_h):=(v_g,\alpha_g)\inv\star (X_\xi(gh), \theta_\xi(gh))$
is an element of $\mathsf D_G(h)$ such that
$\TT\s(w_h,\alpha_h)=(\bar{X_\xi},\bar\theta_\xi)(p)$ and
we have 
\begin{align*}
&\left(R_K^*(X_\xi+X^l), R_K^*(\theta_\xi+\s^*\alpha)\right)(g)\\
=&(v_g,\alpha_g)\star\left(T_{h}R_K\inv \left(v_{g}\inv
\star X_\xi(gh)\star u_p\right),
(T_{g}R_K)^*\left(\alpha_g\inv\star\theta_\xi(gh)\star\gamma_p\right) \right)\\
=&(v_g,\alpha_g)\star\left(T_{h}R_K\inv 
\left(w_h\star u_p\right),
(T_{q}R_K)^*\left(\beta_h\star\gamma_p\right) \right).
\end{align*}
But since $\bar{\lie D}$ is invariant under the action 
of $\mathcal B(H)$ on $\lie B(\mathsf D_G)$ and 
$\overline{(u_p,\gamma_p)}=
(u_p,\gamma_p)+I_p^\s(\mathsf D_G)$ is an element of $\bar{\lie D}(p)$, 
we have $$
\left(T_{ h}R_K\inv 
\left(w_h\star u_p\right),
(T_{q}R_K)^*\left(\beta_h\star \gamma_p\right)\right)
+I_{q}^\s(\mathsf D_G)=\rho_{K\inv}\left(\overline{(u_p,\gamma_p)}\right)\in\bar{\lie D}(q).$$
Because $I_{q}^\s(\mathsf D_G)\subseteq \lie D(q)$, we have 
consequently $$\left(T_{h}R_K\inv 
\left(w_h\star u_p\right),
(T_{q}R_K)^*\left(\beta_h\star\gamma_p\right) \right)
\in\lie D(q)$$
and hence $$\left(R_K^*(X_\xi+X^l), R_K^*(\theta_\xi+\s^*\alpha)\right)(g)
=(v_g,\alpha_g)\star\left(T_{h}R_K\inv 
\left(w_h\star u_p\right),
(T_{q}R_K)^*\left(\beta_h\star\gamma_p\right) \right)
\in\mathsf D(g)$$
since $(v_g,\alpha_g)\in\mathsf D_G(g)$.

We show then that the push-forward $q(\mathsf D)$ is 
 a $\mathsf D_G$-homogeneous Dirac structure on $G/H$. 
By definition of $\TT\mathsf J$, we have $\TT\mathsf  J(q(\mathsf D))=\TT\tg(\mathsf D)
\subseteq \TT\tg(\mathsf D_G)
= \mathfrak A(\mathsf D_G)$.
Choose $(v_{gH},\alpha_{gH})\in q(\mathsf D)(gH)$
and $(w_{g'},\beta_{g'})\in\mathsf D_G(g')$
such that $\TT\s(w_{g'},\beta_{g'})
=\TT\mathsf  J(v_{gH},\alpha_{gH})$. Then there exists 
$v_g\in T_gG$ such that 
$T_gqv_g=v_{gH}$ and $(v_g, (T_gq)^*\alpha_{gH})\in\mathsf D(g)$.
The pair $(v_g, (T_gq)^*\alpha_{gH})$
satisfies then $\TT\tg(v_g, (T_gq)^*\alpha_{gH})=\TT\mathsf  J(v_{gH},\alpha_{gH})
=\TT\s(w_{g'},\beta_{g'})$ and since $(G,\mathsf D)$ is a Dirac homogeneous
space of $(G\rr P,\mathsf D_G)$, we have 
$(w_{g'},\beta_{g'})\star (v_g, (T_gq)^*\alpha_{gH})\in\mathsf D(g'\star g)$
and the identities 
$(T_{g'\star g}q)^*(\beta_{g'}\cdot\alpha_{gH})=\beta_{g'}\star (T_gq)^*\alpha_{gH}$
and $T_{g'\star g}q(w_{g'}\star v_g)=w_{g'}\cdot (T_gqv_g)=
w_{g'}\cdot v_{gH}$. Thus, the pair 
$(w_{g'},\beta_{g'})\cdot (v_{gH},\alpha_{gH})$
is an element of $q(\mathsf D)(gg'H)$
and $q(\mathsf D)$ is shown to be $\mathsf D_G$-homogeneous.
Hence, we have shown $(2)\Rightarrow (3)$.

\medskip

To show that $(3)$ implies $(1)$, we have then just 
to show that the vector bundle 
$\lie D\to P$ is the
restriction to $P$ of the
 pullback  $q^*(q(\mathsf D))$.
Since $\mathsf D\an{P}=\lie D$ by Lemma \ref{lemma}, we can show that
$\mathsf D=q^*(q(\mathsf D))$. This follows from the inclusion
$\mathcal H\oplus 0_{T^*G}\subseteq \mathsf D$.
Choose $(v_g,\alpha_g)\in\mathsf D(g)$. Then 
$\alpha_g\in \mathcal H(g)^\circ$. Thus, 
there exists  $\alpha_{gH}\in T_{gH}^*(G/H)$
such that $\alpha_g=(T_gq)^*\alpha_{gH}$ and,  
by definition of $q(\mathsf D)$, the pair  $(T_gqv_g, \alpha_{gH})$
is an element of $q(\mathsf D)(gH)$. But then 
$(v_g,\alpha_g)\in q^*(q(\mathsf D))(g)$. Conversely, if 
$(v_g,\alpha_g)\in q^*(q(\mathsf D))(g)$, then 
$\alpha_g=(T_gq)^*\alpha_{gH}$
for some $\alpha_{gH}\in T_{gH}^*(G/H)$ satisfying 
$(T_gq v_g,\alpha_{gH})\in q(\mathsf D)(gH)$. By definition
of $q(\mathsf D)(gH)$, there exists then 
$u_g\in T_gG$ such that $(u_g,(T_gq)^*\alpha_{gH})=(u_g,\alpha_g)\in\mathsf D(g)$
and $T_gq u_g=T_gq v_g$. But this yields that $v_g-u_g\in \mathcal H(g)$ and hence 
$(v_g,\alpha_g)=(u_g,\alpha_g)+(v_g-u_g,0_g)\in \mathsf D(g)+(\mathcal H(g)\times\{0_g\})
=\mathsf D(g)$ since $(\mathcal H(g)\times\{0_g\})\subseteq\mathsf D(g)$.
\end{proof}

\begin{theorem}\label{drinfeld2}
Let $(G\rr P, \mathsf D_G)$ be a closed
Dirac groupoid. 
In the situation of the previous theorem, the 
following are equivalent
\begin{enumerate}
\item The Dirac structure $q(\mathsf D)=\mathsf D_{G/H}$
is closed.
\item The Dirac structure
$\mathsf D$ is closed.
\item The set of sections of $\bar{\lie D}\subseteq \lie B(\mathsf D_G)$ is closed under the bracket on 
the sections of the Courant algebroid $\lie B(\mathsf D_G)$.
\end{enumerate}
\end{theorem}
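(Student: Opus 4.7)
The plan is to establish (1) $\Leftrightarrow$ (2) via Dirac reduction, then treat (2) $\Leftrightarrow$ (3) separately, with the nontrivial direction being (3) $\Rightarrow$ (2).

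For (1) $\Leftrightarrow$ (2), recall from the proof of Theorem \ref{drinfeld1} that the hypothesis $AH \oplus \{0\} \subseteq \lie D$ yields both $\mathcal H \oplus 0_{T^*G} \subseteq \mathsf D$ and $\mathsf D = q^*(q(\mathsf D))$. The implication (2) $\Rightarrow$ (1) is then the reduction result of \cite{Zambon08} recalled in Section \ref{background2}: closed Dirac structures satisfying the compatibility \eqref{Dirac_red_condition} push forward to closed Dirac structures. For (1) $\Rightarrow$ (2), $\mathsf D = q^*(q(\mathsf D))$ is locally generated by sections $(X, q^*\bar\alpha)$ with $X \sim_q \bar X$ and $(\bar X, \bar\alpha) \in \Gamma(q(\mathsf D))$, together with sections $(V, 0)$ for $V \in \Gamma(\mathcal H)$; since $\mathcal H$ is involutive and $\ldr{V} q^*\bar\alpha = 0$ for $V \in \Gamma(\mathcal H)$ (the flow of $V$ preserves $q$), a direct calculation shows that the Courant bracket of such generators lands again in $\mathsf D$ whenever $q(\mathsf D)$ is closed.

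For (2) $\Rightarrow$ (3), the argument is built into the definition of the Courant bracket on $\lie B(\mathsf D_G)$ in Theorem \ref{theorem_courant_algebroid}. By Lemma \ref{lemma}, $\mathsf D\an{P} = \lie D$. Any sections $e_1, e_2 \in \Gamma(\bar{\lie D})$ lift to $\bar\xi_i + \sigma_i \in \Gamma(\lie D)$, which extend via star section extensions of $\bar\xi_i$ and left-invariant extensions of $\sigma_i$ to sections $\xi_i + \sigma_i^l \in \Gamma(\mathsf D)$. Assuming (2), the ambient Courant bracket $[\xi_1 + \sigma_1^l, \xi_2 + \sigma_2^l]$ is a section of $\mathsf D$, so its restriction to $P$ lies in $\mathsf D\an{P} = \lie D$; modulo $I^\s(\mathsf D_G) \subseteq \lie D$, this produces a section of $\bar{\lie D}$ equal by construction to $[e_1, e_2]_{\lie B(\mathsf D_G)}$.

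The substantial step is (3) $\Rightarrow$ (2), where I would show that the Courant $3$-tensor $\mathsf T(\eta_1, \eta_2, \eta_3) = \langle [\eta_1, \eta_2], \eta_3 \rangle$ vanishes on $\mathsf D$. By trilinearity, it suffices to check on triples from the two types of generating sections of $\mathsf D$ (see the remark after Theorem \ref{LieD}): (A) extensions $\xi + \sigma^l$ with $\bar\xi + \sigma \in \Gamma(\lie D)$ and $\xi \sim_\s \bar\xi$ a star section of $\mathsf D_G$; and (B) right-invariant sections $\tau^r$ for $\tau \in \Gamma(I^\s(\mathsf D_G))$. Triples containing at least one type-(B) generator are disposed of using only closedness of $\mathsf D_G$: the computations in the proof of Theorem \ref{theorem_courant_algebroid} show that $[\tau_1^r, \tau_2^r]$, $[\sigma^l, \tau^r]$ and $[\xi, \tau^r]$ all lie in $\Gamma(\mathsf D_G \cap \ker\TT\s) \subseteq \Gamma(\mathsf D_G)$, so any further pairing of two sections of $\mathsf D_G$ vanishes by the Lagrangian condition. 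The genuinely hard case is (AAA), which is where I expect the main difficulty. Condition (3) immediately yields that $\mathsf T$ vanishes on such triples \emph{at points of $P$}, since $[\xi_1 + \sigma_1^l, \xi_2 + \sigma_2^l]\an{P}$ represents $[e_1, e_2]_{\lie B(\mathsf D_G)} \in \bar{\lie D}$ and pairs trivially with any $(\bar\xi_3 + \sigma_3)\an{P} \in \lie D$ because $\bar{\lie D}$ is Lagrangian in $\lie B(\mathsf D_G)$. To propagate this from $P$ to arbitrary $g \in G$, I would mimic the Lie derivative argument in the proof of Theorem \ref{int_crit_thm}: pass to an induced tensor $\tilde{\mathsf T}$ on $\mathsf D/(\mathsf D \cap \ker\TT\tg)$, prove $\ldr{Z^l}\tilde{\mathsf T} = 0$ for all $Z \in \Gamma(AG)$ using the derivation formula of Theorem \ref{lie_der_of_xi_section} together with the Jacobi identity that (3) implies on $\bar{\lie D}$ (a standard consequence of $\bar{\lie D}$ being Lagrangian and closed in the Courant algebroid $\lie B(\mathsf D_G)$), and conclude $\tilde{\mathsf T} \equiv 0$ by $\tg$-connectedness of $G$. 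The main obstacle is the extra bookkeeping introduced by the left-invariant $\sigma_i^l$ terms beyond the pure star-section setup of Theorem \ref{int_crit_thm}; the explicit bracket formula \eqref{important_bracket} is the essential computational tool for managing it.
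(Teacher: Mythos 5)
Your treatment of $(1)\Leftrightarrow(2)$ and of $(2)\Rightarrow(3)$ is correct and essentially the paper's argument (pullback/descending generators plus the reduction theorem of \cite{Zambon08} for the first equivalence; restriction to $P$ and Lemma \ref{lemma} for the second), and your reduction of $(3)\Rightarrow(2)$ to spanning sections, with the triples containing a right-invariant generator $\sigma^r$ handled through the computations of Theorem \ref{theorem_courant_algebroid}, is also sound. The gap is in the step you yourself flag as the hard one: propagating the vanishing of $\mathsf T$ on type-(A) triples from $P$ to all of $G$ by transplanting the machinery of Theorem \ref{int_crit_thm}. First, that machinery concludes via right translations $R_{\Exp(tZ)}$ and $\tg$-connectedness of $G$; Theorem \ref{drinfeld2} does not assume $G$ is $\tg$-connected (only $H$ is, in Theorem \ref{drinfeld1}), so the flow argument at best reaches the identity-component subgroupoid $C(G)$ and the stated theorem would only be proved under an extra hypothesis. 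Second, the ingredients you would need to even set up the argument — well-definedness of an induced tensor on $\mathsf D/(\mathsf D\cap\ker\TT\tg)$, constant rank and an invariant spanning of $\mathsf D\cap\ker\TT\tg$, and $R_K^*$-behavior of sections of $\mathsf D$ — all rest, in \S\ref{int_crit}, on the multiplicativity of $\mathsf D_G$ (Lemma \ref{const_rk_int_kern}, left-invariant spanning of $\mathsf D_G\cap\ker\TT\tg$, $R_K^*\xi\in\Gamma(\mathsf D_G+\ker\TT\tg)$), and $\mathsf D=\mathsf D_G\cdot\lie D$ is only a homogeneous Dirac structure, not a subgroupoid of $\mathsf P_G$; none of these facts are established for $\mathsf D$, and they are not mere bookkeeping.

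The paper avoids all of this with a direct multiplicative factorization that you have the ingredients for but do not use. By Lemma \ref{sections_of_D'}, Theorem \ref{lie_der_of_xi_section} and \eqref{important_bracket}, the value at any $g\in G$ of the bracket of two type-(A) generators is the groupoid product
$\bigl(\bigl([(X_\xi,\theta_\xi),(X_\eta,\theta_\eta)]+\mathcal L_X\eta-\mathcal L_Y\xi\bigr)(g)\bigr)\star\bigl([(X_\xi+X^l,\theta_\xi+\s^*\alpha),(X_\eta+Y^l,\theta_\eta+\s^*\beta)](\s(g))\bigr)$,
where the first factor lies in $\mathsf D_G(g)$ because $\mathsf D_G$ is closed and by the derivation formula, and the second lies in $\lie D(\s(g))$ precisely by hypothesis $(3)$ (together with $I^\s(\mathsf D_G)\subseteq\lie D$ and Lemma \ref{lemma}). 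By the very definition $\mathsf D=\mathsf D_G\cdot\lie D$ in Theorem \ref{LieD}, the bracket is then a section of $\mathsf D$, for every $g$, with no flows, no induced tensor, and no connectedness assumption on $G$; the brackets involving $\sigma^r$ and the Leibniz rule then finish exactly as in your type-(B) discussion. You should replace your propagation scheme by this product argument, which is where the multiplicativity of $\mathsf D_G$ actually enters.
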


\begin{proof}
If $\mathsf D$ is closed, then $q(\mathsf D)$ is closed
by a Theorem in \cite{Zambon08} 
about  Dirac reduction by foliations
(see the generalities about 
Dirac reduction in Section \ref{background2}). 
Conversely, assume that $q(\mathsf D)$ is closed. 
Since $\mathsf D\subseteq TG\oplus\mathcal H^\circ$
and by the proof of Theorem \ref{drinfeld1},
the Dirac structure $\mathsf D$ is spanned by $q$-descending sections, 
that is, sections $(X,\alpha)$ such that $\alpha\in\Gamma(\mathcal H^\circ)$
and 
$R_K^*(X,\alpha)=(X,\alpha)$ for all $K\in\mathcal B(H)$.
Choose two descending sections $(X,\alpha), (Y,\beta)$ of 
$\mathsf D$. Choose $(\bar X,\bar \alpha), (\bar Y,\bar \beta)\in\Gamma(q(\mathsf D))$
such that $(X,\alpha)\sim_q(\bar X,\bar \alpha)$
 and $(Y,\beta)\sim_q(\bar Y,\bar \beta)$. Then the bracket
$[(X,\alpha), (Y,\beta)]$ descends to $[(\bar X,\bar \alpha), (\bar Y,\bar \beta)]$
which is a section of  $q(\mathsf D)$ since $(G/H, q(\mathsf D))$ is closed.
But since $\mathcal H\oplus 0_{T^*G}\subseteq \mathsf D$, we have 
$\mathsf D=q^*(q(\mathsf D))$
(recall the proof of  Theorem \ref{drinfeld1}). Since $[(X,\alpha), (Y,\beta)]$ is a section of 
$q^*(q(\mathsf D))$, we have shown that 
$[(X,\alpha), (Y,\beta)]\in\Gamma(\mathsf D)$.
This proves $(1)\iff (2)$.

\medskip

Assume that $(G,\mathsf D)$ is closed and choose two sections 
$e_{\xi,X,\alpha}=(\bar{X_\xi}+X,\bar\theta_\xi+\s^*\alpha)+I^\s(\mathsf D_G), 
e_{\eta,Y,\beta}=(\bar{X_\eta}+Y,\bar\theta_\eta+\s^*\beta)+I^\s(\mathsf D_G)$
of $\bar{\lie D}\subseteq\lie B(\mathsf D_G)$. Then the two
pairs $(X_\xi+X^l,\theta_\xi+\s^*\alpha), (X_\eta+Y^l,\theta_\eta+\s^*\beta)$
are smooth sections of $\mathsf D$ by construction and 
since $(G,\mathsf D)$ is closed, we have 
$[(X_\xi+X^l,\theta_\xi+\s^*\alpha), (X_\eta+Y^l,\theta_\eta+\s^*\beta)]\in\Gamma(\mathsf D)$.
But since $\lie D=\mathsf D\an{P}$ and 
 $[e_{\xi,X,\alpha}, e_{\eta,Y,\beta}]
=[(X_\xi+X^l,\theta_\xi+\s^*\alpha), (X_\eta+Y^l,\theta_\eta+\s^*\beta)]\an{P}
+I^\s(\mathsf D_G)$, this yields
 $[e_{\xi,X,\alpha}, e_{\eta,Y,\beta}]\in\Gamma(\bar{\lie D})$.

Conversely, assume  that $\Gamma(\bar{\lie D})$ is closed under the Courant bracket on sections
of $\lie B(\mathsf D_G)$ and choose 
two spanning sections  $(X_\xi+X^l,\theta_\xi+\s^*\alpha), (X_\eta+Y^l,\theta_\eta+\s^*\beta)$
of $\mathsf D$ corresponding to 
$(\bar{X_\xi}+X,\bar\theta_\xi+\s^*\alpha\an{P})$ and 
$(\bar{X_\eta}+Y,\bar\theta_\eta+\s^*\beta\an{P})
\in\Gamma(\lie D)\subseteq\Gamma(\lie A(\mathsf D_G)\oplus(\ker\TT\tg)\an{P})$.
Since $[e_{\xi,X,\alpha},e_{\eta, Y,\beta}]$ is then an element 
of $\Gamma(\bar{\lie D})$ and $I^\s(\mathsf D_G)\subseteq \lie D$, we have 
$$[(X_\xi+X^l,\theta_\xi+\s^*\alpha), (X_\eta+Y^l,\theta_\eta+\s^*\beta)]\an{P}
\in\Gamma(\lie D)$$
by definition of the bracket on the sections of $\lie B(\mathsf D_G)$.
By Theorem \ref{lie_der_of_xi_section}, Lemma \ref{sections_of_D'}
and \eqref{important_bracket},
the value of 
 $[(X_\xi+X^l,\theta_\xi+\s^*\alpha), (X_\eta+Y^l,\theta_\eta+\s^*\beta)]$
at $g\in G$ 
equals 
\begin{align*}
&\Bigl(\left([(X_\xi,\theta_\xi),(X_\eta,\theta_\eta)]+\mathcal L_X\eta
-\mathcal L_Y\xi\right)(g)\Bigr)\star\\
&\hspace*{4cm}\left([(X_\xi+X^l,\theta_\xi+\s^*\alpha), (X_\eta+Y^l,\theta_\eta+\s^*\beta)](\s(g))\right)
\end{align*}
and we find that 
$[(X_\xi+X^l,\theta_\xi+\s^*\alpha), (X_\eta+Y^l,\theta_\eta+\s^*\beta)]$
is a section of $\mathsf D$,
since the first factor is an element 
of $\mathsf D_G(g)$ and the second an element 
of $\lie D(\s(g))$. 
Recall also that, by the proof 
of Theorem \ref{theorem_courant_algebroid},
we know that 
$[(X_\xi+X^l,\theta_\xi+\s^*\alpha), \sigma^r]\in\Gamma(\mathsf D_G\cap\ker\TT\s)$ for 
all $\sigma\in\Gamma(I^\s(\mathsf D_G))$. Finally, since 
$\mathsf D_G$ is closed, we know that
$[\sigma_1^r,\sigma_2^r]\in\Gamma(\mathsf D_G)$
for all $\sigma_1, \sigma_2 \in\Gamma(\mathsf D_G\cap\ker\TT\s)$.
Thus, 
by the Leibniz identity for the restriction
to $\Gamma(\mathsf D)$ of the Courant bracket on $\mathsf P_G$, we have 
shown that $(G,\mathsf D)$ is closed.
\end{proof}

As a corollary of the Theorems \ref{LieD}, \ref{unicity},
\ref{drinfeld1}, and \ref{drinfeld2} we get our main result,
that generalizes the correspondence theorems  in 
\cite{Drinfeld93},
\cite{LiWeXu98} and  \cite{Jotz11a}.

\begin{theorem}\label{Drinfeld}
Let $(G\rr P, \mathsf D_G)$ be a Dirac groupoid.
Let $H$ be a wide Lie subgroupoid of $G$ such that
the quotient $G/H$ is a smooth manifold and the map $q:G\to G/H$ a smooth surjective submersion.
There is a one-one correspondence 
between
 $\mathsf D_G$-homogeneous
Dirac structures on $G/H$ and 
Dirac 
subspaces $\lie D$ of $\mathsf P_G\an{P}$ such that
$AH\times\{0\}+ I^\s(\mathsf D_G)
\subseteq \lie D\subseteq \lie A(\mathsf D_G)\oplus(\ker\TT\tg)\an{P}$
and $\bar{\lie D}:=\lie D/I^\s(\mathsf D_G)$
is a $\mathcal B(H)$-invariant Dirac 
subspace
 of $\lie B(\mathsf D_G)$.

If $(G\rr P, \mathsf D_G)$ is closed, then 
closed $\mathsf D_G$-homogeneous Dirac structures on $G/H$
correspond in this way to Lagrangian subalgebroids $\bar{\lie D}$ of
$\lie B(\mathsf D_G)$.
\end{theorem}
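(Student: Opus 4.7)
The plan is to assemble the correspondence directly from Theorems \ref{LieD}, \ref{unicity}, \ref{drinfeld1}, and \ref{drinfeld2}, since the statement is packaged as a corollary: the substantive work has already been done, and what remains is to verify that the two assignments are mutually inverse and that the integrability matches under closedness.

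In one direction, starting from a $\mathsf D_G$-homogeneous Dirac structure $\mathsf D_{G/H}$ on $G/H$, I would attach to it the Dirac subspace $\lie D := q^*(\mathsf D_{G/H})\an{P} \subseteq \mathsf P_G\an{P}$ defined by \eqref{lieD}. The proposition establishing \eqref{lieDequation} already gives $I^\s(\mathsf D_G) \subseteq \lie D \subseteq \lie A(\mathsf D_G) \oplus (\ker\TT\tg)\an{P}$ together with $AH \oplus \{0\} \subseteq \lie D$, and the implication $(1) \Rightarrow (2)$ of Theorem \ref{drinfeld1} shows that the quotient $\bar{\lie D}$ is a $\mathcal B(H)$-invariant Lagrangian subbundle of $\lie B(\mathsf D_G)$.

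In the opposite direction, starting from such a $\lie D$, I would invoke Theorem \ref{LieD} to produce the $\mathsf D_G$-homogeneous Dirac structure $\mathsf D := \mathsf D_G \cdot \lie D$ on $G$, and then the implication $(2) \Rightarrow (3)$ of Theorem \ref{drinfeld1} to push it forward to a $\mathsf D_G$-homogeneous Dirac structure $\mathsf D_{G/H} := q(\mathsf D)$ on $G/H$. To see the two assignments are mutually inverse: starting from $\mathsf D_{G/H}$, forming $\lie D$ and then $\mathsf D$, Theorem \ref{unicity} yields $\mathsf D = q^*(\mathsf D_{G/H})$ and hence $q(\mathsf D) = \mathsf D_{G/H}$. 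Starting from $\lie D$, forming $\mathsf D$ and then $q(\mathsf D)$, Lemma \ref{lemma} combined with the inclusion $\mathcal H \oplus \{0\} \subseteq \mathsf D$ (established in the proof of Theorem \ref{drinfeld1}) gives $\mathsf D = q^*(q(\mathsf D))$, so restriction to $P$ recovers $\lie D$.

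For the closed case, Theorem \ref{drinfeld2} transports integrability through the same dictionary: $\bar{\lie D}$ is closed under the Courant bracket on $\lie B(\mathsf D_G)$ if and only if $\mathsf D$ is closed on $G$, which is equivalent to the closedness of $q(\mathsf D) = \mathsf D_{G/H}$. Since $\bar{\lie D}$ is already Lagrangian, being closed under the induced bracket is the same as being a Lagrangian subalgebroid of $\lie B(\mathsf D_G)$. I do not foresee a genuine obstacle here; the only bookkeeping care needed is to confirm that the constraints in the statement (the inclusions and the $\mathcal B(H)$-invariance) match exactly the hypotheses of the four cited theorems, so that every Dirac subspace $\lie D$ admissible in the present theorem feeds into Theorems \ref{LieD} and \ref{drinfeld1}, and conversely every $\mathsf D_G$-homogeneous Dirac structure on $G/H$ gives rise, via $q^*(\cdot)\an{P}$, to an admissible $\lie D$.
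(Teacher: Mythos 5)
Your proposal is correct and follows exactly the paper's route: the paper deduces Theorem \ref{Drinfeld} as a direct corollary of Theorems \ref{LieD}, \ref{unicity}, \ref{drinfeld1} and \ref{drinfeld2}, and your spelled-out verification that the two assignments $\mathsf D_{G/H}\mapsto q^*(\mathsf D_{G/H})\an{P}$ and $\lie D\mapsto q(\mathsf D_G\cdot\lie D)$ are mutually inverse (via Theorem \ref{unicity}, Lemma \ref{lemma} and the inclusion $\mathcal H\oplus\{0\}\subseteq\mathsf D$) is precisely the bookkeeping implicit there, with Theorem \ref{drinfeld2} handling the closed case as you describe.
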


\begin{remark}\label{remark_action_of_H_when_closed}
Assume that $(G\rr P, \mathsf D_G)$ is a closed Dirac  groupoid,
 $\lie D\subseteq \mathsf P_G\an{P}$
a Dirac subspace satisfying \eqref{lieDequation}
 and $AH\times\{0\}\subseteq \lie D$
for some $\tg$-connected wide Lie subgroupoid $H$ of $G\rr P$, and such that
$\lie D/I^\s(\mathsf D_G)\subseteq \lie B(\mathsf D_G)$
is closed under the bracket on $\lie B(\mathsf D_G)$.
It is easy to check (as in the proof of Theorem \ref{drinfeld2}) that we have 
then 
$\left[(X^l,0), (X,\alpha)\right]\in \Gamma(\mathsf D_G\cdot \lie D)$
for all $(X,\alpha)\in \Gamma(\mathsf D_G\cdot \lie D)$
and $X\in\Gamma(AH)$. Since $H$ is $\tg$-connected, we get then the fact
that $(R_K^*X, R_K^*\alpha)\in \Gamma(\mathsf D_G\cdot \lie D)$
for all bisections $K\in\mathcal B(H)$
and the Dirac structure $\mathsf D_G\cdot \lie D$
projects to a Dirac structure on $G/H$, that is 
$\mathsf D_G$-homogeneous. 
The quotient $\lie D/ I^\s(\mathsf D_G)$ is then
automatically invariant
under the induced action of the
bisections $\mathcal B(H)$ on $\lie B(\mathsf D_G)$
and this shows that the condition 2 of Theorem \ref{drinfeld1}
is always satisfied if $\mathsf D_G$ is closed,
 $\lie D/I^\s(\mathsf D_G)$ is closed
under the Courant bracket on sections of $\lie B(\mathsf D_G)$
and $H$ is $\tg$-connected.
\end{remark}

\begin{example}
In \cite{LiWeXu98}, it is shown  that 
for a Poisson groupoid $(G\rr P, \pi_G)$, there is a one to one 
 correspondence between 
$\pi_G$-homogeneous Poisson structures on smooth homogeneous spaces $G/H$ and regular closed
Dirac structures $L$ of the Courant algebroid
$AG\oplus A^*G$,
such that $H$ is  the $\tg$-connected  subgroupoid of 
$G$ corresponding to the subalgebroid
$L\cap (AG\times0_{A^*G})$.
Since pullbacks to $G$ of Poisson 
structures on $G/H$ correspond
to closed Dirac structures 
on $G$ with characteristic distribution  $\mathcal H$,
we recover this result as a special case 
of Theorem \ref{Drinfeld}, 
using 
Remark \ref{remark_action_of_H_when_closed}
and the isomorphism in
Example \ref{bialgebroid_of_Poisson}.

Note that in this particular situation of a Poisson  groupoid, 
Theorem \ref{Drinfeld} 
classifies not only the Poisson homogeneous spaces of $(G\rr P, \pi_G)$,
but all its (not necessarily closed) Dirac homogeneous spaces.
\end{example}

\begin{example}
Let $(G\rr P, \pi_G)$ be a Poisson groupoid and $H$ a wide subgroupoid of $G$. 
Assume that the Poisson structure
descends to the quotient $G/H$, i.e., that $\pi_G$ is invariant under the 
action of the bisections of $H$. 
Let $\pi$ be the induced structure on $G/H$. We show that $(G,q^*\mathsf D_{\pi})$
is a Dirac homogeneous space of $(G\rr P, \pi_G)$. This is equivalent to the fact that
$(G/H,\pi)$ is a Poisson homogeneous space of $(G\rr P, \pi_G)$.

The Dirac structure $q^*\mathsf D_{\pi}$ is equal
to $(\mathcal H\oplus0_{T^*G})\oplus 
\graph\left(\left.\pi_G^\sharp\right\an{\mathcal H^\circ}:\mathcal H^\circ\to TG\right)$.
Since $\mathcal H\subseteq T^\tg G$, the inclusion
$\TT\tg\left(q^*\mathsf D_{\pi}\right)\subseteq \lie A(\mathsf D_{\pi_G})$
is obvious. Choose $(v_g,\alpha_g)\in\left(q^*\mathsf D_{\pi}\right)(g)$
and 
$\alpha_h\in T_h^*G$
such that $\TT\s\left(\pi_G^\sharp(\alpha_h),\alpha_h\right)=\TT\tg(v_g,\alpha_g)$.
Then we have 
$(v_g,\alpha_g)=\left(u_g+\pi_G^\sharp(\alpha_g),\alpha_g\right)$
with some $u_g\in \mathcal H(g)$ and the product
$\left(\pi_G^\sharp(\alpha_h),\alpha_h\right)\star (v_g,\alpha_g)$
is equal to
\begin{align*}
(\pi_G^\sharp(\alpha_h),\alpha_h)\star (u_g+\pi_G^\sharp(\alpha_g),\alpha_g)
&=\left(\pi_G^\sharp(\alpha_h)\star \pi_G^\sharp(\alpha_g)+0_h\star u_g, \alpha_g\star \alpha_h\right)\\
&=\left(\pi_G^\sharp(\alpha_g\star \alpha_h)+ T_gL_h u_g,\alpha_g\star \alpha_h\right)
\end{align*}
since $\pi_G$ is multiplicative.  The vector $T_gL_h u_g$
is an element of $\mathcal H$ by definition and consequently,
$(\pi_G^\sharp(\alpha_h),\alpha_h)\star (u_g+\pi_G^\sharp(\alpha_g),\alpha_g)$
is an element of $q^*(\mathsf D_\pi)$, which is shown to be 
$\pi_G$-homogeneous. It corresponds to the Lagrangian
subalgebroid $(AH\times 0_{T^*P})
\oplus \graph\left(\left.\pi_G^\sharp\right\an{AH^\circ}:AH^\circ\to TP\right)+I^\s(D_{\pi_G})$
of $\lie B(\mathsf D_{\pi_G})$, 
or  more simply, to the Lagrangian subalgebroid $AH\oplus AH^\circ$
in the Courant algebroid $AG\oplus A^*G$.

Thus, Theorem \ref{Drinfeld} together with the isomorphism in Example 
\ref{bialgebroid_of_Poisson}
shows that the multiplicative Poisson structure on $G$ descends to $G/H$ 
if and only if the Lagrangian subspace
$AH\oplus AH^\circ$ is a subalgebroid of the Courant algebroid $AG\oplus A^*G$.

The Poisson homogeneous space that corresponds in this way to the Lagrangian subalgebroid
$AG\oplus0_{A^*G}$ is the Poisson manifold $(P,\pi_P)$, where 
$\pi_P$ is the Poisson structure induced on $P$ by $\pi_G$, see
\cite{Weinstein88b} and also Theorem \ref{induced_Dirac_on_P}. 
 Note that the other trivial Dirac structure 
$0_{AG}\oplus A^*G$ corresponds to $(G,\pi_G)$ seen as a 
Poisson homogeneous space  of $(G\rr P,\pi_G)$ (see Example \ref{extreme_homogeneous}).

In the same manner, we can show that if a Dirac groupoid $(G\rr P, \mathsf D_G)$ 
is invariant under the 
action of a wide subgroupoid $H$, and the Dirac structure descends to the 
quotient $G/H$, then
$(G/H,q(\mathsf D_G))$ is $(G\rr P, \mathsf D_G)$-homogeneous. 
For that, we use the formula
$q^*(q(\mathsf D_G))=\mathcal K_H+ \mathsf D_G\cap\mathcal K_H^\perp$.
In  particular, the 
Dirac structure on $P$ obtained under some regularity conditions in Theorem 
\ref{induced_Dirac_on_P}
is $\mathsf D_G$-homogeneous. As in the Poisson case, we find hence that the 
Dirac structure
descends to $G/H$
if and only if $$\overline{AH\oplus0_{T^*P}\oplus 
\lie A(\mathsf D_G)\cap(TP\oplus AH^\circ)}
\subseteq \lie B(\mathsf D_G)$$
is invariant under the induced action of $\mathcal B(H)$.
\end{example}

\begin{example}
Let $(M,\mathsf D_M)$ be a smooth Dirac manifold and 
$(M\times M\,\rr \,M, \mathsf D_M\ominus \mathsf D_M)$
the pair Dirac  groupoid associated to it.

The wide Lie subgroupoids of $M\times M\,\rr\, M$
are the equivalence relations $R\subseteq M\times M$, 
and the corresponding homogeneous 
spaces are the products $M\times M/R$.
For instance, if $\Phi:G\times M\to M$ is an action of a  Lie group $G$
(with Lie algebra $\lie g$)
 on $M$, the subset $R_G=\{(m, \Phi_g(m))\mid m\in M, g\in G\}$
is a wide subgroupoid of $M\times M$, and 
$(M\times M)/ R_G$ is easily seen to equal $M\times M/G$.
Hence, if the action is free and proper, the 
homogeneous space $(M\times M)/ R_G$
has a smooth manifold structure such that the projection
$q:M\times M\to M\times M/G$ is a smooth surjective submersion.

In this case, the bisections of $R_G$ are the diffeomorphisms 
of $M$ that leave the orbits of $G$ invariant. For instance,
for every $g\in G$, the map $K_g:\Delta_M\simeq M\to M\times M$,
$m\to (m,\Phi_g(m))$ is a bisection of $R_G$. 

Choose $(m,m)\in\Delta_M$. A vector $(v_m, w_m)\in T_{(m,m)}(M\times M)$
is an element of $T_{(m,m)}^\tg R_G$ if and only if 
$v_m=0_m$ and $(0_m,w_m)\in T_{(m,m)} R_G$, 
that is if and only if $v_m=0_m$ and $w_m\in T_m(G\cdot m)$.
Thus, if $\mathcal V$ is the vertical space of the action, 
we find that $AR_G=(\{0\}\oplus\V)\an{\Delta_M}$.

By Theorem \ref{drinfeld1}, $\mathsf D_M\ominus\mathsf D_M$-homogeneous  Dirac 
structures on $M\times M/G$ are in one-one correspondence 
with Lagrangian subspaces 
$\lie D$ of $\mathsf P_G\an{P}$ such that
$I^\s(\mathsf D_M\ominus\mathsf D_M)+(A R_G\times\{0\})\subseteq \lie D$ 
and such that $\lie D/I^\s(\mathsf D_M\ominus\mathsf D_M)$ 
is invariant under the induced action 
of $\mathcal B(R_G)$ on $\lie B(\mathsf D_M\ominus\mathsf D_M)$.
But since $I^\s_{(m,m)}(\mathsf D_M\ominus\mathsf D_M)+A_{(m,m)} R_G=
\{(v_m, \xi_M(m), \alpha_m, 0_m)\mid (v_m,\alpha_m)\in\mathsf D_M(m), \, \xi\in\lie g\}$,
we find, using the isomorphism in Example \ref{iso_pair_case} and the considerations in Example
\ref{action_pair_case}, that 
$(\mathsf D_M\ominus\mathsf D_M)\cdot \lie D$ 
is a product of Dirac structures 
$\mathsf D_M\oplus \mathsf D$ such that $\V\oplus0_{T^*G}\subseteq \mathsf D$ and 
$(\Phi_g^*X,\Phi_g^*\alpha)\in\Gamma(\mathsf D)$ for all $g\in G$ and 
$(X,\alpha)\in\Gamma(\mathsf D)$.
But Dirac structures $\mathsf D$ satisfying these conditions
 are exactly the pullbacks to $M$ of Dirac structures on $M/G$ 
and we find that the $\mathsf D_M\ominus\mathsf D_M$-homogeneous Dirac structures on $M\times M/G$
are of the form $\mathsf D_M\oplus \bar{\mathsf D}:=\mathsf D_M\oplus q_G(\mathsf D)$, 
where $q_G:M\to M/G$ is the canonical projection.

With Example \ref{iso_pair_case} and Theorem \ref{drinfeld2}, we get hence that $\bar{\mathsf D}$
is closed if and only if $\mathsf D$ is closed.
\end{example}

\begin{example}
The left invariant Dirac structures on a Lie group $G$ are the homogeneous 
structures relative to the trivial Poisson bracket on $G$ (\cite{Jotz11a}).
Hence, if we consider this example in the groupoid situation, we should recover the ``right'' definition
for left invariant Dirac structures on a Lie groupoid. We say that a  Dirac structure $\mathsf D$
on a Lie groupoid $G\rr P$ is \emph{left-invariant} 
if the action $\TT\Phi$ of $TG\oplus T^*G$ on 
$\TT\mathsf\tg:TG\oplus T^*G\to TP\oplus A^*G$ restricts 
to an action of $0_{TG}\oplus T^*G$ on
$\mathsf D$, i.e., $$\left(0_{TG}\oplus T^*G\right)\cdot\mathsf D=\mathsf D.$$

 In \cite{LiWeXu98}, a Dirac structure
on a Lie groupoid $G\rr P$ is said to be left-invariant if it is the pullback
under the map
\begin{equation*}
\begin{array}{cccc}
\Phi:T^\tg G\oplus T^*G&\to& AG\oplus A^*G& \\
(v_g,\alpha_g)&\mapsto&(T_gL_{g\inv}v_g,\hat \s(\alpha_g))& \in A_{\s(g)}G\times A^*_{\s(g)}G
\end{array}
\end{equation*}
of a Dirac structure in $AG\oplus A^*G$. These two definitions are easily seen to be equivalent,
the inclusion $0_{TG}\oplus(T^\tg G)^\circ\subseteq\mathsf D$ is immediate
and it is easy to check that $\mathsf D$ is invariant under the lifted
right actions of the bisections 
if and only if the corresponding Dirac structure in
$\lie B(T^*G)$ is invariant under the induced action of $\mathcal B(G)$ on 
 $\lie B(T^*G)$ (compare with Proposition 6.2 in \cite{LiWeXu98}).

The result in Theorem \ref{drinfeld2} implies that a left-invariant Dirac structure $\mathsf D$
is closed if and only if the corresponding Dirac structure $\Phi(\mathsf D\an{P})\subseteq 
AG\oplus A^*G$ is a subalgebroid. 
\end{example}

\def\cprime{$'$} \def\polhk#1{\setbox0=\hbox{#1}{\ooalign{\hidewidth
  \lower1.5ex\hbox{`}\hidewidth\crcr\unhbox0}}} \def\cprime{$'$}
  \def\cprime{$'$} \def\cprime{$'$} \def\cprime{$'$} \def\cprime{$'$}
  \def\cprime{$'$} \def\cprime{$'$}
  \def\polhk#1{\setbox0=\hbox{#1}{\ooalign{\hidewidth
  \lower1.5ex\hbox{`}\hidewidth\crcr\unhbox0}}}
  \def\polhk#1{\setbox0=\hbox{#1}{\ooalign{\hidewidth
  \lower1.5ex\hbox{`}\hidewidth\crcr\unhbox0}}}
  \def\polhk#1{\setbox0=\hbox{#1}{\ooalign{\hidewidth
  \lower1.5ex\hbox{`}\hidewidth\crcr\unhbox0}}}
  \def\polhk#1{\setbox0=\hbox{#1}{\ooalign{\hidewidth
  \lower1.5ex\hbox{`}\hidewidth\crcr\unhbox0}}} \def\cprime{$'$}
  \def\polhk#1{\setbox0=\hbox{#1}{\ooalign{\hidewidth
  \lower1.5ex\hbox{`}\hidewidth\crcr\unhbox0}}}
  \def\polhk#1{\setbox0=\hbox{#1}{\ooalign{\hidewidth
  \lower1.5ex\hbox{`}\hidewidth\crcr\unhbox0}}}
  \def\polhk#1{\setbox0=\hbox{#1}{\ooalign{\hidewidth
  \lower1.5ex\hbox{`}\hidewidth\crcr\unhbox0}}}
  \def\polhk#1{\setbox0=\hbox{#1}{\ooalign{\hidewidth
  \lower1.5ex\hbox{`}\hidewidth\crcr\unhbox0}}}
\providecommand{\bysame}{\leavevmode\hbox to3em{\hrulefill}\thinspace}
\providecommand{\MR}{\relax\ifhmode\unskip\space\fi MR }
% \MRhref is called by the amsart/book/proc definition of \MR.
\providecommand{\MRhref}[2]{%
  \href{http://www.ams.org/mathscinet-getitem?mr=#1}{#2}
}
\providecommand{\href}[2]{#2}

\bigskip

\noindent
\textbf{M. Jotz}\\
Section de Math{\'e}matiques\\ 
Ecole Polytechnique
  F{\'e}d{\'e}rale de Lausanne\\ 
CH-1015 Lausanne\\ Switzerland\\
\texttt{madeleine.jotz@epfl.ch}\\
Partially supported by Swiss NSF grant 200021-121512

\end{document}